\documentclass[12pt,a4paper]{amsart}

\usepackage{amsfonts}
\usepackage{amsmath}
\usepackage{amssymb}
\usepackage{amsthm}
\usepackage{mathrsfs}
\usepackage{color}
\usepackage{ulem}

\usepackage{graphicx}
\usepackage[all]{xy}

\usepackage{enumerate}

\usepackage{hyperref}

\textheight=230mm
\voffset=-20mm
\textwidth=160mm
\hoffset=-10mm
\linespread{1.3}
\unitlength=1mm

\newcommand{\f}{\varphi}

\newcommand{\aA}{{\mathcal{A}}}
\newcommand{\bB}{{\mathcal{B}}}
\newcommand{\cC}{{\mathcal{C}}}
\newcommand{\dD}{{\mathcal{D}}}
\newcommand{\eE}{{\mathcal{E}}}
\newcommand{\fF}{{\mathcal{F}}}

\newcommand{\hH}{{\mathcal{H}}}
\newcommand{\mM}{{\mathcal{M}}}

\newcommand{\lL}{{\mathcal{L}}}
\newcommand{\iI}{{\mathcal{I}}}

\newcommand{\kK}{{\mathcal{K}}}

\newcommand{\pP}{{\mathcal{P}}}

\newcommand{\sS}{{\mathcal{S}}}
\newcommand{\tT}{{\mathcal{T}}}
\newcommand{\uU}{{\mathcal{U}}}
\newcommand{\xX}{{\mathcal{X}}}
\newcommand{\yY}{{\mathcal{Y}}}

\newcommand{\la}{\lambda}
\newcommand{\infl}{\iota}
\newcommand{\defl}{\delta}

\newcommand{\tr}{{\mbox{$t$-struc}\-tu\-r}}

\newcommand{\bcdot}{{\mbox{\boldmath{$\cdot$}}}}

\newcommand{\Hom}{\mathop{\textrm{Hom}}\nolimits}
\newcommand{\Ext}{\mathop{\textrm{Ext}}\nolimits}
\newcommand{\End}{\mathop{\textrm{End}}\nolimits}
\newcommand{\Fun}{\mathop{\textrm{Fun}}\nolimits}

\newcommand{\Coh}{\mathop{\textrm{Coh}}\nolimits}
\newcommand{\Lex}{\mathop{\textrm{Lex}}\nolimits}
\newcommand{\Rex}{\mathop{\textrm{Rex}}\nolimits}
\newcommand{\Id}{\mathop{\textrm{Id}}\nolimits}

\newcommand{\Ex}{\mathop{\textrm{Ex}}\nolimits}
\newcommand{\opp}{\mathop{\textrm{op}}\nolimits}
\newcommand{\Ab}{\mathop{\mathcal{A}\textrm{b}}\nolimits}

\newcommand{\wt}[1]{\widetilde{#1}}
\newcommand{\ol}[1]{\overline{#1}}

\newtheorem{LEM}{Lemma}[section]

\newtheorem*{THM*}{Theorem}
\newtheorem*{PROP*}{Proposition}
\newtheorem{THM}[LEM]{Theorem}
\newtheorem{PROP}[LEM]{Proposition}
\newtheorem{COR}[LEM]{Corollary}

\theoremstyle{definition}
\newtheorem{EXM}[LEM]{Example}
\newtheorem{REM}[LEM]{Remark}
\newtheorem{DEF}[LEM]{Definition}

\hyphenpenalty=5000 \tolerance=5000

\begin{document}
\title{Abelian envelopes of exact categories and highest weight categories}
\today
	\author{Agnieszka Bodzenta}
\author{Alexey Bondal}

\address{Agnieszka Bodzenta\\
	Institute of Mathematics, University of Warsaw \\ Banacha 2 \\ Warsaw 02-097,
	Poland} \email{A.Bodzenta@mimuw.edu.pl}

\address{Alexey Bondal\\
	Steklov Mathematical Institute of Russian Academy of Sciences, Moscow, Russia, and \newline
	Center of Pure Mathematics, Moscow Institute of Physics and Technology, Russia, and
	\newline
	Kavli Institute for the Physics and Mathematics of the Universe (WPI), The University of Tokyo, Kashiwa, Chiba 277-8583, Japan} 
\email{bondal@mi-ras.ru}

\begin{abstract}
	We define admissible and weakly admissible subcategories in exact categories and
	prove that the former induce semi-orthogonal decompositions on the derived categories. 
	We develop the theory of thin exact categories, an exact-category analogue of triangulated categories generated by exceptional sequences.
	
	The right and left abelian envelopes of exact categories are introduced, an example
	being the category of coherent sheaves on a scheme as the right envelope of the category
	of vector bundles.
	The existence of right (left) abelian envelopes is proven for exact categories with projectively (injectively) generating subcategories with weak (co)kernels.
	
	We show that highest weight categories are precisely the right/left envelopes of thin categories. Ringel duality on highest weight categories is interpreted as a duality between the right and  left abelian envelopes of a thin exact category. 
	A duality for thin exact categories compatible with Ringel duality is introduced by means of derived categories and
	Serre functor on them.
\end{abstract}
	
	\dedicatory{Dedicated to Claus Michael Ringel on his 79\textsuperscript{th} Anniversary}
	
	\maketitle
	
	\tableofcontents
	
\section{Introduction}

This paper is a result of an attempt to understand the categorical foundations of the theory of highest weight categories. Highest weight categories, being the same as the categories of representations for quasi-hereditary algebras, emerged in papers of Cline, Parshall and Scott \cite{CPS, ParSco}  as a result of a formalisation of properties of categories of perverse sheaves on nicely stratified topological spaces. They attracted a lot of attention in Representation Theory because categories of representations of various origin turned out to be highest weight, \textit{cf.} \cite{ParSco, Donkin, Rin, DR, BrunStrop}.

By definition, highest weight categories are abelian categories possessing a set of objects, called {\em standard objects}, which have some good properties, which imply, in particular, that they form a full exceptional sequence \cite{B} in the derived category \cite{CPS}. 

Our point of view in this paper, inspired by the work of Claus Michael Ringel, is that to understand highest weight categories one should shift the attention from abelian to exact categories. Studying the properties of the relevant exact categories lead us to the notion of a \emph{thin} exact category.  

We consider an exact category counterpart of the notion of admissible subcategory in a triangulated category \cite{B}. It happens that there are two levels of admissibility for subcategories in exact categories, which we call {\em weak admissibility} and just {\em admissibility}. Thin category is an exact category that has a filtration by admissible subcategories, such that graded factors are categories of vectors spaces. One can think of it  as an exact category endowed with a suitable replacement of what-would-be a full exceptional sequence, but for exact categories. This gadget has already been discussed by V. Dlab and C. Ringel in the abelian category set-up under the name of {\em standarisable collection} \cite{DR}. 

To match the definition in the triangulated world, we show that  the derived category of an admissible subcategory in an exact category is an admissible subcategory (in the `triangulated sense') of the derived category of the ambient category.   

The main innovative tool of this paper are abelian envelopes of exact categories. We distinguish the right $\aA_r(\eE)$ and left $\aA_l(\eE)$  envelopes of an exact category $\eE$. One known way to associate an abelian category to an exact category is via the {\em exact abelian hull}. It is the universal abelian category among those which allow an exact functor from $\eE$ into them. The \emph{left and right envelopes} are similar universal abelian categories but for functors which are only right/left exact. The existence of the hull was proven in \cite{Adel, Stein}. An arbitrary exact category does not need to have the right abelian envelope (see Example \ref{exm_no_env}). We prove the existence of the right/left envelope under the condition that the exact category has enough projectives/injectives and the full subcategory of projectives/injectives  has weak kernels/cokernels. We discuss further instances when the envelopes exist. For example, we show that the
right envelope of the category of vector bundles on a good enough scheme is the category of coherent sheaves.
Under suitable finiteness conditions we prove the derived
equivalence of an exact category with its envelopes.

We show that highest weight categories are precisely the  right/left envelopes of thin exact categories. The thin category related to the highest weight category is the extension closure of the set of (co)standard objects in it.
The envelopes of thin categories possess canonical strict (co)localising filtrations which are relevant to the partial order on simple objects in the highest weight categories. We give a characterisation of the thin subcategory inside its envelope via adjunction morphisms induced by the filtration. 

Ringel duality (originally for quasi-hereditary algebras) acquires in our context a natural interpretation as a transfer from the right to the left envelope and vice versa. Also it leads to a duality (which we call also Ringel duality) on thin categories, where the dual of thin category $\eE$ is constructed as the intersection inside derived category $\dD^b(\eE)$ of $\aA_l(\eE)$ with 
the image of $\aA_r(\eE)$ under the inverse of the Serre functor.

\vspace{0.3cm}
\textbf{Structure of the paper and its results in detail.}

We begin by recalling the notion of an exact category (see Section \ref{ssec_prelim}). 
The main (and the only) source of examples of exact categories are full subcategories $\eE\subset \aA$ of abelian categories closed under extensions. Then short exact sequences in $\aA$ whose all terms lie in $\eE$ give an exact structure on $\eE$.

We study the structure of an exact category $\eE$ by means of (perpendicular) torsion pairs. 
Full subcategories $\tT, \fF\subset \eE$ form a \emph{torsion pair} if $\Hom(\tT, \fF) =0$ and any object $E\in \eE$ fits into a conflation $T\to E \to F$ with $T\in \tT$ and $F\in \fF$. Such conflation is functorial in $E$, see Section \ref{ssec_weak_l_and_r_adm}. The data of a torsion pair $(\tT, \fF)$ is equivalent to a \emph{weakly right admissible} subcategory $\infl_* \colon \tT \to \eE$ (i.e. $\infl_*$ admits the right adjoint $\infl^!$ and the adjunction counit $\infl_*\infl^!E \to E$ is an inflation, for any $E\in \eE$), similarly for a \emph{weakly left admissible} subcategory $\defl_*\colon \fF\to \eE$, see Proposition \ref{prop_torsion=adjoint}. 

Torsion pair $(\tT, \fF)$ is  \emph{perpendicular} if further $\Ext^1(\tT, \fF) =0$. We define a \emph{right (left) admissible subcategory} as a torsion class $\tT$ (torsion-free class $\fF$) in a perpendicular torsion pair.
By Theorem \ref{thm_admissible_iff_adjoint_exact}, a torsion pair $(\tT, \fF)$ is perpendicular if and only if one of functors $\infl^! \colon \eE\to \tT$, $\defl^* \colon \eE\to \fF$ is exact, hence both. 
 
 For a perpendicular torsion pair  $(\tT,\fF)$, we introduce the subcategory of \emph{$\tT$-projective objects}. We prove that it is a square zero extension of $\fF$ in the case when $\tT\simeq k \textrm{--vect}$, see Theorem \ref{thm_Y_to_F_is_square_zero}.

The bounded derived category $\dD^b(\eE)$ is well-behaved for \emph{weakly idempotent split} exact categories
 (see Section \ref{ssec_weak_idm_split_and_tor_p}).
An exact category $\eE$ with a perpendicular torsion pair $(\tT, \fF)$ is weakly idempotent split if and only if $\tT$ and $\fF$ are (see Proposition \ref{prop_idem_split_iff_T_and_F}). 
If this is the case, a perpendicular torsion pair $(\tT, \fF)$ on $\eE$ yields a semi-orthogonal decomposition $\dD^b(\eE) = \langle \dD^b(\fF), \dD^b(\tT) \rangle$, see Theorem \ref{thm_SOD}. 
The change of order of $\tT$ and $\fF$ appears here just to match the traditional conventions. 

We define a \emph{strict admissible left/right filtration} on an exact category $\eE$ (see Section \ref{ssec_str_adm_filt}). It is a counterpart in exact categories of a longer semi-orthogonal decomposition and it induces such on $\dD^b(\eE)$,  see Theorem \ref{thm_srict_filt_on_derived}.

In Section \ref{sec_thin_cat} we introduce a class of exact categories which is our main attraction throughout the paper.
A \emph{thin exact category} is a  $\Hom$ and $\Ext^1$-finite $k$-linear exact category which admits a right admissible filtration $0= \tT_0 \subset \tT_1 \subset \ldots \subset \tT_n = \eE$ with graded factors 
equivalent to $k\textrm{--vect}$. 
The derived category of a thin exact category admits a full exceptional sequence \cite{B}, see Proposition \ref{prop_full_exc_coll_in_D_thin}. Hence, $\dD^b(\eE)$ admits the \emph{Serre functor} $\mathbb{S}$ \cite{BK1}.

Using the theory of universal extensions developed in Section \ref{ssec_univ_ext_as_square_zero}, we prove that any thin exact category admits a projectively generating subcategory $\pP$, see Proposition \ref{prop_proj_in_thin}. This is a categorical version of the construction of tilting objects for standarizable collections by V. Dlab and C. Ringel \cite{DR}.

We endow $\Lambda$, the set of irreducible objects in a thin category $\eE$,  with a canonical partial order, see Section \ref{ssec_can_str_filt_on_thin}. Principal lower ideals in this poset allow us to assign a right admissible subcategory $\tT_\lambda \subset \eE$ to any $\la \in \Lambda$, see Proposition \ref{prop_strict_filtr_on_thin}.

In Section \ref{sec_abel_env} we introduce the main tool of our paper, the machinery of \emph{abelian envelopes}. The \emph{right abelian envelope} for an exact category $\eE$ is an abelian category $\aA_r(\eE)$ together with a right exact functor $i_R \colon \eE\to \aA_r(\eE)$ which is  universal for  right exact functors $\eE\to \aA$, with $\aA$ abelian. The \emph{left abelian envelope} $\aA_l(\eE)$ is defined analogously. Since $\aA_l(\eE^{\opp}) \simeq \aA_r(\eE)^{\opp}$, see (\ref{rem_envs_for_opp}), we mostly restrict our attention to right abelian envelopes.

If the right abelian envelope of $\eE$ exists then it is equivalent to the subcategory of compact objects in $\Lex(\eE^{\opp}, \Ab)$ and the functor $i_R\colon \eE\to\aA_r(\eE) $ is faithful (see Lemma \ref{lem_envelope_fully_exact_in_Lex}). We don't know whether it is always fully faithful, but when it is so, $\eE$ is a fully exact subcategory of $\aA_r(\eE)$ (see Lemma \ref{lem_if_I_R_full_then_exact}).

The right abelian envelope extends to a monad on the 2-category of exact categories admitting the right abelian envelopes, see Section \ref{ssec_monad}.
Algebras over this monad are identified with abelian categories, see Proposition \ref{prop_mondad_algebras_over}. 

Using a statement of \cite{KasSch2}, we prove that an abelian category $\aA$ is the right abelian envelope of a full subcategory $\eE\subset \aA$ closed under extensions and kernels of epimorphisms, provided every object of $\aA$ is a quotient of an object of $\eE$, see Theorem \ref{thm_if_quotient_then_envelope}.
This implies that the right envelope of the category of vector bundles is the category of coherent sheaves (see Corollary \ref{cor_Coh_as_env}).

If an exact category $\eE$ admits a projectively generating subcategory $\pP$ with weak kernels, then the right abelian envelope $\aA_r(\eE)$ is equivalent to the category $\textrm{fp}(\pP)$ of finitely presented functors $\pP^{\opp} \to \Ab$. 
Moreover, $\eE\subset \textrm{fp}(\pP)$ is a fully exact subcategory, 
see Theorem \ref{thm_right_env_if_proj_gen}.
In particular, a thin category admits the right and the left envelope, see Corollary \ref{cor_envel_for_thin}.

Under the same conditions we prove that $\dD^-(\eE) \simeq \dD^-(\aA_r(\eE))$, Proposition  \ref{prop_derived_equiv}.
If further $\aA_r(\eE)$ has finite global dimension then $\dD^b(\eE) \simeq \dD^b(\aA_r(\eE))$, see Theorem \ref{thm_bounded_derived_equi}. 

A perpendicular torsion pair $(\tT, \fF)$ on an exact category $\eE$  induces a colocalising subcategory of the right envelope $\aA_r(\fF) \subset \aA_r(\eE)$ if $\tT$ and $\eE$ have right envelopes, see Proposition \ref{prop_envelope_becomes_Serre_subcat}.
More generally, a strict admissible filtration on $\eE$ with graded factors admitting right envelopes induces a colocalising filtration on $\aA_r(\eE)$, see Theorem \ref{thm_filtr_on_envelope}.

In Section \ref{sec_high_weigh_as_env} we focus on  highest weight categories and show that they are abelian envelopes of thin categories.

Given a highest weight category $(\aA, \Lambda)$, its subcategories $\fF(\Delta_{\Lambda})$ and $\fF(\nabla_{\Lambda})$, of objects with filtration with
standard, respectively costandard, factors, are thin. This gives an essentially one-to-one correspondence between highest weight abelian categories and thin exact categories, the inverse operation given by taking  the right or left envelope (see Propositions \ref{prop_F_Delta_thin}, \ref{prop_highest_weuight_is_right_env} and Theorem \ref{thm_right_env_is_h_w}).

An abelian category $\aA$ might have several highest weight structures which depend on the choice of partial orders on the set of isomorphism classes of simple objects. We consider the equivalence relation on the partial orders defined by equality of the corresponding thin subcategories $\fF(\Delta)\subset \aA$, see (\ref{eqtn_equiv_hw_st}). We prove that posets $\Lambda$ and $\Lambda'$ are equivalent if and only if $\Lambda'$ dominates the canonical poset of the thin category $\fF(\Delta_\Lambda)$, see Proposition \ref{prop_hw_iff_dominated}.

We characterise a thin category $\eE$ inside $\aA_r(\eE)$ using the strict filtration on $\aA_r(\eE)$ induced by the canonical filtration on $\eE$ (see Theorem \ref{thm_filtr_on_envelope}).
Objects of $\eE$
are precisely  those $A\in \aA_r(\eE)$ for which the adjunction counits ${\beta_{\la}}_!\beta_\la^*A \to A$ are monomorphisms, where ${\beta_{\la}}_!\colon \aA_r(\tT_\lambda) \to \aA_r(\eE)$ is the functor induced by the inclusion $\tT_\la \subset \eE$, 
see Theorem \ref{thm_thin_in_its_envelope}.

A highest weight category $(\aA, \Lambda)$ admits \emph{Ringel dual} $\mathbf{RD}(\aA)$ \cite{Rin}, originally defined as the category of modules over the endomorphism algebra of the characteristic tilting object $T \in \aA$.  In Section \ref{ssec_ring_dual_for_hw} we prove
(see Theorem \ref{thm_Ringel_dual_for_hwc})
$$
\mathbf{RD}(\aA) \simeq \aA_l(\fF(\Delta_\Lambda)) \simeq \aA_r(\fF(\nabla_\Lambda)).
$$
Since $\aA \simeq \aA_r(\fF(\Delta_\Lambda))$, Ringel duality is the transfer from the right to the left abelian envelope of a thin category.

We show that a thin category $\eE$ is the intersection of its left and right envelopes in $\dD^b(\eE)$, see Theorem \ref{thm_E_is_the_int_of_hearts}. 
We define the \emph{Ringel dual} $\mathbf{RDT}(\eE)$ of a thin exact category $\eE$,
$$
\mathbf{RDT}(\eE):=\aA_l(\eE)\cap \mathbb{S}^{-1}\aA_r(\eE).
$$
The category  $\bf{RDT}(\eE)$ is again thin exact and $\bf{RDT}(\bf{RDT}(\eE))$ is canonically equivalent to $\eE$, see Proposition \ref{prop_RIn_dual_is_thin}.
Given a highest weight category $(\aA, \Lambda)$, the thin exact categories $\fF(\Delta_\Lambda)$ and $\fF(\nabla_\Lambda)$ are Ringel dual to each other.

In Appendix \ref{sec_abelian_cat} we show that the data of an abelian recollement is equivalent to a
bi-localising subcategory (see Proposition \ref{prop_charact_of_abel_recol}). We establish a one-to-one correspondence between Serre subcategories in a finite length abelian category and subsets of the set of simple objects in it (this should be well-known to experts), see Lemma  \ref{lem_biloc_in_fin_len}. 
For Deligne finite categories (i.e. finite length categories which admit projective generators), we prove that bi-localising subcategories are exactly Serre subcategories, see Proposition \ref{prop_biloc_in_Df_n}.

\textbf{Notation} For a field $k$ we denote by $k\textrm{-vect}$ the category of finite dimensional $k$-vector spaces.  To simplify notation we denote by $\otimes$ the tensor product over the ground field $k$.

\textbf{Acknowledgements}
It should be clear from the exposition how much this paper is  spiritually  indebted to Claus Michael Ringel. It is our pleasure to dedicate it to this wonderful mathematician whose ideas continue to inspire generations of researchers.
We are grateful to A. Beligiannis, A. Efimov and D. Orlov for useful discussions.
The first named author would like to thank Kavli IPMU and Higher School of Economics for their hospitality. The first named author was partially supported by Polish National Science Centre grants No. 2018/31/D/ST1/03375 and 2021/41/B/ST1/03741. 
The work of A. I. Bondal was performed at the Steklov International Mathematical Center and supported by the Ministry of Science and Higher Education of the Russian Federation (agreement no. 075-15-2022-265).
This research was partially supported by World Premier International Research Center Initiative (WPI Initiative), MEXT, Japan.
This work was supported by JSPS KAKENHI Grant Number JP20H01794.

\section{Exact categories}\label{sec_exact_cat}

We recall the definition of an exact category and introduce the crucial notion of left and right admissible subcategories. We characterise them in terms of exactness of appropriate adjoint functors. Then, we discuss strict admissible filtrations on exact categories and the induced filtrations on the derived categories.

\vspace{0.3cm}
\subsection{Preliminaries}\label{ssec_prelim}~\\

Exact categories were defined by N. Yoneda \cite{Yoneda}, A.Heller \cite{Heller} and D.Quillen in  \cite{Quillen}. Below we reproduce a version of the definition as in \cite{Kel4}.	

An \emph{exact category} is an additive category $\eE$ together with a fixed class $\sS$ of \emph{conflations}, i.e. pairs of composable morphisms
\begin{equation}\label{eqtn_confl}
X \xrightarrow{i} Y \xrightarrow{d} Z
\end{equation} 
such that $i$ is the kernel of $d$ and $d$ is the cokernel of $i$. We shall say that $i$ is an \emph{inflation} and $d$ a \emph{deflation}. The class $\sS$ is closed under isomorphisms and the pair $(\eE, \sS)$ is to satisfy the following axioms:
\begin{itemize}
	\item[(Ex 0)] $0 \to X \xrightarrow{\Id_X} X$ is a conflation,
	\item[(Ex 1)] the composite of two deflations is a deflation,
	\item[(Ex 2)] the pullback of a deflation against an arbitrary morphism exists and is a deflation,
	\item[(Ex 2')] the pushout of an inflation along an arbitrary morphism exists and is an inflation.
\end{itemize}

For objects $E$ and $F$ in an exact category $\eE$, we denote by $\Ext^1_\eE(E,F)$ the abelian group whose elements are classes of conflations
$F\to X \to E$ considered up to isomorphisms which are identical on $F$ and $E$.

An additive functor between exact categories is \emph{exact} if the image of every conflation is a conflation. We denote by $\Ex(\eE,\eE')$ the category whose objects are exact functors $\eE \to \eE'$ and morphisms are natural transformations.

A full subcategory $\xX \subset \eE$ of an exact category is said to be \emph{closed under extensions} if for any conflation (\ref{eqtn_confl}) with $X, Z \in \xX$ object $Y$ also belongs to $\xX$. If $\xX$ satisfies this condition then the conflations in $\eE$ whose all three terms lie in $\xX$ yield an exact structure on $\xX$. By \emph{fully exact subcategory of} $\eE$ we mean a full subcategory closed under extensions and endowed with the induced exact structure. The embedding functor of a fully exact subcategory is clearly exact and reflects exactness. 

An additive category $\bB$ has the \emph{split exact structure} with deflations split epimorphisms and inflations split monomorphisms.

An abelian category $\aA$ is regarded as an exact category $(\aA, \sS)$ with the class $\sS$ of conflations being all short exact sequences in $\aA$. In particular exact functors $\eE\to \aA$ from an exact category to an abelian category make sense.

A rich source of examples of exact categories is given by extension closed subcategories of abelian categories with the induced exact structure. According to the Gabriel-Quillen theorem \cite[II.3]{Gabriel}, \cite[§2]{Quillen}, \textit{cf.} \cite[Theorem 1.0.3]{Laumon}, \cite[Ex. 4.7.3.G]{Freyd}, every exact category $\eE$ is a fully exact subcategory of an abelian category $\aA$. 

Under this embedding $\eE \to \aA$ into an abelian category,  $\Hom_{\eE}(E_1,E_2) \simeq \Hom_{\aA}(E_1,E_2)$ and $\Ext^1_{\eE}(E_1,E_2) \simeq \Ext^1_{\aA}(E_1,E_2)$, for any $E_1,E_2 \in \eE$. It follows that for any conflation (\ref{eqtn_confl}) and any $E \in \eE$ we have the six-terms exact sequences: 
\begin{equation}\label{eqtn_6_term_ex_seq} 
\begin{aligned}
	0 \to \Hom_{\eE}(E,X) \to &\Hom_{\eE}(E,Y) \to \Hom_{\eE}(E,Z) \to\\
	&\to \Ext^1_{\eE}(E,X) \to \Ext^1_{\eE}(E,Y) \to \Ext^1_{\eE}(E,Z),&\\
	0 \to \Hom_{\eE}(Z,E) \to &\Hom_{\eE}(Y,E) \to \Hom_{\eE}(X,E) \to\\
	&\to \Ext^1_{\eE}(Z,E) \to \Ext^1_{\eE}(Y,E) \to \Ext^1_{\eE}(X,E)&
\end{aligned}
\end{equation}

\vspace{0.3cm}
\subsection{Weakly left and right admissible subcategories}\label{ssec_weak_l_and_r_adm}~\\

Let $\xX \subset \eE$ be a subcategory of an additive category $\eE$. We define \emph{the right} (respectively \emph{left}) \emph{orthogonal subcategory} $\xX^{\perp_0}$ (respectively ${}^{\perp_0} \xX$) by:
\begin{equation}
	\begin{aligned}
	&\xX^{\perp_0} = \{E \in \eE\,|\, \Hom(\xX, E) = 0\},&\\
	&{}^{\perp_0}\xX = \{E \in \eE\,|\, \Hom(E, \xX) =0\}.&
	\end{aligned}
\end{equation} 

Category ${}^{\perp_0}\xX$ is closed under quotients, i.e. given an epimorphism $X\to X'$ with $X \in \xX^{\perp_0}$ object $X'$ lies in $\xX^{\perp_0}$. Similarly, $\xX^{\perp_0} $ is closed under subobjects.

\begin{DEF}\label{def_weakly_right_adm} 
We say that a fully exact subcategory $\tT$ of an exact category $ \eE$ is \emph{weakly right admissible} if the inclusion functor $i_*\colon \tT\to \eE$ admits right adjoint $i^! \colon \eE \to \tT$ and the adjunction counit $\varepsilon_E \colon i_*i^!E \to E$ is an inflation, for any $E \in \eE$.
\end{DEF}
\begin{DEF}\label{def_weakly_left_adm} 
We say that a fully exact subcategory $\fF \subset \eE$ is \emph{weakly left admissible} if the inclusion functor $\defl_* \colon \fF \to \eE$ admits a left adjoint $\defl^* \colon \eE\to \fF$ and the adjunction unit $\eta_E \colon E \to \defl_*\defl^*E$ is a deflation, for any $E \in \eE$. 
\end{DEF}

Let $\tT$, $\fF$ be fully exact subcategories of an exact category $\eE$. We say that $(\tT, \fF)$ is a \emph{torsion pair} if $\Hom (\tT , \fF )=0$ and any object $E \in \eE$ fits into a conflation $T \to E \to F$ with $T\in \tT$ and $F \in \fF$.

\begin{PROP}\label{prop_torsion=adjoint}
	A fully exact subcategory $\tT$ of an exact category $\eE$ is weakly right admissible  if and only if $(\tT, \fF)$, where $\fF :=\tT^{\perp_0}$, is a torsion pair in  $\eE$. In this case, every conflation $T\to E\to F$ with $T\in \tT$ and $F\in \fF$ is canonically isomorphic (identically on $E$) to the functorial conflation 
\begin{equation}\label{decomposition}
	\infl_*\infl^!E \to E \to \defl_*\defl^*E,
\end{equation}
	where $\infl_*:\tT \to \eE$, and $\defl_*:\fF \to \eE$ are the embedding functors, $\infl^!$ and $\defl^*$ are their adjoints and morphisms in (\ref{decomposition}) are, respectively, the adjunction counit and unit. An analogous fact holds for weakly left admissible subcategories.
\end{PROP}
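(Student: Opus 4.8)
The plan is to prove the two implications of the equivalence separately, then extract the uniqueness clause from the universal property of the adjunction; the weakly left admissible case will follow by passing to $\eE^{\opp}$.

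First, suppose $\tT$ is weakly right admissible, with $\infl^!$ right adjoint to the inclusion $\infl_*$ and counit $\varepsilon_E\colon\infl_*\infl^!E\to E$ an inflation. Set $\fF:=\tT^{\perp_0}$; then $\Hom(\tT,\fF)=0$ by construction, so I only need to produce, for each $E$, a conflation with outer terms in $\tT$ and $\fF$. Since $\varepsilon_E$ is an inflation it completes to a conflation $\infl_*\infl^!E\xrightarrow{\varepsilon_E}E\xrightarrow{d}F$, and $\infl^!E\in\tT$, so the only substantive point is $F\in\tT^{\perp_0}$. To check it I would take an arbitrary $\varphi\colon T'\to F$ with $T'\in\tT$, pull the deflation $d$ back along $\varphi$ by (Ex 2) to obtain a conflation $\infl_*\infl^!E\to P\xrightarrow{p}T'$ and a map $q\colon P\to E$ with $d\circ q=\varphi\circ p$; since $\tT$ is closed under extensions, $P\in\tT$, so the universal property of the counit factors $q$ as $q=\varepsilon_E\circ r$ with $r\colon P\to\infl_*\infl^!E$. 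Then $\varphi\circ p=d\circ q=d\circ\varepsilon_E\circ r=0$, and as $p$ is a deflation hence an epimorphism this forces $\varphi=0$. So $F\in\fF$ and $(\tT,\fF)$ is a torsion pair.

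Conversely, assuming $(\tT,\fF)$ with $\fF=\tT^{\perp_0}$ is a torsion pair, I would fix for each $E$ a conflation $T_E\xrightarrow{a_E}E\xrightarrow{b_E}F_E$ with $T_E\in\tT$ and $F_E\in\fF$. Applying $\Hom(T',-)$ with $T'\in\tT$ and using $\Hom(T',F_E)=0$ shows $\Hom(T',T_E)\to\Hom(T',E)$ is bijective, i.e. $a_E$ is a universal arrow from $\infl_*$ to $E$; symmetrically, $\Hom(-,F')$ with $F'\in\fF$ together with $\Hom(T_E,F')=0$ shows $b_E$ is a universal arrow from $E$ into $\fF$. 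By the pointwise construction of adjoints these arrows assemble into a right adjoint $\infl^!$ of $\infl_*$ (counit $a_E$) and a left adjoint $\defl^*$ of $\defl_*$ (unit $b_E$); in particular the counit is an inflation and the unit a deflation, so $\tT$ is weakly right admissible, $\fF$ weakly left admissible, and $T_E\to E\to F_E$ is the conflation (\ref{decomposition}).

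For the uniqueness statement, the same computation applies to \emph{any} conflation $T\to E\to F$ with $T\in\tT$, $F\in\fF$: the map $T\to E$ is again a universal arrow from $\infl_*$ to $E$, so uniqueness of universal arrows yields a unique isomorphism $T\simeq\infl_*\infl^!E$ over $E$; passing to cokernels gives a unique isomorphism $F\simeq\defl_*\defl^*E$ compatible with the structure maps, i.e. an isomorphism of conflations that is the identity on $E$. The weakly left admissible case is the mirror statement for $\eE^{\opp}$, under which inflations and deflations, torsion pairs $(\tT,\fF)$, and the orthogonal $\tT^{\perp_0}$ all swap with their duals. I expect the only genuinely non-formal step to be $F\in\tT^{\perp_0}$ in the first implication: this is exactly where the exact structure enters, via (Ex 2) and the extension-closedness of $\tT$ — without either, $P$ need not lie in $\tT$ and the argument breaks. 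Everything else is routine bookkeeping with the six-term sequences (\ref{eqtn_6_term_ex_seq}) and the usual parameter theorem for adjoints.
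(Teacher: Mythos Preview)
Your proof is correct and follows essentially the same approach as the paper: both directions rely on the same ideas (the universal property of the counit, the pullback along $\varphi$ via (Ex~2), and extension-closedness of $\tT$). Your argument for $F\in\tT^{\perp_0}$ is in fact slightly more direct than the paper's --- you conclude $\varphi=0$ immediately from $\varphi\circ p=0$ and $p$ epi, whereas the paper shows the pulled-back extension splits and then invokes the six-term sequence --- but the substance is identical.
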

\begin{proof}
	Let $(\tT, \fF)$ be a torsion pair in an exact category $\eE$. Consider a conflation $T \xrightarrow{in} E \xrightarrow{d} F$ with $T\in \tT$ and $F\in \fF$.
	
	For $T' \in \tT$, the map $\Hom(T', T) \to \Hom(T', E)$ given by the composition with $in$ is an isomorphism, since its cokernel belongs to $\Hom(T', F)=0$. Similarly, $\Hom(F,F') \simeq \Hom(E, F')$, for any $F' \in \fF$. Hence, we have canonical isomorphisms $T \simeq \infl_*\infl^!E$ and $F \simeq \defl_* \defl^* E$. Under these isomorphisms, the adjunction counit $\infl_*\infl^!E \to E$ is identified with $in$, hence an inflation, and the adjunction unit $E \to \defl_*\defl^*E$ is identified with $d$, hence a deflation. Thus, $\tT$ is weakly right admissible and $\fF$ is weakly left admissible.
	
	Let now $\infl_* \colon \tT \to  \eE$ be a weakly right admissible fully exact subcategory. We aim to show that $F$ defined as the cokernel of $T:=\infl_*\infl^!E \to E$ belongs to $\tT^{\perp_0}$. For $T' \in \tT$ the first map in the exact sequence $\Hom_{\eE}(T',T)\to \Hom_{\eE}(T',E) \to \Hom_{\eE}(T', F) \to \Ext^1_{\eE}(T',T)$ (see (\ref{eqtn_6_term_ex_seq})) is an isomorphism. Hence, in order to show that $\Hom(T',F) =0$ it suffices to check that for any $\f \colon T' \to F$ the induced extension of $T'$ by $T$ splits:
	\[
	\xymatrix{T \ar[r]^{in} & E \ar[r]& F\\
	T \ar[r]^{\ol{in}} \ar[u]^{\Id} & T'' \ar[u]^{\ol{\f}} \ar@{-->}[ul]|{\psi} \ar[r] & T' \ar[u]^{\f} }
	\] 
	Object $T''$ belongs to $\tT$, because $\tT$ is closed under extensions. Since $T = \infl_*\infl^!E$ morphism $\ol{\f}$ factors via $\psi$, i.e. $\ol{\f} = in \circ \psi$. Then $in \circ \psi \circ \ol{in} = \ol{\f} \circ \ol{in} = in$ which implies that $\psi \circ \ol{in} = \Id$, as $in$ is a monomorphism. Hence the induced extension of $T'$ by $T$ splits and $\f  =0$. As $\f$ was arbitrary, $F \in \tT^{\perp_0}$, i.e. $(\tT, \tT^{\perp_0})$ is a torsion pair. 
	
	We have shown that torsion pairs are in one to one correspondence with weakly right admissible subcategories. By considering the opposite categories we obtain an analogous result for weakly left admissible subcategories.
\end{proof}

Note that functors $\infl^!$ and $\defl^*$ are not necessarily exact. In the next subsection, we put stronger conditions on the subcategory $\tT \subset \eE$ that ensure exactness of these functors.

\begin{PROP}\label{lem_right-left}
	We have an involutive duality between weakly right admissible subcategories $\tT$ and weakly left admissible subcategories $\fF$ in an exact category $\eE$:
\begin{equation}
\begin{aligned}
\tT \mapsto \fF :=\tT^{\perp_0}, &\\
\fF \mapsto \tT :={}^{\perp_0}\fF.
\end{aligned}
\end{equation}
In particular, $\tT$ is closed under quotients in $\eE$, and $\fF$ is closed under subobjects in $\eE$.
\end{PROP}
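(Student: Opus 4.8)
The plan is to deduce the statement formally from Proposition \ref{prop_torsion=adjoint} together with its counterpart for weakly left admissible subcategories, each of which already sets up a bijection with torsion pairs; the only point that genuinely uses the exact structure is a preliminary observation about torsion pairs.

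First I would record that in an \emph{arbitrary} torsion pair $(\tT,\fF)$ in $\eE$ one has $\fF=\tT^{\perp_0}$ and $\tT={}^{\perp_0}\fF$, so that either class determines the other. The inclusions $\fF\subseteq\tT^{\perp_0}$ and $\tT\subseteq{}^{\perp_0}\fF$ are immediate from $\Hom(\tT,\fF)=0$. For the reverse inclusions, pick $E\in\tT^{\perp_0}$ and a conflation $T\xrightarrow{in}E\xrightarrow{d}F$ with $T\in\tT$, $F\in\fF$; then $in\in\Hom(T,E)=0$, and since $in$ is an inflation, hence a monomorphism, this forces $T=0$ and $E\cong F\in\fF$. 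Dually, for $E\in{}^{\perp_0}\fF$ the deflation $d\in\Hom(E,F)=0$ is an epimorphism, so $F=0$ and $E\cong T\in\tT$. Thus a torsion pair is recovered from each of its two classes.

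Next I would assemble the correspondences. By Proposition \ref{prop_torsion=adjoint}, $\tT\mapsto(\tT,\tT^{\perp_0})$ is a bijection from weakly right admissible subcategories onto torsion pairs; applying the same statement in $\eE^{\opp}$ and reversing the order of the pair (a torsion pair in $\eE^{\opp}$ is one in $\eE$ read in the opposite order), $\fF\mapsto({}^{\perp_0}\fF,\fF)$ is a bijection from weakly left admissible subcategories onto torsion pairs. Composing the first bijection with the inverse of the second, and using the previous paragraph to identify the torsion pair attached to a weakly right admissible $\tT$ with $(\tT,\tT^{\perp_0})$ and its torsion-free class with $\tT^{\perp_0}$, we obtain a bijection sending $\tT$ to $\tT^{\perp_0}$ whose inverse sends $\fF$ to ${}^{\perp_0}\fF$; this is the claimed involutive duality. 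The closure clauses then follow at once from these identifications: if $E\twoheadrightarrow E'$ is an epimorphism in $\eE$ with $E\in\tT={}^{\perp_0}\fF$, then for any $F\in\fF$ a morphism $E'\to F$, composed with the epimorphism, is a morphism $E\to F$ and hence $0$, forcing $E'\to F$ to vanish, so $E'\in{}^{\perp_0}\fF=\tT$; dually $\fF=\tT^{\perp_0}$ is closed under subobjects.

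I do not expect a real obstacle here: the substance is already contained in Proposition \ref{prop_torsion=adjoint}, and the argument is essentially bookkeeping around it. The one step that is not pure category theory, and which I would write out with care, is the identification $\fF=\tT^{\perp_0}$, $\tT={}^{\perp_0}\fF$ for a general torsion pair, since it relies on the exact-category facts that inflations are monomorphisms and deflations are epimorphisms.
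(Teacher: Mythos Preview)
Your proposal is correct and follows essentially the same route as the paper: both arguments rest on Proposition \ref{prop_torsion=adjoint} (and its dual) to identify weakly right/left admissible subcategories with torsion/torsion-free classes, and then check that in a torsion pair each class is recovered from the other as the appropriate $\perp_0$. Your organization is slightly more modular---you first isolate the general fact $\fF=\tT^{\perp_0}$, $\tT={}^{\perp_0}\fF$ for an arbitrary torsion pair and then compose bijections---whereas the paper argues directly from the functorial decomposition (\ref{decomposition}) that ${}^{\perp_0}(\tT^{\perp_0})=\tT$; but this is only a difference in packaging, not in substance.
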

\begin{proof}
Let $\tT \subset \eE$ be a weakly right admissible subcategory and $\fF :=\tT^{\perp_0}$. Proposition \ref{prop_torsion=adjoint} implies that $\eE = (\tT, \fF)$ is a torsion pair and $\fF\subset \eE$ is weakly left admissible. The decomposition (\ref{decomposition}) of any $T'\in {}^{\perp_0} \fF$ with respect to the torsion pair $(\tT, \fF)$ is a conflation $T \to T' \to 0$, with $T\in \tT$. Hence ${}^{\perp_0}\fF = {}^{\perp_0}(\tT^{\perp_0})\subset \tT$. As the inverse inclusion is clear, we have: $\tT = {}^{\perp_0}(\tT^{\perp_0})$.

By transferring to the opposite categories, we conclude that if $\fF \subset \eE$ is weakly left admissible then ${}^{\perp_0} \fF$ is weakly right admissible and $\fF = (^{\perp_0}\fF)^{\perp_0}$.
\end{proof}

\vspace{0.3cm}
\subsection{Left and right admissible subcategories}\label{ssec_l_and_r_adm}~\\

Let $\xX \subset \eE$ be a subcategory of an exact category $\eE$. We define  \emph{the right} (respectively \emph{left}), \emph{ perpendicular subcategory} $\xX^{\perp}$ (respectively ${}^\perp \xX$) by:
\begin{equation}\label{eqtn_perp_cat}
\begin{aligned} 
&\xX^\perp = \{E \in \eE\,|\, \Hom(\xX, E) = 0 = \Ext^1(\xX, E)\},&\\
&{}^\perp \xX = \{E \in \eE\,|\, \Hom(E,\xX) = 0 = \Ext^1(E,\xX)\}.&
\end{aligned}
\end{equation} 
Category $\xX^\perp$ is closed under kernels of deflations, i.e. given a conflation $E' \to E \to E''$ with $E, E'' \in \xX^\perp$ object $E'$ lies in $\xX^\perp$. Similarly, ${}^\perp \xX$ is closed under cokernels of inflations.

Perpendicular categories in \emph{abelian} categories were introduced by W. Geigle and H. Lenzing \cite{GeiLen}, who emphasized their importance in the representation theory of associative algebras. In this paper, we use a more general notion of perpendicular categories in exact categories as a basic tool for studying exact categories.

\begin{DEF}\label{def_right_and_left_adm} 
	We say that a fully exact subcategory $\xX \subset \eE$ is \emph{right}  (respectively, \emph{left}) \emph{admissible} if $\eE$ admits a torsion pair $(\xX, \xX^{\perp})$, (respectively $({}^\perp \xX, \xX)$). 
\end{DEF}
In view of Proposition \ref{prop_torsion=adjoint}, $\xX \subset \eE$ is right (respectively, left) admissible if and only if $\xX$ is weakly right (respectively, left) admissibe and $\xX^{\perp}=\xX^{\perp_0}$ (respectively,  ${}^{\perp_0} \xX={}^\perp \xX$).
In particular, by Proposition \ref{lem_right-left}, any right admissible subcategory is closed under quotients, and any left admissible subcategory is closed under subobjects.

\begin{PROP}\label{right-left}
	We have an involutive duality between right admissible subcategories $\tT$ and left admissible subcategories $\fF$ in an exact category $\eE$:
\begin{equation}
\begin{aligned}
\tT \mapsto \fF :=\tT^{\perp}, &\\
\fF \mapsto \tT :={}^{\perp}\fF.
\end{aligned}
\end{equation}
\end{PROP}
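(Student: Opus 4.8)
The plan is to deduce Proposition \ref{right-left} from the already-established involutive duality between weakly admissible subcategories (Proposition \ref{lem_right-left}) by tracking the extra $\Ext^1$-vanishing condition through that correspondence. Recall that by the remark following Definition \ref{def_right_and_left_adm}, a fully exact subcategory $\tT\subset\eE$ is right admissible precisely when it is weakly right admissible \emph{and} $\tT^{\perp}=\tT^{\perp_0}$; dually, $\fF\subset\eE$ is left admissible precisely when it is weakly left admissible and ${}^{\perp}\fF={}^{\perp_0}\fF$. So the content of the proposition is: under the assignments $\tT\mapsto\tT^{\perp_0}$ and $\fF\mapsto{}^{\perp_0}\fF$ of Proposition \ref{lem_right-left}, the condition $\tT^{\perp}=\tT^{\perp_0}$ on $\tT$ is equivalent to the condition ${}^{\perp}\fF={}^{\perp_0}\fF$ on $\fF=\tT^{\perp_0}$.

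First I would fix a right admissible $\tT$, set $\fF:=\tT^{\perp}=\tT^{\perp_0}$, and recall from Proposition \ref{lem_right-left} that $\fF$ is weakly left admissible with ${}^{\perp_0}\fF=\tT$. It remains to show ${}^{\perp}\fF=\tT$, and since ${}^{\perp}\fF\subseteq{}^{\perp_0}\fF=\tT$ automatically, the point is the reverse inclusion $\tT\subseteq{}^{\perp}\fF$, i.e.\ $\Ext^1(\tT,\fF)=0$. This is where the perpendicularity of the torsion pair must be used: I would take $T\in\tT$, $F\in\fF$, and a conflation $F\to X\to T$ representing a class in $\Ext^1_\eE(T,F)$. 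Since $\tT$ is closed under extensions one might hope $X\in\tT$, but that is false in general — rather, one applies the torsion decomposition (\ref{decomposition}) to $X$ and uses $\Hom(\tT,\fF)=0$ together with the six-term sequences (\ref{eqtn_6_term_ex_seq}) to show the conflation splits. Concretely: the composite $F\to X\to \defl_*\defl^*X$ lies in $\Hom(F,\fF)$, and chasing the two conflations (the given one and the torsion conflation of $X$) forces the torsion part $\infl_*\infl^!X\to X$ to split off $T$, making the original extension trivial. Actually the cleanest route is to invoke Theorem \ref{thm_admissible_iff_adjoint_exact} (stated in the introduction, proved earlier): the torsion pair $(\tT,\fF)$ is perpendicular iff $\infl^!\colon\eE\to\tT$ is exact, and this latter property is manifestly self-dual — it is a property of the torsion pair, phrased via the adjoint $\infl^!$, and dualizing $\eE$ interchanges $\infl^!$ with $\defl^*$ and the roles of left/right. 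So if $(\tT,\tT^\perp)$ is a perpendicular torsion pair, then so is $({}^\perp\fF,\fF)$ with $\fF=\tT^\perp$, which is exactly the claim that $\fF$ is left admissible and ${}^\perp\fF=\tT$.

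Having done one direction, the reverse — starting from a left admissible $\fF$, showing ${}^{\perp}\fF$ is right admissible with $({}^{\perp}\fF)^{\perp}=\fF$ — follows immediately by passing to opposite categories, exactly as in the proof of Proposition \ref{lem_right-left}: under $\eE\rightsquigarrow\eE^{\opp}$ a left admissible subcategory becomes right admissible and $\Ext^1$ vanishing conditions are symmetric. Finally, I would note involutivity: the two assignments are mutually inverse because $\tT={}^{\perp_0}(\tT^{\perp_0})={}^{\perp}(\tT^{\perp})$ already from Proposition \ref{lem_right-left} (the perpendicular versions agree with the weak ones on admissible subcategories by definition), and symmetrically $\fF=(^{\perp}\fF)^{\perp}$.

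The main obstacle is the single genuinely new verification: that the torsion pair $(\tT,\tT^\perp)$ being perpendicular implies $\Ext^1(\tT,\tT^\perp)=0$ is being read the ``other way'' — i.e.\ that ${}^\perp(\tT^\perp)$ is not merely $\supseteq\tT$ but equals $\tT$, equivalently that no object outside $\tT$ is in ${}^\perp\fF$; but this is automatic since ${}^\perp\fF\subseteq{}^{\perp_0}\fF=\tT$. So in fact the only real work is the self-duality of the perpendicularity condition, and if Theorem \ref{thm_admissible_iff_adjoint_exact} is available with its symmetric characterization (``one of $\infl^!$, $\defl^*$ is exact, hence both''), the proof is essentially a two-line deduction plus the opposite-category argument. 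I expect the write-up to be short, the subtlety being only to cite the exactness characterization correctly rather than to re-prove the extension-splitting by hand.
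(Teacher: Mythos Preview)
Your overall structure is right --- reduce to Proposition~\ref{lem_right-left} and track the extra $\Ext^1$-vanishing --- but you substantially overcomplicate the one step you flag as ``the point''. You write that the reverse inclusion $\tT\subseteq{}^{\perp}\fF$ needs an extension-splitting argument or an appeal to Theorem~\ref{thm_admissible_iff_adjoint_exact}. It needs neither: the inclusion $\tT\subseteq{}^{\perp}\fF$ is \emph{immediate from the hypothesis}. By definition of right admissibility, $\fF=\tT^{\perp}$, and unpacking what $\tT^{\perp}$ means (equation~(\ref{eqtn_perp_cat})) already gives $\Hom(\tT,\fF)=0=\Ext^1(\tT,\fF)$, i.e.\ $\tT\subseteq{}^{\perp}\fF$. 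There is nothing to prove here; the $\Ext^1$-vanishing is not a consequence to be derived but part of what you assumed.

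The paper's proof is accordingly a two-liner: from $\fF=\tT^{\perp}=\tT^{\perp_0}$ and Proposition~\ref{lem_right-left} one has $\tT={}^{\perp_0}\fF$; the assumption gives $\tT\subseteq{}^{\perp}\fF$; the trivial inclusion ${}^{\perp}\fF\subseteq{}^{\perp_0}\fF=\tT$ closes the loop; hence $\tT={}^{\perp}\fF$ and $\fF$ is left admissible. The other direction is by passing to opposites, as you say. Note also that Theorem~\ref{thm_admissible_iff_adjoint_exact} appears \emph{after} this proposition in the paper, so invoking it here would be a forward reference; while there is no actual circularity (its proof does not cite Proposition~\ref{right-left}), it is both unnecessary and organizationally awkward. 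Your instinct in the final paragraph that the argument ``is essentially a two-line deduction'' is correct --- the detour through exactness of $\infl^!$ and the extension-splitting sketch can simply be deleted.
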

\begin{proof}
Let $\tT \subset \eE$ be a right admissible subcategory and $\fF :=\tT^{\perp}=\tT^{\perp_0}$. By Proposition \ref{lem_right-left}, $\tT = {}^{\perp_0}\fF$. By our assumptions, $\tT \subset {}^{\perp}\fF$. Since ${}^{\perp}\fF \subset {}^{\perp_0}\fF=\tT$, we have the equality: $\tT = {}^\perp\fF$, and $\fF$ is left admissible. Similarly for a left admissible subcategories $\fF \subset \eE$, we have $\fF = ({}^\perp \fF)^\perp$.
\end{proof}

We say that a \emph{torsion pair} $(\tT, \fF)$ is \emph{perpendicular} if $\tT = {}^\perp \fF$ (equivalently, $\fF = \tT^\perp$).

By \emph{filtration on an object} $E\in \eE$ we mean a sequence of inflations $0=E_0\to E_1\to \dots \to E_n=E$.
Since the composition of inflations is an inflation, we have an induced conflation $E_i\to E\to Q_{i+1}$, for every $i$. Objects $Q_i$ fit into a sequence of deflations $E=Q_1\to \dots \to Q_n\to Q_{n+1}=0$, which equivalently define the filtration on $E$.

The \emph{graded factors} $G_i$ \emph{of the filtration}, are defined via conflations $E_{i-1}\to E_i\to G_i$. Also they fit conflations $G_i\to Q_i\to Q_{i+1}$. All this can be deduced from the similar facts for abelian categories via the Gabriel-Quillen embedding theorem or proven directly.

\begin{LEM}\label{lem_filtration_torsion}
	 Let $\tT\subset \eE$ be a fully exact subcategory and $\fF\subset\tT^{\perp}$ a fully exact subcategory. If every object in $\eE$ has a filtration with all graded factors being either in $\tT$ or in $\fF$, then $\tT$ is right admissible and $\fF=\tT^{\perp}$.
\end{LEM}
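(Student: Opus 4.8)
To prove Lemma \ref{lem_filtration_torsion}, by Proposition \ref{prop_torsion=adjoint} (via the remark after Definition \ref{def_right_and_left_adm}) it suffices to show that $(\tT, \fF)$ is a torsion pair and that $\tT^{\perp_0} = \fF = \tT^\perp$. The first task is to produce, for every $E \in \eE$, a conflation $T \to E \to F$ with $T \in \tT$, $F \in \fF$, and to check $\Hom(\tT, \fF) = 0$. The latter is immediate since $\fF \subset \tT^\perp \subset \tT^{\perp_0}$. For the conflation, I would argue by induction on the length $n$ of the given filtration $0 = E_0 \to E_1 \to \dots \to E_n = E$, the crucial input being the six-term exact sequences (\ref{eqtn_6_term_ex_seq}).

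\textbf{The inductive step.} Given the conflation $E_{n-1} \to E \to G_n$ with $G_n$ the top graded factor, and assuming by induction a torsion conflation $T' \to E_{n-1} \to F'$ with $T' \in \tT$, $F' \in \fF$, there are two cases. If $G_n \in \tT$: pushing out the inflation $T' \to E_{n-1}$ along the composite $E_{n-1}\to E$, or rather working with the sub-filtration, one gets $E_{n-1}/T' \cong F'$ and needs a conflation with $\tT$-part an extension of $G_n$ by $T'$ (hence in $\tT$ by closure under extensions) and $\fF$-part $F'$; concretely, form the pullback of $E \to G_n$ along... — more cleanly, one observes the composite filtration $0 \to T' \to E_{n-1} \to E$ can be reorganized so that $T' \to E$ is an inflation with cokernel $C$ admitting a conflation $F' \to C \to G_n$; since $\fF$ need not be closed under extensions, this is exactly where care is needed, so instead I would, when $G_n \in \tT$, first treat the conflation $T'\to E\to C$ where $C$ is the cokernel of $T'\to E$, note $C$ has a shorter filtration, and recurse — but that changes the object. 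The robust route: when $G_n \in \tT$, pull back $E \to G_n$ along $0 \to G_n$? That gives $E_{n-1}$ back. Let me instead push out $E_{n-1}\to E$ along $E_{n-1}\to F'$: this yields a conflation $T'' \to E \to F'$ where $T''$ is the cokernel of the induced map, sitting in a conflation $T' \to T'' \to G_n$, so $T'' \in \tT$ by closure under extensions, and we are done. If instead $G_n \in \fF$: push out $E_{n-1} \to E$ along $E_{n-1} \to F'$ to get $T' \to E \to F''$ with $F' \to F'' \to G_n$ a conflation; here $F''$ need not lie in $\fF$, so this case requires the opposite maneuver — use that $\Ext^1(\tT, F') = \Ext^1(\tT, G_n) = 0$ (as $F', G_n \in \tT^\perp$) together with the long exact sequence to see $\Ext^1(\tT, F'') = 0$ and likewise $\Hom(\tT, F'') = 0$, so $F'' \in \tT^\perp \subset \tT^{\perp_0}$, giving the desired conflation $T' \to E \to F''$ directly with $T = T' \in \tT$.

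\textbf{Identifying the orthogonals.} Once every $E$ admits such a conflation $T \to E \to F$ with $T \in \tT$ and $F \in \tT^\perp$, we have a torsion pair $(\tT, \fF_0)$ with $\fF_0 := \tT^{\perp_0}$ by Proposition \ref{prop_torsion=adjoint}. It remains to show $\fF_0 = \tT^\perp$; the inclusion $\tT^\perp \subset \tT^{\perp_0}$ is trivial, so I must show $\tT^{\perp_0} \subset \tT^\perp$, i.e. that $\Ext^1(\tT, F) = 0$ for $F \in \tT^{\perp_0}$. Given $F \in \tT^{\perp_0}$, apply the hypothesis to get a filtration; each graded factor in $\fF$ already lies in $\tT^\perp$, and I claim no graded factor lies in $\tT$ nontrivially: using the torsion conflation for $F$ and $\Hom(\tT, F) = 0$ one shows the torsion part of $F$ vanishes, but more directly, from $\Hom(\tT, F) = 0$ and the filtration one peels off: the bottom step $E_1 = G_1$ maps to $F$, and if $G_1 \in \tT$ this map is zero forcing (by the conflation $G_1 \to E_2 \to G_2$, etc.) an inductive collapse showing $F$ is filtered by objects of $\fF \subset \tT^\perp$; then closure of $\tT^\perp$ under cokernels of inflations (stated just before Definition \ref{def_right_and_left_adm}) — wait, I need the other direction — closure of $\tT^\perp$ under extensions, which holds since $\tT^\perp = \fF$ and one checks directly via (\ref{eqtn_6_term_ex_seq}) that $\{E : \Hom(\tT,E) = \Ext^1(\tT,E) = 0\}$ is extension-closed. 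Hence $F \in \tT^\perp$.

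\textbf{Main obstacle.} The delicate point is the asymmetry in the inductive step: $\tT$ is closed under extensions but $\fF$ is only assumed to sit inside $\tT^\perp$ and is \emph{not} assumed extension-closed, so one cannot symmetrically push graded factors to either side. The resolution — and the conceptual heart of the proof — is that $\tT^\perp$ \emph{is} automatically closed under extensions (and under kernels of deflations), which lets the $G_n \in \fF$ case absorb the lower $\fF$-part into a single object of $\tT^\perp$. I would make this closure property the first lemma-internal observation, then run the induction, then close the loop on orthogonals as above.
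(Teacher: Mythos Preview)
Your inductive step in the case $G_n\in\tT$ contains a real error. Pushing out the inflation $E_{n-1}\to E$ along the deflation $E_{n-1}\to F'$ produces an object $P$ with conflations
\[
T'\to E\to P\qquad\text{and}\qquad F'\to P\to G_n,
\]
not the conflation $T''\to E\to F'$ that you claim (check this in any ambient abelian category: the kernel of $E\to P$ is the kernel $T'$ of $E_{n-1}\to F'$, and the cokernel of $F'\to P$ is the cokernel $G_n$ of $E_{n-1}\to E$). To obtain $T''\to E\to F'$ you must invoke the perpendicularity hypothesis $\fF\subset\tT^\perp$: since $G_n\in\tT$ and $F'\in\tT^\perp$ one has $\Ext^1(G_n,F')=0$, so applying the six-term sequence to $T'\to E_{n-1}\to F'$ gives a surjection $\Ext^1(G_n,T')\to\Ext^1(G_n,E_{n-1})$, and the conflation $E_{n-1}\to E\to G_n$ is then the pushout of some $T'\to T''\to G_n$ along $T'\to E_{n-1}$; this yields $T''\in\tT$ and the desired $T''\to E\to F'$. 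In your write-up the $\Ext^1$-vanishing is never used in this case, yet it is precisely here that the hypothesis $\fF\subset\tT^\perp$ (rather than merely $\tT^{\perp_0}$) does its work.

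For comparison, the paper runs the same induction but on the quotient $Q_2=E/E_1$ rather than on $E_{n-1}$, so the roles of the two cases are swapped: when $E_1\in\tT$ a pullback suffices, and when $E_1\in\fF$ the vanishing $\Ext^1(T_2,E_1)=0$ is used to lift the extension class from $\Ext^1(Q_2,E_1)$ to $\Ext^1(F_2,E_1)$. Your direction of induction is equally valid, but the perpendicularity must be spent in the dual place. Once the induction is carried out correctly and produces, for every $E$, a conflation $T\to E\to F$ with $T\in\tT$ and $F\in\fF$, the identification of orthogonals is a one-liner: for $X\in\tT^\perp$ the torsion inflation $T\to X$ lies in $\Hom(\tT,X)=0$, hence $T=0$ and $X\simeq F\in\fF$. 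Your ``peeling off $\tT$-factors'' paragraph is therefore unnecessary, and as you yourself notice it does not actually close (even after showing all graded factors of $X$ lie in $\fF$, you would still need $\fF$ extension-closed to conclude $X\in\fF$).
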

\begin{proof}
	We prove by induction on the length of the filtration on a given object $E$ that $E$ fits into a conflation $T\to E\to F$ with $T\in \tT$ and $F\in \fF$. Such a conflation for any object in $\tT^{\perp}$ implies that it is in $\fF$, hence $\fF=\tT^{\perp}$.
	
	Since the quotient $Q_2$ has a shorter filtration, it fits, by induction hypothesis, into a conflation $T_2\to Q_2\to F_2$, with $T_2\in \tT$ and $F_2\in \fF$. If $E_1\in \tT$, then the pullback of $E_1\to E\to Q_2$ along $T_2\to Q_2$ is a conflation $\sigma : E_1\to E'\to T_2$. Hence $E'\in \tT$, and $E'\to E\to F_2$ is the sought conflation.
	
	If $E_1\in \fF$, then applying the second of sequences (\ref{eqtn_6_term_ex_seq}) to the conflation $T_2\to Q_2\to F_2$ and object $E_1$ gives an isomorphism $\Ext^1(F_2, E_1)\simeq \Ext^1(Q_2, E_1)$. This means that our conflation $E_1\to E \to Q_2$ is the pullback of a conflation $E_1\to F\to F_2$ along $Q_2\to F_2$:
	\[
	\xymatrix{E_1 \ar[r]& F \ar[r] & F_2\\
	E_1\ar[r]\ar[u]^{\simeq}& E \ar[r]\ar[u] & Q_2 \ar[u]\\
	& T_2\ar[r]^{\simeq} \ar[u] & T_2 \ar[u]}
	\]
	It follows that $F\in \fF$ and the middle column is the sought conflation.  
\end{proof}

\vspace{0.3cm}
\subsection{Exactness of adjoints for embeddings of admissible subcategories}\label{ssec_ex_of_adj_for_emb}~\\

The following property of morphisms in an exact category was proved by Yoneda in \cite{Yoneda}, then reintroduced by D. Quillen as a part of axiomatics of exact categories and later again reproved by B. Keller.
\begin{LEM}[Quillen's obscure axiom]\cite{Yoneda} \cite{Kel4}\label{lem_obscure_axiom}
	Let $d\colon X\to Y$ be a morphism in an exact category $\eE$ which admits a kernel. If there exists $\f \colon X' \to X$ such that $d\circ \f$ 
is a deflation then $d$ is a deflation.
\end{LEM}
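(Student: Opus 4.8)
\textbf{Proof proposal for Lemma \ref{lem_obscure_axiom} (Quillen's obscure axiom).}

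The plan is to use the existence of pullbacks of deflations (axiom (Ex 2)) together with the assumption that $d$ has a kernel, and to argue that the pullback of $d$ along itself splits off enough to exhibit $d$ as a deflation. First I would form the composite deflation $e:=d\circ\f\colon X'\to Y$, which is a deflation by hypothesis, and let $K=\ker d$ with inclusion $k\colon K\to X$ (this exists by assumption); note that $K$ is also the kernel of $e$ composed appropriately, but more useful is the conflation $K\to X'\oplus X\to Y$ obtained from the difference morphism. Concretely, I would consider the map $(e,\,-d)\colon X'\oplus X\to Y$; since $e$ is a deflation, this map is a deflation as well (it is $e$ precomposed with the split epimorphism $X'\oplus X\to X'$, and a deflation precomposed with a split epi is a deflation, using (Ex 1) and the split exact structure of a split epimorphism, or directly (Ex 2) applied to the projection). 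Its kernel is the fibre product $X'\times_Y X$, which sits in a conflation $X'\times_Y X\to X'\oplus X\xrightarrow{(e,-d)} Y$.

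Next I would analyze the fibre product $P:=X'\times_Y X$ with its two projections $p_1\colon P\to X'$ and $p_2\colon P\to X$. Since $e=d\circ\f$, the morphism $(\f,\Id_X)\colon X\to X'\oplus X$ factors through $P$, giving a section-type morphism $s\colon X\to P$ with $p_2\circ s=\Id_X$. Thus $p_2\colon P\to X$ is a split epimorphism, hence $P\cong X\oplus \ker p_2$. On the other hand, $p_1\colon P\to X'$ is the pullback of the deflation $d\colon X\to Y$ along $e\colon X'\to Y$, so by (Ex 2) $p_1$ is a deflation; moreover its kernel is canonically $\ker d=K$. Therefore $K=\ker p_1$ and also $K\oplus(\text{something})$ relates to $P$; the key point I want is that the conflation $K\to P\xrightarrow{p_1} X'$ holds, and $P\cong X\oplus M$ for some $M$ via the splitting $s$.

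Now I would assemble the pieces: from $P\cong X\oplus M$ and the conflation $K\to P\to X'$ together with $p_1\circ s$ being precisely $d\circ\f\cdot(\text{?})$ — more carefully, I would compute $p_1\circ s$: since $s$ comes from $(\f,\Id_X)$, we get $p_1\circ s=\f$. So under the identification $P\cong X\oplus M$ with $X\hookrightarrow P$ being $s$, the deflation $p_1\colon X\oplus M\to X'$ restricts to $\f$ on the first summand. Then the \emph{other} projection data lets me recover $d$: the composite $X\xrightarrow{s} P\xrightarrow{} Y$ is $e=d\circ\f$, but I actually want to realize $d$ itself as a deflation. Here I would instead use the dual bookkeeping: consider the pullback of $d$ along $\Id_Y$, trivial, so better to run the argument showing that $X\to Y$ is the cokernel of $k\colon K\to X$ \emph{and} a deflation by exhibiting $Y$ as a quotient. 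The cleanest route: the conflation $K\to P\xrightarrow{p_1}X'$ and the split $P\cong X\oplus M$ give, after pushing out along $p_1$ or rather projecting, a conflation whose deflation is $d$; since deflations are stable under the operations used and $p_1$ is a deflation while the summand $M$ is ``absorbed'' by the splitting, one concludes $d$ is a deflation.

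\textbf{Main obstacle.} The delicate step is the last one: correctly tracking how the splitting $s\colon X\to P$ of $p_2$ interacts with the deflation $p_1\colon P\to X'$ so as to conclude that $d$ (and not merely $e$ or $p_1$) is a deflation. The temptation is to say ``$d$ is a retract of a deflation,'' but retracts of deflations need not be deflations in a general exact category (this is exactly the subtlety that makes the axiom ``obscure''); what saves the argument is that the relevant idempotent splits because we have an honest kernel $\ker d$ at our disposal, so one must use $\ker d$ explicitly to produce the conflation witnessing $d$ as a deflation rather than invoking a general retract principle. I expect the write-up to require drawing the $3\times 3$ diagram relating $K$, $P\cong X\oplus M$, $X$, $X'$, and $Y$, and checking commutativity of each square carefully.
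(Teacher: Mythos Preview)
The paper does not supply its own proof of this lemma: it is quoted as a known fact with references to Yoneda and Keller. So there is nothing in the paper to compare against, and I will simply assess your argument.

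Your overall strategy---form the fibre product $P=X'\times_Y X$, exploit a section of one projection together with the kernel $K=\ker d$, and finish by a bicartesian-square argument---is exactly the standard route. But several of the concrete steps are miswired.

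\begin{itemize}
\item Your claim that $(e,-d)\colon X'\oplus X\to Y$ is a deflation because it is ``$e$ precomposed with the split epimorphism $X'\oplus X\to X'$'' is wrong: that composite is $(e,0)$, not $(e,-d)$. In fact, the assertion that $(\alpha,\beta)$ is a deflation whenever $\alpha$ is, for arbitrary $\beta$, is \emph{equivalent} to the obscure axiom, so invoking it here is circular. Fortunately you do not need this: the fibre product $P$ exists directly by (Ex~2), as the pullback of the deflation $e$ along $d$.
\item Axiom (Ex~2) therefore tells you that $p_2\colon P\to X$ (the pullback of $e$) is a deflation, not $p_1$. You asserted $p_1$ is a deflation ``by (Ex~2)'' because it is the pullback of $d$---but $d$ is precisely the morphism you are trying to prove is a deflation.
\item Your section is also on the wrong side. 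The map $(\f,\Id_X)\colon X\to X'\oplus X$ lands in $P$ only if $e\circ\f=d$, i.e.\ $d\f\f=d$, which is not assumed. The correct section is $(\Id_{X'},\f)\colon X'\to P$, which satisfies $e\circ\Id_{X'}=d\circ\f$; this splits $p_1$, and $\ker p_1\cong K$.
\end{itemize}

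With the roles corrected one obtains $P\cong X'\oplus K$ and, under this identification, the deflation $p_2$ becomes $(\f,i)\colon X'\oplus K\to X$, where $i\colon K\to X$ is the kernel of $d$. The commuting square
\[
\xymatrix{X'\oplus K \ar[r]^{pr_1} \ar[d]_{(\f,i)} & X' \ar[d]^{e} \\ X \ar[r]_{d} & Y}
\]
is then the pullback square you started with. Since $pr_1$ and $e$ are deflations, this pullback is bicartesian (B\"uhler, Proposition~2.12, proved before and independently of the obscure axiom). Reading the same square with $(\f,i)$ as the ``top'' deflation, bicartesianness forces the opposite side $d$ to be a deflation. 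That is the missing final step; it does not require any $3\times3$ diagram, only the bicartesian-square criterion. Your instinct that ``retract of a deflation'' is the wrong lever, and that the kernel $K$ must be used explicitly, is correct---it is exactly the identification $P\cong X'\oplus K$ that makes the square amenable to this criterion.
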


We will use the following lemma in the proof of Theorem \ref{thm_admissible_iff_adjoint_exact} on exactness of adjoints.

\begin{LEM}\label{lem_j*_of_a_conf}
	Let $\defl_* \colon \fF \to \eE$ be a left admissible subcategory and $\sigma$ a conflation $F \xrightarrow{in} E \xrightarrow{d} E'$ in $\eE$ with $F\in \fF$. Then $\defl^*(\sigma )$ is a conflation in $\fF$ and $\sigma$ is the pull-back of $\defl_*\defl^*(\sigma )$ along the adjunction unit $\eta_{E'}:E' \to \defl_*\defl^*E'$.
\end{LEM}
\begin{proof}
	Since $\defl_*$ reflects exactness, $\defl^*(\sigma )$ is a conflation in $\fF$ if and only if $\defl_*\defl^*(\sigma )$ is a conflation in $\eE$. It is enough to check that $\defl_*\defl^*(in)$ is the kernel of $\defl_*\defl^*(d)$. Indeed, as $\eta_{E'}$ is a deflation by Definition \ref{def_weakly_left_adm}, the composite $\defl_*\defl^*(d) \circ \eta_E = \eta_{E'} \circ d$ is the result of composition of deflations, hence a deflation. By Quillen's obscure axiom, Lemma \ref{lem_obscure_axiom}, if $\defl_*\defl^*(d)$ has a kernel, it is a deflation. Then, $\defl_*\defl^*F \xrightarrow{\defl_*\defl^*(in)} \defl_*\defl^*E \xrightarrow{\defl_*\defl^*(d)} \defl_*\defl^* E'$ is a conflation.
	
	Let $\tT := {}^\perp \fF$ and let $i^!$ be the right adjoint to the inclusion $\infl_*\colon \tT\to \fF$. Objects $E$ and $E'$ admit the decompositions (\ref{decomposition}) with respect to the torsion pair $(\tT, \fF)$, which occur as rows in the following diagram: 
	\begin{equation}\label{eqtn_diagram_2}
	\xymatrix{&\infl_*\infl^!E' \ar[r]^{\varepsilon_{E'}}& E' \ar[r]^{\eta_{E'}}& \defl_*\defl^*E' \\
	&\infl_*\infl^!E \ar[r]^{\varepsilon_E} \ar[u]|{\infl_*\infl^!(d)} & E \ar[u]|d \ar[r]^{\eta_E} & \defl_*\defl^*E \ar[u]|{\defl_*\defl^*(d)} \\
& &F \ar[u]|{in} \ar[r]^{\eta_F}_\simeq & \defl_*\defl^*F \ar[u]|{\defl_*\defl^*(in)} }
	\end{equation}
	
	We will show that $\infl_*\infl^!(d)\colon \infl_*\infl^!E \to \infl_*\infl^!E'$ is an isomorphism. 
	
	Let $T\in \tT$ be any object. In the exact sequence $\Hom_{\eE}(T,F) \to \Hom_{\eE}(T,E) \xrightarrow{d\circ (-)} \Hom_{\eE}(T, E') \to \Ext^1_{\eE}(T,F)$ (see (\ref{eqtn_6_term_ex_seq})), the first and the last group vanish because $(\tT, \fF)$ is a perpendicular torsion pair. In other words, $d$ induces an isomorphism of functors on $\tT$: $\Hom_{\eE}(-,E) \xrightarrow{d\circ (-)} \Hom_{\eE}(-, E')$. Since $\infl^!E$ and $\infl^!E'$ are objects representing these functors, $\infl^!(d)$ is an isomorphism. Hence, so is $\infl_*\infl^!(d)$.
	
	Since $\eta_E$ and $\eta_{E'}$ have isomorphic kernels, the standard diagram chasing shows that $\defl_*\defl^*(d)$ has the kernel isomorphic to that of $d$. As $\eta_F$ is an isomorphism, the kernel of $\defl_*\defl^*(d)$ 	is identified with $\defl^*\defl_*(in)$.

	Let $\tilde{E}$ be the fibre product of $E'$ and $\defl_*\defl^*E$ over $\defl_*\defl^*E'$. The canonical map $E\to \tilde{E}$ extends to a morphism of conflation $\sigma$ to the pull-back conflation $\defl_*\defl^*F\to \tilde{E}\to E'$,
	which has isomorphisms on the ends of conflations: $F\xrightarrow{\simeq} \defl_*\defl^*F$ and $E'\xrightarrow{\rm{id}_{E'}} E'$. Hence the conflations are canonically isomorphic.
\end{proof}

\begin{LEM}\cite[Proposition 2.15]{Buehl}\label{lem_push_def}
	The pullback of an inflation along a deflation exists and is an inflation. The pushout of a deflation along an inflation exists and is a deflation.
\end{LEM}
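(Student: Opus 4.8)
The plan is to prove the first assertion and obtain the second by duality, since $\eE^{\opp}$ is again an exact category in which inflations and deflations are interchanged and pullbacks become pushouts.

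So fix an inflation $i\colon A\to B$ and a deflation $d\colon C\to B$. Let $q\colon B\to B/A$ be the cokernel of $i$, so that $A\xrightarrow{i}B\xrightarrow{q}B/A$ is a conflation. By (Ex 1) the composite $qd\colon C\to B/A$ is a deflation, hence it fits into a conflation $P\xrightarrow{p}C\xrightarrow{qd}B/A$; in particular the kernel $P$ of $qd$ exists in $\eE$ and $p$ is an inflation. I claim that $P$, equipped with $p\colon P\to C$ and with the map $P\to A$ coming from the fact that $dp\colon P\to B$ factors through $\ker q=\operatorname{im}i$ and that $i$ is monic, is the fibre product $A\times_B C$. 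Granting this, the pullback of $i$ along $d$ exists and its projection onto $C$ equals $p$, which is an inflation, as wanted.

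To verify the claim I would pass to an abelian category via the Gabriel--Quillen embedding $\eE\hookrightarrow\aA$. There $P\to C\to B/A$ is short exact, so $P=\ker_{\aA}(qd)=\{\,c\in C\mid d(c)\in\operatorname{im}i\,\}$; since $i$ is a monomorphism this is exactly the fibre product $A\times_B C$ taken in $\aA$, compatibly with the maps to $A$ and $C$. As $\eE\hookrightarrow\aA$ is fully faithful and $A,B,C,P$ all lie in $\eE$, the universal property of $A\times_B C$ restricts to $\eE$, proving the claim. (Alternatively one can stay inside $\eE$: by (Ex 2) the pullback $P'=A\times_B C$ exists with $P'\to A$ a deflation of kernel $\ker d$, the composite $P'\to C\xrightarrow{qd}B/A$ vanishes so $P'\to C$ factors through $p$, and comparing the conflations $\ker d\to P'\to A$ and $\ker d\to P\to A$ shows $P'\to P$ is an isomorphism, Quillen's obscure axiom of Lemma \ref{lem_obscure_axiom} being available if needed.) Applying what has been proved to $\eE^{\opp}$ then gives that the pushout of a deflation along an inflation exists in $\eE$ and is a deflation. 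The one step requiring genuine care is the identification of the fibre product with $\ker(qd)$, i.e.\ the transfer of the universal property through the embedding (or, in the embedding-free variant, that the comparison map is an isomorphism); the rest is immediate from the axioms.
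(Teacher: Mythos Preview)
The paper does not supply its own proof of this lemma; it is simply cited from B\"uhler. So there is no in-paper argument to compare against, and your task reduces to whether your proposal is a correct proof of the cited fact. It is.

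Your main line --- form $q=\operatorname{coker}i$, note $qd$ is a deflation by (Ex~1), take $p=\ker(qd)$ which is an inflation, and identify $P$ with $A\times_B C$ --- is exactly the standard argument (and is essentially how B\"uhler proves it). One remark: you do not actually need the Gabriel--Quillen embedding, nor the somewhat roundabout ``alternative'' via (Ex~2) and the obscure axiom. The universal property of the fibre product can be checked directly in $\eE$: given $f\colon T\to A$ and $g\colon T\to C$ with $if=dg$, one has $(qd)g=q(if)=0$, so $g=ph$ for a unique $h\colon T\to P$; then $i(\alpha h)=dph=dg=if$ forces $\alpha h=f$ since $i$ is monic. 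This is shorter and stays entirely inside the exact-category axioms. Your duality step for the second assertion is fine.
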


\begin{THM}\label{thm_admissible_iff_adjoint_exact}
	A weakly left admissible subcategory $\defl_*\colon \fF \to \eE$ of an exact category $\eE$ is left admissible if and only if $\defl^* \colon \eE\to \fF$ is exact.
	A weakly right admissible subcategory $\infl_*\colon \tT \to \eE$  of an exact category $\eE$ is right admissible if and only if $\infl^! \colon \eE\to \tT$ is exact. 
\end{THM}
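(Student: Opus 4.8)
The plan is to prove both statements simultaneously by duality (passing to $\eE^{\opp}$ swaps left/right admissible and swaps $\defl^*$ with $\infl^!$), so it suffices to treat the left-admissible case: given a weakly left admissible $\fF \subset \eE$ with torsion pair $(\tT, \fF)$ where $\tT := \fF^{\perp_0}$, show that $\fF$ is left admissible (i.e.\ ${}^{\perp_0}\fF = {}^{\perp}\fF$, equivalently $\Ext^1(\tT, \fF) = 0$) if and only if $\defl^* \colon \eE \to \fF$ is exact. Recall $\defl^*$ is always right exact in the sense that it preserves deflations (it is a left adjoint, and $\eta$ is a deflation), so the real content is: \emph{$\defl^*$ preserves inflations} $\iff$ \emph{$\Ext^1(\tT, \fF) = 0$}.

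For the direction ($\Leftarrow$), assume $(\tT, \fF)$ is perpendicular. Then Lemma~\ref{lem_j*_of_a_conf} already gives that $\defl^*$ sends any conflation $F \to E \to E'$ \emph{with $F \in \fF$} to a conflation. To handle a general conflation $X \xrightarrow{in} Y \xrightarrow{d} Z$, I would first apply $\defl_*\defl^*$ termwise and use the decompositions (\ref{decomposition}) of $X, Y, Z$ with respect to $(\tT, \fF)$, arranged in a $3 \times 3$ diagram as in (\ref{eqtn_diagram_2}); the top row is $\infl_*\infl^!(X) \to \infl_*\infl^!(Y) \to \infl_*\infl^!(Z)$ and the bottom row is $\defl_*\defl^*(X) \to \defl_*\defl^*(Y) \to \defl_*\defl^*(Z)$. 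The key point is that the top row is a conflation in $\tT$: this follows because $\infl^!$ is the right adjoint of a \emph{right admissible} subcategory $\tT \subset \eE$ — indeed, perpendicularity makes $(\tT,\fF)$ symmetric, so $\tT$ is right admissible and by the already-treated dual half (or directly by Lemma~\ref{lem_j*_of_a_conf} applied in $\eE^{\opp}$, using $\Ext^1(\tT,\fF)=0$ on the other side) $\infl^!$ preserves conflations. Given that the top and middle rows are conflations, the $3 \times 3$ lemma for exact categories (provable via Gabriel--Quillen) forces the bottom row to be a conflation, i.e.\ $\defl_*\defl^*(in)$ is an inflation; since $\defl_*$ reflects exactness, $\defl^*(in)$ is an inflation in $\fF$. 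Thus $\defl^*$ is exact.

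For the direction ($\Rightarrow$), assume $\defl^*$ is exact; I must show $\Ext^1(\tT, \fF) = 0$. Take $T \in \tT$, $F \in \fF$ and a conflation $F \xrightarrow{in} E \xrightarrow{d} T$ representing a class in $\Ext^1(T, F)$. Apply $\defl^*$: since $\defl^*$ is exact, $\defl^*(F) \to \defl^*(E) \to \defl^*(T)$ is a conflation; but $\defl^*(T) = 0$ because $T \in \tT = \fF^{\perp_0}$ implies $\Hom(T, \fF') = 0$ for all $\fF' \in \fF$, forcing the representing object $\defl_*\defl^*T$ to be $0$. Hence $\defl^*(in) \colon \defl^*(F) \to \defl^*(E)$ is an isomorphism. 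On the other hand $\defl^*(F) = F$ (via $\eta_F$, an isomorphism) and the composite $\eta_E \circ in = \defl_*\defl^*(in) \circ \eta_F$ is then an inflation with the property that $\defl^*$ of it is invertible; chasing the square (\ref{decomposition}) for $E$, one sees $in$ factors through a splitting, so the extension is split. Concretely: $d \circ \text{(section of }\eta_E\text{ composed appropriately)}$ splits $d$, or equivalently the pullback description of $\sigma$ in Lemma~\ref{lem_j*_of_a_conf} collapses because $E' = T$ has $\defl_*\defl^*E' = 0$, so $\sigma$ is the pullback of the zero conflation and thus split. Therefore $\Ext^1(\tT, \fF) = 0$ and $(\tT, \fF)$ is perpendicular, i.e.\ $\fF$ is left admissible.

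The main obstacle I anticipate is the ($\Leftarrow$) direction, specifically justifying cleanly that the top row $\infl_*\infl^!(in) , \infl_*\infl^!(d)$ is a conflation and then invoking a $3 \times 3$-type lemma in the exact-category setting; one must be careful that Lemma~\ref{lem_j*_of_a_conf} as stated only gives exactness of $\defl^*$ on conflations with \emph{torsion-free} left term, so the general case genuinely needs either the $3 \times 3$ lemma (via Gabriel--Quillen, \ref{eqtn_6_term_ex_seq}) or a direct pullback/pushout argument using Lemma~\ref{lem_push_def} to reduce an arbitrary conflation to the special cases already handled. The ($\Rightarrow$) direction is comparatively soft once one observes $\defl^*(\tT) = 0$.
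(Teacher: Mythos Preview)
Your $(\Rightarrow)$ direction is correct and matches the paper (modulo a notational slip: you write $\tT := \fF^{\perp_0}$ but mean $\tT := {}^{\perp_0}\fF$; the argument that $\defl^*T = 0$ forces the conflation $F \to E \to T$ to split is exactly the paper's).

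Your $(\Leftarrow)$ direction, however, is circular as written. You want the top row $\infl_*\infl^!X \to \infl_*\infl^!Y \to \infl_*\infl^!Z$ of the $3\times 3$ diagram to be a conflation, and you justify this by saying $\infl^!$ is exact ``by the already-treated dual half''. But the dual half is exactly the right-admissible statement of the same theorem, which by the duality you invoked at the start is \emph{equivalent} to what you are proving --- neither half has been treated yet. The fallback you offer, Lemma~\ref{lem_j*_of_a_conf} in $\eE^{\opp}$, only gives exactness of $\infl^!$ on conflations whose \emph{right} term lies in $\tT$, not on an arbitrary $X \to Y \to Z$; so the top row is not known to be a conflation and the $3\times 3$ lemma cannot be applied. (A minor side remark: your claim that $\defl^*$ ``always preserves deflations'' because it is a left adjoint is also not automatic in an exact category; the paper uses the obscure axiom, Lemma~\ref{lem_obscure_axiom}, to get this even in the special case of Lemma~\ref{lem_j*_of_a_conf}.)

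The paper's route is precisely the pushout/pullback reduction you mention only as an afterthought. Given $A \xrightarrow{in} B \xrightarrow{d} C$, push out along the unit $\eta_A\colon A \to \defl_*\defl^*A$ to obtain a conflation $\defl_*\defl^*A \to \wt{B} \to C$ whose left term is in $\fF$; now Lemma~\ref{lem_j*_of_a_conf} applies directly and gives a conflation $\defl_*\defl^*A \to \defl_*\defl^*\wt{B} \to \defl_*\defl^*C$. The remaining work --- and it is genuine work, carried out via a further pullback diagram --- is to show $\defl^*(\gamma)\colon \defl^*B \to \defl^*\wt{B}$ is an isomorphism, so that this conflation is isomorphic to $\defl_*\defl^*(\sigma)$. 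Your proposal gestures at this route but does not execute it; that execution is the actual content of the proof.
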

\begin{proof}
	Since the transfer to the opposite categories $\eE^{\opp}$, $\tT^{\opp}$, $\fF^{\opp}$ exchanges the left and right (weakly) admissible subcategories and preserves the exactness of functors, it suffices to prove the equivalence for weakly left admissible subcategories.
	
	Consider a weakly left admissible subcategory $\fF\subset \eE$ with an exact functor $\defl^*\colon \eE\to \fF$.  Let $\sigma$ be a conflation $F \xrightarrow{in} E \xrightarrow{d} T$ in $\eE$ with $F \in \fF$ and $T \in {}^{\perp_0} \fF$. Vanishing of $\defl^*T $ and exactness of $\defl^*$ imply that $j=\defl^*(\sigma )$ is a conflation $\defl^*F \xrightarrow{\simeq} \defl^*E \to 0$, meaning that $\defl_*\defl^*(in)$ is an isomorphism $F = \defl_*\defl^*F \xrightarrow{\simeq} \defl_*\defl^*E$. Then $\eta_E \colon E \to \defl_*\defl^*E\simeq F$ yields a splitting of $\sigma$. Hence $\Ext^1({}^{\perp_0} \fF, \fF) = 0$, which implies that $\fF$ is left admissible in $\eE$.
	
	Let now $\fF$ be a left admissible subcategory and $\sigma$ a conflation $A \xrightarrow{in} B \xrightarrow{d} C$ in $\eE$. We aim at showing that $\defl^*(\sigma )$ is a conflation in $\fF$. As $\fF$ is a fully exact subcategory, it suffices to check that $\defl_*\defl^*(\sigma )$ is a conflation in $\eE$.
	
	Let $T_A \xrightarrow{\varepsilon_A} A \xrightarrow{\eta_A} \defl_*\defl^* A$ be the decomposition (\ref{decomposition}) for $A$ with respect to the torsion pair $({}^\perp \fF,\fF)$. The pushout of $\sigma$ along the adjunction unit $\eta_A\colon A \to \defl_*\defl^*A$ yields an extension of $C$ by $\defl_*\defl^*A$:
	\begin{equation}\label{eqtn_diag_1}
	\xymatrix{\defl_*\defl^*A \ar[r]^{\alpha} & \wt{B} \ar[r]^{\beta} & C \\
	A \ar[u]^{\eta_A} \ar[r]^{in} & B \ar[r]^d \ar[u]^{\gamma} & C \ar[u]^{\Id} \\ 
T_A \ar[r]^{\Id} \ar[u]^{\varepsilon_A} & T_A \ar[u]^{in\circ \varepsilon_A}}
	\end{equation}
	By Lemma \ref{lem_push_def}, $\gamma$ is a deflation, hence the middle column in the above diagram
	$T_A \xrightarrow{in\circ \varepsilon_A} B \xrightarrow{\gamma} \wt{B}$
	 is a conflation in $\eE$. 
	
	Lemma \ref{lem_j*_of_a_conf} applied to the conflation $\defl_*\defl^*A \xrightarrow{\alpha} \wt{B} \xrightarrow{\beta} C$ yields a conflation  $\defl_*\defl^*A \xrightarrow{\defl_*\defl^*(\alpha)} \defl_*\defl^*\wt{B} \xrightarrow{\defl_*\defl^*(\beta)} \defl_*\defl^*C$:
	\begin{equation}\label{eqtn_diag_3}
	\xymatrix{\defl_*\defl^*A\ar[r]^{\defl_*\defl^*(\alpha)} &\defl_*\defl^*\wt{B} \ar[r]^{\defl_*\defl^*(\beta)} & \defl_*\defl^*C\\
		\defl_*\defl^*A \ar[u]^{\Id} \ar[r]^{\alpha} & \wt{B} \ar[u]^{\eta_{\wt{B}}} \ar[r]^\beta & C \ar[u]^{\eta_C}\\
	& T_C \ar[r]^{\Id} \ar[u]^{\varepsilon_{\wt{B}}} & T_C \ar[u]^{\varepsilon_C}}
	\end{equation}
	Moreover, we can apply Lemma \ref{lem_j*_of_a_conf} to the middle horizontal conflation in (\ref{eqtn_diag_3}) and conclude that $\eta_{\wt{B}}$ is a pullback of $\eta_C$, hence it is a deflation. Since $T_C \in {}^\perp \fF$ and $\defl_*\defl^*\wt{B} \in \fF$, the conflation $T_C \xrightarrow{\varepsilon_{\wt{B}}} \wt{B} \xrightarrow{\eta_{\wt{B}}} \defl_*\defl^* \wt{B}$ is the decomposition (\ref{decomposition}) of $\wt{B}$ with respect to the torsion pair $({}^{\perp}\fF, \fF)$ (hence the notation for the morphisms).

	Note that commutativity of (\ref{eqtn_diag_1}) implies that $\defl^*(\alpha )= \defl^*(\gamma )\circ \defl^*(in) $ and $\defl^*(d) = \defl^*(\beta )\circ \defl^*(\gamma )$. To prove that $\defl_*\defl^*(\sigma)$ is a conflation it remains to show that $\defl^*(\gamma )\colon \defl^*B \to \defl^*\wt{B}$ is an isomorphism. Indeed, by applying functor $\defl_*\defl^*$ to (\ref{eqtn_diag_1}), we see that the conflation in the top row of (\ref{eqtn_diag_3}) is then isomorphic to $\defl_*\defl^*(\sigma)$:
	\[
	\xymatrix{\defl_*\defl^*A\ar[r]^{\defl_*\defl^*(\alpha)} &\defl_*\defl^*\wt{B} \ar[r]^{\defl_*\defl^*(\beta)} & \defl_*\defl^*C\\
	\defl_*\defl^*A\ar[r]^{\defl_*\defl^*(in)} \ar[u]|{\Id} &\defl_*\defl^*B\ar[u]|{\defl_*\defl^*(\gamma)} \ar[r]^{\defl_*\defl^*(d)} & \defl_*\defl^*C\ar[u]|\Id}
	\]

	Consider the pullback of the conflation in the middle column in (\ref{eqtn_diag_1}) along $\varepsilon_{\wt{B}} \colon T_C \to \wt{B}$: 
\begin{equation}\label{mattress4}
	\xymatrix{& \defl_*\defl^*\wt{B} \ar[r]^{\Id} & \defl_*\defl^* \wt{B}\\ 
		T_A \ar[r]^{in \circ \varepsilon_A} & B \ar[r]^{\gamma} \ar[u]^{\theta} & \wt{B} \ar[u]^{\eta_{\wt{B}}} \\ T_A \ar[u]^{\Id} \ar[r] & \wt{T} \ar[r] \ar[u]^{\zeta} & T_C \ar[u]^{\varepsilon_{\wt{B}}} }
\end{equation}
	By Lemma \ref{lem_push_def}, $\zeta$ is an inflation, hence $\wt{T} \xrightarrow{\zeta} B \xrightarrow{\theta} \defl_*\defl^*\wt{B}$ 
	is a conflation. Since $\wt{T}$ is an extension of $T_C$ by $T_A$, it is an object of ${}^\perp\fF$. Hence, the above conflation is of the form (\ref{decomposition}), and we have canonical isomorphisms $\wt{T} \simeq \infl_*\infl^!B$, $\defl_*\defl^*\wt{B} \simeq \defl_*\defl^*B$, and $\theta =\eta_B$. By applying $\defl^*$ to (\ref{mattress4}), we see that  $\defl^*(\gamma )$ is an isomorphism, which finishes the proof.
\end{proof}

Now we describe admissible subcategories in the case $\eE$ is an abelian category. We recall the notion of a (co)localising subcategory of an abelian category in Appendix \ref{sec_abelian_cat}.
\begin{PROP}
	A fully exact subcategory $\tT$ (resp. $\fF$) of an abelian category $\aA$ is right (left) admissible if and only if $\tT \subset \aA$ (resp. $\fF\subset \aA$) is a colocalising (localising) subcategory and $\tT^{\perp_0} = \tT^{\perp}$ (resp. ${}^{\perp_0}\fF = {}^{\perp} \fF$).
\end{PROP}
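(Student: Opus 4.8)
The plan is to unwind the definitions on both sides of the equivalence and reduce everything to the two characterisations already established: Proposition \ref{prop_torsion=adjoint} (torsion pairs $\leftrightarrow$ weakly admissible subcategories) together with Theorem \ref{thm_admissible_iff_adjoint_exact} (admissibility $\leftrightarrow$ exactness of the adjoint), on one hand, and the definition of a (co)localising subcategory of an abelian category recalled in Appendix \ref{sec_abelian_cat}, on the other. Concretely, a colocalising subcategory of $\aA$ is (by the definition in the appendix) a full subcategory $\tT$ such that the inclusion $\infl_*\colon \tT\to\aA$ admits a right adjoint $\infl^!$ which is exact and such that the counit $\infl_*\infl^!A\to A$ is a monomorphism (equivalently, $\tT$ is a Serre subcategory closed under quotients, i.e. a torsion class of a torsion pair whose adjoint is exact); dually for localising. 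By self-duality via $\aA\rightsquigarrow\aA^{\opp}$ it suffices to treat the right-admissible/colocalising case.

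First I would prove the forward direction. Suppose $\tT\subset\aA$ is right admissible. By Definition \ref{def_right_and_left_adm}, $(\tT,\tT^{\perp})$ is a torsion pair; by the remark following that definition, this means $\tT$ is weakly right admissible and $\tT^{\perp}=\tT^{\perp_0}$. By Proposition \ref{prop_torsion=adjoint} the inclusion $\infl_*$ has a right adjoint $\infl^!$ whose counit $\varepsilon_A\colon\infl_*\infl^!A\to A$ is an inflation — but in the abelian category $\aA$ an inflation is just a monomorphism. By Theorem \ref{thm_admissible_iff_adjoint_exact}, right admissibility forces $\infl^!$ to be exact. These three facts — right adjoint exists, its counit is monic, it is exact — are exactly what is required for $\tT$ to be a colocalising subcategory of $\aA$; and $\tT^{\perp_0}=\tT^{\perp}$ holds by the second clause of right admissibility. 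So the forward direction is essentially a matter of matching vocabulary.

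For the converse, assume $\tT\subset\aA$ is colocalising with $\tT^{\perp_0}=\tT^{\perp}$. From the colocalising hypothesis, $\infl_*$ admits a right adjoint $\infl^!$ with monic (hence inflationary, in the abelian = exact structure sense) counit, so $\tT$ is weakly right admissible, and by Proposition \ref{prop_torsion=adjoint} we get a torsion pair $(\tT,\tT^{\perp_0})$. Using the hypothesis $\tT^{\perp_0}=\tT^{\perp}$, this is the torsion pair $(\tT,\tT^{\perp})$, which is precisely Definition \ref{def_right_and_left_adm} for right admissibility. (One should double-check here that the exactness of $\infl^!$ coming from the definition of colocalising agrees with the exactness demanded in Theorem \ref{thm_admissible_iff_adjoint_exact}, but since $\tT^{\perp_0}=\tT^{\perp}$ is assumed, Theorem \ref{thm_admissible_iff_adjoint_exact} already tells us right admissibility of the weakly admissible $\tT$ is automatic from $\tT^{\perp_0}=\tT^{\perp}$ without reusing exactness — so the two notions of exactness need not even be compared, they are both consequences.) Finally, the left/localising statement follows by applying the right/colocalising statement to $\aA^{\opp}$, using that passing to opposite categories swaps right and left admissible subcategories and swaps localising with colocalising subcategories, and swaps $\xX^{\perp_0}\leftrightarrow{}^{\perp_0}\xX$ and $\xX^{\perp}\leftrightarrow{}^{\perp}\xX$.

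The only real subtlety — and the step I expect to be the main obstacle — is reconciling the precise packaging of "colocalising subcategory'' in Appendix \ref{sec_abelian_cat} with the data extracted from Proposition \ref{prop_torsion=adjoint} and Theorem \ref{thm_admissible_iff_adjoint_exact}: one must be careful whether the appendix builds colocalising subcategories as torsion classes with exact reflection, or as Serre subcategories closed under quotients (these coincide, but the proof that they do uses that the counit is monic and the adjoint exact, together with the characterisation $\tT^{\perp_0}=\tT^{\perp}$). Everything else — the role of $\tT^{\perp_0}=\tT^{\perp}$, the identification of inflations with monomorphisms in $\aA$, and the duality argument — is routine once that dictionary is pinned down.
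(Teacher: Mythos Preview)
Your main obstacle is exactly the one you flagged at the end: you guessed the wrong packaging of ``colocalising subcategory''. Appendix~\ref{sec_abelian_cat} defines a colocalising subcategory as a \emph{Serre} subcategory $\tT\subset\aA$ (closed under extensions, subobjects, and quotients) such that the \emph{quotient functor} $\aA\to\aA/\tT$ admits a left adjoint --- not as a full subcategory whose inclusion has an exact right adjoint with monic counit. Under your assumed definition the proposition is nearly tautological, which is why your argument reduces to ``matching vocabulary''; under the actual definition there is genuine content, and your sketch does not supply it.

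Concretely, the forward direction is missing two steps. First, you must show $\tT$ is Serre: being a torsion class gives closure under quotients and extensions, but closure under \emph{subobjects} is not automatic --- the paper obtains it from $\tT=\ker\defl^*$ with $\defl^*$ exact (Theorem~\ref{thm_admissible_iff_adjoint_exact}). Second, you must identify the Serre quotient $\aA/\tT$ with $\fF=\tT^\perp$ and recognise the inclusion $\fF\hookrightarrow\aA$ as an adjoint to the quotient functor. In the converse direction the gap is larger: from ``$\tT$ is Serre with adjoint-to-quotient'' you do \emph{not} directly obtain a right adjoint to $\infl_*$ with monic counit, so Proposition~\ref{prop_torsion=adjoint} cannot be invoked as you do. The paper instead identifies $\aA/\tT\simeq\tT^\perp=:\fF$, notes that the inclusion $\defl_*\colon\fF\hookrightarrow\aA$ acquires a left adjoint $\defl^*$ (the quotient functor), invokes an external result to produce a torsion pair $({}^{\perp_0}\fF,\fF)$ from this adjunction, and finally checks ${}^{\perp_0}\fF=\tT$ by hand (any $X$ with $\defl^*X=0$ lies in $\tT$). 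None of these steps appears in your outline.
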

\begin{proof}
	Assume that $(\tT, \fF)$ is a perpendicular torsion pair in $\aA$. 
	Let $0 \to A' \to A \to A'' \to 0$ be a short exact sequence in $\aA$. Since $\defl^*$ is exact, $\defl^*(A) =0$ if and only if $\defl^*(A') =0$ and $\defl^*(A'')=0$. Hence, $\tT$ is a Serre subcategory.
	Since the torsion pair is perpendicular, $\tT^{\perp_0} \simeq \tT^{\perp}= \fF$. In particular, $\aA/\tT \simeq \tT^{\perp}= \fF$ \cite{Gabriel}. As the embedding $\fF\to \aA$ is right adjoint to the quotient $\aA \to \fF \simeq \aA/\tT$, category $\tT$ is colocalising. 
	
	Now let $\tT\subset \aA$ be a colocalising subcategory such that $\fF:=\tT^{\perp_0} \simeq \tT^{\perp}$. The quotient functor $\defl^*\colon \aA \to \aA/\tT \simeq \tT^{\perp}=\fF$ has a fully faithful right adjoint $\defl_* \colon \fF\to\aA$. The subcategory  $\fF = \tT^{\perp_0} \subset \aA$ is closed under extensions and subobjects. 
	As the inclusion functor $\defl_*\colon \fF \to \aA$ has left adjoint $\defl^*$, $\aA$ admits a torsion pair $({}^{\perp_0}\fF, \fF)$ \cite[Proposition 1.2]{BelRei}. Let $X$ be an object of $ {}^{\perp_0}\fF$. Vanishing of $\defl^*(X)$ implies that $X\in \tT$, hence ${}^{\perp_0}\fF \subset \tT$. The inverse inclusion is clear, i.e. $\aA$ admits a torsion pair $(\tT, \fF)$. As $\tT^{\perp_0} \simeq \tT^{\perp}$ the torsion pair is perpendicular and $\tT \subset \aA$ is right admissible.
\end{proof}

\vspace{0.3cm}
\subsection{The category of universal extensions as a square-zero extension category}\label{ssec_univ_ext_as_square_zero}~\\

Let $\aA$ be an additive category. An $\aA$ \emph{bimodule} is an additive bifunctor $\mM \colon \aA^{\opp} \times \aA \to \textrm{Ab}$.

Let $q\colon \aA \to \bB$ be a functor. A \emph{lifting} of an object $B\in \bB$ along $q$ is an object $A\in \aA$ together with an isomorphism $\zeta \colon q(A) \xrightarrow{\simeq} B$.

Let $\aA$ and $\bB$ be additive categories and $q\colon \aA\to \bB$ an additive functor. We define the \emph{kernel bimodule} $\kK\colon \aA^{\opp} \times \aA \to \aA b$ of $q$ via: 
$$
\kK(A, A') = \{\f\in \Hom_\aA(A,A')\,|\, q(\f)  = 0\}.
$$

\begin{DEF} We say that an additive functor $q\colon \aA \to \bB$ of additive categories is a \emph{square-zero extension} of $\bB$ if $q$ is essentially surjective, conservative and full, and the composition of any two morphisms in the kernel $\kK$ of $q$ is zero.
\end{DEF}
\begin{LEM}\label{lem_kernel_of_square_zer_ext}
	Let $q\colon \aA \to \bB$ be a square-zero extension with kernel $\kK$. There is unique up to unique isomorphism $\bB$ bimodule $\mM$ such that $\kK \simeq \mM \circ (q^{\opp} \times q)$.
\end{LEM}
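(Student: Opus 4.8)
The plan is to descend the kernel bimodule $\kK$ along the functor $q$ using its defining properties: essential surjectivity, fullness, and conservativity. First I would define $\mM$ on objects by choosing, for each $B \in \bB$, a lifting, i.e.\ an object $\widehat{B} \in \aA$ with a fixed isomorphism $\zeta_B \colon q(\widehat{B}) \simto B$ (possible since $q$ is essentially surjective), and setting $\mM(B, B') := \kK(\widehat{B}, \widehat{B'})$. For morphisms, given $f \colon B_1 \to B_2$ and $g \colon B_1' \to B_2'$ in $\bB$, fullness of $q$ lets me pick lifts $\widehat{f} \colon \widehat{B_1} \to \widehat{B_2}$, $\widehat{g} \colon \widehat{B_1'} \to \widehat{B_2'}$ with $q(\widehat{f}) = \zeta_{B_2}^{-1} f \zeta_{B_1}$ (and similarly for $\widehat g$); then $\mM(f, g)$ is defined by $\psi \mapsto \widehat{g} \circ \psi \circ \widehat{f}$ on $\kK(\widehat{B_2}, \widehat{B_1'})$ — note this lands in $\kK(\widehat{B_1}, \widehat{B_2'})$ since $\kK$ is a two-sided ideal, and crucially it is \emph{independent of the choice of lifts} $\widehat f, \widehat g$, because two lifts of the same morphism differ by an element of $\kK$, and the square-zero condition ($\kK \circ \kK = 0$) forces the difference in $\widehat g \psi \widehat f$ to vanish. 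This independence also immediately gives functoriality of $\mM$ (composites of chosen lifts are lifts of composites, up to $\kK$, which washes out) and bi-additivity, so $\mM$ is a genuine $\bB$ bimodule.

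Next I would construct the isomorphism $\kK \simeq \mM \circ (q^{\opp} \times q)$. For objects $A, A' \in \aA$ we have $\bigl(\mM \circ (q^{\opp}\times q)\bigr)(A, A') = \mM(qA, qA') = \kK(\widehat{qA}, \widehat{qA'})$, and I would use the isomorphisms $\zeta_{qA} \colon q(\widehat{qA}) \simto qA$ and $\zeta_{qA'} \colon q(\widehat{qA'}) \simto qA'$: by fullness lift the composite $\zeta_{qA'}^{-1} \circ q(\mathrm{id}) \colon$ wait — more precisely, fullness gives a morphism $u_A \colon A \to \widehat{qA}$ with $q(u_A) = \zeta_{qA}^{-1}$, and since $q(u_A)$ is an isomorphism, conservativity of $q$ makes $u_A$ itself an isomorphism in $\aA$. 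Then $\psi \mapsto u_{A'} \circ \psi \circ u_A^{-1}$ identifies $\kK(A, A')$ with $\kK(\widehat{qA}, \widehat{qA'})$. Naturality of this in $A, A'$ is checked using the same square-zero argument that gave well-definedness of $\mM$ on morphisms, and it is bijective because $u_A, u_{A'}$ are isomorphisms.

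Finally, uniqueness: if $\mM'$ is another $\bB$ bimodule with a chosen isomorphism $\kK \simeq \mM' \circ (q^{\opp} \times q)$, then on objects of the form $(qA, qA')$ the two bimodules are forced to agree, and since $q$ is essentially surjective every object of $\bB$ is isomorphic to one in the image of $q$, so $\mM$ and $\mM'$ agree up to isomorphism everywhere; fullness then propagates this to morphisms, and one checks the resulting isomorphism $\mM \simeq \mM'$ is the unique one compatible with the two identifications of $\kK$. The main obstacle — and really the only nontrivial point — is verifying that $\mM$ is well defined on morphisms independently of the choice of lifts; everything else is bookkeeping once the square-zero hypothesis is deployed at that step. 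I expect the write-up to spend most of its effort making the "difference of two lifts lies in $\kK$, hence its contribution vanishes by $\kK \circ \kK = 0$" argument precise and reusing it for naturality.
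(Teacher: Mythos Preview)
Your proposal is correct and follows essentially the same approach as the paper: fix liftings, define $\mM$ on those, and use the square-zero condition to show that pre/post-composition with elements of $\kK$ vanishes, making everything independent of the choice of lifts; conservativity is invoked exactly as you do to turn lifts of isomorphisms into isomorphisms. The paper organizes the argument by first establishing the canonical isomorphism $\kK(A_1,A_2)\simeq\kK(A'_1,A_2)$ for different liftings and then defining $\mM$, whereas you define $\mM$ first and then build the natural isomorphism $\kK\simeq\mM\circ(q^{\opp}\times q)$, but the substance is identical.
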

\begin{proof}
 	Let $B_1, B_2$ be objects of $\bB$ and $(A_2, \xi)$ a lifting of $B_2$. We check that for two liftings $(A_1, \zeta)$, $(A'_1, \zeta')$ of $B_1$ the groups $\kK(A_1,A_2)$ and $\kK(A'_1, A_2)$ are canonically isomorphic.
	
	Consider the composite $q(A'_1) \xrightarrow{\zeta'} B_1 \xrightarrow{\zeta^{-1}}q(A_1)$. Since $q$ is full, there exists $\tau \in \Hom_{\aA}(A'_1,A_1)$ such that $q(\tau) = \zeta^{-1} \circ \zeta'$. As $q$ is conservative, $\tau$ is an isomorphism. Morphism $\tau$ induces a commutative diagram 
	\[
	\xymatrix{0 \ar[r] & \kK(A'_1,A_2) \ar[r] & \Hom_{\aA}(A'_1,A_2) \ar[r]^(0.4)q & \Hom_{\bB}(q(A'_1), q(A_2)) \ar[r] & 0\\
	0 \ar[r] & \kK(A_1,A_2) \ar[r] \ar[u]_{\simeq}^\mu& \Hom_{\aA}(A_1,A_2) \ar[r]^(0.4)q \ar[u]^{(-)\circ \tau}_{\simeq}& \Hom_{\bB}(q(A_1), q(A_2))\ar[u]^{(-) \circ \zeta^{-1} \circ \zeta'}_{\simeq} \ar[r] & 0 }
	\]
	Let $\tau' \colon A'_1 \to A_1$ be another morphism such that $q(\tau') = \zeta^{-1} \circ \zeta'$. Then $\tau -\tau' \in \kK(A'_1, A_1)$. As the composition of any two morphism in $\kK$ is zero we have $\f\circ(\tau-\tau') = 0$, for any $\f \in \kK(A_1,A_2)$, i.e. the isomorphism $\mu \colon \kK(A_1,A_2) \xrightarrow{\simeq} \kK(A'_1,A_2)$ does not depend on the choice of $\tau$. Similarly for two liftings $(A_2,\xi)$, $(A'_2,\xi')$ for $B_2$ one constructs a canonical isomorphism $\nu \colon \kK(A_1,A_2)\to \kK(A_1,A'_2)$. Given $f_0\colon B_0 \to B_1$, $f_2 \colon B_2\to B_3$ one can also check that the homomorphism $\kK(g_0\times g_2) \colon \kK(A_1,A_2) \to \kK(A_0, A_3)$ is independent of the choice of $g_0\in \Hom(A_0, A_1)$, $g_2\in \Hom(A_2,A_3)$ such that $q(g_0) =f_0$, $q(g_2)= f_2$.
	
	Therefore $\mM$ defined by $\mM(B_1, B_2):= \kK(A_1, A_2)$, for some fixed choice of liftings $(A_1,\zeta_1)$, $(A_2, \zeta_2)$ for any $B_1, B_2\in \bB$, is a $\bB$ bimodule.
	Isomorphisms $\mu$ and $\nu$ yield an isomorphism 
	$\kK(A'_1, A'_2)\xrightarrow{\simeq} \mM (q(A'_1), q(A'_2))$  for any pair $(A'_1, A'_2)$ of objects of $\aA$.
\end{proof}

Let $(Q,T)$ be a pair of objects in a $k$-linear, $\Ext^1$-finite exact category $\eE$. \emph{The universal extension} of $Q$ by $T$ is a unique up to isomorphism object $R$ that fits into a conflation:
\begin{align}\label{univ_extens}
T\otimes \Ext^1_{\eE}(Q, T)^\vee \to R \xrightarrow{\pi} Q
\end{align}  
given by the canonical element in $\Ext^1(Q, T \otimes \Ext^1(Q,T)^\vee)$ corresponding to $\Id_{\Ext^1(Q,T)}$ under the isomorphism $\Ext^1(Q, T \otimes \Ext^1(Q,T)^\vee)  \simeq \Ext^1(Q, T) \otimes \Ext^1(Q, T)^\vee$ .

The \emph{universal coextension} is a unique up to isomorphism object $U$ that fits into a conflation
\begin{equation*} 
T \to U \to Q \otimes \Ext^1(Q,T)
\end{equation*}
given by the canonical element in $\Ext^1(Q \otimes \Ext^1(Q,T), T)$ corresponding to $\Id_{\Ext^1(Q,T)}$ under the isomorphism $\Ext^1(Q \otimes \Ext^1(Q,T), T)\simeq \Ext^1(Q, T) \otimes \Ext^1(Q, T)^\vee$ .

Let $(\tT, \fF)$ be a perpendicular torsion pair in an exact category $\eE$ with functors $\defl_* \colon \fF \to \eE$, $\defl^* \colon \eE\to \fF$, $\infl_*\colon \tT\to \eE$, $\infl^!\colon \eE\to \tT$ as in Proposition \ref{prop_torsion=adjoint}. 

We define the \emph{$\tT$-projective subcategory} as a full subcategory of $\eE$ with objects

\begin{equation}\label{eqtn_categ_Y}
\begin{aligned} 
	\yY = \{E \in \eE\,|\, \forall T \in \tT\, \Ext^1_{\eE}(E,T) =0, \, 
	(-)\circ \eta_E \colon \Hom_{\eE}(\defl_*\defl^*E, T) \xrightarrow{\simeq} \Hom(E,T)\},
\end{aligned} 
\end{equation} 
where $\eta_E\colon E\to \defl_*\defl^*E $ is the adjunction unit.

We assume that $\tT \simeq k\textrm{-vect}$ and $T\in \tT$ is an indecomposable object. Then
the category  of $\tT$-projective objects is the full subcategory of $\eE$ whose objects are universal extensions of objects in $\fF$:

\begin{THM}\label{thm_Y_to_F_is_square_zero}
	Let $(\tT, \fF)$ be a perpendicular torsion pair in a $k$-linear, $\Ext^1$-finite exact category $\eE$ such that $\tT\simeq k\textrm{-vect}$. Let further $\yY\subset \eE$ be the $\tT$-projective subcategory. Then functor $q = \defl^*|_{\yY} \colon \yY\to \fF$ is a square-zero extension. Its kernel $\kK(F_1,F_2)$ is $\Hom_{\eE}(\defl_*F_1,T) \otimes \Ext^1_{\eE}(\defl_*F_2,T)^\vee$ with the natural $\fF$ bimodule structure  (see Lemma \ref{lem_kernel_of_square_zer_ext}).
\end{THM}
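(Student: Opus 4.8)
The plan is to verify the three defining properties of a square-zero extension — essential surjectivity, fullness, conservativity — for $q = \defl^*|_{\yY}$, and then to identify the kernel bimodule explicitly. First I would prove essential surjectivity: given $F \in \fF$, form the universal extension $R$ of $\defl_* F$ by $T$ as in (\ref{univ_extens}), so $T \otimes \Ext^1_\eE(\defl_* F, T)^\vee \to R \xrightarrow{\pi} \defl_* F$ is a conflation. Since $\tT$ is perpendicular to $\fF$, applying $\defl^*$ to this conflation (which is exact by Theorem \ref{thm_admissible_iff_adjoint_exact}) kills the subobject term and gives $\defl^* R \simeq \defl^* \defl_* F = F$. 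Then I would check $R \in \yY$: the vanishing $\Ext^1_\eE(R, T) = 0$ follows from the long exact sequence (\ref{eqtn_6_term_ex_seq}) applied to the universal extension conflation and the object $T$, where the connecting map $\Ext^1(\defl_* F, T) \otimes \Ext^1(\defl_* F,T)^\vee \simeq \Hom(T \otimes \Ext^1(\defl_* F,T)^\vee, T) \to \Ext^1(\defl_* F, T)$ is an isomorphism (it is evaluation against the canonical/identity element, so surjective, and a dimension count makes it bijective); the condition that $(-)\circ\eta_R \colon \Hom(\defl_*\defl^* R, T) \to \Hom(R, T)$ be an isomorphism follows from the same long exact sequence together with $\eta_R$ being identified (up to the isomorphism $\defl^* R \simeq F$) with $\pi$ composed with $\eta_{\defl_* F}$, an isomorphism since $\defl_* F \in \fF$.

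Next, fullness and conservativity. For fullness, given $F_1, F_2 \in \fF$ with chosen liftings $R_1, R_2 \in \yY$ (universal extensions of $\defl_* F_1$, $\defl_* F_2$) and a morphism $g \colon F_1 \to F_2$, I want to lift $g$ to $\yY$. The right way is to use the defining property of $R_1$ as a $\tT$-projective object: since $\Ext^1_\eE(R_1, T) = 0$, the object $R_1$ behaves like a projective relative to the torsion class, so the deflation $R_2 \xrightarrow{\pi_2} \defl_* F_2$ followed by... more precisely, I would show that the functor $\Hom_\eE(R_1, -)$ sends the conflation $T\otimes(\cdots) \to R_2 \to \defl_* F_2$ to a short exact sequence (using $\Ext^1(R_1,T)=0$), hence every map $R_1 \to \defl_* F_2$ — in particular $\defl_* g \circ \pi_1$ — lifts through $\pi_2$ to a map $R_1 \to R_2$, which then has the right image under $\defl^*$. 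Conservativity: if $\defl^*(\psi)$ is an isomorphism for $\psi \colon R_1 \to R_2$ in $\yY$, then since $\defl_*$ is faithful exact and $R_i$ sits in a conflation with kernel a sum of copies of $T$ and cokernel $\defl_* F_i$, a diagram chase (five-lemma style, using that $\psi$ induces compatible maps on the $T$-parts and the $\defl_* F_i$-parts, with the latter an isomorphism) shows $\psi$ is an isomorphism — here one needs that the induced map on $\Ext^1(\defl_* F_i, T)^\vee$-multiplicity spaces is forced to be an isomorphism by naturality of the universal extension.

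For the kernel computation, a morphism $\psi \colon R_1 \to R_2$ lies in $\kK$ iff $\defl^*(\psi) = 0$ iff $\psi$ factors through the subobject $T\otimes \Ext^1_\eE(\defl_* F_2, T)^\vee \hookrightarrow R_2$, i.e. $\psi$ is the composite $R_1 \xrightarrow{\pi_1} \defl_* F_1 \xrightarrow{?}$... no — rather, $\defl^*\psi = 0$ means $\psi$ lands in $\ker(\eta_{R_2})$, and combined with $\psi$ needing to vanish appropriately, one sees $\kK(R_1, R_2) \simeq \Hom_\eE(R_1, T \otimes \Ext^1_\eE(\defl_* F_2, T)^\vee)$; then using the defining property (\ref{eqtn_categ_Y}) of $\yY$, namely $\Hom_\eE(R_1, T) \simeq \Hom_\eE(\defl_*\defl^* R_1, T) = \Hom_\eE(\defl_* F_1, T)$, this becomes $\Hom_\eE(\defl_* F_1, T) \otimes \Ext^1_\eE(\defl_* F_2, T)^\vee$ as claimed, and the composition of two such morphisms is zero because it factors through $\Hom(T,T)$-components that telescope to a map landing in the $T$-part on both ends — concretely, a composite $R_1 \to R_2 \to R_3$ of kernel elements factors as $R_1 \to (T\text{-part of }R_2) \hookrightarrow R_2 \to (T\text{-part of }R_3)$, and the middle map $(T\text{-part of }R_2) \to R_3$ restricted from a kernel element must itself land in the $T$-part, but $\Hom(T\text{-part}, T\text{-part})$ composed with the inclusion followed by projection... the key point is $\Hom_\eE(T, R_3) \to \Hom_\eE(T, \defl_* F_3) = 0$ forces the relevant composite to vanish. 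The main obstacle I expect is the bookkeeping in conservativity and in showing the multiplicity-space maps are isomorphisms: one must leverage the \emph{universality} of the extension (its classifying element is the canonical/identity element of $\Ext^1 \otimes \Ext^1{}^\vee$) rather than treating $R_i$ as an arbitrary extension, since a general extension of $\defl_* F_i$ by a sum of copies of $T$ need not be $\tT$-projective and the argument would fail.
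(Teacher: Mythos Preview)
Your approach is the same as the paper's, and the arguments for essential surjectivity, fullness, and the kernel identification are correct. Two points need tightening.

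First, for conservativity and the square-zero property you implicitly use that \emph{every} object of $\yY$ is (isomorphic to) the universal extension of its image in $\fF$, not just that universal extensions lie in $\yY$. The paper proves this converse explicitly: for $Y \in \yY$ with decomposition $T\otimes V \to Y \to \defl_*\defl^*Y$, the two defining conditions of $\yY$ force the boundary map $V^\vee \simeq \Hom(T\otimes V, T) \to \Ext^1(\defl_*\defl^*Y, T)$ to be an isomorphism, identifying the extension class with the canonical element. You allude to this in your final paragraph but should state it as a step; without it, $\infl^! Y$ is not a priori $T \otimes \Ext^1(\defl_*\defl^*Y,T)^\vee$, and the functoriality computation $\infl^!(\psi) = (\defl^*\psi \text{ on Ext-duals})$ that drives conservativity does not get off the ground.

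Second, your square-zero argument as written does not work. The vanishing $\Hom_\eE(T, \defl_*F_3) = 0$ only says that maps from $T$ into $R_3$ land in the $T$-part; it does \emph{not} force any composite to vanish. The correct mechanism is the other defining condition of $\yY$: any kernel element $\psi \colon R_2 \to R_3$ lands in the $T$-part of $R_3$, hence is a map $R_2 \to T^{\oplus m}$, and by the isomorphism $\Hom(\defl_*\defl^*R_2, T) \xrightarrow{\simeq} \Hom(R_2, T)$ this map factors through the adjunction unit $\eta_{R_2} \colon R_2 \to \defl_*\defl^*R_2$. Now a composite $R_1 \xrightarrow{\varphi} R_2 \xrightarrow{\psi} R_3$ of kernel elements has $\varphi$ landing in $\ker \eta_{R_2}$ (the $T$-part of $R_2$) and $\psi$ factoring through $\eta_{R_2}$, so $\psi \varphi = 0$. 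This is exactly the paper's argument; you have the ingredient available from your kernel computation but cited the wrong vanishing in the final step.
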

\begin{proof}
	First we check that $q$ is essentially surjective. Let $F$ be an object of $\fF$ and $Y_F$ the universal extension of $F$ by $T$. Then the conflation (\ref{univ_extens}) with $Q =F$ and $R = Y_F$ is the decomposition of $Y_F$ with respect to the torsion pair $(\tT, \fF)$. Applying $\Hom(-,T)$ yields an exact sequence (see (\ref{eqtn_6_term_ex_seq})):
	\begin{align*}
	0 \to \Hom_\eE(F,T) \to \Hom_\eE(Y_F,T) \to \Ext^1_\eE(F,T) \xrightarrow{\Id} \Ext^1_\eE(F,T) \to \Ext^1_\eE(Y_F,T) \to 0.
	\end{align*}
	It follows that $\Ext^1_\eE(Y_F, T) = 0$ and $\Hom_\eE(F,T) \simeq \Hom_\eE(Y_F,T)$, hence $Y_F\in \yY$. 
	
	Conversely, let 
	\begin{equation}\label{eqtn_decom_for_Y}
	T\otimes V \to Y \to \defl_*\defl^*Y
	\end{equation} 
	be a decomposition of an object $Y \in \yY$ with respect to the torsion pair (see Proposition \ref{prop_torsion=adjoint}). The conditions $\Ext^1_\eE(Y,T) = 0$ and $\Hom_\eE(\defl_*\defl^*Y,T) \simeq\Hom_\eE(Y,T)$ imply that the map $\Hom_\eE(T\otimes V,T) \simeq V^\vee \to \Ext^1_\eE(\defl_*\defl^*Y,T)$ obtained by applying $\Hom_\eE(-,T)$ to (\ref{eqtn_decom_for_Y}) is an isomorphism. 
	The element in $\Ext^1_\eE(\defl_*\defl^*Y, T\otimes V)$ corresponding to (\ref{eqtn_decom_for_Y}) is then the image of $\Id_{T\otimes V} $ under the morphism $\Hom_{\eE}(T\otimes V, T\otimes V)\to \Ext^1(\defl_*\defl^*Y, T \otimes V)$ (which we proved to be an isomorphism).  As $\Id_{T\otimes V}$ corresponds to $\Id_{V}$ under the isomorphism
	$$
\Hom_{\eE}(T\otimes V, T\otimes V) \simeq \Hom_{\eE}(T, T) \otimes \Hom_k(V, V) \simeq k \otimes \Hom_k(V,V) \simeq \Hom_k(V,V),
$$ 
(\ref{eqtn_decom_for_Y}) corresponds to the identity on $V\simeq \Ext^1_{\eE}(\defl_*\defl^*Y,T)^\vee$, i.e. to the canonical element in $\Ext^1_{\eE}(\defl_*\defl^*Y,T) \otimes \Ext^1_{\eE}(\defl_*\defl^*Y, T)^\vee$.
	This implies that $Y$ is isomorphic to the universal extension of $\defl_*\defl^*Y$ by $T$.
	
	Let $Y, Y'$ be objects of $\yY$ and $\f\colon Y' \to Y$ a morphism. 
	As discussed above $\infl^!(Y) \simeq T\otimes \Ext^1_\eE(\defl_*\defl^*Y,T)^\vee$, and analogously for $Y'$. Let $\alpha \colon \Ext^1_\eE(\defl_*\defl^*Y',T)^\vee \to \Ext^1_\eE(\defl_*\defl^*Y,T)^\vee$ be the transpose to $(-)\circ \defl_*\defl^* \f \colon \Ext^1_{\eE}(\defl_*\defl^*Y, T) \to \Ext^1_{\eE}(\defl_*\defl^*Y', T)$, i.e. $\alpha(\zeta(-)) = \zeta(- \circ \defl_*\defl^*\f)$.
	Then diagram 
	\[
\xymatrix{T \otimes \Ext^1_\eE(\defl_*\defl^*Y, T)^\vee \ar[r]& Y \ar[r]& \defl_*\defl^*Y\\
	T \otimes \Ext^1_\eE(\defl_*\defl^*Y', T)^\vee \ar[r] \ar[u]^{\Id_T\otimes \alpha} & Y' \ar[r] \ar[u]^{\f} & \defl_*\defl^*Y'\ar[u]^{\defl_*\defl^*\f} }
	\]
	commutes, i.e. $\Id_T\otimes \alpha= \infl^!(\f)$.

	If
	 $\defl^*\f$ is an isomorphism, then so is $\Id_T\otimes \alpha$. Then the five lemma \cite[Corollary 3.2]{Buehl} implies that $\f$ is an isomorphism, i.e. functor $q$ is conservative.
	
	Since $\Ext^1_\eE(Y',T) =0$, applying $\Hom_\eE(Y',-)$ to (\ref{eqtn_decom_for_Y}) yields a short exact sequence
	\begin{equation}\label{eqtn_some_ses}
	0 \to \Ext^1_\eE(\defl_*\defl^*F,T)^\vee \otimes \Hom_\eE(Y', T) \to \Hom_\eE(Y',Y) \xrightarrow{\Phi} \Hom_\eE(Y', \defl_*\defl^*Y) \to 0.
	\end{equation} 
	The composition of $\Phi$ with the adjunction isomorphism $\Hom_\eE(Y', \defl_*\defl^*Y) \simeq \Hom_{\fF}(\defl^*Y', \defl^*Y)$ is the map induced by $\defl^*$, hence $q$ is full. 
	
	It follows from (\ref{eqtn_some_ses}) that the kernel of $q$ is 
	\begin{equation}\label{eqtn_form_of_kern}
	\kK(Y',Y) =\Ext^1_\eE(\defl_*\defl^*Y,T)^\vee  \otimes \Hom_\eE(Y',T)  \simeq \Ext^1_\eE(\defl_*\defl^*Y,T)^\vee\otimes \Hom_\eE(\defl_*\defl^*Y', T),
	\end{equation} 
	where the isomorphism follows from the fact that $Y'\in \yY$.
	
	Note that, the right hand side of \eqref{eqtn_form_of_kern} has the $\yY$ bimodule structure; the right $\yY$ module structure comes from the isomorphism $\Ext^1_\eE(\defl_*\defl^*Y,T)^\vee\otimes \Hom_\eE(\defl_*\defl^*Y', T)\simeq \Hom_{\eE}(\defl_*\defl^*Y', T\otimes \Ext^1_\eE(\defl_*\defl^*Y,T)^\vee )$, hence it is given by $(\zeta \otimes f)  g = \zeta \otimes f\circ \defl_*\defl^*g$. The left $\yY$ module structure, induced by the right $\yY$ module structure of $\Ext^1_{\eE}$, reads $h(\zeta \otimes f) = \zeta(-\circ \defl_*\defl^*h) \otimes f$.
	
	We check that \eqref{eqtn_form_of_kern} is an isomorphism of $\yY$- bimodules. Let $\zeta \otimes f \in \Hom(\defl_*\defl^*Y',T \otimes \Ext_{\eE}^1(\defl_*\defl^*Y,T)^{\vee})$ be a morphism in $\kK(Y', Y)$ and let $g\colon Y_0 \to Y'$ 
		be a morphism in $\yY$. The composite $Y_0 \xrightarrow{g} Y' \xrightarrow{\eta_{Y'}} \defl_*\defl^*Y' \xrightarrow{\zeta \otimes f} T \otimes \Ext^1_{\eE}(\defl_*\defl^*Y,T)^\vee$ factors via $\defl_*\defl^*Y_0$ as $Y_0$ is an object in $\yY$. Hence, 
		$(\zeta \otimes f) \circ \eta_{Y'} \circ g = (\zeta \otimes f)\circ \defl_*\defl^*g \circ \eta_{Y_0} = (\zeta \otimes f\circ \defl_*\defl^*g )\circ \eta_{Y_0}$. It follows that $\zeta \otimes f \circ \defl_*\defl^*g$ is the corresponding element in $\kK(Y_0,Y)$. 
	
	Let now $h\colon Y \to Y_1$ be a morphism in $\yY$ and $\zeta \otimes f$ as above. It follows from \eqref{eqtn_some_ses} that $\zeta \otimes f\in \kK(Y',Y)$ should be considered as morphism $Y' \to T \otimes \Ext^1(\defl_*\defl^*Y, T)^\vee$. Then, the left $\yY$ module structure is given by the composition with $\infl^!(h)$. Hence, by the above discussion on $\infl^!\colon \yY\to \tT$, we have $h \circ(\zeta \otimes f) = \zeta(-\circ \defl_*\defl^*h) \otimes f$.
	
	Let $\f\colon Y_1 \to Y_2$ be in $\kK(Y_1,Y_2)$ and $\psi \colon Y_2 \to Y_3$ in $\kK(Y_2,Y_3)$. As $Y_2$ is an object in $\yY$,  any morphism $Y_2 \to T$ factors via $Y_2\to \defl_*\defl^*Y_2$.Hence, the composite $\psi \circ \f$ is 
	$$
	Y_1 \to T\otimes \Ext^1_\eE(\defl_*\defl^*Y_2,T)^\vee \to Y_2 \to \defl_*\defl^*Y_2 \to T \otimes \Ext^1_\eE(\defl_*\defl^*Y_3,T)^\vee \to Y_3.
	$$
	As $(\tT, \fF)$ is a torsion pair, there are no non-zero maps $T\otimes \Ext^1_\eE(\defl_*\defl^*Y_2,T)^\vee \to \defl_*\defl^*Y_2$, hence $\psi \circ \f =0$ which proves that $q$ is a square-zero extension.
	
	By Lemma \ref{lem_kernel_of_square_zer_ext} the kernel $\kK$ of $q$ is an $\fF$ bimodule. For $F\in \fF$ the universal extension $Y_F$ of $F$ by $T$ is a lifting of $F$ to $\yY$. Any lift of a morphism $f\colon F\to F'$ to a morphism $g\colon Y_F \to Y_{F'}$ of universal extensions fits into a commutative diagram:
	\[
	\xymatrix{T\otimes \Ext^1_\eE(\defl_*F',T)^\vee \ar[r] & Y_{F'} \ar[r] & \defl_* F'\\
	T\otimes \Ext^1_\eE(\defl_*F,T)^\vee \ar[r] \ar[u]^{\Id_T\otimes \alpha} & Y_{F} \ar[r] \ar[u]^g & \defl_* F\ar[u]^f} 
	\]
	with $\Id_T \otimes \alpha = \infl^!(g)$ induced by the composition with $f$ (see above).
	It follows from (\ref{eqtn_form_of_kern}) that
	$$
	\kK(F_1,F_2) \simeq \Hom_{\eE}(\defl_*F_1,T) \otimes \Ext^1_\eE(\defl_*F_2,T)^\vee. 
	$$
	As \eqref{eqtn_form_of_kern} is an isomorphism of $\yY$ bimodules, the above is an isomorphism of $\fF$ bimodules for the natural $\fF$ bimodule structure of the right hand side.
\end{proof}

\vspace{0.3cm}
\subsection{Strict admissible filtrations in exact categories}\label{ssec_str_adm_filt}~\\

We define the \emph{right admissible poset} $\textrm{rAdm}(\eE)$ of an exact category $\eE$ as the poset of right admissible subcategories with the inclusion order. Similarly, we consider the \emph{left admissible poset} $\textrm{lAdm}(\eE)$. Note that two elements in $\textrm{rAdm}(\eE)$ and in $\textrm{lAdm}(\eE)$ in general have neither union nor intersection.

Consider a finite lattice $\lL$ with the maximal element $1$ and the minimal element $0$.

\begin{DEF}\label{def_right_amd_filtr}
	A \emph{right admissible $\lL$-filtration} on $\eE$ is a map of posets $\lL \to \textrm{rAdm}(\eE)$, $I \mapsto \tT_I$, such that 
\begin{itemize}
	\item[(Ri)] $\tT_0 = 0$, $\tT_1 = \eE$,
	\item[(Rii)] for any $I, J \in \lL$, $\tT_{I \cap J} = \tT_I \cap \tT_J$, $\tT_{I\cup J}^\perp = \tT_I^\perp \cap \tT_J^\perp$.
\end{itemize}
\end{DEF}
\begin{DEF}\label{def_left_amd_filtr}
A \emph{left admissible $\lL$-filtration} on $\eE$ is a map of posets $\lL\to \textrm{lAdm}(\eE)$, $I \mapsto \fF_I$, such that
\begin{itemize}
	\item[(Li)] $\fF_0 = 0$, $\fF_1 = \eE$,
	\item[(Lii)] for any $I,J \in \lL$, $\fF_{I \cap J} = \fF_I \cap \fF_J$, ${}^\perp\fF_{I \cup J} = {}^\perp \fF_I \cap {}^\perp \fF_J$.
\end{itemize}
\end{DEF}

\begin{LEM}\label{lem_right_admis_in_subcat}
Let $\sS \subset \tT$ be a pair of right admissible subcategories in an exact category $\eE$. Then $\sS$ is right admissible in $\tT$. Similarly for left admissible subcategories.
\end{LEM}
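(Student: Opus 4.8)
The statement for left admissible subcategories follows from the one for right admissible subcategories by passing to the opposite category $\eE^{\opp}$, which interchanges left and right (weakly) admissible subcategories, so it suffices to treat the right admissible case. The plan is to exhibit, inside $\tT$, a two-step filtration of each object whose graded factors lie either in $\sS$ or in the right perpendicular of $\sS$, and then to invoke Lemma~\ref{lem_filtration_torsion} with $\tT$ as the ambient exact category.

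First I would record two facts that make the ambient category irrelevant where it matters. Since $\sS\subset\tT\subset\eE$ are all fully exact subcategories, $\sS$ is closed under extensions inside $\tT$, hence is a fully exact subcategory of $\tT$; and for objects of $\tT$ the groups $\Hom$ and $\Ext^1$ are the same whether computed in $\tT$ or in $\eE$. In particular the right perpendicular subcategory of $\sS$ formed inside $\tT$ coincides with $\sS^{\perp}\cap\tT$, where $\sS^{\perp}$ is the perpendicular subcategory computed in $\eE$. Second, $\tT$, being right admissible in $\eE$ and hence weakly right admissible, is closed under quotients in $\eE$ by Proposition~\ref{lem_right-left}.

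Next, for any $T\in\tT$ I would take its decomposition (\ref{decomposition}) with respect to the perpendicular torsion pair $(\sS,\sS^{\perp})$ in $\eE$ (here $\sS^{\perp}=\sS^{\perp_0}$ since $\sS$ is right admissible in $\eE$): a conflation $S_T\to T\to F_T$ in $\eE$ with $S_T\in\sS$ and $F_T\in\sS^{\perp}$. The morphism $T\to F_T$ is a deflation, so $F_T$ is a quotient of $T\in\tT$, whence $F_T\in\tT$ and therefore $F_T\in\sS^{\perp}\cap\tT$. As all three terms now lie in $\tT$ and $\tT$ is fully exact in $\eE$, this is a conflation in $\tT$; equivalently $0\to S_T\to T$ is a filtration of $T$ in $\tT$ with graded factors $S_T\in\sS$ and $F_T\in\sS^{\perp}\cap\tT$.

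Finally I would apply Lemma~\ref{lem_filtration_torsion} to the exact category $\tT$, with $\sS$ in the role of the fully exact subcategory and $\sS^{\perp}\cap\tT$ in the role of the subcategory contained in its right perpendicular (legitimate by the first paragraph): since every object of $\tT$ admits the required filtration, $\sS$ is right admissible in $\tT$ and its right perpendicular there equals $\sS^{\perp}\cap\tT$. The left admissible case follows by dualising. The only step carrying any content is the bookkeeping of the third paragraph — that $\Hom$ and $\Ext^1$ do not depend on the ambient fully exact category, and that a right admissible subcategory is closed under quotients — and both of these are already available from the preceding material, so I expect no real obstacle.
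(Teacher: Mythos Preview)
Your proof is correct and follows the same approach as the paper: decompose an object $T\in\tT$ with respect to the perpendicular torsion pair $(\sS,\sS^{\perp})$ in $\eE$, then observe that the torsion-free part lies in $\tT$ because right admissible subcategories are closed under quotients. The paper's proof is simply your third paragraph stated tersely; your final appeal to Lemma~\ref{lem_filtration_torsion} is valid but could equally be replaced by the direct remark that the conflation itself exhibits $(\sS,\sS^{\perp}\cap\tT)$ as a torsion pair in $\tT$, which is perpendicular since $\Ext^1$ agrees in $\tT$ and $\eE$.
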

\begin{proof}
Since $\sS$ is right admissible in $\eE$, for any object $T \in \tT$, there exists a conflation $S \to T \to T'$ with $S \in \sS$ and $T'\in \sS^\perp$, where the perpendicular is taken in $\eE$. Since $T'$ is the cokernel of an inflation in $\tT$ and $\tT$ is right admissible, $T' \in \tT$. 
\end{proof} 

The following definition for exact categories is a counterpart of the similar definition for triangulated categories in \cite{BodBon2} (see also subsection \ref{ssec_der_admiss}).

\begin{DEF}\label{def_strict_right_filtr} 
A \emph{right admissible} $\lL$-\emph{filtration} on $\eE$ is \emph{strict} if 
\begin{itemize}
	\item[(Riii)] for any $I, J \in \lL$, we have $\tT_{I \cup J} \cap \tT_{I}^\perp = \tT_J \cap \tT_{I\cap J}^\perp$ as full subcategories of $\eE$.
\end{itemize}
A \emph{left admissible} $\lL$-\emph{filtration} on $\eE$ is \emph{strict} if 
\begin{itemize}
	\item[(Liii)]  for any $I, J \in \lL$, we have $\fF_{I \cup J} \cap ( ^\perp\fF_{J}) = \fF_I \cap ( ^\perp\fF_{I\cap J})$ as full subcategories of $\eE$.	
\end{itemize}
\end{DEF}

The conditions of strictness are trivially satisfied when $I\preceq J$ or $J\preceq I$, hence they always hold for a full order $\lL$.

For a lattice $\lL$, denote by $\lL^{op}$ the lattice obtained by inversion of the order of elements in $\lL$. For $I\in \lL$, denote by $I^{o}$ the corresponding element of $\lL^{\opp}$. We have: $(I \cap J)^{o} = I^{o} \cup J^{o}$ and $(I \cup J)^{o}  = I^{o} \cap J^{o}$.

For a right admissible $\lL$-filtration $\{\tT_{I}\}$ define the \emph{right dual} $\lL^{op}$-\emph{filtration} $\{ \fF_{I^{o}}\}$ to consist of perpendicular categories: 
$$
\fF_{I^{o}}:= \tT_{I}^{\perp}
$$
In view of Proposition \ref{right-left}, this is an involutive duality between right admissible $\lL$-filtrations and left admissible $\lL^{\opp}$-filtrations in an exact category. 

\begin{LEM}\label{lem_dual_strict}
	A right admissible $\lL$-filtration on an exact category $\eE$ is strict if and only if its right dual $\lL^{\opp}$-filtration is strict.
\end{LEM}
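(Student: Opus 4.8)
The plan is to unwind both strictness conditions into statements purely about right admissible subcategories and their perpendiculars, and then apply the dictionary $\fF_{I^o}=\tT_I^\perp$ together with the duality of Proposition~\ref{right-left}, which gives $\tT_I={}^\perp\fF_{I^o}$. First I would record the translation of indices: under $I\mapsto I^o$ we have $(I\cap J)^o=I^o\cup J^o$ and $(I\cup J)^o=I^o\cap J^o$, so that a pair $(I,J)$ in $\lL$ corresponds to the pair $(I^o,J^o)$ in $\lL^{\opp}$, with the roles of meet and join interchanged. The strictness condition (Riii) for $\{\tT_I\}$ at the pair $(I,J)$ reads
\[
\tT_{I\cup J}\cap\tT_I^\perp=\tT_J\cap\tT_{I\cap J}^\perp,
\]
while the strictness condition (Liii) for the dual filtration $\{\fF_{I^o}\}$ at the pair $(I^o,J^o)$ reads
\[
\fF_{I^o\cup J^o}\cap{}^\perp\fF_{J^o}=\fF_{I^o}\cap{}^\perp\fF_{I^o\cap J^o}.
\]

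Next I would rewrite the left-hand (Liii) identity using $\fF_{K^o}=\tT_K^\perp$. Thus $\fF_{I^o\cup J^o}=\fF_{(I\cap J)^o}=\tT_{I\cap J}^\perp$, and ${}^\perp\fF_{J^o}={}^\perp(\tT_J^\perp)=\tT_J$ by Proposition~\ref{right-left}; similarly $\fF_{I^o}=\tT_I^\perp$ and ${}^\perp\fF_{I^o\cap J^o}={}^\perp\fF_{(I\cup J)^o}={}^\perp(\tT_{I\cup J}^\perp)=\tT_{I\cup J}$. Substituting, (Liii) for $\{\fF_{I^o}\}$ at $(I^o,J^o)$ becomes exactly
\[
\tT_{I\cap J}^\perp\cap\tT_J=\tT_I^\perp\cap\tT_{I\cup J},
\]
which is literally (Riii) for $\{\tT_I\}$ at $(I,J)$ (the two sides merely swapped). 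Since $(I,J)\mapsto(I^o,J^o)$ is a bijection between pairs in $\lL$ and pairs in $\lL^{\opp}$, the family of all instances of (Riii) coincides with the family of all instances of (Liii) for the dual filtration; hence one holds for all pairs iff the other does. The converse direction needs no separate argument because the right dual of the right dual is the original filtration (the duality is involutive, as noted after Definition~\ref{def_strict_right_filtr}).

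I do not expect a genuine obstacle here: the statement is a formal consequence of the involutive duality in Proposition~\ref{right-left} together with the lattice-theoretic identities $(I\cap J)^o=I^o\cup J^o$ and $(I\cup J)^o=I^o\cap J^o$. The only point requiring a little care is bookkeeping—making sure that the pair $(I^o,J^o)$ enters (Liii) in the order that produces the correct swap of the two sides of (Riii), and that all four perpendicular/co-perpendicular simplifications ${}^\perp(\tT_K^\perp)=\tT_K$ and $\fF_{K^o}=\tT_K^\perp$ are applied to the correct indices. Once the indices are matched, the two conditions are the same equality of full subcategories of $\eE$, so the equivalence is immediate; I would therefore keep the write-up to the index translation plus the four substitutions above.
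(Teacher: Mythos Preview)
Your proof is correct and is essentially identical to the paper's own argument: both translate the (Liii) condition for the dual filtration at $(I^o,J^o)$ into the (Riii) condition for $\{\tT_I\}$ at $(I,J)$ via the substitutions $\fF_{K^o}=\tT_K^\perp$ and ${}^\perp\fF_{K^o}=\tT_K$ together with the lattice identities $(I\cap J)^o=I^o\cup J^o$, $(I\cup J)^o=I^o\cap J^o$. The paper merely records the two resulting equalities of subcategories without the surrounding commentary, but the content is the same.
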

\begin{proof}		
	This follows from equalities:
	\begin{align*}
	&\fF_{I^{o} \cup J^{o}} \cap (^\perp \fF_{J^{o}}) = \fF_{(I \cap J)^{o}} \cap (^\perp \fF_{J^{o}}) = \tT_{I\cap J}^\perp \cap \tT_{J} \\
	& \fF_{I^{o}} \cap (^\perp \fF_{I^{o} \cap J^{o}}) =\fF_{I^{o}} \cap ({}^\perp\fF_{(I \cup J)^{o}})  =\tT_I^\perp \cap \tT_{I \cup J}.
	\end{align*}
\end{proof}

\begin{PROP}\label{prop_direct_sum}
	A right admissible $\lL$-filtration $\{\tT_I\}$ on an exact category $\eE$ is strict if and only if, for any pair $I,J\in \lL$, the category $\xX = \tT_{I\cup J} \cap \tT_{I\cap J}^\perp$  is a direct sum of mutually perpendicular subcategories $\xX_J=(\tT_I \cap \tT_{I\cap J}^\perp)$ and $\xX_I=(\tT_J\cap \tT_{I \cap J}^\perp)$.
\end{PROP}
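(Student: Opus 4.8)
The plan is to show the two-sided inclusion: on one hand, the categories $\xX_I$ and $\xX_J$ are mutually perpendicular subcategories of $\xX$, and the strictness condition (Riii) forces every object of $\xX$ to decompose as a direct sum of an object of $\xX_I$ and an object of $\xX_J$; conversely, if such a direct sum decomposition always holds, then (Riii) follows immediately by comparing the two summand subcategories. Throughout, I will work inside the ambient exact category $\eE$, using that all of $\tT_I$, $\tT_J$, $\tT_{I\cap J}$, $\tT_{I\cup J}$ are right admissible, so that Theorem~\ref{thm_admissible_iff_adjoint_exact} gives exactness of the relevant coreflectors $\infl^!$, and Proposition~\ref{right-left} gives the duality with the perpendicular left admissible subcategories.

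First I would verify the mutual perpendicularity $\Hom(\xX_I,\xX_J)=0=\Ext^1(\xX_I,\xX_J)$ and the same with $I,J$ swapped. Since $\xX_I\subset \tT_I$ and, by (Rii), $\xX_J\subset \tT_J\cap \tT_{I\cap J}^\perp$ with $\tT_{I\cap J}=\tT_I\cap\tT_J$, I need $\xX_J\subset \tT_I^\perp$. For this, observe that an object $X\in \xX_J$ lies in $\tT_J$ and in $\tT_{I\cap J}^\perp$; applying the conflation associated to the right admissible pair $\tT_{I\cap J}\subset \tT_J$ (Lemma~\ref{lem_right_admis_in_subcat}) together with the fact that $\tT_{I\cap J}$ is the intersection $\tT_I\cap\tT_J$, one gets that the $\tT_I$-coreflection of $X$ lands in $\tT_{I\cap J}$, while $X\in\tT_{I\cap J}^\perp$ kills it; exactness of $\infl^!$ for $\tT_I$ then pins down $\Hom$ and $\Ext^1$ from $\tT_I$ into $X$. (The symmetric statement $\xX_I\subset\tT_J^\perp$ is obtained by exchanging $I$ and $J$, noting $\tT_{I\cap J}$ is symmetric in $I,J$.) Hence $\xX_I$ and $\xX_J$ are mutually perpendicular inside $\xX$.

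Next, for an arbitrary $X\in\xX = \tT_{I\cup J}\cap \tT_{I\cap J}^\perp$, I would produce the decomposition. Apply the right admissible coreflection onto $\tT_I$ inside $\tT_{I\cup J}$: this yields a conflation $X_I\to X\to X'$ with $X_I\in\tT_I$ and $X'\in \tT_I^\perp\cap\tT_{I\cup J}$. Using (Riii) in the form $\tT_{I\cup J}\cap\tT_I^\perp = \tT_J\cap\tT_{I\cap J}^\perp = \xX_J$, we get $X'\in\xX_J$. To see $X_I\in\xX_I=\tT_I\cap\tT_{I\cap J}^\perp$, note $X_I$ is a subobject of $X$ along an inflation while simultaneously $\tT_{I\cap J}^\perp$ is closed under kernels of deflations; combined with $X,X'\in\tT_{I\cap J}^\perp$ (for $X'$ because $\xX_J\subset\tT_{I\cap J}^\perp$), the second of the six-term sequences (\ref{eqtn_6_term_ex_seq}) applied with a test object in $\tT_{I\cap J}$ gives $\Hom(\tT_{I\cap J},X_I)=0=\Ext^1(\tT_{I\cap J},X_I)$, so $X_I\in\xX_I$. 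Finally, because $\Ext^1(\xX_J,\xX_I)=\Ext^1(X',X_I)=0$ by the mutual perpendicularity already established, the conflation $X_I\to X\to X'$ splits, giving $X\simeq X_I\oplus X'$ with $X_I\in\xX_I$, $X'\in\xX_J$. For the converse direction: if every $X\in\tT_{I\cup J}\cap\tT_{I\cap J}^\perp$ is a direct sum of an object of $\tT_I\cap\tT_{I\cap J}^\perp$ and an object of $\tT_J\cap\tT_{I\cap J}^\perp$, then the subcategory $\xX\cap\tT_I^\perp$ consists exactly of the objects whose $\xX_I$-summand vanishes (using $\Hom(\xX_I,\xX_I)\ne 0$ unless that summand is zero, i.e. $\xX_I\cap\tT_I^\perp=0$), which is $\xX_J=\tT_J\cap\tT_{I\cap J}^\perp$; this is precisely (Riii).

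The main obstacle I anticipate is the bookkeeping around which coreflection to apply and in which ambient category, and verifying that $X_I\in\tT_{I\cap J}^\perp$ cleanly — i.e. that taking the $\tT_I$-coreflection of an object already in $\tT_{I\cap J}^\perp$ keeps one in $\tT_{I\cap J}^\perp$. This needs the exactness of $\infl^!$ for $\tT_I$ (Theorem~\ref{thm_admissible_iff_adjoint_exact}) together with $\tT_{I\cap J}\subset\tT_I$ being right admissible in $\tT_I$ (Lemma~\ref{lem_right_admis_in_subcat}), so that the torsion decomposition of $X$ relative to $\tT_{I\cap J}\subset\tT_I$ is compatible with that relative to $\tT_I\subset\tT_{I\cup J}$; the perpendicularity of the torsion pairs is what makes the relevant $\Ext^1$ groups vanish and the diagrams commute. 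Once that compatibility is set up, the splitting of the conflation and the converse are formal.
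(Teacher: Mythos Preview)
You have the subscripts of $\xX_I$ and $\xX_J$ swapped throughout: the statement defines $\xX_J=\tT_I\cap\tT_{I\cap J}^\perp$ and $\xX_I=\tT_J\cap\tT_{I\cap J}^\perp$, so $\xX_J\subset\tT_I$ while $\xX_I\subset\tT_J$. This is harmless if carried consistently, but it obscures two genuine gaps.

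In the forward direction, your argument for mutual perpendicularity does not go through as written. You assert that for $X\in\tT_J\cap\tT_{I\cap J}^\perp$ the $\tT_I$-coreflection $\infl_I^!X$ lands in $\tT_{I\cap J}=\tT_I\cap\tT_J$, but this would require $\infl_I^!X\in\tT_J$, and right admissible subcategories are closed under quotients, not under inflation-subobjects, so there is no reason the $\tT_I$-torsion part of an object of $\tT_J$ stays in $\tT_J$. The fix is simply to invoke (Riii) directly: strictness says $\tT_J\cap\tT_{I\cap J}^\perp=\tT_{I\cup J}\cap\tT_I^\perp$, i.e.\ $\xX_I\subset\tT_I^\perp$, and since $\xX_J\subset\tT_I$ this gives the required orthogonality at once (this is what the paper does). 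Your subsequent decomposition step is fine and essentially matches the paper; your use of the six-term sequence to get the torsion part into $\tT_{I\cap J}^\perp$ is a clean alternative to the paper's identity $\infl_{I\cap J}^!=\infl_{I\cap J}^!\circ\infl_I^!$.

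The more serious gap is in the converse. You show only one inclusion of (Riii): if $X\in\tT_{I\cup J}\cap\tT_I^\perp$ then its summand in $\tT_I\cap\tT_{I\cap J}^\perp$ must vanish, so $X\in\tT_J\cap\tT_{I\cap J}^\perp$. You do not prove the reverse inclusion $\tT_J\cap\tT_{I\cap J}^\perp\subset\tT_I^\perp$, and ``objects whose $\xX_J$-summand vanishes'' only describes that set once this inclusion is known. Here is where the mutual perpendicularity hypothesis is actually used: by Lemma~\ref{lem_right_admis_in_subcat}, $\tT_{I\cap J}$ is right admissible in $\tT_I$, so $\tT_I$ carries the perpendicular torsion pair $(\tT_{I\cap J},\xX_J)$; since $\xX_I$ is right-perpendicular to $\tT_{I\cap J}$ by definition and to $\xX_J$ by hypothesis, the decomposition (\ref{decomposition}) of any object of $\tT_I$ shows $\xX_I\subset\tT_I^\perp$. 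This is the step the paper supplies and your sketch omits.
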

\begin{proof}
	Assume that the $\lL$-filtration is strict. Lemma \ref{lem_right_admis_in_subcat} implies existence of perpendicular torsion pairs $(\tT_I, \xX_I)$, $(\tT_J, \xX_J)$, $(\tT_{I\cap J}, \xX)$ in $\tT_{I\cup J}$, $(\tT_{I \cap J}, \xX_J)$ in $\tT_I$, and $(\tT_{I\cap J}, \xX_I)$ in $\tT_J$. 
	As $\xX_J \subset \tT_I$, and $\xX_I \subset \tT_J$ 
categories $\xX_I$ and $\xX_J$ are mutually perpendicular, i.e. $\Hom_{\eE}(X_I, X_J)$ and $\Hom_{\eE}(X_J, X_I)$ vanish for any $X_I \in \xX_I$ and $X_J \in \xX_J$.

Consider $X \in \xX$. Decomposition (\ref{decomposition}) with respect to torsion pair $(\tT_I, \xX_I)$ is a conflation 
\begin{equation}\label{conftixi}
T_I \to X \to X_I
\end{equation}
with $T_I\in \tT_I$ and $X_I\in \xX_I$. Denote by $\infl^!_K$ the right adjoint to the inclusion functor $\tT_K\ \to \eE$, for any $K\in \lL$. Functor $\infl_{I\cap J}^!$ vanishes on $X$ in view of torsion pair $(\tT_{I\cap J}, \xX)$. Moreover, 
$\infl_{I\cap J}^!(X) = \infl_{I\cap J}^! \circ \infl_{I}^!(X) = \infl_{I\cap J}^!(T_I)=0$. Therefore, in view of the torsion pair $(\tT_{I\cap J}, \xX_J)$ in $\tT_I$, we have: $T_I\in \xX_J$. Then perpendicularity of $\xX_I$ and $\xX_J$ implies that conflation (\ref{conftixi}) splits, hence $\xX$ is a direct sum of $\xX_I$ and $\xX_J$.
	
Conversely, assume that $\xX= \xX_I \oplus \xX_J$, and $\xX_I$ and $\xX_J$ are mutually perpendicular. We need to show that $\xX_I= \tT_{I\cup J}\cap \tT^{\perp}_I$.
By Lemma \ref{lem_right_admis_in_subcat}, $\tT_{I \cap J}$ is admissible in $\tT_I$. Then in view of definition of $\xX_J$, category $\tT_I$ admits a perpendicular torsion pair $(\tT_{I \cap J},\xX_J)$. Since both components of the torsion pair are (left) perpendicular to $\xX_I$, we have: $\xX_I\subset \tT_I^{\perp}$. The other way around, any $X\in \tT_{I\cup J}\cap \tT^{\perp}_I$ is in $\tT_{I \cap J}^{\perp}$, hence $X\in \xX=\xX_I\oplus \xX_J$. As $X$ is right perpendicular to $\tT_I$ its $\xX_J$-component should be trivial, hence $X\in \xX_I$.
\end{proof}

\vspace{0.3cm}
\subsection{Weakly idempotent split categories and torsion pairs}\label{ssec_weak_idm_split_and_tor_p}~\\

Recall \cite{ThoTro}, \cite{Buehl} that a  category $\eE$ is \emph{weakly idempotent split} if any retraction $r \colon E \to E'$, i.e. a morphism admitting $s\colon E' \to E$ such that $rs = \Id_{E'}$ has a kernel. Equivalently, any co-retraction $c\colon E \to E''$, i.e. a morphism admitting $s\colon E''\to E$ with $sc = \Id_E$, has a cokernel. Note that $\eE$ is weakly idempotent split if and only if $\eE^{\opp}$ is.

\begin{LEM}\cite[Corollary 7.7]{Buehl}\label{lem_Buehler_weakly_idem_comp}
	An exact category $\eE$ is weakly idempotent split if and only if it has the following property: if $f\colon A \to B$ is a deflation and $g\colon B \to C$ a morphism such that $gf\colon A \to B$ is a delfation, then $g$ is a deflation. 
\end{LEM}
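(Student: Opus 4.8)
The plan is to prove the two implications separately: an elementary matrix computation for the ``if'' part, and the pullback machinery together with Quillen's obscure axiom (Lemma~\ref{lem_obscure_axiom}) for the ``only if'' part.

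First I would do the ``if'' direction. Assume $\eE$ has the stated cancellation property; it suffices to show that every retraction is a deflation, since deflations admit kernels. Let $r\colon E\to E'$ be a retraction with section $s$, so $rs=\Id_{E'}$. The morphism $f\colon E\oplus E'\to E$ with matrix $(\Id_E\ \ s)$ equals the split projection $\mathrm{pr}_E$ precomposed with the automorphism $\left(\begin{smallmatrix}\Id_E & s\\ 0 & \Id_{E'}\end{smallmatrix}\right)$ of $E\oplus E'$, hence is a deflation; similarly $r\circ f$ has matrix $(r\ \ \Id_{E'})$ and equals $\mathrm{pr}_{E'}$ precomposed with the automorphism $\left(\begin{smallmatrix}\Id_E & 0\\ r & \Id_{E'}\end{smallmatrix}\right)$, hence is a deflation. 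Applying the cancellation property to $E\oplus E'\xrightarrow{f}E\xrightarrow{r}E'$ shows that $r$ is a deflation. So every retraction has a kernel and $\eE$ is weakly idempotent split.

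Next the ``only if'' direction. Assume $\eE$ is weakly idempotent split, let $f\colon A\to B$ be a deflation and $g\colon B\to C$ a morphism with $gf$ a deflation. Since $gf$ is a deflation, by (Ex~2) the pullback $P$ of $g$ along $gf$ exists; denote by $f'\colon P\to B$ the pullback of $gf$ (a deflation) and by $g'\colon P\to A$ the pullback of $g$, so that $gf'=(gf)g'$ and the canonical map $(f',g')\colon P\to B\oplus A$ is a monomorphism (it realizes $P$ as $\ker\bigl((g,-gf)\colon B\oplus A\to C\bigr)$). The morphism $\delta\colon A\to P$ with components $(f,\Id_A)$ satisfies $g'\delta=\Id_A$, so $g'$ is a retraction; since $\eE$ is weakly idempotent split, $g'$ has a kernel $n\colon N\to P$, and the resulting splitting of the idempotent $\delta g'$ identifies $P$ with $A\oplus N$, with $g'$ the projection onto $A$ and $n$ the inclusion of $N$.

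Finally I would show that $\kappa:=f'n\colon N\to B$ is a kernel of $g$ and conclude. One has $g\kappa=(gf)(g'n)=0$ because $g'n=0$; and for any $t\colon T\to B$ with $gt=0$, the pair $(t,0)$ defines a morphism $T\to P$ (legitimate since $gt=(gf)0$) whose composite with $g'$ vanishes, so it factors through $n$, and applying $f'$ recovers $t$, uniqueness being forced by $(f',g')$ monic. Once $g$ is known to admit a kernel, Quillen's obscure axiom (Lemma~\ref{lem_obscure_axiom}), applied to $g$ with $\varphi=f$ (and $gf$ the deflation), gives that $g$ is a deflation. The step I expect to be the main obstacle is precisely this one: the cancellation property is not visibly symmetric, and one cannot conclude that $g$ is a deflation before producing a kernel for it --- and producing that kernel is exactly where weak idempotent splitting enters, via the direct sum decomposition $P\cong A\oplus N$ furnished by the retraction $g'$.
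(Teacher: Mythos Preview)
Your argument is correct. The paper does not give its own proof of this lemma; it merely cites \cite[Corollary~7.7]{Buehl}, so there is nothing to compare against beyond the standard reference. Your proof is essentially Buehler's: the matrix trick for the ``if'' direction and the pullback combined with the obscure axiom for the ``only if'' direction are exactly the ingredients used there.

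Two minor remarks. First, the direct sum decomposition $P\cong A\oplus N$ that you mention is not actually needed; your subsequent argument only uses that the retraction $g'$ admits a kernel $n\colon N\to P$, which is precisely what weak idempotent splitting provides. Second, your justification that $(f',g')$ is a monomorphism (``it realizes $P$ as $\ker((g,-gf))$'') is correct in any additive category with the relevant pullback, not just in abelian categories --- but it is simpler to note that this monicity is immediate from the uniqueness clause in the universal property of the pullback. Neither point affects the validity of the proof.
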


We will show that an exact category possessing a perpendicular torsion pair $(\tT, \fF)$ is weakly idempotent split if and only if $\tT$ and $\fF$ are.

\begin{LEM}\label{lem_defl_in_the_middle}
	Consider a diagram in an exact category $\eE$ with conflations as rows:
	\begin{equation}\label{eqtn_diag_2}
	\xymatrix{C' \ar[r] & C \ar[r] & C''\\
	B' \ar[r] \ar[u]^{g'} & B \ar[r] \ar[u]^g & B'' \ar[u]^{g''}}
	\end{equation}
	 If $g'$ and $g''$ are deflations, so is $g$.
\end{LEM}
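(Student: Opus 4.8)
The plan is to reduce the statement to a standard pushout/pullback manipulation together with Quillen's obscure axiom (Lemma \ref{lem_obscure_axiom}). First I would observe that since $B'\to B\to B''$ is a conflation, the composite $B\to B''\xrightarrow{g''}C''$ is a deflation (composite of deflations by (Ex 1)), and this composite equals $C\to C''$ precomposed with $g$, where $C\to C''$ is the deflation from the top conflation. So I would form the pullback $P$ of $C\to C''$ along $g''\colon B''\to C''$; by (Ex 2) the projection $P\to B''$ is a deflation, and since $g''$ itself is a deflation, the other projection $P\to C$ is a deflation as well (composite $P\to B''\to C''$ equals $P\to C\to C''$, and one uses that the base change of a deflation along a deflation, combined with Lemma \ref{lem_obscure_axiom} or directly (Ex 1), stays a deflation). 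The universal property of this pullback gives a canonical morphism $B\to P$ induced by $g\colon B\to C$ and $B\to B''$.

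Next I would analyze the morphism $B\to P$. Chasing the diagram, its ``kernel direction'' is controlled by the left column: the fibre of $P\to B''$ over $0$ is $C'$, and the fibre of $B\to B''$ over $0$ is $B'$, so $B\to P$ restricted to these fibres is exactly $g'\colon B'\to C'$, which is a deflation by hypothesis. More precisely, I would set up the $3\times 3$ diagram whose rows are $B'\to B\to B''$, $C'\to P\to B''$ (the pulled-back conflation, which is a conflation by (Ex 2) since $C'\to C\to C''$ is), and whose columns are $g'\colon B'\to C'$, the map $B\to P$, and $\Id_{B''}$. The map $B\to P$ is then a deflation because it sits in a morphism of conflations which is $\Id$ on the right terms and a deflation $g'$ on the left terms: the pushout of $B'\to B$ along $g'$ lands in a conflation $C'\to \wt{B}\to B''$ with $B\to\wt{B}$ a deflation (Lemma \ref{lem_push_def}, pushout of a deflation along an inflation), and by the universal property $\wt{B}\cong P$.

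Finally, $g\colon B\to C$ factors as $B\to P\to C$, where $B\to P$ is a deflation (just shown) and $P\to C$ is a deflation (base change of the deflation $C\to C''$ along $g''$, composed appropriately with the deflation $g''$; alternatively $P\to C$ is a deflation because it is the pullback of the deflation $B''\to C''$ under $g''$... one must be slightly careful here, so the cleaner route is: $P\to C$ is a deflation since in the pullback square defining $P$, the map $P\to C$ is obtained by base change of $g''\colon B''\to C''$ along $C\to C''$, and $g''$ is a deflation, so by (Ex 2) $P\to C$ is a deflation). Hence $g$ is a composite of two deflations, so a deflation by (Ex 1).

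The main obstacle I anticipate is keeping the bookkeeping of which square is a pullback of which, and in particular verifying that the object $P$ built as a pullback from the top conflation simultaneously receives the pushout of the left column — i.e. that the two natural constructions agree — since it is precisely this coincidence that upgrades $B\to P$ to a deflation. Once the $3\times 3$ diagram is correctly assembled (rows and columns all conflations except the trivial $\Id_{B''}$ column), the rest is a direct application of (Ex 1), (Ex 2), and Lemma \ref{lem_push_def}; no appeal to weak idempotent splitting is needed, which is consistent with the lemma's role as an ingredient toward Proposition \ref{prop_idem_split_iff_T_and_F}.
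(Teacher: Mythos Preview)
Your argument is correct, and it takes a genuinely different route from the paper. The paper's proof is a one-liner via the Gabriel--Quillen embedding: view $\eE$ as a fully exact subcategory of an abelian category $\aA$, apply the snake lemma to obtain a short exact sequence $0\to A'\to A\to A''\to 0$ of kernels with $A',A''\in\eE$, conclude $A\in\eE$ by extension-closure, observe $\textrm{coker}\,g=0$ in $\aA$, and then use that the embedding reflects exactness.

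Your proof, by contrast, is entirely intrinsic to the exact-category axioms: you factor $g$ as $B\to P\to C$ through the pullback $P$ of $C\to C''$ along $g''$, and show each leg is a deflation. The step you flag as delicate --- identifying the pushout $\wt{B}$ of $B'\to B$ along $g'$ with $P$ --- is indeed the crux, but it goes through exactly as you outline: the universal property of the pushout produces a map $\wt{B}\to P$ which is a morphism of conflations $C'\to(-)\to B''$ identical on the outer terms, hence an isomorphism by the short five lemma (\cite[Corollary 3.2]{Buehl}). You should make that last appeal explicit rather than leave it at ``by the universal property $\wt{B}\cong P$''. What your approach buys is independence from the embedding theorem; what the paper's approach buys is brevity and the ability to import diagram lemmas wholesale from abelian categories.
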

\begin{proof}
	Consider $\eE$ as a fully exact subcategory in an abelian category $\aA$. The snake lemma for the diagram (\ref{eqtn_diag_2}) considered as a diagram in $\aA$ yields a short exact sequence $0 \to A' \to A \to A'' \to 0$ of the kernels of the vertical maps. Since $g'$ and $g''$ are deflations, $A', A'' \in \eE$. As $\eE \subset \aA$ is closed under extensions, $A \in \eE$. The snake lemma implies that cokernel of $g$ in $\aA$ is trivial. We conclude that $A \to B \xrightarrow{g} C$ is a conflation in $\eE$, as the embedding $\eE\to \aA$ reflects exactness.
\end{proof}

\begin{PROP}\label{prop_idem_split_iff_T_and_F}
	Consider an exact category $\eE$ with a perpendicular torsion pair $(\tT, \fF)$. Then $\eE$ is weakly idempotent split if and only if $\tT$ and $\fF$ are.
\end{PROP}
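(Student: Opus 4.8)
The plan is to prove the two implications separately, using the semi-orthogonal-type decomposition of every object given by the torsion pair. Recall that $\eE$ is weakly idempotent split iff $\eE^{\opp}$ is, and that passing to opposite categories exchanges $(\tT,\fF)$ with the perpendicular torsion pair $(\fF^{\opp},\tT^{\opp})$ in $\eE^{\opp}$; so by this duality it suffices to characterise the retraction-splitting property in one direction. We will freely use Lemma \ref{lem_Buehler_weakly_idem_comp}: $\eE$ is weakly idempotent split iff whenever $f\colon A\to B$ is a deflation and $g\colon B\to C$ satisfies $gf$ a deflation, then $g$ is a deflation.

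\textbf{Step 1: $\eE$ weakly idempotent split $\Rightarrow$ $\tT$ and $\fF$ are.} Since $\tT$ and $\fF$ are fully exact subcategories of $\eE$, a retraction $r\colon T\to T'$ in $\tT$ is in particular a morphism in $\eE$; it has a kernel $K$ in $\eE$ (with a conflation $K\to T\to T'$), and $\tT$ is closed under quotients, hence under kernels of deflations by the discussion before Lemma \ref{lem_filtration_torsion} (as $\tT=\tT^{\perp_0\perp_0}$-style closure), so $K\in\tT$. Thus $\tT$ is weakly idempotent split; dually, using that $\fF$ is closed under subobjects, a co-retraction in $\fF$ acquires its $\eE$-cokernel inside $\fF$, so $\fF$ is weakly idempotent split as well.

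\textbf{Step 2: $\tT$ and $\fF$ weakly idempotent split $\Rightarrow$ $\eE$ is.} I would verify the Bühler criterion of Lemma \ref{lem_Buehler_weakly_idem_comp}. Let $f\colon A\to B$ be a deflation in $\eE$ and $g\colon B\to C$ with $gf$ a deflation; we must show $g$ is a deflation. Apply the functorial decomposition (\ref{decomposition}) to $g$: since $\infl^!$ and $\defl^*$ are exact by Theorem \ref{thm_admissible_iff_adjoint_exact}, the morphism $g$ fits into a morphism of conflations with top row $\infl_*\infl^!(g)\colon \infl_*\infl^!B\to\infl_*\infl^!C$ (a morphism in $\tT$, after applying $\infl_*$) and bottom row $\defl_*\defl^*(g)\colon\defl_*\defl^*B\to\defl_*\defl^*C$ (coming from a morphism in $\fF$). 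The composites $\infl^!(gf)$ and $\defl^*(gf)$ are deflations, being images of the deflation $gf$ under the exact functors $\infl^!,\defl^*$; and $\infl^!(gf)=\infl^!(g)\circ\infl^!(f)$ with $\infl^!(f)$ a deflation as well, so by the Bühler criterion \emph{inside} $\tT$ (which is weakly idempotent split) $\infl^!(g)$ is a deflation in $\tT$, hence $\infl_*\infl^!(g)$ is a deflation in $\eE$ since $\infl_*$ is exact and reflects exactness; similarly $\defl_*\defl^*(g)$ is a deflation in $\eE$. Now apply Lemma \ref{lem_defl_in_the_middle} to the morphism of conflations
\[
\xymatrix{\infl_*\infl^!B \ar[r] & B \ar[r] & \defl_*\defl^*B\\
\infl_*\infl^!B \ar[u]^{\Id}\ar[d] & & \\
}
\]
— more precisely, to the morphism from the decomposition conflation of $B$ to that of $C$, whose outer vertical maps $\infl_*\infl^!(g)$ and $\defl_*\defl^*(g)$ we have just shown to be deflations — to conclude that the middle vertical map $g$ is a deflation.

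\textbf{Main obstacle.} The delicate point is Step 2: one must be careful that the naive three-row diagram obtained by applying the decomposition to $g$ is genuinely a commutative diagram of conflations (this is where exactness of $\infl^!$ and $\defl^*$, i.e. Theorem \ref{thm_admissible_iff_adjoint_exact}, is essential, so that $\defl^*(g)$ and $\infl^!(g)$ sit in conflations that reassemble $g$), and that the Bühler criterion can be applied \emph{internally} to $\tT$ — which requires knowing a priori that $\infl^!(g)$ has a kernel in $\tT$, or circumventing this. The clean way around the kernel issue is to invoke Lemma \ref{lem_Buehler_weakly_idem_comp} for $\tT$ directly in its "if $f$ deflation and $gf$ deflation then $g$ deflation" form, which needs no kernel hypothesis on $g$; one just needs $\infl^!(f)$ to be a deflation in $\tT$, which follows since $f$ is a deflation and $\infl^!$ is exact. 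The $\fF$ side is handled symmetrically (or by passing to $\eE^{\opp}$, exchanging the two halves). Once these pieces are in place, Lemma \ref{lem_defl_in_the_middle} does the final assembly.
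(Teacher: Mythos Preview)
Your proof is correct and follows exactly the paper's approach: the forward direction uses the closure properties of $\tT$ and $\fF$, and the reverse direction applies the B\"uhler criterion (Lemma \ref{lem_Buehler_weakly_idem_comp}) inside $\tT$ and $\fF$ via exactness of $\infl^!$ and $\defl^*$, then assembles with Lemma \ref{lem_defl_in_the_middle}. One small wobble in Step 1: ``$\tT$ closed under quotients, hence under kernels of deflations'' is not a valid implication in general; the clean argument is that the kernel $K$ of a split retraction is a direct summand of $T$, hence a quotient of $T$, so $K\in\tT$ (equivalently, argue with co-retractions and use that $\tT$ is closed under cokernels of inflations, which is how the paper phrases it).
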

\begin{proof}
	Subcategory $\tT$ is closed under cokernels and $\fF$ is closed under kernels. Hence, if $\eE$ is weakly idempotent split, then so are $\tT$ and $\fF$.

	Now let $\tT$ and $\fF$ be weakly idempotent split. Let $g\colon B \to C$ be a morphism and $f\colon A \to B$ a deflation such that the composite $gf$ is a deflation. As adjoint-to-embedding functors $\infl^!: \eE \to \tT$ and $\defl^*:\eE \to \fF$ are exact, morphisms $\infl^!(f)$, $\infl^!(gf)$, $\defl^*(f)$, $\defl^*(gf)$ are deflations. Hence, by Lemma \ref{lem_Buehler_weakly_idem_comp}, $\infl^!g$ and $\defl^*g$ are deflations.  Then $g$ is a deflation by Lemma \ref{lem_defl_in_the_middle}. We conclude by Lemma \ref{lem_Buehler_weakly_idem_comp} that $\eE$ is weakly idempotent split. 
\end{proof}

\vspace{0.3cm}
\subsection{Deriving admissibility}\label{ssec_der_admiss}~\\

Given an exact category $\eE$, its bounded (resp. bounded above, bounded below, unbounded) derived category $\dD^b(\eE)$, (resp. $\dD^{-}(\eE)$, $\dD^+(\eE)$, $\dD(\eE)$) is the Verdier quotient of the homotopy category $\kK^b(\eE)$ (resp. $\kK^-(\eE)$, $\kK^+(\eE)$, $\kK(\eE)$) of bounded (bounded above, bounded below, unbounded) complexes in $\eE$ by the thick subcategory $\aA^b(\eE)$, (resp. $\aA^{-}(\eE)$, $\aA^+(\eE)$, $\aA(\eE)$) generated by acyclic complexes \cite{ThoTro},\cite{Nee1}. If $\eE$ is a weakly idempotent split exact category, then $\aA^b(\eE)$, $\aA^-(\eE)$, $\aA^+(\eE)$ are already thick subcategories ((partial) boundedness of complexes is important here, $\aA(\eE)$ is thick if $\eE$ is idempotent split, see \cite{ Nee1}).
The derived categories of exact categories are triangulated.

A full triangulated subcategory $\dD_0 \subset \dD$ in a triangulated category $\dD$ is said to be \emph{right admissible} \cite{B} if the inclusion functor $\iota_{0*} \colon \dD_0 \to \dD$ admits a right adjoint $\iota_0^! \colon \dD\to \dD_0$. Equivalently, $\dD$ admits a semi-orthogonal decomposition (or SOD) $\dD= \langle \dD_0^{\perp_0}, \dD_0\rangle$, see \cite[Lemma 3.1]{B}. Similarly, $\dD_0 \subset \dD$ is \emph{left admissible} if the inclusion functor admits a left adjoint $i_0^* \colon \dD \to \dD_0$. Then $\dD$ admits an SOD $\dD= \langle \dD_0, {}^{\perp_0} \dD_0\rangle$. 

\begin{THM}\label{thm_SOD}
	Let $\defl_* \colon\fF \to \eE$ be a left admissible subcategory of a weakly idempotent split exact category $\eE$. Then $\dD^b(\fF)$ is a left admissible subcategory of $\dD^b(\eE)$ and $\dD^b(\eE)$ admits a semi-orthogonal decomposition $\dD^b(\eE) = \langle \dD^b(\fF), \dD^b({}^\perp \fF)\rangle$.
\end{THM}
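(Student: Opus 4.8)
The plan is to build the semi-orthogonal decomposition on $\dD^b(\eE)$ directly from the perpendicular torsion pair $(\tT,\fF)$, where $\tT:={}^\perp\fF$, by promoting the functors $\defl^*$ and $\infl^!$ to the derived level. Since $\fF$ is left admissible, Theorem \ref{thm_admissible_iff_adjoint_exact} tells us that $\defl^*\colon\eE\to\fF$ is exact; by the duality of Proposition \ref{right-left}, $\tT={}^\perp\fF$ is right admissible, so $\infl^!\colon\eE\to\tT$ is exact as well. Exact functors between exact categories induce triangulated functors on the bounded derived categories, so we obtain $\mathbb{L}\defl^*=\defl^*\colon\dD^b(\eE)\to\dD^b(\fF)$ and $\mathbb{R}\infl^!=\infl^!\colon\dD^b(\eE)\to\dD^b(\tT)$, together with the triangulated inclusions $\defl_*\colon\dD^b(\fF)\to\dD^b(\eE)$ and $\infl_*\colon\dD^b(\tT)\to\dD^b(\eE)$. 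Note that weak idempotent splitness of $\eE$ (together with Proposition \ref{prop_idem_split_iff_T_and_F}, giving the same for $\tT$ and $\fF$) is exactly what guarantees that $\aA^b(-)$ is thick in each case, so these bounded derived categories are well behaved and the inclusions of $\dD^b(\fF)$, $\dD^b(\tT)$ into $\dD^b(\eE)$ are fully faithful.

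The first key step is to check that $\defl^*\colon\dD^b(\eE)\to\dD^b(\fF)$ is left adjoint to $\defl_*$ at the derived level. For a complex $E^\bullet$ in $\eE$ and a complex $F^\bullet$ in $\fF$, the adjunction $\defl^*\dashv\defl_*$ on the underlying exact categories gives a natural isomorphism of $\Hom$-complexes, hence an isomorphism on homotopy categories $\kK^b$; one then checks it descends to the Verdier quotients, which is automatic because $\defl^*$ sends acyclic complexes to acyclic complexes (being exact) and $\defl_*$ does too. This exhibits $\dD^b(\fF)$ as a left admissible subcategory of $\dD^b(\eE)$, giving an SOD $\dD^b(\eE)=\langle\dD^b(\fF),\,{}^{\perp_0}\dD^b(\fF)\rangle$ in the triangulated sense, where ${}^{\perp_0}\dD^b(\fF)=\ker(\defl^*)$.

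The second, and I expect the main, step is to identify the triangulated kernel $\ker(\defl^*\colon\dD^b(\eE)\to\dD^b(\fF))$ with $\dD^b(\tT)$. One inclusion is easy: for $T\in\tT$ one has $\defl^*T=0$ since $\Hom(T,\fF)=0$ forces the torsion decomposition of $T$ to be $T\xrightarrow{\sim} T\to 0$, so $\dD^b(\tT)\subset\ker(\defl^*)$. For the reverse inclusion, take a complex $E^\bullet\in\dD^b(\eE)$ with $\defl^*E^\bullet\cong 0$ in $\dD^b(\fF)$. The functorial torsion conflation $\infl_*\infl^!E\to E\to\defl_*\defl^*E$ of (\ref{decomposition}), applied termwise, gives a termwise-split-free short exact sequence of complexes, hence a triangle $\infl_*\infl^!E^\bullet\to E^\bullet\to\defl_*\defl^*E^\bullet$ in $\dD^b(\eE)$; since the third term vanishes, $E^\bullet\cong\infl_*\infl^!E^\bullet$ lies in the essential image of $\dD^b(\tT)$. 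The delicate point is that $\infl^!E^\bullet$, computed termwise, really does compute the derived functor and lands in $\dD^b(\tT)$ — but this is exactly the exactness of $\infl^!$ furnished by $\tT$ being right admissible (Theorem \ref{thm_admissible_iff_adjoint_exact}), so the termwise application preserves acyclicity and no resolution is needed. Finally, combining $\dD^b(\eE)=\langle\dD^b(\fF),\ker(\defl^*)\rangle$ with $\ker(\defl^*)=\dD^b(\tT)$ yields the claimed $\dD^b(\eE)=\langle\dD^b(\fF),\dD^b({}^\perp\fF)\rangle$, and the semi-orthogonality $\Hom_{\dD^b(\eE)}(\dD^b(\tT),\dD^b(\fF))=0$ follows from $\Hom_\eE(\tT,\fF)=0$ together with $\Ext^1_\eE(\tT,\fF)=0$ and a degree-shift argument, i.e. from perpendicularity of the torsion pair. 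The one genuine subtlety to watch is that Verdier localization is applied correctly — that the functors descend and that full faithfulness of the two subcategories is not lost — but all the needed thickness is already packaged into the weak idempotent splitting hypothesis.
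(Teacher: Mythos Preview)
Your overall architecture matches the paper's: establish the derived adjunction $L\defl^*\dashv R\defl_*$, show $R\defl_*$ is fully faithful so that $\dD^b(\fF)$ is left admissible, and identify $\ker(L\defl^*)$ with $\dD^b(\tT)$ via the termwise torsion triangle. The second step is exactly what the paper does, invoking \cite[Lemma~11.6]{Kel5} for the fact that a sequence of complexes which is termwise a conflation yields an exact triangle in $\dD^b(\eE)$.

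There is, however, a genuine gap in your justification for full faithfulness of $\defl_*\colon\dD^b(\fF)\to\dD^b(\eE)$. Thickness of acyclic complexes (ensured by weak idempotent splitting) only tells you that $\dD^b(\fF)$ is the correct Verdier quotient; it does \emph{not} imply that the comparison functor into $\dD^b(\eE)$ is fully faithful---for an arbitrary fully exact subcategory this can fail. The paper handles this by invoking Keller's criterion \cite[Theorem~12.1.b]{Kel5}: for every conflation $\defl_*F'\to E\to E''$ in $\eE$ one must produce a conflation in $\fF$ mapping to it, and Lemma~\ref{lem_j*_of_a_conf} supplies precisely $F'\to\defl^*E\to\defl^*E''$. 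That said, your setup suggests a shorter route that avoids Keller's criterion: since both $\defl^*$ and $\defl_*$ are exact, the derived counit $L\defl^*R\defl_*\to\Id_{\dD^b(\fF)}$ is computed termwise, and on the exact level $\defl^*\defl_*\simeq\Id_\fF$; hence the derived counit is an isomorphism, which is equivalent to $R\defl_*$ being fully faithful. You should make this explicit rather than appeal to thickness. Finally, your closing remark that semi-orthogonality follows from $\Hom_\eE(\tT,\fF)=0=\Ext^1_\eE(\tT,\fF)$ by a ``degree-shift argument'' is not valid on its own (you would need all higher $\Ext$'s), but it is redundant anyway once the SOD is in place.
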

\begin{proof}
Since $\defl_*$ is exact, its derived functor $R\defl_*$ exists.
	\cite[Theorem 12.1.b]{Kel5} states that $R\defl_*$ is fully faithful if for any conflation $\defl_* F' \to E \to E''$ in $\eE$ there exists a conflation $F' \to F \to F''$ in $\fF$ and a commutative diagram with identical the left-most vertical morphism:
	\[
	\xymatrix{\defl_* F' \ar[r] & \defl_*F \ar[r] & \defl_*F''\\
	\defl_*F' \ar[r] \ar[u]& E \ar[r] \ar[u] & E'' \ar[u]}
	\]
	By Lemma \ref{lem_j*_of_a_conf} the conflation $F' \to \defl^*E \to \defl^* E''$ fits, hence $R\defl_*$ is fully faithful.
	
	Functor $\defl^*$ is exact by Theorem \ref{thm_admissible_iff_adjoint_exact}, hence its derived functor $L\defl^* \colon \dD^b(\eE) \to \dD^b(\fF)$ exists and is left adjoint to $R\defl_* \colon \dD^b(\fF) \to \dD^b(\eE)$ (\textit{cf.} \cite[Lemma 13.6]{Kel5}). Therefore, $\dD^b(\fF)$ is a left admissible in $\dD^b(\eE)$.
	
	Category ${}^{\perp_0}\dD^b(\fF)$ is the kernel of $L\defl^*$. It remains to show that if  $E^\bcdot$ is a complex in $\dD^b(\eE)$ such that $L\defl^*E^\bcdot$ is acyclic then $E^\bcdot$ is quasi-isomorphic to a complex in $\dD^b({}^\perp \fF)$.
	
	Let $E^\bcdot$ be any complex in $\dD^b(\eE)$. The decomposition (\ref{decomposition}) with respect to the torsion pair $({}^\perp \fF, \fF)$ applied term-wise to $E^\bcdot$ yields a sequence $\infl_*\infl^!E^\bcdot \xrightarrow{\varepsilon^\bcdot} E^\bcdot \xrightarrow{\eta^\bcdot} \defl_*\defl^*E^\bcdot$ of complexes over $\eE$. As $\infl_*\infl^!E^n \xrightarrow{\varepsilon^n} E^n \xrightarrow{\eta^n}\defl_*\defl^*E^n$ is a conflation for every $n$, \cite[Lemma 11.6]{Kel5} implies that $\infl_*\infl^!E^\bcdot \to E^\bcdot \to \defl_*\defl^*E^\bcdot \to \infl_*\infl^!E^\bcdot[1]$ is an exact triangle in $\dD^b(\eE)$.
	Since functors $\defl_*$ and $\defl^*$ are exact, $\defl_*\defl^*E^\bcdot \simeq R\defl_*L\defl^*(E^\bcdot)$. If $E^\bcdot$ lies in the kernel of $L\defl^*$ complex $\defl_*\defl^*E^\bcdot$ is acyclic. Therefore, $\varepsilon^\bcdot$ is a quasi-isomorphism, i.e. $E^\bcdot \simeq \infl_*\infl^*E^\bcdot \in \dD^b({}^\perp \fF)$. 
\end{proof}

Next we will show that a strict right/left admissible filtration on an exact category induces a strict right/left admissible filtration on its derived category as a triangulated category.

Recall the definitions after \cite{BodBon2}.
A triangulated subcategory $\dD_0$ in a triangulated category  $\dD$  is \emph{admissible} if it is both right and left admissible, if and only if $\dD$ admits SODs: $\dD=\langle \dD_0^{\perp_0}, \dD_0\rangle = \langle \dD_0, {}^{\perp_0} \dD_0 \rangle$. This condition implies existence of a recollement \cite{BBD}:
\begin{equation}\label{eqtn_recoll}
\xymatrix{\dD_0 \ar[rr]|{\iota_{0*}}&& \dD \ar@<-2ex>[ll]|{\iota_0^*} \ar@<2ex>[ll]|{\iota_0^!} \ar[rr]|{j^*} && \dD_1 \ar@<-2ex>[ll]|{j_!} \ar@<2ex>[ll]|{j_*}} 
\end{equation}
where $\dD_1 = \dD/\dD_0$, $j^*$ is the quotient functor, $j_!$ embeds $\dD_1$ as ${}^{\perp_0} \dD_0$ while $j_*$ embeds $\dD_1$ as $\dD_0^{\perp_0}$. Conversly, for any recollement (\ref{eqtn_recoll}), category $\dD_0$ is admissible. We say that (\ref{eqtn_recoll}) is a \emph{recollement with respect to subcategory} $\dD_0$.

Given \tr es $(\dD_0^{\leq 0},\dD_0^{\geq 1})$ on $\dD_0$ and $(\dD_1^{\leq 0},\dD_1^{\geq 1})$ on $\dD_1$, there exists a unique \tr e $(\dD^{\leq 0},\dD^{\geq 1})$ on $\dD$ such that functors $i_{0*}$ and $j^*$ are $t$-exact,  i.e. $i_* \dD_0^{\leq 0} \subset \dD^{\leq 0}$, $i_* \dD_0^{\geq 1} \subset \dD^{\geq 1}$, $j^* \dD^{\leq 0} \subset \dD_1^{\leq 0}$, and $j^* \dD^{\geq 1} \subset \dD_1^{\geq 1}$, see \cite{BBD}. The \tr e $(\dD^{\leq 0},\dD^{\geq 1})$ is said to be \emph{glued via the recollement (\ref{eqtn_recoll})}.

We define the \emph{right admissible poset} $\textrm{rAdm}(\dD)$ of a triangulated category $\dD$ as the poset of right admissible subcategories with the inclusion order. Similarly, we consider the \emph{left admissible poset} $\textrm{lAdm}(\dD)$ and the \emph{admissible poset} $\textrm{Adm}(\dD)$. 

For a finite lattice $\lL$ with the maximal element $1$ and the minimal element $0$ a \emph{right admissible $\lL$-filtration} on $\dD$ is a map of posets $\lL \to \textrm{rAdm}(\dD)$, $I \mapsto \dD_I$, such that 
\begin{itemize}
	\item[(Rti)] $\dD_0 = 0$, $\dD_1 = \dD$,
	\item[(Rtii)] for any $I, J \in \lL$, $\dD_{I \cap J} = \dD_I \cap \dD_J$, $\dD_{I\cup J}^{\perp_0} = \dD_I^{\perp_0} \cap \dD_J^{\perp_0}$.
\end{itemize}
A \emph{left admissible $\lL$-filtration} on $\dD$ is a map of posets $\lL\to \textrm{lAdm}(\dD)$, $I \mapsto \dD_I$, such that
\begin{itemize}
	\item[(Lti)] $\dD_0 = 0$, $\dD_1 = \dD$,
	\item[(Ltii)] for any $I,J \in \lL$, $\dD_{I \cap J} = \dD_I \cap \dD_J$, ${}^{\perp_0}\dD_{I \cup J} = {}^{\perp_0} \dD_I \cap {}^{\perp_0} \dD_J$.
\end{itemize}
An \emph{admissible $\lL$-filtration} on $\dD$ is a map of posets $\lL \to \textrm{Adm}(\dD)$ which defines both left and right admissible $\lL$-filtration on $\dD$. 

Denote by $\aA / \bB$ the Verdier quotient. We say that a (right or left) admissible $\lL$-filtration on $\dD$ is \emph{strict} if 
\begin{itemize}
	\item[(iii)] for any $I, J \in \lL$, $\Hom_{\dD_{I\cup J}/\dD_{I \cap J}}(\dD_I/\dD_{I \cap J}, \dD_J/\dD_{I \cap J}) =0$.
\end{itemize}

\begin{THM}\label{thm_srict_filt_on_derived}
	Consider a finite lattice $\lL$ and a strict right admissible $\lL$-filtration $\{\tT_I\}$ on a weakly idempotent split exact category $\eE$. Then $\{\dD^b(\tT_I)\}$ is a strict right admissible $\lL$-filtration on $\dD^b(\eE)$.
\end{THM}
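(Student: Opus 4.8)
The plan is to verify conditions (Rti), (Rtii), and (iii) for the family $\{\dD^b(\tT_I)\}$ inside $\dD^b(\eE)$, building on Theorem \ref{thm_SOD} and the combinatorial structure established for strict admissible filtrations on $\eE$. First I would observe that each $\tT_I$ is right admissible in $\eE$ (hence in particular a fully exact subcategory, and weakly idempotent split by Proposition \ref{prop_idem_split_iff_T_and_F}), so by (the opposite-category version of) Theorem \ref{thm_SOD}, $\dD^b(\tT_I)$ is a right admissible subcategory of $\dD^b(\eE)$ with $\dD^b(\eE) = \langle \dD^b(\tT_I^\perp), \dD^b(\tT_I)\rangle$. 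This gives a well-defined map $\lL \to \textrm{rAdm}(\dD^b(\eE))$; that it is a map of posets is clear since $\tT_I \subset \tT_J$ induces a fully faithful exact functor and hence $\dD^b(\tT_I) \subset \dD^b(\tT_J)$. Condition (Rti) is immediate.

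The key identifications underlying (Rtii) and (iii) are that the subcategory $\dD^b(\tT_I)^{\perp_0}$ of $\dD^b(\eE)$ equals $\dD^b(\tT_I^\perp)$, and that the Verdier quotient $\dD^b(\tT_{I\cup J})/\dD^b(\tT_{I\cap J})$ is equivalent to $\dD^b(\xX)$ where $\xX = \tT_{I\cup J}\cap \tT_{I\cap J}^\perp$. The first follows from the SOD of Theorem \ref{thm_SOD}: the right orthogonal to $\dD^b(\tT_I)$ is precisely $\dD^b(\tT_I^\perp)$. For the second I would use Lemma \ref{lem_right_admis_in_subcat} (so $\tT_{I\cap J}$ is admissible in $\tT_{I\cup J}$ with perpendicular complement $\xX$), apply Theorem \ref{thm_SOD} inside $\tT_{I\cup J}$ to get $\dD^b(\tT_{I\cup J}) = \langle \dD^b(\xX), \dD^b(\tT_{I\cap J})\rangle$, and then identify the Verdier quotient by $\dD^b(\tT_{I\cap J})$ with $\dD^b(\xX)$ via the semi-orthogonal projection. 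Granting these, (Rtii) reduces to checking $\dD^b(\tT_I)\cap \dD^b(\tT_J) = \dD^b(\tT_I\cap\tT_J)$ and $\dD^b(\tT_I^\perp)\cap\dD^b(\tT_J^\perp) = \dD^b(\tT_{I\cup J}^\perp)$; the inclusions $\supseteq$ are trivial, and for $\subseteq$ one uses that these subcategories are generated by (complexes over) the respective hearts together with the cohomological characterization — an object of $\dD^b(\eE)$ lies in $\dD^b(\tT_I)$ iff all its cohomology objects (computed in a Gabriel--Quillen ambient abelian category, or via the SOD projection being an isomorphism) lie in $\tT_I$ — combined with the exact-category equalities $\tT_{I\cap J} = \tT_I\cap\tT_J$ of (Rii) and $\tT_{I\cup J}^\perp = \tT_I^\perp\cap\tT_J^\perp$ of (Rii). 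Finally, for strictness (iii): using the identification $\dD^b(\tT_{I\cup J})/\dD^b(\tT_{I\cap J}) \simeq \dD^b(\xX)$ and Proposition \ref{prop_direct_sum}, strictness of $\{\tT_I\}$ gives a decomposition $\xX = \xX_I \oplus \xX_J$ into mutually perpendicular fully exact subcategories, with $\xX_J = \tT_I\cap\tT_{I\cap J}^\perp$ mapping to $\dD^b(\tT_I)/\dD^b(\tT_{I\cap J})$ and $\xX_I = \tT_J\cap\tT_{I\cap J}^\perp$ to $\dD^b(\tT_J)/\dD^b(\tT_{I\cap J})$; mutual perpendicularity of $\xX_I,\xX_J$ in the exact category (vanishing of $\Hom$ and $\Ext^1$, hence of all higher $\Ext$'s since they are admissible, computed via the SOD) forces vanishing of $\Hom$ in the derived quotient, which is exactly (iii).

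I expect the main obstacle to be the clean identification of the Verdier quotient $\dD^b(\tT_{I\cup J})/\dD^b(\tT_{I\cap J})$ with $\dD^b(\xX)$ and the transfer of the $\Hom$- and $\Ext$-vanishing between $\xX_I$ and $\xX_J$ from the exact level to the level of the quotient triangulated category. The subtlety is that a priori one only controls $\Hom$ and $\Ext^1$ in $\eE$, whereas in $\dD^b(\eE)$ (or its quotient) one needs $\Hom^n$ to vanish for all $n$; here one must use that $\xX_I$ and $\xX_J$ are both admissible subcategories of $\xX$ sitting in a perpendicular torsion pair, so that the exact sequences (\ref{eqtn_6_term_ex_seq}) together with exactness of the truncation-type adjoints $\infl^!,\defl^*$ propagate the vanishing to all degrees — equivalently, one computes in the SOD $\dD^b(\xX) = \langle \dD^b(\xX_I),\dD^b(\xX_J)\rangle$ (or the reverse), where the semi-orthogonality in one direction is built in and in the other direction follows from $\Ext^{\ge 0}(\xX_I,\xX_J) = 0$. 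Once this is in place the verification of (iii) is formal. The boundedness hypothesis and weak idempotent splitness are used throughout to ensure the derived categories and their SODs are well-behaved, as recalled in Section \ref{ssec_der_admiss}.
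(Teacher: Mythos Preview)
Your overall strategy matches the paper's, but there is a genuine gap in your verification of (Rtii), specifically the inclusion $\dD^b(\tT_I)\cap\dD^b(\tT_J)\subset\dD^b(\tT_{I\cap J})$. You invoke a ``cohomological characterization'' --- that an object of $\dD^b(\eE)$ lies in $\dD^b(\tT_I)$ iff its cohomology objects lie in $\tT_I$ --- but this is not available for exact categories: $\eE$ has no cohomology objects in general, and passing to a Gabriel--Quillen ambient abelian category does not help, since cohomology taken there need not land in $\eE$ at all, let alone detect whether a complex is quasi-isomorphic to one over $\tT_I$. This is the real obstacle, not the one you flag in your final paragraph (your treatment of (iii) via Proposition~\ref{prop_direct_sum} is essentially fine and matches the paper).

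The paper's argument for (Rtii) is purely triangulated and bypasses cohomology. Given $X\in\dD^b(\tT_I)\cap\dD^b(\tT_J)$, apply Theorem~\ref{thm_SOD} \emph{inside} $\tT_I$ and $\tT_J$ to obtain SODs $\dD^b(\tT_I)=\langle\dD^b(\xX_J),\dD^b(\tT_{I\cap J})\rangle$ and $\dD^b(\tT_J)=\langle\dD^b(\xX_I),\dD^b(\tT_{I\cap J})\rangle$. The two decomposition triangles $T_{IJ}\to X\to Y$ and $T'_{IJ}\to X\to Y'$ coincide, because in each the left-hand arrow is the adjunction counit for the same inclusion $\dD^b(\tT_{I\cap J})\hookrightarrow\dD^b(\tT_I)\cap\dD^b(\tT_J)$. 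Hence $Y\simeq Y'\in\dD^b(\xX_I)\cap\dD^b(\xX_J)=0$ by the mutual perpendicularity of $\xX_I,\xX_J$ from Proposition~\ref{prop_direct_sum}, so $X\simeq T_{IJ}\in\dD^b(\tT_{I\cap J})$. The second equality in (Rtii) is then obtained by running the same argument for the right dual $\lL^{\opp}$-filtration, which is strict by Lemma~\ref{lem_dual_strict}.
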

\begin{proof}
	Consider $I \in \lL$. Theorem \ref{thm_SOD} implies that $\dD^b(\tT_I)\subset \dD^b(\eE)$ is right admissible. 
Since condition (Rti) is obviously satisfied, it suffices to check conditions (Rtii) and (iii). We note that 
	condition (iii) follows immediately from the direct sum decomposition $\tT_{I\cap J}^\perp \cap \tT_{I\cup J} = (\tT_{I\cap J}^\perp \cap \tT_I) \oplus (\tT_{I\cap J}^\perp \cap \tT_J)$ of Proposition \ref{prop_direct_sum}.
	
	Let $I, J$ be any pair of elements of $\lL$. Lemma  \ref{lem_right_admis_in_subcat} and Proposition \ref{prop_direct_sum} imply the existence of the perpendicular torsion pairs $(\tT_I, \xX_I)$, $(\tT_J,\xX_J)$, $(\tT_{I\cap J}, \xX_I\oplus \xX_J)$ in $\tT_{I\cup J}$, $(\tT_{I \cap J}, \xX_J)$ in $\tT_I$, and $ (\tT_{I \cap J}, \xX_I)$ in $\tT_J$. 
	
 	Clearly $\dD^b(\tT_{I\cap J}) \subset \dD^b(\tT_I) \cap \dD^b(\tT_J)$. Let now $X$ be an object of $\dD^b(\tT_I) \cap \dD^b(\tT_J)$. The decompositions of $X$ with respect to SOD's $\dD^b(\tT_I) = \langle \dD^b(\xX_J), \dD^b(\tT_{I\cap J}) \rangle$, $\dD^b(\tT_J) = \langle \dD^b(\xX_I), \dD^b(\tT_{I\cap J})\rangle$ (given by Theorem \ref{thm_SOD}) yield isomorphic exact triangles $T_{IJ} \to X \to Y$, $T'_{IJ} \to X \to Y'$, because morphisms $T_{IJ} \to X$, $T'_{IJ} \to X$ are the adjunction counit for the embedding $\dD^b(\tT_{I\cap J}) \to \dD^b(\tT_I) \cap \dD^b(\tT_J)$. Then 
$Y\simeq Y' \in \dD^b(\xX_I) \cap \dD^b(\xX_J)$=0. 
Hence $X \simeq T_{IJ} \in \dD^b(\tT_{I \cap J})$, which proves the first part of (Rtii).
 	
 	By Lemma \ref{lem_dual_strict} the right dual $\lL^{\opp}$-filtration on $\eE$ is strict. Then the same argument for the opposite category $\eE^{op}$ implies that $\dD^b(\fF_{I^o}) \cap \dD^b(\fF_{J^o}) = \dD^b(\fF_{I^o \cap J^o})$, where $\fF_{K^o} := \tT_K^\perp$.
By Theorem \ref{thm_SOD}, $\dD^b(\fF_{K^o})={}^{\perp_0}\dD(\tT_K )$ and ${I^o \cap J^o}=(I\cup J)^o$, hence the second part of (Rtii) follows.
\end{proof}

\section{Thin categories}\label{sec_thin_cat}
In this section we introduce the basic notion for our approach to highest weight categories, thin exact categories. We define a canonical partial order on the set of isomorphism classes of irreducible objects and construct a strict admissible filtration related to the poset. Then we find projective generators in thin categories. This is based on the construction of 'universal extensions' of standarizable collections by Dlab and Ringel \cite{DR}. 

\vspace{0.3cm}
\subsection{Canonical strict filtrations on thin categories}\label{ssec_can_str_filt_on_thin}~\\

Let $k$ be an algebraically closed field and $k\rm{-vect}$ the category of finite dimensional $k$-vector spaces.
\begin{DEF}
We say that a $k$-linear Hom and $\Ext^1$-finite \emph{exact category} $\eE$ is \emph{thin} if it has a right admissible filtration $0=\tT_0\subset \tT_1\subset \ldots \subset \tT_n = \eE$ with graded factors $\eE_i:=\tT_{i-1}^{\perp}\cap \tT_{i} $ equivalent to
$k\rm{-vect}$, for all $i\in [1, n]$. 
\end{DEF}
Any filtration with above properties in a thin category is called \emph{thin filtration}.

The notion of thin filtration, hence of thin category, is self-dual: if we transfer to the opposite category, then $\tT_i^{\perp}$ make a right admissible filtration with graded factors $k\rm{-vect}$.
\begin{LEM}\label{lem_thin_weakly_idem_split}
	A thin category $\eE$ is weakly idempotent split.
\end{LEM}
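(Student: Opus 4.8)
The plan is to argue by induction on the length $n$ of a thin filtration $0=\tT_0\subset\tT_1\subset\dots\subset\tT_n=\eE$, using Proposition \ref{prop_idem_split_iff_T_and_F} as the inductive mechanism. The base case $n=0$ (or $n=1$) is trivial: $\eE$ is then equivalent to $0$ (or to $k\textrm{-vect}$), which is abelian, hence weakly idempotent split. For the inductive step, observe that $(\tT_{n-1},\tT_{n-1}^\perp)$ is a perpendicular torsion pair on $\eE$ by the definition of right admissibility, and that $\tT_{n-1}^\perp\cap\eE=\tT_{n-1}^\perp=\eE_n\simeq k\textrm{-vect}$, which is weakly idempotent split. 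Since $\tT_{n-1}$ carries the induced right admissible filtration $0=\tT_0\subset\dots\subset\tT_{n-1}$ of length $n-1$ (the subcategories $\tT_i\subset\tT_{n-1}$ are right admissible by Lemma \ref{lem_right_admis_in_subcat}, with the same graded factors), the induction hypothesis applies to $\tT_{n-1}$, so $\tT_{n-1}$ is weakly idempotent split. Then Proposition \ref{prop_idem_split_iff_T_and_F}, applied to the perpendicular torsion pair $(\tT_{n-1},\tT_{n-1}^\perp)$ on $\eE$ whose two members are both weakly idempotent split, yields that $\eE$ itself is weakly idempotent split.

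The one point that needs a small check is that the graded factors of the induced filtration on $\tT_{n-1}$ really are $\eE_i=\tT_{i-1}^\perp\cap\tT_i$ computed \emph{inside} $\tT_{n-1}$, i.e. that taking the perpendicular inside $\tT_{n-1}$ agrees with taking it inside $\eE$ for the relevant subcategories. This follows because, by Definition \ref{def_right_and_left_adm} and the remark after it, $\tT_{i-1}^{\perp}=\tT_{i-1}^{\perp_0}$ for a right admissible subcategory, and the $\Hom$-orthogonal computed in the fully exact subcategory $\tT_{n-1}$ is just the intersection of the $\Hom$-orthogonal in $\eE$ with $\tT_{n-1}$; since $\tT_i\subset\tT_{n-1}$, the graded factor $\tT_{i-1}^\perp\cap\tT_i$ is unchanged. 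Hence the induced filtration on $\tT_{n-1}$ is again a thin filtration, and the induction is legitimate; $\tT_{n-1}$ is in particular a thin category in its own right.

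I do not expect any serious obstacle here: the statement is essentially a bookkeeping consequence of two results already established, Lemma \ref{lem_right_admis_in_subcat} (restriction of right admissibility) and Proposition \ref{prop_idem_split_iff_T_and_F} (weak idempotent splitting passes between an exact category with a perpendicular torsion pair and the two members of the pair). The mildest subtlety is simply making sure the induction is set up on the right category, namely $\tT_{n-1}$ with its restricted filtration, rather than on a quotient; using $\tT_{n-1}$ (a fully exact subcategory, where perpendiculars behave well) rather than any Serre-type quotient avoids all difficulty. No finiteness hypotheses beyond those already assumed for thin categories are needed for this particular statement.
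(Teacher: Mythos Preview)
Your proof is correct and takes essentially the same approach as the paper: the paper's proof is the one-liner ``Since $k\textrm{-vect}$ is idempotent split, the statement follows from Proposition \ref{prop_idem_split_iff_T_and_F},'' which implicitly relies on exactly the induction you have spelled out. Your extra care about the graded factors of the restricted filtration on $\tT_{n-1}$ is a reasonable sanity check, though one could also argue it directly from the fact that $\Ext^1$ is unchanged in a fully exact subcategory.
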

\begin{proof}
	Since $k\rm{-vect}$ is idempotent split, the statement follows from Proposition \ref{prop_idem_split_iff_T_and_F}.
\end{proof}

Object $E$ in an exact category is called \emph{irreducible} if any inflation $A\to E$ is either zero or an isomorphism.
\begin{LEM}\label{lem_irr_in_thin}
Let $\eE$ be a thin category and $E_i\in \eE$ the object corresponding to 
$k$ under the equivalence $\eE_i\simeq k\textrm{-vect}$ for a thin filtration. Then $\Lambda =\{E_1,\ldots, E_n\}$ is the set of isomorphism classes of irreducible objects in $\eE$.
\end{LEM}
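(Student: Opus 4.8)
The plan is to show two inclusions: every $E_i$ is irreducible, and every irreducible object is isomorphic to some $E_i$.

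First I would check that each $E_i$ is irreducible. Fix a thin filtration $0=\tT_0\subset\dots\subset\tT_n=\eE$ and let $E_i$ be the object corresponding to $k$ under $\eE_i=\tT_{i-1}^\perp\cap\tT_i\simeq k\textrm{-vect}$. Suppose $A\to E_i$ is a nonzero inflation with cokernel $C$, so $A\to E_i\to C$ is a conflation. Since $E_i\in\tT_i$ and $\tT_i$ is right admissible, hence closed under quotients (Proposition \ref{lem_right-left}), we get $C\in\tT_i$; and $A$, being a subobject appearing in a conflation with $E_i,C\in\tT_i$ and $\tT_i$ closed under extensions, also lies in $\tT_i$. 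Now apply the exact functor $\infl^!_{i-1}\colon\tT_i\to\tT_{i-1}$ (right adjoint to the embedding, exact by Theorem \ref{thm_admissible_iff_adjoint_exact}) to this conflation. Because $E_i\in\tT_{i-1}^\perp$ we have $\infl^!_{i-1}E_i=0$, so $\infl^!_{i-1}A\xrightarrow{\sim}\infl^!_{i-1}C[\ldots]$—more precisely the conflation $\infl^!_{i-1}A\to 0\to\infl^!_{i-1}C$ forces $\infl^!_{i-1}A=0=\infl^!_{i-1}C$, i.e.\ $A,C\in\tT_{i-1}^\perp\cap\tT_i=\eE_i\simeq k\textrm{-vect}$. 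In $k\textrm{-vect}$, which is abelian and has $E_i$ corresponding to the $1$-dimensional space, the only subobjects of $E_i$ are $0$ and $E_i$; so the inflation $A\to E_i$ is an isomorphism. Hence $E_i$ is irreducible. I should also note the $E_i$ are pairwise non-isomorphic: $E_i\in\tT_i$ but $\infl^!_{i-1}E_i=0$ forces $E_i\notin\tT_{i-1}$, and similarly for $j<i$ one separates them using the filtration.

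Next I would show any irreducible object $E$ is isomorphic to some $E_i$. Let $m$ be minimal with $E\in\tT_m$; such $m$ exists since $\tT_n=\eE$, and $m\ge 1$ as $E\neq 0$. Consider the perpendicular torsion pair $(\tT_{m-1},\ \tT_{m-1}^\perp)$ restricted to $\tT_m$ (Lemma \ref{lem_right_admis_in_subcat} makes $\tT_{m-1}$ right admissible in $\tT_m$ with perpendicular the graded factor $\eE_m$), and take the canonical conflation $\infl^!_{m-1}E\to E\to \defl^*_mE$ of (\ref{decomposition}), with $\infl^!_{m-1}E\in\tT_{m-1}$ and $\defl^*_mE\in\eE_m$. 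The first map is an inflation; by irreducibility of $E$ it is either zero or an isomorphism. It cannot be an isomorphism, since that would put $E\in\tT_{m-1}$, contradicting minimality of $m$. So $\infl^!_{m-1}E=0$ and $E\xrightarrow{\sim}\defl^*_mE\in\eE_m\simeq k\textrm{-vect}$. An irreducible object of $k\textrm{-vect}$ is precisely a $1$-dimensional space, which corresponds to $E_m$; hence $E\simeq E_m$. Combining the two inclusions gives $\Lambda=\{E_1,\dots,E_n\}$ as the set of isomorphism classes of irreducibles.

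The main obstacle I anticipate is bookkeeping around the exact-category subtleties rather than any deep difficulty: one must be careful that "subobject/quotient" statements are used only where justified (right admissible subcategories are closed under quotients and, being fully exact, under extensions, so the relevant terms stay inside $\tT_m$), and that the functors $\infl^!$ and $\defl^*$ being exact (Theorem \ref{thm_admissible_iff_adjoint_exact}) is what lets one transport conflations down the filtration. The identification of irreducible objects of $k\textrm{-vect}$ with the $1$-dimensional space is immediate. No step should require a genuinely hard argument once the filtration machinery of Section \ref{sec_exact_cat} is in hand.
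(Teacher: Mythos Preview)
Your approach is essentially the same as the paper's one-line proof (induction along the filtration using the decompositions (\ref{decomposition})), and the overall structure is correct. There is one small slip: in showing $E_i$ is irreducible you claim $A\in\tT_i$ because ``$\tT_i$ is closed under extensions.'' That is the wrong direction---in the conflation $A\to E_i\to C$, it is $E_i$ that is an extension of $C$ by $A$, not the other way around; closure under extensions does not force a kernel to stay in $\tT_i$. The conclusion $A\in\tT_i$ is nonetheless true: apply the exact functor $\defl_i^*\colon\eE\to\fF_i$ (Theorem \ref{thm_admissible_iff_adjoint_exact}) to the conflation; since $E_i,C\in\tT_i$ one gets $\defl_i^*A\to 0\to 0$, hence $\defl_i^*A=0$ and $A\in\tT_i$. (Alternatively, observe $E_i\in\fF_{i-1}=\tT_{i-1}^\perp$, which is left admissible hence closed under subobjects, so $A\in\fF_{i-1}$, and then run your $\infl_{i-1}^!$ argument directly.) With this correction the proof goes through.
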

\begin{proof}
This follows easily by induction on the length of a thin filtration in view of decompositions (\ref{decomposition}) for right admissible subcategories in the filtration.
\end{proof}

Recall \cite{B} that an ordered collection  $( E_n, \ldots, E_1 )$ is a \emph{full exceptional sequence} in a $k$-linear triangulated category $\tT$ if
\begin{itemize}
	\item $\Hom_{\tT}(E_i, E_i[l])$ equals $k$ if $l =0$ and vanishes otherwise,
	\item $\Hom_{\tT}(E_i, E_j[l])$ vanish for $i$ less than $j$ and any $l \in \mathbb{Z}$,
	\item $\tT$ is equivalent to its smallest triangulated subcategory containing $E_n, \ldots, E_1$.
\end{itemize} 

\begin{PROP}\label{prop_full_exc_coll_in_D_thin}
	Let $\{E_1,\ldots,E_n\}$ be the ordered collection  of representatives of  isomorphisms classes of irreducible objects in a thin category $\eE$. Then $( E_n,\ldots,E_1 )$ is a full exceptional sequence in $\dD^b(\eE)$.
\end{PROP}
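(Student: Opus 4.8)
The plan is to produce the full exceptional collection $\langle E_n,\ldots,E_1\rangle$ directly from the thin filtration $0=\tT_0\subset\tT_1\subset\cdots\subset\tT_n=\eE$ by repeatedly invoking Theorem~\ref{thm_SOD}. First I would note that, by Lemma~\ref{lem_thin_weakly_idem_split}, $\eE$ is weakly idempotent split, and by Lemma~\ref{lem_right_admis_in_subcat} each $\tT_{i-1}\subset\tT_i$ is a right admissible (hence, since the pair is perpendicular, also a left admissible after passing to the perpendicular) subcategory of the weakly idempotent split exact category $\tT_i$. Dualizing the thin filtration or working with the perpendicular torsion pairs, I would arrange that the graded piece $\eE_i=\tT_{i-1}^{\perp}\cap\tT_i\simeq k\textrm{-vect}$ sits as a left admissible subcategory of $\tT_i$ with $\tT_{i-1}={}^{\perp}\eE_i$ inside $\tT_i$. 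Then Theorem~\ref{thm_SOD} gives a semi-orthogonal decomposition $\dD^b(\tT_i)=\langle\dD^b(\eE_i),\dD^b(\tT_{i-1})\rangle$. Iterating this for $i=n,n-1,\ldots,1$ yields $\dD^b(\eE)=\langle\dD^b(\eE_n),\dD^b(\eE_{n-1}),\ldots,\dD^b(\eE_1)\rangle$.

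Second, I would identify each $\dD^b(\eE_i)$. Since $\eE_i\simeq k\textrm{-vect}$ and $k$ is a field, $\dD^b(k\textrm{-vect})$ is the triangulated category generated by the single object $k$, which is exceptional: $\Hom^\bullet(k,k)=k$ concentrated in degree $0$. By Lemma~\ref{lem_irr_in_thin}, the object of $\eE_i$ corresponding to $k$ under the equivalence $\eE_i\simeq k\textrm{-vect}$ is (the isomorphism class of) $E_i$, and $\dD^b(\eE_i)\subset\dD^b(\eE)$ is the triangulated subcategory generated by $E_i$. Therefore the semi-orthogonal decomposition above refines to a full exceptional collection $\langle E_n,\ldots,E_1\rangle$ in $\dD^b(\eE)$: semi-orthogonality of the $\dD^b(\eE_i)$ in the stated order gives the vanishing $\Hom^\bullet(E_i,E_j)=0$ for $i<j$ (in the displayed order $E_n$ leftmost), fullness follows because the $\dD^b(\eE_i)$ generate, and each $E_i$ is exceptional because $\End(E_i)=k$ and $\Ext^{>0}(E_i,E_i)=0$ already inside $\eE_i\simeq k\textrm{-vect}$, and $\dD^b(\eE_i)\to\dD^b(\eE)$ is fully faithful by Theorem~\ref{thm_SOD}.

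The main point requiring care — and what I expect to be the chief obstacle — is the compatibility of the repeated application of Theorem~\ref{thm_SOD}: one must check that at each stage the relevant embeddings $\dD^b(\tT_{i-1})\hookrightarrow\dD^b(\tT_i)\hookrightarrow\dD^b(\eE)$ are fully faithful and that the semi-orthogonal decompositions nest correctly, i.e.\ that $\dD^b(\tT_{i-1})$, realised inside $\dD^b(\eE)$, is still right/left admissible with the expected orthogonal. This is where weak idempotent splitness of all the $\tT_i$ (Lemma~\ref{lem_thin_weakly_idem_split} together with Proposition~\ref{prop_idem_split_iff_T_and_F}) and the exactness of the adjoint functors $\infl^!$, $\defl^*$ (Theorem~\ref{thm_admissible_iff_adjoint_exact}) are essential, since Theorem~\ref{thm_SOD} takes a weakly idempotent split exact category with a left admissible subcategory as input. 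Finally I would record that $\langle E_n,\ldots,E_1\rangle$ is precisely of the form appearing in \cite{B}, so the derived category of a thin category carries a full exceptional collection, which is the assertion. The only slightly delicate bookkeeping is the left-versus-right convention: the thin filtration is by \emph{right} admissible subcategories, while Theorem~\ref{thm_SOD} is stated for a \emph{left} admissible subcategory; this is handled either by passing to $\eE^{\opp}$ (under which thin filtrations are self-dual, as noted after the definition of thin category, and which reverses the order of an exceptional collection) or by observing directly that $\eE_i={}^{\perp}\tT_{i-1}\cap\tT_i$ is left admissible in $\tT_i$ whenever $\tT_{i-1}$ is right admissible in $\tT_i$, via Proposition~\ref{right-left}.
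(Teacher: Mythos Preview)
Your proof is correct and takes essentially the same approach as the paper. The paper's proof simply cites Theorem~\ref{thm_srict_filt_on_derived} (a strict right admissible $\lL$-filtration on a weakly idempotent split $\eE$ induces one on $\dD^b(\eE)$), applied to the totally ordered thin filtration, for which strictness is vacuous; you instead iterate Theorem~\ref{thm_SOD} directly, which is precisely the content of Theorem~\ref{thm_srict_filt_on_derived} specialised to a chain, so the arguments are the same in substance.
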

\begin{proof}
	Follows from Theorem \ref{thm_srict_filt_on_derived}.
\end{proof}

 We endow the set $\Lambda$ of isomorphism classes of irreducible objects in a thin category with a \emph{canonical poset structure}. Namely, we consider the minimal partial order such that 
\begin{equation}\label{def-poset}
{\rm if}\
E_i \notin E_j^{\perp},\ {\rm for\ some}\ i,j\in \Lambda,
{\rm then}\ i\preceq j. 
\end{equation}
The existence of a thin filtration implies no cycle 
$(i_1, i_2, \dots , i_k=i_1)$, such that $E_{i_s}\notin E_{i_t}^{\perp}$ for $i_s\le i_t$.
Therefore, the partial order is well-defined. The obvious full order on $\Lambda$ induced by any thin filtration is a linearization of the canonical partial order.
We write $E(\la)$ for the irreducible object of $\eE$ corresponding to $\la \in \Lambda$.

If $\lL$ is a finite distributive lattice then by Birkhoff's theorem \cite{Birk}, $\lL$ is isomorphic to the lattice of lower ideals $\iI(\Lambda)$ in a finite poset $\Lambda$. A \emph{lower (upper) ideal} in a poset $\Lambda$ is a subset $I\subset \Lambda$ (resp.  $U\subset \Lambda$) which together with every $\la\in I$ (resp. $\la\in U$) contains all $\la'\in \Lambda$ such that $\la'\preceq \la$ (resp. $\la'\succeq \la$). 
The poset $\Lambda$ in Birkhoff's theorem is identified with the subposet of join-prime elements in $\lL$. An element $s\in \lL$ is \emph{join-prime} if the fact that $s\preceq J_1\cup J_2$, for some $J_1,J_2\in \lL$, implies that $s\preceq J_1$ or $s\preceq J_2$. 

For a finite poset $\Lambda$ and  $\lambda \in \Lambda$, the corresponding join-prime element in the distributive lattice $\lL =\iI(\Lambda)$
is the principal lower ideal $I_{\la}\subset \Lambda$, i.e. the ideal of all elements $\mu \in \Lambda$, such that $\mu \preceq \la $. Then $I_{<\la} :=I_{\la}\setminus \{\la\}$ is an ideal in $I_{\la}$ which contains all ideals $J\subsetneqq I_{\la}$.

Let $\{\tT_I\}$ be a right admissible $\iI(\Lambda)$-filtration. To $\la \in \Lambda$ we assign the subcategory 
$$
\eE_{\la} := \tT_{I_{<\la}}^\perp \cap \tT_{I_{\la}},
$$
the $\la$-graded factor of the filtration. By Lemma \ref{lem_right_admis_in_subcat}, $\eE_{\la}$ is left admissible in $\tT_{I_{\la}}$. Similarly, given a left admissible $\iI(\Lambda)$-filtration $\{\fF_I\}$, we define the $\la$-graded factor by:
$$
\eE_{\la} := {}^\perp \fF_{I_{<\lambda}} \cap \fF_{I_{\la}}.
$$

For a class of objects $\{X_i\}$ in an exact category $\eE$, we denote by $\fF(\{X_i\})$ the smallest fully exact subcategory in $\eE$ that contains all $X_i$'s.

\begin{PROP}\label{prop_strict_filtr_on_thin}
	Let $\eE$ be a thin category and $\Lambda$ the canonical poset of isomorphism classes of irreducible objects in $\eE$. Then $\tT_I:=\fF(\{E({\la})\}_{\la\in I})$, where $I$ runs over the set of lower ideals in $\Lambda$,
defines a strict right admissible $\iI(\Lambda)$-filtration on $\eE$.
\end{PROP}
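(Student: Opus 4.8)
The plan is to show first that each $\tT_I$ is a right admissible subcategory, using Lemma~\ref{lem_filtration_torsion}, and then to reduce the filtration axioms (Ri)--(Riii) to elementary combinatorics of lower ideals by introducing a ``support'' invariant coming from the full exceptional collection on $\dD^b(\eE)$.

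For a lower ideal $I\subseteq\Lambda$ put $\fF_I:=\fF(\{E(\mu)\}_{\mu\notin I})$. If $\la\in I$ and $\mu\notin I$, then $\mu\not\preceq\la$, for otherwise $\mu$ would lie in the lower ideal $I$; hence, by minimality of the canonical order (\ref{def-poset}), $E(\mu)\in E(\la)^{\perp}$, i.e. $\Hom(E(\la),E(\mu))=0=\Ext^1(E(\la),E(\mu))$. Feeding these vanishings into the six-term sequences (\ref{eqtn_6_term_ex_seq}) and inducting over the extension closures in each variable yields $\Hom(\tT_I,\fF_I)=0=\Ext^1(\tT_I,\fF_I)$, so that $\fF_I\subseteq\tT_I^{\perp}$. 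On the other hand, iterating the functorial decomposition (\ref{decomposition}) along a thin filtration of $\eE$ (as in the proof of Lemma~\ref{lem_irr_in_thin}) shows that every object of $\eE$ admits a filtration whose graded factors are finite direct sums of irreducibles $E(\la)$; such a factor lies in $\tT_I$ when $\la\in I$ and in $\fF_I$ when $\la\notin I$. Lemma~\ref{lem_filtration_torsion} then gives that $\tT_I$ is right admissible with $\tT_I^{\perp}=\fF_I$. In particular $\tT_\emptyset=0$ and $\tT_\Lambda=\eE$, and $I\mapsto\tT_I$ is evidently a map of posets, so condition (Ri) holds.

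Next I introduce support. By Proposition~\ref{prop_full_exc_coll_in_D_thin}, $K_0(\dD^b(\eE))$ is free with basis $\{[E(\la)]\}_{\la\in\Lambda}$; the filtration of the previous paragraph gives $[X]=\sum_\la d_\la(X)\,[E(\la)]$ with $d_\la(X)\ge 0$, and I set $\mathrm{supp}(X):=\{\la:d_\la(X)>0\}$, well defined by freeness. Two remarks: $X\cong 0$ if and only if $\mathrm{supp}(X)=\emptyset$ (if all $d_\la(X)$ vanish, the $E(\la)$-filtration of $X$ consists of identities), and $X\in\fF(\{E(\mu)\}_{\mu\in A})$ implies $\mathrm{supp}(X)\subseteq A$ (induction over the extension closure, using additivity of $[\,\cdot\,]$ and non-negativity of the $d_\la$). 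Decomposing an arbitrary $X$ with respect to the perpendicular torsion pair $(\tT_I,\fF_I)$ as $T\to X\to F$, the equality $\mathrm{supp}(X)=\mathrm{supp}(T)\cup\mathrm{supp}(F)$ together with these remarks forces $F=0$ whenever $\mathrm{supp}(X)\subseteq I$ and $T=0$ whenever $\mathrm{supp}(X)\cap I=\emptyset$; hence, for every lower ideal $I$, one has the equalities of full subcategories
\[
\tT_I=\{X\in\eE:\mathrm{supp}(X)\subseteq I\},\qquad
\tT_I^{\perp}=\fF_I=\{X\in\eE:\mathrm{supp}(X)\cap I=\emptyset\}.
\]

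With these descriptions the remaining axioms become purely set-theoretic identities for lower ideals. Condition (Rii) reads $\tT_{I\cap J}=\{\mathrm{supp}\subseteq I\cap J\}=\tT_I\cap\tT_J$ and $\tT_{I\cup J}^{\perp}=\{\mathrm{supp}\cap(I\cup J)=\emptyset\}=\tT_I^{\perp}\cap\tT_J^{\perp}$, while strictness (Riii) reads
\[
\tT_{I\cup J}\cap\tT_I^{\perp}=\{\mathrm{supp}\subseteq(I\cup J)\setminus I\}=\{\mathrm{supp}\subseteq J\setminus I\}=\{\mathrm{supp}\subseteq J\setminus(I\cap J)\}=\tT_J\cap\tT_{I\cap J}^{\perp},
\]
which finishes the proof. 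The only genuinely non-formal step is the first paragraph: the inclusion $\fF_I\subseteq\tT_I^{\perp}$ is precisely what the definition of the canonical order is designed to make true, and, together with the existence of an ``$E(\la)$-filtration'' of every object, it is where the thinness hypothesis is used; once this is in place, the support invariant --- available because $\dD^b(\eE)$ carries a full exceptional collection, though one could equally argue by a direct Jordan--H\"older count of the multiplicities $d_\la$ --- makes everything else automatic.
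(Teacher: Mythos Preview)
Your proof is correct. The first paragraph --- establishing that each $\tT_I$ is right admissible with $\tT_I^{\perp}=\fF_I$ via Lemma~\ref{lem_filtration_torsion} --- is exactly the paper's opening move.

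Where you diverge is in the verification of (Rii) and (Riii). The paper argues directly with perpendiculars: for instance, $\tT_I\cap\tT_J\subset{}^\perp\fF_I\cap{}^\perp\fF_J\subset{}^\perp\fF_{I\cap J}=\tT_{I\cap J}$, the middle inclusion because $\fF_{I\cap J}$ is the extension closure of $\{E(\la)\}_{\la\notin I}\cup\{E(\la)\}_{\la\notin J}$; strictness is handled by observing that $\fF(\{E(\la)\}_{\la\in J\setminus I})$ is simultaneously the perpendicular of $\tT_I$ in $\tT_{I\cup J}$ and of $\tT_{I\cap J}$ in $\tT_J$. You instead introduce a support invariant via $K_0(\dD^b(\eE))$ (available from Proposition~\ref{prop_full_exc_coll_in_D_thin}, which depends only on the totally ordered thin filtration, so there is no circularity), characterise $\tT_I$ and $\tT_I^{\perp}$ as support conditions, and reduce everything to set-theoretic identities on subsets of $\Lambda$. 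Your route is a little longer to set up but makes the filtration axioms completely transparent; the paper's route stays closer to the exact-category definitions and avoids the passage through $K_0$. Both are clean, and your parenthetical remark that a direct Jordan--H\"older multiplicity count would do equally well is apt --- that is essentially what the freeness of $K_0$ on the $[E(\la)]$ is encoding.
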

\begin{proof} Category $\fF_I :=\fF(\{E( \la)\}_{\la \notin I}\})$ belongs to $\tT_I^{\perp}$ and any thin filtration on $\eE$ gives a filtration on every object of $\eE$ with all graded factors sums of irreducible objects, i.e. either in $\tT_I$ or in $\fF_I$. By Lemma (\ref{lem_filtration_torsion}), $\tT_I$ is right admissible and $\fF_I=\tT_I^{\perp}$.

	Subcategory $\tT_I \cap \tT_J$ is contained in the left perpendicular to both $\fF_I$ and $\fF_J$, hence, by the definition of $\fF_{I\cap J}$, it is contained in the left perpendicular to $\fF_{I\cap J}$, i.e. in $\tT_{I\cap J}$. Since the opposite inclusion is obvious, $\tT_{I\cap J} = \tT_I \cap \tT_J$ and, similarly, $\fF_{I \cup J} = \fF_I \cap \fF_J$, i.e. $I\mapsto \tT_I$ defines a right admissible filtration. 
	
	Finally, as $\fF(\{E( \la )\}_{\la \in J\setminus I})$ is the right perpendicular category to $\tT_{I\cap J}$ in $\tT_J$ and to $\tT_I$ in $\tT_{I \cup J}$, for any lower ideals $I\subset J$, the filtration is strict. 
\end{proof}

\begin{REM}\label{rem_thin_not_alg_cl}
	In the case when the field $k$ is not algebraically closed, we generalize the definition of a thin category as follows: we say that a $k$-linear Hom and $\Ext^1$-finite \emph{exact category} $\eE$ is \emph{thin} if it has a right admissible filtration $0=\tT_0\subset \tT_1\subset \ldots \subset \tT_n = \eE$ such that, for all $i\in [1,n]$, the graded factor $\eE_i:=\tT_{i-1}^{\perp}\cap \tT_{i} $ is equivalent to $\textrm{mod-}\Gamma_i$, for a finite dimensional division $k$-algebra $\Gamma_i$. The construction of the partially ordered set $\Lambda$ and the $\iI(\Lambda)$-filtration on $\eE$ are basically the same.	
\end{REM}

\vspace{0.3cm}
\subsection{Projective generators in thin categories}\label{ssec_proj_gen_in_thin}~\\

An object $P \in \eE$ in an exact category $\eE$ is \emph{projective} if functor $\Hom_{\eE}(P, -)\colon \eE\to \aA b$ is exact. It is equivalent to requiring $\Ext^1(P, -)=0$. A full additive subcategory $\pP \subset \eE$ is \emph{projectively generating} if all objects of $\pP$ are projective and, for any object $E\in \eE$, there exists a deflation $P \to E$ with $P \in \pP$. 

For a collection of objects $\{X_i\}_{i\in I}\subset \eE$, denote by $\textrm{add }\{X_i\}\subset \eE$ the full subcategory of finite direct sums of copies of $X_i$'s. 
A set of projective object $\{P_i\}_{i\in I}\subset \eE$ is a \emph{set of projective generators} if  $\textrm{add }\{P_i\}\subset \eE$ is projectively generating. If the set consists of one element $P$, we say that $P$ is a \emph{projective generator}. A \emph{set of injective generators} is a set of objects in $\eE$ which is a set of projective generators in the opposite category $\eE^{op}$.

The following proposition allows us to construct a projective generator in an exact category, given an admissible subcategory isomorphic to $k$-vect and a projective generator in its perpendicular.
\begin{PROP}\label{prop_univ_ext_in_proj}
	Let $\tT\simeq k-{\rm vect}$ be a right admissible subcategory of $\eE$. Let $Q$ be a projective object in $ \tT^{\perp}$. Then the universal extension $R$ of $Q$ by the irreducible object $T$ in $\tT$, is projective in $\eE$. If $\{Q_{i}\}$ is a set of projective generators in $\tT^{\perp}$, then $T\cup \{R_i\}$ is a set of projective generators in $\eE$, where $R_i$ is the universal extension of $Q_i$ by $T$.
\end{PROP}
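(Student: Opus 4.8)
The plan is to work with the perpendicular torsion pair $(\tT,\fF)$ where $\fF=\tT^\perp$, and to use the structural information about the $\tT$-projective subcategory $\yY$ from Theorem~\ref{thm_Y_to_F_is_square_zero}. First I would recall that, since $\tT\simeq k\textrm{-vect}$ with indecomposable generator $T$, the universal extension $R$ of a projective $Q\in\fF$ by $T$ lies in $\yY$; indeed, the proof of Theorem~\ref{thm_Y_to_F_is_square_zero} already shows that $q=\defl^*|_\yY\colon\yY\to\fF$ is essentially surjective and that $Y_F$, the universal extension of $F$ by $T$, satisfies $\Ext^1_\eE(Y_F,T)=0$ and $\Hom_\eE(F,T)\simeq\Hom_\eE(Y_F,T)$. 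So the content to add is: (i) $\Ext^1_\eE(R,-)$ vanishes on all of $\eE$, not just on $\tT$; and (ii) the generation statement.

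For projectivity of $R$, I would test $\Ext^1_\eE(R,-)$ against an arbitrary object $E\in\eE$ and use the torsion conflation $\infl_*\infl^!E\to E\to\defl_*\defl^*E$ from~(\ref{decomposition}). Applying the first six-term sequence~(\ref{eqtn_6_term_ex_seq}) with the contravariant argument $R$, it suffices to check $\Ext^1_\eE(R,T')=0$ for every $T'\in\tT$ and $\Ext^1_\eE(R,F')=0$ for every $F'\in\fF$. The first vanishing is exactly the defining condition of $\yY$ (equation~(\ref{eqtn_categ_Y})), which $R=Y_Q$ satisfies. For the second, I would apply $\Hom_\eE(-,F')$ to the universal extension conflation $T\otimes\Ext^1_\eE(Q,T)^\vee\to R\xrightarrow{\pi}Q$: since $Q$ is projective in $\fF$ and $\fF=\tT^\perp$ is a fully exact subcategory with $\Ext^1_\eE(-,-)$ agreeing with $\Ext^1_\fF(-,-)$ on $\fF$ (Gabriel--Quillen), one gets $\Ext^1_\eE(Q,F')=0$; and $\Ext^1_\eE(T\otimes\Ext^1_\eE(Q,T)^\vee,F')=0$ because $\Ext^1(\tT,\fF)=0$ by perpendicularity. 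Hence the relevant piece of the long exact sequence forces $\Ext^1_\eE(R,F')=0$. Combining, $\Ext^1_\eE(R,E)=0$ for all $E$, i.e. $R$ is projective in $\eE$.

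For the generation statement, let $E\in\eE$ be arbitrary with torsion conflation $\infl_*\infl^!E\to E\xrightarrow{\eta_E}\defl_*\defl^*E$. Write $F:=\defl^*E\in\fF$ and choose a deflation $g\colon\bigoplus Q_i\to F$ in $\fF$ (possible since $\{Q_i\}$ projectively generates $\fF$); note a deflation in the fully exact subcategory $\fF$ is a deflation in $\eE$. Since $R:=\bigoplus R_i$ is projective in $\eE$ and $\eta_E$ is a deflation (Proposition~\ref{prop_torsion=adjoint}), the map $\defl_* g\colon\defl_*(\bigoplus Q_i)=\bigoplus\defl_*Q_i\to\defl_*\defl^*E$ lifts through $\eta_E$ along the canonical $R\to\bigoplus\defl_*Q_i=\defl_*\defl^*R$ to a morphism $h\colon R\to E$. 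I claim $\langle T\to E,\ h\colon R\to E\rangle$ assemble into a deflation $T^{m}\oplus R\to E$ for suitable $m$: on the $\defl^*$-side the composite becomes the deflation $g$, while on the torsion part $\infl^!E\in\tT\simeq k\textrm{-vect}$ is a sum of copies of $T$, each mapping to $E$ via the counit $\infl_*\infl^!E\to E$, so adding enough copies of the irreducible $T\to E$ covers $\infl^!E$. By Lemma~\ref{lem_defl_in_the_middle} (or a short diagram chase in the Gabriel--Quillen embedding), a map inducing deflations on both the $\infl^!$- and $\defl^*$-parts of the torsion conflation is itself a deflation. Hence every $E$ admits a deflation from an object of $\textrm{add}(T\cup\{R_i\})$, so $T\cup\{R_i\}$ is a set of projective generators.

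The main obstacle I anticipate is making the last assembly precise: one must exhibit, functorially enough, the lift $h\colon R\to E$ and check that the pair $(T\to E, h)$ induces deflations termwise on the torsion conflation of $E$ so that Lemma~\ref{lem_defl_in_the_middle} applies. The subtlety is that $\eta_E$ need not split, so the lift $h$ is only guaranteed by projectivity of $R$ together with $\eta_E$ being a deflation; once $h$ is in hand, the verification that $\defl^*h$ recovers $g$ (up to the chosen identification $\defl^*R\simeq\bigoplus\defl^*Q_i\simeq\bigoplus Q_i$, using that $\defl^*$ kills $\tT$) and that $\infl^!$ of the combined map is surjective is a routine but slightly fiddly diagram chase.
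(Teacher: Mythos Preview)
Your proposal is correct and follows essentially the same route as the paper: reduce $\Ext^1_\eE(R,-)=0$ to vanishing on $\tT$ and on $\fF$ via the torsion conflation, then for generation lift a deflation $\bigoplus R_i^{\oplus b_i}\to \defl_*\defl^*E$ through $\eta_E$ using projectivity of the $R_i$, adjoin $\infl_*\infl^!E\simeq T^{\oplus a}$, and invoke Lemma~\ref{lem_defl_in_the_middle}. Two small remarks: you should also record that $T$ itself is projective in $\eE$ (immediate from $\Ext^1(T,\tT)=0$ in $k$-vect and $\Ext^1(\tT,\fF)=0$, then the torsion conflation), and your worry about the final assembly is unnecessary---the paper does exactly this with a single commutative diagram whose outer verticals are $\Id_{T^{\oplus a}}$ and the deflation $\bigoplus R_i^{\oplus b_i}\to F$, to which Lemma~\ref{lem_defl_in_the_middle} applies directly.
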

\begin{proof} 
	By applying the second of the sequences (\ref{eqtn_6_term_ex_seq}) to the conflation (\ref{univ_extens}) and object $F\in \tT^{\perp}$ we get: $\Ext^1(R,F)=0$.
	
	Now applying (\ref{eqtn_6_term_ex_seq}) to the same conflation and object $T^{\oplus a}\in \tT$ gives an exact sequence:
	\begin{equation}\label{four-term}
	\Ext^1(Q,T)\otimes\Hom(T,T^{\oplus a})\to \Ext^1(Q,T^{\oplus a})\to \Ext^1(R,T^{\oplus a})\to 0
	\end{equation}
	Since the first morphism is an isomorphism, $\Ext^1(R,T^{\oplus a})$ vanishes.

	Now any object $E\in \eE$ has a conflation of the form 
	\begin{equation}\label{taef}
	T^{\oplus a}\to E\to F 
	\end{equation}
	with $T^{\oplus a}\in \tT$ and $F\in \tT^{\perp}$.
	 Applying $\Ext^1(R, -)$ to this conflation gives that $\Ext^1(R,E)=0$, for any $E\in \eE$, i.e. $R$ is projective in $\eE$.
	
	Applying $\Ext^1(T,-)$ to the same conflation implies that $T$ is projective in $\eE$.
	
	Now let $\{Q_{i}\}$ be a set of projective generators in $\tT^{\perp}$. Consider conflation (\ref{taef}) for an arbitrary object $E\in \eE$. We have a deflation $\oplus Q_i^{\oplus b_i}\to F$. Since $R_i$ is a universal extension of $Q_i$, we have a deflation $R_i\to Q_i$. As $\oplus R_i^{\oplus b_i}$ is a projective object in $\eE$, the composite deflation $\oplus R_i^{\oplus b_i}\to \oplus Q_i^{\oplus b_i}\to F$ lifts along the deflation $E\to F$ to a morphism $\oplus R_i^{\oplus b_i}\to E$. We get a diagram:
	\[
	\xymatrix{T^{\oplus a} \ar[r] & E \ar[r] & F\\
	T^{\oplus a}\ar[r] \ar[u]^{\Id}& T^{\oplus a}\oplus  R_i^{\oplus b_i} \ar[r] \ar[u] & \bigoplus R_i^{\oplus b_i}\ar[u]}
	\]
	that satisfies the condition of Lemma \ref{lem_defl_in_the_middle}. Hence the morphism $T^{\oplus a}\oplus R_i^{\oplus b_i}\to E$ is a deflation.
\end{proof}

\begin{PROP}\label{prop_proj_in_thin}
	Let $\eE$ be a thin category with the canonical poset $\Lambda$. There exist pairwise non-isomorphic, unique (up to a non-unique isomorphism) projective objects $\{P(\la)\}_{\la \in \Lambda}\subset\eE$ with local endomorphism rings which admit deflations $d_{\la} \colon P(\la)\to E(\la)$. The kernel of $d_{\la}$ is in $\tT_{I_{<\la}}$ 
	and the subcategory $\pP := \fF (\{P(\la)\}_{\la \in \Lambda})$ is projectively generating.  	
\end{PROP}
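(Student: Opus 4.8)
\emph{Proof proposal.} The plan is to induct on $n=|\Lambda|$ (equivalently, on the length of a thin filtration); for $n\le 1$ the statement is immediate with $P(\la)=E(\la)$. For the inductive step I would choose a minimal element $\lambda_0\in\Lambda$, extend $\{\lambda_0\}$ to a linearisation of $\Lambda$, and work with the induced thin filtration, whose first term is $\tT_1=\fF(\{E(\lambda_0)\})\simeq k\textrm{-vect}$. By Proposition~\ref{prop_strict_filtr_on_thin} this $\tT_1$ is right admissible and $\fF:=\tT_1^{\perp}=\fF(\{E(\mu)\}_{\mu\ne\lambda_0})$ is the torsion-free class of the perpendicular torsion pair $(\tT_1,\fF)$; in particular $E(\lambda_0)$ is already projective in $\eE$, since $\Ext^1_{\eE}(E(\lambda_0),\fF)=0$ and $\Ext^1_{\eE}(E(\lambda_0),E(\lambda_0))=0$. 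The first thing to check is that $\fF$ is again a thin category: reindexing the $\iI(\Lambda)$-filtration along the bijection between lower ideals of $\Lambda\setminus\{\lambda_0\}$ and lower ideals of $\Lambda$ containing $\lambda_0$, and using Lemma~\ref{lem_filtration_torsion} together with the semiorthogonality $\Hom_{\eE}(E_p,E_q)=\Ext^1_{\eE}(E_p,E_q)=0$ for $p<q$ from Proposition~\ref{prop_full_exc_coll_in_D_thin}, one sees that $\tT^{\fF}_J:=\fF(\{E(\nu)\}_{\nu\in J})$ ($J$ a lower ideal of $\Lambda\setminus\{\lambda_0\}$) is right admissible in $\fF$ with $k\textrm{-vect}$ graded factors, and that the canonical poset of $\fF$ is $\Lambda\setminus\{\lambda_0\}$ with the restricted order (minimality of $\lambda_0$ guarantees that no order relation of $\Lambda$ between elements of $\Lambda\setminus\{\lambda_0\}$ factors through $\lambda_0$, and $\Hom$, $\Ext^1$ are computed the same way in $\fF$ as in $\eE$).

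By the inductive hypothesis applied to $\fF$ there are pairwise non-isomorphic projective objects $\{P_{\fF}(\mu)\}_{\mu\ne\lambda_0}\subset\fF$ with local endomorphism rings, deflations $d^{\fF}_{\mu}\colon P_{\fF}(\mu)\to E(\mu)$ with $\ker d^{\fF}_{\mu}\in\tT^{\fF}_{I^{\fF}_{<\mu}}$, and with $\textrm{add}\{P_{\fF}(\mu)\}$ projectively generating in $\fF$; in particular each $P_{\fF}(\mu)$ is filtered by the $E(\nu)$ with $\nu\preceq\mu$ and $\nu\ne\lambda_0$. Now I would apply Proposition~\ref{prop_univ_ext_in_proj} to the right admissible subcategory $\tT_1\simeq k\textrm{-vect}$ with $T=E(\lambda_0)$ and with the projective generators $Q_{\mu}=P_{\fF}(\mu)$ of $\tT_1^{\perp}=\fF$: it produces the universal extensions $R_{\mu}$, sitting in conflations $E(\lambda_0)\otimes\Ext^1_{\eE}(P_{\fF}(\mu),E(\lambda_0))^{\vee}\to R_{\mu}\xrightarrow{\pi_{\mu}}P_{\fF}(\mu)$, and asserts that $E(\lambda_0)$ together with the $R_{\mu}$ form a set of projective generators of $\eE$. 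I set $P(\lambda_0):=E(\lambda_0)$, $d_{\lambda_0}:=\Id$, and $P(\mu):=R_{\mu}$, $d_{\mu}:=d^{\fF}_{\mu}\circ\pi_{\mu}$, which is a composite of deflations.

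It remains to verify the listed properties. For the kernel: pulling the inflation $\ker d^{\fF}_{\mu}\hookrightarrow P_{\fF}(\mu)$ back along the deflation $\pi_{\mu}$ exhibits $\ker d_{\mu}$ inside a conflation $\ker\pi_{\mu}\to\ker d_{\mu}\to\ker d^{\fF}_{\mu}$; here $\ker d^{\fF}_{\mu}\in\tT^{\fF}_{I^{\fF}_{<\mu}}\subseteq\tT_{I_{<\mu}}$, and $\ker\pi_{\mu}$ is a sum of copies of $E(\lambda_0)$. If $\lambda_0\prec\mu$ this sum lies in $\tT_{\{\lambda_0\}}\subseteq\tT_{I_{<\mu}}$; if $\lambda_0\not\prec\mu$ then $\Ext^1_{\eE}(P_{\fF}(\mu),E(\lambda_0))=0$, because for every $\nu\preceq\mu$ with $\nu\ne\lambda_0$ one has $\lambda_0\not\preceq\nu$, so the defining property of the canonical order gives $E(\lambda_0)\in E(\nu)^{\perp}$, hence $\Ext^1_{\eE}(E(\nu),E(\lambda_0))=0$, and a dévissage over the filtration of $P_{\fF}(\mu)$ finishes the claim; thus $\pi_{\mu}$ is an isomorphism and $\ker d_{\mu}=\ker d^{\fF}_{\mu}$. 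In either case $\ker d_{\mu}\in\tT_{I_{<\mu}}$, this subcategory being extension-closed. Locality of $\End_{\eE}(R_{\mu})$ follows from Theorem~\ref{thm_Y_to_F_is_square_zero}: $R_{\mu}$ lies in the $\tT_1$-projective subcategory $\yY$, $\defl^{*}|_{\yY}\colon\yY\to\fF$ is a square-zero extension, so $\End_{\eE}(R_{\mu})$ maps onto the local ring $\End_{\fF}(\defl^{*}R_{\mu})=\End_{\fF}(P_{\fF}(\mu))$ with square-zero (hence nil) kernel, whence it is local. Applying $\defl^{*}$ separates $E(\lambda_0)$ (sent to $0$) from each $R_{\mu}$ (sent to $P_{\fF}(\mu)\ne 0$) and the $R_{\mu}$ from one another, giving pairwise non-isomorphism. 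Finally $\pP=\fF(\{P(\la)\})$ contains the projectively generating subcategory $\textrm{add}\{P(\la)\}$ and consists of projectives (extensions of projectives being projective, by the six-term sequences~(\ref{eqtn_6_term_ex_seq})), hence is projectively generating; and since any projective object of $\eE$ admits a deflation from $\textrm{add}\{P(\la)\}$ which then splits, it is a summand of a finite sum of the $P(\la)$, so — the $P(\la)$ being indecomposable with local endomorphism rings, whence $\textrm{add}\{P(\la)\}$ is Krull--Schmidt — the $P(\la)$ are precisely the indecomposable projectives and are unique up to (non-canonical) isomorphism.

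The step I expect to cost the most work is establishing, for the inductive hypothesis, that the perpendicular $\fF=\tT_1^{\perp}$ is genuinely thin with canonical poset $\Lambda\setminus\{\lambda_0\}$ under the restricted order; this is what makes the filtration data $\tT^{\fF}_{I^{\fF}_{<\mu}}$ available and compatible with the $\tT_{I_{<\mu}}$ inside $\eE$, which is the crux of the verification that $\ker d_{\mu}\in\tT_{I_{<\mu}}$. The remaining arguments are routine applications of Proposition~\ref{prop_univ_ext_in_proj}, Theorem~\ref{thm_Y_to_F_is_square_zero}, and the six-term exact sequences.
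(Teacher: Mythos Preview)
Your inductive construction---remove a minimal $\lambda_0$, apply Proposition~\ref{prop_univ_ext_in_proj} to build each $P(\mu)$ as the universal extension of the inductively given $P_{\fF}(\mu)$ by $E(\lambda_0)$, and deduce locality of $\End P(\mu)$ from the square-zero extension of Theorem~\ref{thm_Y_to_F_is_square_zero}---is exactly the paper's strategy. Your case analysis for $\ker d_\mu\in\tT_{I_{<\mu}}$ (splitting on whether $\lambda_0\prec\mu$) is in fact more careful than the paper's treatment of this point, and your use of $\defl^*$ to separate isomorphism classes is a clean variant of the paper's $\Hom(P(\mu),E(\lambda))$ computation.

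There is, however, a gap in your uniqueness argument. Your Krull--Schmidt reasoning shows that the $P(\lambda)$ exhaust the isomorphism classes of indecomposable projectives, so any projective $P'(\lambda)$ with local endomorphism ring and a deflation $d'\colon P'(\lambda)\to E(\lambda)$ is isomorphic to \emph{some} $P(\mu)$. But the proposition asserts uniqueness for each fixed $\lambda$: you must show $\mu=\lambda$, and Krull--Schmidt alone does not give this (nothing in your setup rules out a deflation $P(\mu)\to E(\lambda)$ for $\mu\neq\lambda$). The paper handles uniqueness by a direct lifting argument instead: projectivity of $P(\lambda)$ and $P'(\lambda)$ yields maps $\alpha\colon P(\lambda)\to P'(\lambda)$ and $\beta\colon P'(\lambda)\to P(\lambda)$ with $d'_\lambda\alpha=d_\lambda$ and $d_\lambda\beta=d'_\lambda$; then $d_\lambda(\beta\alpha-1)=0$, and since $\End P(\lambda)$ is local either $\beta\alpha$ or $1-\beta\alpha$ is a unit, forcing $\beta\alpha$ invertible; similarly for $\alpha\beta$. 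This avoids the question of which $P(\mu)$ can surject onto $E(\lambda)$ entirely.
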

\begin{proof}
	We construct pairwise non-isomorphic $P(\la)$ by induction on $|\Lambda|$. The case $|\Lambda|=1$ is clear. Consider $\la \in \Lambda$. If $\la$ is minimal then $P(\la) = E(\la)$. Otherwise, let $\nu\in \Lambda$ be a minimal element. Category $\fF:=\fF(\{ E(\la )\}_{\la \neq \nu})$ is thin with canonical poset $\Lambda \setminus \{\nu\}$, hence by inductive hypothesis, there exists $Q(\la )\in \fF$ with the required properties. By Proposition \ref{prop_strict_filtr_on_thin}, category $\eE$ admits a perpendicular torsion pair $(\tT, \fF)$ where $\tT = \fF(E (\nu ))$. 
	
	We define $P(\la)$ as the universal extension of $Q(\la)$ by $E(\nu)$. By Proposition \ref{prop_univ_ext_in_proj} object $P(\la)$ is projective in $\eE$. 
		Let $\yY \subset \eE$ be the category of $\tT$-projective objects. By Theorem \ref{thm_Y_to_F_is_square_zero} functor $\defl^* = q\colon \yY \to \fF$ is a square-zero extension. 
		As it was shown in the beginning of the proof of this theorem, $P(\la) \in \yY$ and $q(P(\la)) = Q(\la)$. Hence, the kernel of the surjective morphism $\End(P(\la)) \to \End(Q(\la))$ induced by $q$ is a square-zero ideal.
As $\End(Q(\la))$ is local, so is $\End(P(\la))$. 
	By inductive hypothesis $Q(\la)$ admits a deflation $Q(\la) \to E(\la)$ with kernel in $\tT_{I_{<\la}} \cap \fF$.
	Then the composite of deflations $P(\la) \to Q(\la) \to E(\la)$ is a deflation whose kernel lies in $\tT_{I_{< \la}}$. This implies $\Hom(P(\la), E(\la)) \simeq \End(E(\la))$.

	For $\la \notin I_{\mu}$, $\Hom(P(\mu), E(\la)) =0$, as $E(\la) \in \fF_{I_\mu}$, $P(\mu)\in \tT_{I_{\mu}}$. Hence, if $P(\la) \simeq P(\mu)$, deflations $P(\la) \to E(\mu)$, $P(\mu) \to E(\la)$ imply that $\mu \in I_{\la}$ and $\la \in I_{\mu}$, i.e. $\la = \mu$.

	Proposition \ref{prop_univ_ext_in_proj} implies by induction   that $\pP$ is a projectively generating subcategory.
	
	Now we prove the uniqueness of $P(\la)$. Let $P(\la)$ and $P'(\la)$ be two 
	projective objects 
	with local endomorphism rings
	and $d_{\la}\colon P(\la)\to E(\la)$, $d'_{\la}\colon P'(\la)\to E(\la)$ deflations. Morphism $d_{\la}$ admits a lift to $\alpha \colon P(\la) \to P'(\la)$. Similarly, we have $\beta\colon P'(\la) \to P(\la)$. The composite $ x =\beta \circ \alpha \in \End(P(\la))$ is an element whose image under the surjective map $\End(P(\la)) \to \Hom(P(\la),E(\la)) \simeq \End(E(\la)) \simeq k$ is the identity morphism. 
	As $\End(P(\la))$ is a local ring, 
	$x$ is invertible. Similarly, $\alpha \circ \beta$ is invertible, which implies that $\alpha $ is an isomorphism.
\end{proof}

\begin{REM}\label{rem_injective_in_thin}
	Since the opposite of a thin category is thin, Proposition \ref{prop_proj_in_thin} implies that objects $E(\la)$  admit also injective hulls $I(\la)$ and  $\iI:=\fF (\{I(\la)\}_{\la \in \Lambda})\subset \eE$ is an injectively generating subcategory.
\end{REM}

\begin{REM}\label{rem_proj_in_thin_non_alg_cl}
	Theorem \ref{thm_Y_to_F_is_square_zero} remains true for a torsion pair $(\tT, \fF)$ with $\tT \simeq \textrm{mod-}\Gamma$, for a division $k$-algebra $\Gamma$. The current proof works if the tensor product over $k$ in the definition of the universal extension (\ref{univ_extens}) and in Theorem \ref{thm_Y_to_F_is_square_zero} 
	is replaced by the tensor product over $\Gamma$. 
	Hence, thin categories over arbitrary fields (see Remark \ref{rem_thin_not_alg_cl}) admit projective covers $P(\lambda)$ of $E(\lambda)$ with local endomorphisms algebras.
\end{REM}

\section{Abelian envelopes of exact categories}\label{sec_abel_env}

We shall introduce the left and right abelian envelopes of an exact category and discuss their basic properties. We construct such envelopes for exact categories with enough projectives and injectives. Under further finiteness conditions we prove the derived equivalence of an exact category with its envelopes.
Also, we discuss the structures which admissible subcategories and strict filtrations in exact categories induce on envelopes, and apply these techniques to  thin categories.

For additive categories $\cC$ and  $\bB$ with $\cC$ essentially small, we denote by $\Fun(\cC, \bB)$ the category of additive functors $\cC \to \bB$ with natural transformations as morphisms.

\vspace{0.3cm}
\subsection{Definition of the right and  left envelope
}\label{ssec_first_prop}~\\

There is a meaningful notion of righ/left exact functors between exact categories. We introduce a bit of their theory in the form required for our purposes.

Let $\eE$ and $\eE'$ be exact categories. We define an additive functor $\Psi \colon \eE\to \eE'$ to be \emph{right exact} if for any conflation $X \xrightarrow{i} Y \xrightarrow{d} Z$ in $\eE$ morphism $\Psi(d)$ is a deflation with kernel $i'\colon X' \to \Psi(Y)$ and $\Psi(i) = i' \circ d'$, for some deflation $d'\colon \Psi(X) \to X'$.  A \emph{left exact} functor is defined similarly. We denote by $\textrm{Rex}(\eE, \eE')$  and $\Lex(\eE, \eE')$ the categories of right/left exact functors $\eE \to \eE'$ with natural transformations as morphisms.

\begin{LEM}\label{lem_comp_of_righ_ex_is_right_ex}
	The composite of two right exact functors is right exact, and similarly for left exact functors.
\end{LEM}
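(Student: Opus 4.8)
The plan is to prove the statement for right exact functors and then deduce the left exact case by passing to opposite categories, using that $\Psi\colon\eE\to\eE'$ is left exact precisely when $\Psi^{\opp}\colon\eE^{\opp}\to(\eE')^{\opp}$ is right exact, together with $(\Phi\circ\Psi)^{\opp}=\Phi^{\opp}\circ\Psi^{\opp}$.

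First I would record an elementary observation: every right exact functor $\Phi$ carries deflations to deflations. Indeed, a deflation $f\colon A\to B$ is the deflation part of some conflation $K\to A\xrightarrow{f}B$, and the definition of right exactness applied to that conflation forces $\Phi(f)$ to be a deflation. With this in hand, let $\Psi\colon\eE\to\eE'$ and $\Phi\colon\eE'\to\eE''$ be right exact and let $X\xrightarrow{i}Y\xrightarrow{d}Z$ be a conflation in $\eE$. I would first apply right exactness of $\Psi$ to obtain a conflation $X'\xrightarrow{i'}\Psi(Y)\xrightarrow{\Psi(d)}\Psi(Z)$ in $\eE'$ together with a deflation $d'\colon\Psi(X)\to X'$ satisfying $\Psi(i)=i'\circ d'$; then I would apply right exactness of $\Phi$ to this new conflation to get that $\Phi\Psi(d)$ is a deflation with kernel $i''\colon X''\to\Phi\Psi(Y)$, along with a deflation $d''\colon\Phi(X')\to X''$ with $\Phi(i')=i''\circ d''$.

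To conclude I would compute $\Phi\Psi(i)=\Phi(i')\circ\Phi(d')=i''\circ(d''\circ\Phi(d'))$ and note that $d''\circ\Phi(d')\colon\Phi\Psi(X)\to X''$ is a deflation: $\Phi(d')$ is a deflation by the observation above (as $d'$ is), and the composite of two deflations is a deflation by axiom (Ex 1). This exhibits $\Phi\Psi(d)$ as a deflation with kernel $i''$ and $\Phi\Psi(i)$ as the composite of $i''$ with a deflation onto $X''$, which is exactly what it means for $\Phi\circ\Psi$ to be right exact. I do not expect a genuine obstacle: the argument is a routine chase through the definitions once one isolates the only non-formal ingredient, namely that right exact functors preserve deflations.
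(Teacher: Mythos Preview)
Your proposal is correct and follows essentially the same argument as the paper's proof: apply right exactness of $\Psi$, then of $\Phi$, and compose the resulting factorisations using that deflations compose. Your explicit isolation of the observation that right exact functors preserve deflations and the reduction of the left exact case to the opposite category are minor expository additions, but the mathematical content is identical.
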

\begin{proof}
	Let $\Psi \colon \eE\to \eE'$, $\Phi \colon \eE' \to \eE''$ be right exact functors and $X\xrightarrow{i}Y \xrightarrow{d}Z$ a conflation in $\eE$. Let $i'$ be the kernel of $\Psi(d)$ and $\Psi(i) = i' \circ d'$. As $\Psi(d)$ and $d'$ are deflations, so are $\Phi \Psi(d)$ and $\Phi(d')$. Let $i''$ be the kernel of $\Phi \Psi(d)$. The right exactness of $\Phi$ implies $\Phi(i') = i'' \circ d''$, for some deflation $d''$. Then $\Phi \Psi(i) = \Phi(i') \circ \Phi(d') = i'' \circ d'' \circ \Phi(d')$ is the composite of $i''$ with the deflation $d'' \circ \Phi(d')$, i.e. $\Phi \Psi$ is right exact.
\end{proof}

Note that, for an abelian category $\aA$, an additive functor $\Psi\colon \eE\to \aA$ is right exact if for any conflation (\ref{eqtn_confl}) the sequence $\Psi(X) \to \Psi(Y) \to \Psi(Z) \to 0$ is exact in $\aA$.

The category $\Lex(\eE^{\opp},\Ab)$ of contravariant left exact functors  with values in abelian groups is the quotient of the category $\textrm{Fun}(\eE^{\opp}, \Ab)$ of additive functors by the Serre subcategory $\textrm{Efc}(\eE^{\opp}, \Ab)$ of effaceable functors (\textit{cf.} \cite{Gabriel, Kel4}), hence it is abelian. The composite 
\begin{equation}\label{eqtn_Yoneda_for_E}
h\colon \eE\to \textrm{Fun}(\eE^{\opp},\Ab) \to \Lex(\eE^{\opp}, \Ab)
\end{equation} 
of the Yoneda embedding with the quotient functor 
induces an equivalence of $\eE$ with a fully exact subcategory of $\Lex(\eE^{\opp}, \Ab)$. Actually,
 $h$ (\ref{eqtn_Yoneda_for_E}) is the embedding functor of the Gabriel-Quillen theorem that we mentioned in Section \ref{ssec_prelim}.

\begin{DEF}\label{def_rig_ab_env}
	A \emph{right abelian envelope} for an exact category $\eE$ is an abelian category $\aA_r(\eE)$ together with a right exact functor $i_R \colon \eE \to \aA_r(\eE)$ which, for any abelian category $\bB$, yields an equivalence of categories $(-)\circ i_R\colon \Rex(\aA_r(\eE), \bB) \xrightarrow{\simeq} \Rex(\eE, \bB)$. 
\end{DEF}

If we choose a quasi-inverse to this equivalence then every right exact functor $F \colon \eE \to \bB$ has a unique, up to a canonical functorial isomorphism, lifting to a functor $\tilde{F} \colon \aA_r(\eE) \to \bB$ together with a canonical isomorphism $\tilde{F} \circ i_R \simeq F$:
\begin{equation}\label{triangle}
\xymatrix{\eE \ar[dr]_{F} \ar[r]^{i_R} & \aA_r(\eE) \ar[d]^{\tilde{F}} \\ & \bB}
\end{equation}
Similarly, a \emph{left abelian envelope} of $\eE$ is an abelian category $\aA_l(\eE)$ together with a left exact functor $i_L\colon \eE\to \aA_l(\eE)$ which, for any abelian category $\bB$, induces an equivalence $\Lex(\aA_l(\eE), \bB) \xrightarrow{\simeq} \Lex(\eE, \bB)$.

It easily follows from the definition that if a right (left) abelian envelope $\aA_r(\eE)$ ($\aA_l(\eE)$) exists for a given exact category $\eE$ then it is unique up to a canonical equivalence, i.e. given two right abelian envelopes $\aA_r(\eE)$ and $\aA'_r(\eE)$ of an exact category $\eE$, there is a unique, up to a unique isomorphism of functors, equivalence  $\Phi \colon \aA_r(\eE) \to \aA'_r(\eE)$.

We say that $\eE$ is an exact category \emph{with the right/left envelope} if $\aA_r(\eE)$/$\aA_l(\eE)$ exists. 

The opposite to an exact (abelian) category is again an exact (abelian) category. The opposite to the right exact functor is left exact. Then transfer to the opposite categories and functors in (\ref{triangle}) implies that if $\aA_r(\eE)$ exists then so does $\aA_l(\eE^{\opp})$ and
\begin{equation}\label{rem_envs_for_opp}
	 \aA_l(\eE^{\opp})\simeq \aA_r(\eE)^{\opp}.
\end{equation}

The envelopes do not exist in general, see Example \ref{exm_no_env} below. We will discuss their existence under some conditions later in this section. 
Historically, the first instance 
when right and left abelian envelopes exist are 
quasi-abelian categories. 

Let $\cC$ be an additive category with kernels and cokernels. A morphism $f\colon C\to C'$ in $\cC$ is \emph{strict} if the canonical map $\textrm{Coim }f \to \textrm{Im }f$ is an isomorphism. Category $\cC$ is \emph{quasi-abelian} if a pull back of a strict epimorphism is a strict epimorphism and a pushout of a strict monomorphism is a strict monomorphism. A quasi-abelian category $\cC$ carries an intrinsic exact category structure with strict monomorphisms as inflations and strict epimorphisms as deflations.

With \cite[Proposition 1.2.33]{Schne} J-P. Schneiders constructed the right  abelian envelope of a quasi-abelian category, which he denoted by $\lL\hH(\cC)$. He proved that the corresponding functor $I\colon \cC \to \lL\hH(\cC)$ is fully faithful, exact, and reflects exactness \cite[Corollary 1.2.27]{Schne}. Moreover, $I$ induces an equivalence of unbounded derived categories $\dD(\cC) \xrightarrow{\simeq} \dD(\lL\hH(\cC))$  \cite[Proposition 1.2.31]{Schne}.

It make sense to compare the left/right envelopes with the exact abelian hull introduced by M. Adelaman. 

In \cite{Adel}, M.~Adelman  defined and proved the existence of an \emph{exact abelian hull} of an exact category $\eE$,
i.e. an abelian category $\mathcal{U}(\eE)$ together with an 
exact functor $i_e\colon \eE \to \mathcal{U}(\eE)$ which induces an equivalence of the category of exact functors $\mathcal{U}(\eE) \to \bB$ with the category of exact functors $\eE\to \bB$, for any abelian category $\bB$, \textit{cf.} \cite{Stein}. 
Moreover, $i_e \colon \eE\to \aA_e(\eE)$ is fully faithful and reflects exactness, see \cite[Theorem 4.4.4]{Stein}. It would be interesting to investigate the relation of the exact abelian hull of $\eE$ with its abelian envelopes.

The rest of this subsection was added after 
	 the reviewer's comments on possible relation of Rump's recent construction of left/right quotient categories, which appeared at about the same time as the first version of our paper.

 Recall that an additive category $\aA$ is \emph{left abelian} \cite{Rump2} if it has cokernels and for any $f\colon A\to B$ and $g\colon D\to B$ in $\aA$ with $c = \textrm{cok }f$ and $cg=0$, there is a cokernel $d\colon E\to D$ and a morphism $e\colon E\to A$ such that $gd = fe$:
	\[
	\xymatrix{E\ar@{-->}[r]^d \ar@{-->}[d]_e & D\ar[r] \ar[d]^g & 0 & \\
	A \ar[r]^f & B \ar[r]^c & C \ar[r] & 0.}
	\]

	In \cite{Rump2}  Rump introduced the left/right quotient categories, respectively $Q_l(\eE)$ and $Q_r(\eE)$, for any left/right exact category $\eE$.  
	For a left exact category $\eE$, the category $Q_l(\eE)$ is defined as the quotient of the left abelian category $\textrm{fp}(\eE)$ of finitely presented functors $\eE^{\opp}\to \Ab$ by the Serre subcategory of \emph{defects}, i.e. functors $F\colon \eE^{\opp} \to \Ab$ which admit a presentation $\Hom(-,Y) \to \Hom(-, Z) \to F(-) \to 0$, with a deflation $Y\to Z$.
	The category $Q_l(\eE)$ is itself left abelian. The Yoneda functor $\eE\to \textrm{fp}(\eE)$ composed with the quotient functor gives a fully faithful exact functor $i \colon \eE\to Q_l(\eE)$, \cite[Proposition 3.5]{Rump2}. Category $Q_l(\eE)$
	  has the following universal property for right exact functors from $\eE$:
	\begin{PROP}\cite[Corollary 3.1]{Rump2}\label{prop_Rump}
		Let $\eE$ be a left exact category. For any left abelian category $\aA$, the composition with $i$ induces an equivalence of the category of additive functors $Q_l(\eE) \to \aA$ which respect cokernels of morphisms and right exact functors $\eE \to \aA$.
	\end{PROP}
	Any exact category $\eE$ is both left and right exact. The maximal exact structures on the quotient categories $Q_l(\eE)$, $Q_r(\eE)$ consist of all kernel-cokernel pairs.  Rump introduced also a weaker exact structure on them, denoted by $Q_l^\vee(\eE)$ and $Q_r^\vee(\eE)$, and proved that the exact abelian hull $\uU(\eE)$ of $\eE$ is equivalent to 
	 $$
	 \uU(\eE) \simeq Q_r(Q_l^\vee(\eE)) \simeq Q_l(Q_r^\vee(\eE)).
	 $$

\begin{THM}
	If $Q_l(\eE)$ is abelian then $\aA_r(\eE)$ exists and is equivalent to $Q_l(\eE)$.
\end{THM}
\begin{proof}
	We need to check that if $Q_l(\eE)$ is abelian it has the universal property of the right abelian envelope. It is well-known that a functor between abelian categories is right exact if and only if it respects cokernels (note that it is not so when categories are only left abelian).
	Hence, 
	according to Proposition \ref{prop_Rump}, we have an equivalence $\Rex(Q_l(\eE), \aA) \xrightarrow{\simeq}\Rex(\eE, \aA)$, for any abelian category $\aA$.
\end{proof}

\vspace{0.3cm}
\subsection{Faithfulness and fully faithfulness of the functor $ \eE\to \aA_r(\eE)$}~\\

Let $\eE$ be an essentially small exact category with the right envelope $\aA_r(\eE)$. Since the category $\Ab$ of abelian groups is abelian, the composition with $i_R$ yields an equivalence
\begin{equation}\label{eqtn_equiv_of_Lex}
\Lex(\aA_r(\eE)^{\opp}, \Ab) \simeq \Rex(\aA_r(\eE), \Ab^{\opp}) \simeq \Rex(\eE, \Ab^{\opp}) \simeq \Lex(\eE^{\opp},\Ab).
\end{equation}
The composite of the Yoneda embedding $h_{\aA} \colon \aA_r(\eE) \to \Lex(\aA_r(\eE)^{\opp}, \Ab)$ with the  above equivalence is the fully faithful exact functor
\begin{align}\label{eqtn_func_Xi}
&\Xi\colon \aA_r(\eE) \to \Lex(\eE^{\opp}, \Ab),& &\Xi(A)(-) = \Hom_{\aA_r(\eE)}(i_R(-), A).&
\end{align}
Recall after \cite[Definition 6.3.3]{KasSch2} that an object $C$ in a category $\cC$ with small filtrant colimits is \emph{compact} if for any $\alpha \colon I \to \cC$ with $I$ small and filtrant the natural morphism $\varinjlim \Hom_{\cC}(C, \alpha) \to \Hom_{\cC}(C, \varinjlim \alpha)$ is an isomorphism. We denote by $\cC^c$ the full subcategory of compact objects in $\cC$.

\begin{LEM}\label{lem_envelope_fully_exact_in_Lex}
	Let $\eE$ be an essentially small exact category with the right abelian envelope. Then functor $i_R \colon \eE\to \aA_r(\eE)$ is faithful. Moreover, $\Xi$ identifies $\aA_r(\eE)$ with the subcategory $\Lex(\eE^{\opp}, \aA b)^c $ of compact objects in $\Lex(\eE^{\opp}, \aA b)$.
\end{LEM}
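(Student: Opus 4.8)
The plan is to exploit the universal property of $i_R$ together with the Gabriel--Quillen embedding $h\colon \eE\to \Lex(\eE^{\opp},\Ab)$ recalled just above the statement. Since $h$ is itself a left exact functor into an abelian category, and $\Lex(\eE^{\opp},\Ab)$ is abelian, the universal property of the \emph{right} envelope does not directly apply to $h$ (which is left exact, not right exact). The correct move is to note that $h$ is in particular an \emph{exact} functor $\eE\to\Lex(\eE^{\opp},\Ab)$ — it reflects exactness and its essential image is a fully exact subcategory — hence \emph{a fortiori} right exact. Therefore $h$ factors (uniquely up to canonical isomorphism) through $i_R$ as $h\simeq \widetilde h\circ i_R$ for a right exact functor $\widetilde h\colon\aA_r(\eE)\to\Lex(\eE^{\opp},\Ab)$, as in diagram (\ref{triangle}). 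Since $h$ is faithful and $h\simeq\widetilde h\circ i_R$, the functor $i_R$ is faithful; this gives the first assertion.

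For the second assertion I would identify $\widetilde h$ with the functor $\Xi$ of (\ref{eqtn_func_Xi}). Indeed, $\Xi(A)(-)=\Hom_{\aA_r(\eE)}(i_R(-),A)$, and precomposing with $i_R$ gives $\Xi(i_R(E))(-)=\Hom_{\aA_r(\eE)}(i_R(-),i_R(E))$; one checks using the universal property (every right exact functor out of $\eE$, in particular $\Hom$ into a fixed target, extends uniquely) that this represents the same functor as $h(E)=\Hom_{\eE}(-,E)$, so $\Xi\circ i_R\simeq h$ and $\Xi$ is right exact. By uniqueness of the lift, $\Xi\simeq\widetilde h$. It then remains to show $\Xi$ is fully faithful and that it identifies $\aA_r(\eE)$ with a fully exact (i.e.\ extension-closed) subcategory of $\Lex(\eE^{\opp},\Ab)$ on which the exact structure is induced.

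Fully faithfulness of $\Xi$ is where I expect the real work to lie. The strategy: since objects of $\aA_r(\eE)$ are built from the image of $i_R$ (more precisely, one should first record that every object of $\aA_r(\eE)$ is a cokernel $i_R(E_1)\to i_R(E_0)\to A\to 0$ of a morphism between objects in the image of $i_R$ — this follows from the construction/universal property, since the full subcategory of such cokernels is abelian and receives a right exact functor from $\eE$, hence equals $\aA_r(\eE)$), one reduces computing $\Hom_{\aA_r(\eE)}(A,A')$ to a limit diagram of $\Hom$-groups between objects $i_R(E)$. On both sides — in $\aA_r(\eE)$ and in $\Lex(\eE^{\opp},\Ab)$ after applying $\Xi$ — this $\Hom$ is computed by the same left exact sequence coming from the presentation, because $\Xi$ is right exact (it turns the presentation of $A$ into a presentation of $\Xi(A)$) and $\Hom$ is left exact; a diagram chase using that $\Xi\circ i_R=h$ is fully faithful then forces $\Xi$ to be an isomorphism on $\Hom$-groups. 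The main obstacle is bookkeeping the naturality and making the reduction to presentations rigorous without circularity; conceptually it is the standard fact that a right exact functor which is fully faithful on a generating-by-cokernels subcategory is fully faithful, but it must be stated carefully for abelian targets.

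Finally, for the "fully exact subcategory" claim: $\Xi$ is exact (it is right exact, and a symmetric argument, or the fact that $i_R$ and $h$ have matching left exactness defects, shows it preserves kernels of the relevant maps — alternatively one invokes that $\Xi$ is fully faithful with right adjoint given by the left Kan extension / sheafification, hence exact); its essential image is closed under extensions in $\Lex(\eE^{\opp},\Ab)$ because $\aA_r(\eE)$ is abelian and $\Xi$ reflects exactness, so an extension of two objects in the image, being the image of their extension computed in $\aA_r(\eE)$, again lies in the image. That the induced exact structure coincides with the one transported from $\aA_r(\eE)$ is then automatic since $\Xi$ is exact and reflects exactness. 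I would close by remarking that faithfulness of $i_R$ is now immediate a second way, as the composite of the fully faithful $\Xi$ with $h=\Xi\circ i_R$.
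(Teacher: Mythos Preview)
Your argument for faithfulness of $i_R$ is correct and is exactly what the paper does: factor the Gabriel--Quillen embedding $h$ (which is exact, hence right exact) through $i_R$ as $h\simeq q\circ i_R$ and use that $h$ is faithful.

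The second half, however, has a genuine gap. Your identification $\Xi\simeq\widetilde h$ rests on the claim that $\Xi\circ i_R\simeq h$, i.e.\ that $\Hom_{\aA_r(\eE)}(i_R(-),i_R(E))\simeq\Hom_{\eE}(-,E)$. This is precisely the statement that $i_R$ is fully faithful, which is \emph{not} known in general (the paper says so explicitly right after this lemma). Your appeal to the universal property here is vague and does not save the argument: the universal property controls right exact functors out of $\eE$, not the representability of $\Xi(i_R(E))$. Consequently the entire presentation/diagram-chase strategy collapses, since the base case ``$\Xi$ is fully faithful on the image of $i_R$'' is exactly what is missing. The claim that every object of $\aA_r(\eE)$ is a cokernel of a map between $i_R$-images is also unjustified (your sketched argument does not use the universal property correctly) and turns out to be unnecessary.

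The paper's route is both shorter and avoids this circularity. One observes directly that
\[
\Xi \;=\; \bigl((-)\circ i_R\bigr)\circ h_{\aA_r(\eE)},
\]
where $h_{\aA_r(\eE)}\colon \aA_r(\eE)\to\Lex(\aA_r(\eE)^{\opp},\Ab)$ is the Gabriel--Quillen embedding of the abelian category $\aA_r(\eE)$, and $(-)\circ i_R\colon \Lex(\aA_r(\eE)^{\opp},\Ab)\to\Lex(\eE^{\opp},\Ab)$ is an \emph{equivalence}: rewrite $\Lex(\cC^{\opp},\Ab)\simeq\Rex(\cC,\Ab^{\opp})^{\opp}$ on both sides and apply the defining universal property of $i_R$ with $\bB=\Ab^{\opp}$. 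Since $h_{\aA_r(\eE)}$ is fully faithful, exact and reflects exactness by Gabriel--Quillen, so is $\Xi$. No presentations, no assumption on the essential image of $i_R$, and no exactness bookkeeping are needed.
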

\begin{proof}
	Functor $h$ in (\ref{eqtn_Yoneda_for_E})
	is exact, in particular right exact, hence there exists right exact functor $q\colon \aA_r(\eE) \to \Lex(\eE^{\opp}, \aA b)$ such that $q\circ i_R = h$. As $h$ is fully faithful, functor $i_R$ is faithful.
	
	Functor $\Xi\colon \aA_r(\eE) \to \Lex(\eE^{\opp}, \Ab)$ in (\ref{eqtn_func_Xi}) is exact and fully faithful. As category $\aA_r(\eE)$ is abelian, $\Lex(\aA_r(\eE)^{\opp}, \Ab)$ is equivalent to the category $\textrm{Ind}(\aA_r(\eE))$ of ind-objects over $\aA_r(\eE)$, see \cite[Corollary 8.6.3]{KasSch2}. The category $\aA_r(\eE)$ is idempotent complete, hence it is equivalent to the category of compact objects in $\textrm{Ind}(\aA_r(\eE))\simeq \Lex(\aA_r(\eE)^{\opp}, \Ab)$, see \cite[Exercise 6.1]{KasSch2}.	
\end{proof}

\begin{REM}\label{rem_not_yoneda}
	By Lemma \ref{lem_envelope_fully_exact_in_Lex},  if $\aA_r(\eE)$ exists then it is equivalent to the full subcategory of compact objects in $\Lex(\eE^{\opp}, \Ab)$. However, it is not clear if the composite of the universal functor $i_R\colon \eE\to \aA_r(\eE)$ with the canonical embedding $\Xi\colon \aA_r(\eE) \simeq \Lex(\eE^{\opp}, \Ab)^c \to \Lex(\eE^{\opp}, \Ab)$ is the Yoneda embedding  $h$ (\ref{eqtn_Yoneda_for_E}).
\end{REM}

The following example of an exact category without the right abelian envelope was communicated to us by A. Efimov.
\begin{EXM}\label{exm_no_env}
	Let $R$ be a non-coherent ring and $\eE$ the category of finitely presented projective right $R$-modules. Then $\eE$ does not have the right abelian envelope. Indeed, by Lemma \ref{lem_envelope_fully_exact_in_Lex}, if $\aA_r(\eE)$ exists it is equivalent to the category of compact objects in $\textrm{Mod-}R$, i.e. to the category of finitely presented right $R$- modules  (see \cite[Exercise 6.8]{KasSch2}) which, by the assumption on $R$, is not abelian.
\end{EXM} 

By Lemma \ref{lem_envelope_fully_exact_in_Lex}, the functor $i_R\colon \eE\to \aA_r(\eE)$ is always faithful.  It would be interesting to know whether functor it is also always full.

\begin{LEM}\label{lem_if_I_R_full_then_exact}
	Let $\eE$ be an essentially small exact category. If the right abelian envelope $\aA_r(\eE)$ exists and $i_R \colon \eE\to \aA_r(\eE)$ is full then $i_R$ 
	induces an equivalence of $\eE$ with a fully exact subcategory of $\aA_r(\eE)$.
\end{LEM}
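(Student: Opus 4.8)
The plan is to show two things: that $i_R$ is fully faithful, and that its essential image is closed under extensions in $\aA_r(\eE)$ with the induced exact structure agreeing with the original one. By Lemma \ref{lem_envelope_fully_exact_in_Lex} we already know $i_R$ is faithful, so the hypothesis that $i_R$ is full immediately gives that $i_R$ is fully faithful; hence $\eE$ is equivalent, as an additive category, to a full subcategory of $\aA_r(\eE)$. The real content is therefore to verify that this full subcategory is closed under extensions and that the embedding reflects and preserves exactness, so that the induced exact structure on it coincides with that of $\eE$.

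First I would use the Gabriel--Quillen-type factorization from the proof of Lemma \ref{lem_envelope_fully_exact_in_Lex}: the functor $\Xi\colon\aA_r(\eE)\to\Lex(\eE^{\opp},\aA b)$ presents $\aA_r(\eE)$ as a fully exact subcategory, and $\Xi\circ i_R$ is canonically identified with the Yoneda embedding $h\colon\eE\to\Lex(\eE^{\opp},\aA b)$, which is exact and reflects exactness. Since $\Xi$ is exact, reflects exactness and is fully faithful, a sequence in $\eE$ is a conflation if and only if its $i_R$-image is a conflation in $\aA_r(\eE)$: indeed $i_R(X)\to i_R(Y)\to i_R(Z)$ is a short exact sequence in $\aA_r(\eE)$ iff its $\Xi$-image is short exact in $\Lex(\eE^{\opp},\aA b)$ (as $\Xi$ reflects exactness) iff $h(X)\to h(Y)\to h(Z)$ is short exact iff $X\to Y\to Z$ is a conflation in $\eE$ (as $h$ reflects exactness). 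This gives that $i_R$ both preserves and reflects exactness.

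The remaining point is closure under extensions: given a short exact sequence $0\to i_R(X)\to A\to i_R(Z)\to 0$ in $\aA_r(\eE)$, one must produce $Y\in\eE$ with $A\simeq i_R(Y)$. Here I would pass to $\Lex(\eE^{\opp},\aA b)$ via $\Xi$, obtaining a short exact sequence $0\to h(X)\to\Xi(A)\to h(Z)\to 0$; since $h$ embeds $\eE$ as a \emph{fully exact} subcategory of $\Lex(\eE^{\opp},\aA b)$ (the Gabriel--Quillen embedding), this extension is classified by an element of $\Ext^1_{\Lex}(h(Z),h(X))\simeq\Ext^1_\eE(Z,X)$, so there is a conflation $X\to Y\to Z$ in $\eE$ whose $h$-image realizes $\Xi(A)$; applying $i_R$ and using that $\Xi$ is fully faithful and $\Xi\circ i_R\simeq h$ yields $A\simeq i_R(Y)$. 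Thus the essential image of $i_R$ is closed under extensions, and combined with the previous paragraph the induced exact structure matches $\eE$'s, so $i_R$ identifies $\eE$ with a fully exact subcategory of $\aA_r(\eE)$.

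The step I expect to be the main obstacle is the closure-under-extensions argument: one must be careful that the identification $\Ext^1_{\Lex}(h(Z),h(X))\simeq\Ext^1_\eE(Z,X)$ really comes from the Gabriel--Quillen embedding (this is exactly the statement, recalled in Section \ref{ssec_prelim}, that $h$ induces isomorphisms on $\Ext^1$), and that transporting the extension back through $\Xi$ is legitimate — i.e. that every object of $\aA_r(\eE)$ sitting in such an extension is hit, which relies on $\Xi$ being fully faithful and exact rather than merely reflecting exactness. Everything else is formal manipulation of the fully faithful exact functors $h$, $\Xi$ and $i_R$.
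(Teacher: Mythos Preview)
Your proposal is correct and follows essentially the same route as the paper: both use the factorization $\Xi\circ i_R\simeq h$ together with the fact that $\Xi$ is a fully faithful exact embedding of $\aA_r(\eE)$ into $\Lex(\eE^{\opp},\aA b)$ and that $h$ is the Gabriel--Quillen embedding of $\eE$ as a fully exact subcategory. The paper's proof is more compressed --- it simply invokes these two facts to conclude directly that $i_R$ is exact and that its image is a fully exact subcategory --- whereas you unpack the closure-under-extensions step explicitly via the $\Ext^1$-isomorphism; but the underlying argument is the same.
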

\begin{proof}
	Since $i_R$ is full, it is fully faithful by Lemma \ref{lem_envelope_fully_exact_in_Lex}. Then formula (\ref{eqtn_func_Xi}) implies that the composite of $\Xi$ and $i_R$ is the Yoneda embedding $h$ (\ref{eqtn_Yoneda_for_E}): $\Xi\circ i_R\simeq h$. 
	As $i_R$ is right exact, $\Xi$ and $h$ are exact, and $\Xi$ is fully faithful, functor $i_R$ is exact. Moreover, since by Gabriel-Quillen theorem $h$
	is an inclusion of a fully exact subcategory and $\Xi$ is exact and fully faithful, $i_R$ induces an equivalence of $\eE$ with a fully exact subcategory of $\aA_r(\eE)$.
\end{proof}

\vspace{0.3cm}
\subsection{The monad associated to the right abelian envelope}\label{ssec_monad}~\\

We consider the 2-category $\mathscr{E}x_r$ whose objects are exact categories admitting  right abelian envelopes. The 1-morphisms in $\mathscr{E}x_r$ are right exact functors and 2-morphisms are natural transformations.

\begin{PROP}\label{prop_envelope_functorial}
	The 2-category $\mathscr{E}x_r$ admits a pseudo-endofunctor $\aA_r\colon \mathscr{E}x_r \to \mathscr{E}x_r$ and a pseudo-natural transformation $\mathscr{I}\colon \Id_{\mathscr{E}x_r} \to \aA_r$ such that, for any $\eE\in \mathscr{E}x_r$, the functor $\mathscr{I}_\eE \colon \eE\to \aA_r(\eE)$ is the right abelian envelope of $\eE$. 
\end{PROP}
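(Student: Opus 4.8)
The plan is to define $\aA_r$ on $1$- and $2$-morphisms via the universal property of the envelope, and then to read off all of the pseudo-functoriality and pseudo-naturality data from the uniqueness part of that property. First I would note that $\aA_r(\eE)$ again belongs to $\mathscr{E}x_r$, since an abelian category is its own right abelian envelope with structure functor the identity. For every $\eE\in\mathscr{E}x_r$ fix once and for all a quasi-inverse to the equivalence $(-)\circ\mathscr{I}_\eE\colon\Rex(\aA_r(\eE),\bB)\xrightarrow{\simeq}\Rex(\eE,\bB)$ of Definition \ref{def_rig_ab_env}, written $G\mapsto\wt G$, together with the canonical isomorphism $\wt G\circ\mathscr{I}_\eE\simeq G$ as in (\ref{triangle}).

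Given a $1$-morphism $F\colon\eE\to\eE'$, the composite $\mathscr{I}_{\eE'}\circ F$ is right exact by Lemma \ref{lem_comp_of_righ_ex_is_right_ex} and has abelian target, so I set
\[
\aA_r(F):=\widetilde{\mathscr{I}_{\eE'}\circ F}\colon\aA_r(\eE)\to\aA_r(\eE'),
\]
a right exact functor between abelian categories, hence a $1$-morphism of $\mathscr{E}x_r$; it carries the canonical isomorphism $\mathscr{I}_F\colon\aA_r(F)\circ\mathscr{I}_\eE\xrightarrow{\simeq}\mathscr{I}_{\eE'}\circ F$, which I take as the structure $2$-cell of $\mathscr{I}$ at $F$. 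On a $2$-morphism $\alpha\colon F\Rightarrow G$ I apply the chosen quasi-inverse functor to the whiskered transformation $\mathscr{I}_{\eE'}\ast\alpha$; since the quasi-inverse is a genuine functor on each $\Rex$-category, the resulting $\aA_r(\alpha)\colon\aA_r(F)\Rightarrow\aA_r(G)$ is strictly functorial in $\alpha$.

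Next I would construct the compositor $\aA_r(G\circ F)\simeq\aA_r(G)\circ\aA_r(F)$ for composable $F\colon\eE\to\eE'$, $G\colon\eE'\to\eE''$, and the unitor $\aA_r(\Id_\eE)\simeq\Id_{\aA_r(\eE)}$. Both $\aA_r(G)\circ\aA_r(F)$ and $\aA_r(G\circ F)$ are right exact functors $\aA_r(\eE)\to\aA_r(\eE'')$ whose precomposition with $\mathscr{I}_\eE$ is canonically isomorphic to $\mathscr{I}_{\eE''}\circ G\circ F$ — for the first one via $\aA_r(G)\circ\aA_r(F)\circ\mathscr{I}_\eE\simeq\aA_r(G)\circ\mathscr{I}_{\eE'}\circ F\simeq\mathscr{I}_{\eE''}\circ G\circ F$, using $\mathscr{I}_F$ and $\mathscr{I}_G$, and for the second by definition. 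Because $(-)\circ\mathscr{I}_\eE$ is an equivalence, hence fully faithful, this agreement of restrictions lifts uniquely to the sought isomorphism; the same argument with $F=\Id_\eE$ produces the unitor.

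What remains is to check the coherence axioms — the associativity pentagon and the unit triangles for the pseudofunctor $\aA_r$, together with the compatibility of the $2$-cells $\mathscr{I}_F$ with vertical and horizontal composition and with identities. Each such axiom is an equality of natural isomorphisms between right exact functors out of a fixed $\aA_r(\eE)$, and in every case both sides, after precomposition with $\mathscr{I}_\eE$, unwind to the same canonical isomorphism assembled from the data $\wt G\circ\mathscr{I}_\eE\simeq G$, the $2$-cells $\mathscr{I}_F$, and the triangle identities of the equivalence $(-)\circ\mathscr{I}_\eE$. Since precomposition with $\mathscr{I}_\eE$ is faithful on $2$-cells, the axioms follow. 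I expect this last verification to be the only real work: a lengthy but wholly routine diagram chase, with everything before it an immediate consequence of the defining universal property.
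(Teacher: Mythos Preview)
Your proof is correct and follows essentially the same approach as the paper's: both define $\aA_r$ on $1$- and $2$-morphisms by applying the universal property of the envelope to $\mathscr{I}_{\eE'}\circ F$ (respectively to the whiskered $2$-cell), and both extract the unitor, compositor, and pseudo-naturality data from the uniqueness/fully-faithfulness of $(-)\circ\mathscr{I}_\eE$. You are somewhat more explicit than the paper about the coherence verifications, but the argument is the same.
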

\begin{proof}
	We shall define the functor $\aA_r$ and the natural transformation $\mathscr{I}$. For $\eE\in \mathscr{E}x_r$, $\aA_r(\eE)$ is the right abelian envelope of $\eE$ and $\mathscr{I}_\eE :=i_R\colon \eE\to \aA_r(\eE)$.
	
	Let $\Phi \in \Hom_{\mathscr{E}x_r}(\eE,\eE')$. By Lemma \ref{lem_comp_of_righ_ex_is_right_ex}, the composite $\mathscr{I}_{\eE'}\circ \Phi\colon \eE\to \aA_r(\eE')$ is right exact, hence there exists $\Psi\in \Hom_{\mathscr{E}x_r}(\aA_r(\eE), \aA_r(\eE'))$, unique up to a canonical functorial isomorphism, such that $\Psi \circ \mathscr{I}_{\eE} \simeq \mathscr{I}_{\eE'} \circ \Phi$. We put $\aA_r(\Phi):= \Psi$. Clearly, $\mathscr{I}$ is a pseudo-natural transformation.
 
 	A 2-morphism in $\mathscr{E}x_r$, i.e. a natural transformation $\tau \colon \Phi_1 \to \Phi_2$ of right exact functors $\eE\to \eE'$, yields a natural transformation $\mathscr{I}_{\eE'} \circ \tau \colon \mathscr{I}_{\eE'} \circ \Phi_1 \to \mathscr{I}_{\eE'} \circ \Phi_2$ of functors in $\Rex(\eE, \aA_r(\eE'))$. Equivalence $\Rex(\eE, \aA_r(\eE')) \simeq \Rex(\aA_r(\eE), \aA_r(\eE'))$ implies existence of a unique $\sigma \colon \Psi_1\to \Psi_2$, where $\Psi_i \circ \mathscr{I}_{\eE} \simeq \mathscr{I}_{\eE'} \circ \Phi_i$, for $i=1,2$:
 	\[
 	\xymatrix{\aA_r(\eE) \ar@<1ex>[r]^{\Psi_1} \ar[r]_{\Psi_2} & \aA_r(\eE')\\
 		\eE \ar[u]^{\mathscr{I}_{\eE}} \ar[r]^{\Phi_1} \ar@<-1ex>[r]_{\Phi_2} & \eE' \ar[u]_{\mathscr{I}_{\eE'}}}
 	\] We put $\aA_r(\tau) := \sigma$.
 	
 	The uniqueness of $\Psi\colon \aA_r(\eE) \to \aA_r(\eE')$ such that $\Psi \circ \mathscr{I}_{\eE} \simeq \mathscr{I}_{\eE'} \circ \Phi$ implies canonical isomorphisms $\aA_r(\Id) \simeq \Id$ and $\aA_r(\Phi' \circ \Phi)\simeq \aA_r(\Phi') \circ \aA_r(\Phi)$, for $\Phi \in \Hom_{\mathscr{E}x_r}(\eE, \eE')$, $\Phi' \in \Hom_{\mathscr{E}x_r}(\eE', \eE'')$. It follows that $\aA_r$ is indeed a pseudo-functor.
\end{proof}

	Recall that a \emph{monad} on a category $\cC$ is an endofunctor $F\colon \cC \to \cC$ together with natural transformations $\eta \colon \Id_{\cC} \to F$, $\mu \colon F^2 \to F$ which satisfy the standard conditions.

	An \emph{algebra over monad} $(F, \eta, \mu)$ is a pair $(C,T_C)$ of an object $C\in \cC$ and $T_C \in \Hom_{\cC}(F(C), C)$ which fit into the standard commutative diagrams. 
	A morphism of algebras $(C,T_C) \to (C', T_{C'})$  is $\f \colon C \to C'$ such that $T_{C'} \circ F(\f) =\f \circ T_C$. We denote by $\cC^F$ the (Eilenberg-Moore) category of algebras over $F$ and algebra homomorphisms.
	If $\cC$ is a 2-category, all the equalities should be replaced by isomorphisms.

	We denote by $\mathscr{A}b_r\subset \mathscr{E}x_r$ the full 2-subcategory whose objects are abelian categories endowed with the exact structure in which conflations are the short exact sequences.
	
		If $\aA\in \mathscr{E}x_r$ is abelian, then $\Id_{\aA} \colon \aA \to \aA$ is its right abelian envelope. In particular, $\mathscr{I}_{\aA_r(\eE)} \colon \aA_r(\eE) \xrightarrow{\simeq} \aA_r^2(\eE)$ is an equivalence, for any $\eE\in \mathscr{E}x_r$. We define isomorphism 
	$$
	\mu:=(\mathscr{I}|_{\aA_r})^{-1}\colon \aA_r^2 \xrightarrow{\simeq} \aA_r.
	$$

\begin{PROP}\label{prop_mondad_algebras_over}
	The pseudo-functor $\aA_r$ together with the pseudo-natural transformations $\mathscr{I}\colon \Id_{\mathscr{E}x_r} \to \aA_r$, $\mu \colon \aA_r^2\to \aA_r$ defines a monad $(\aA_r, \mathscr{I}, \mu)$ on the category $\mathscr{E}x_r$. 
	The subcategory $\mathscr{A}b_r\subset \mathscr{E}x_r$ is equivalent to the category $\mathscr{E}x_r^{\aA_r}$ of algebras over this monad.
\end{PROP}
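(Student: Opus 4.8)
The plan is to check the pseudomonad axioms formally, using at each step the universal property of $i_R=\mathscr{I}_\eE$, and then to identify the Eilenberg--Moore objects by exploiting that $\mu$ is invertible, so that $(\aA_r,\mathscr{I},\mu)$ is an \emph{idempotent} pseudomonad.

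First I would fix, once and for all, a choice of right abelian envelope for each $\eE\in\mathscr{E}x_r$, taking $\aA_r(\aA)=\aA$ and $\mathscr{I}_\aA=\Id_\aA$ whenever $\aA$ is already abelian; then $\mathscr{I}_{\aA_r(\eE)}\colon \aA_r(\eE)\to\aA_r^2(\eE)$ is an equivalence and $\mu_\eE:=\mathscr{I}_{\aA_r(\eE)}^{-1}$ makes sense and is pseudo-natural, the whiskering $\mathscr{I}\aA_r$ being pseudo-natural and componentwise an equivalence. The two unit laws and associativity then follow from one observation used three times: if $\Psi_1,\Psi_2\colon \aA_r(\eE)\to\bB$ are right exact with $\Psi_1\circ\mathscr{I}_\eE\cong\Psi_2\circ\mathscr{I}_\eE$, then $\Psi_1\cong\Psi_2$, since precomposition with $\mathscr{I}_\eE$ is an equivalence $\Rex(\aA_r(\eE),\bB)\to\Rex(\eE,\bB)$ for abelian $\bB$. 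Pseudo-naturality of $\mathscr{I}$ on the $1$-morphism $\mathscr{I}_\eE$ gives $\aA_r(\mathscr{I}_\eE)\circ\mathscr{I}_\eE\cong\mathscr{I}_{\aA_r(\eE)}\circ\mathscr{I}_\eE$, hence $\aA_r(\mathscr{I}_\eE)\cong\mathscr{I}_{\aA_r(\eE)}$, which is the left unit law $\mu_\eE\circ\aA_r(\mathscr{I}_\eE)\cong\Id$; the right unit law $\mu_\eE\circ\mathscr{I}_{\aA_r(\eE)}\cong\Id$ is the definition of $\mu_\eE$; and pseudo-naturality of $\mathscr{I}$ on $\mu_\eE$ gives $\aA_r(\mu_\eE)\circ\mathscr{I}_{\aA_r^2(\eE)}\cong\mathscr{I}_{\aA_r(\eE)}\circ\mu_\eE\cong\Id$, hence $\aA_r(\mu_\eE)\cong\mu_{\aA_r(\eE)}$ and, after whiskering with $\mu_\eE$, the associativity isomorphism $\mu_\eE\circ\aA_r(\mu_\eE)\cong\mu_\eE\circ\mu_{\aA_r(\eE)}$. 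All the invertible modifications produced this way are the unique ones compatible with the various $\mathscr{I}$'s, so the pentagon and triangle coherence diagrams of a pseudomonad commute by uniqueness; this bookkeeping is routine but the most tedious part.

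Next, since $\mu$ is invertible the pseudomonad is idempotent, and I would show that an algebra structure on $\eE$ is precisely a witness that $\eE$ lies in $\mathscr{A}b_r$. Let $(\eE,a)$ be an algebra, $a\colon\aA_r(\eE)\to\eE$ right exact with $a\circ\mathscr{I}_\eE\cong\Id_\eE$. Applying $\aA_r$ and using $\aA_r(\mathscr{I}_\eE)\cong\mathscr{I}_{\aA_r(\eE)}$ gives $\aA_r(a)\circ\mathscr{I}_{\aA_r(\eE)}\cong\Id$, and pseudo-naturality of $\mathscr{I}$ on $a$ then reads $\mathscr{I}_\eE\circ a\cong\aA_r(a)\circ\mathscr{I}_{\aA_r(\eE)}\cong\Id_{\aA_r(\eE)}$. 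Thus $\mathscr{I}_\eE$ and $a$ are mutually quasi-inverse equivalences; in particular $\eE\simeq\aA_r(\eE)$ is abelian as an additive category. It then remains to match the exact structures: an equivalence preserves and reflects kernels, cokernels and epimorphisms, so unwinding the definition of right exactness for $\mathscr{I}_\eE$ shows every conflation of $\eE$ is a short exact sequence in $\eE$, while unwinding it for $a$ applied to the $\mathscr{I}_\eE$-image of a short exact sequence shows conversely that every short exact sequence of $\eE$ is a conflation (both times using that the class of conflations is closed under isomorphism). Hence $\eE$ carries the exact structure of short exact sequences, i.e.\ $\eE\in\mathscr{A}b_r$. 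I expect this identification of the exact structures to be the only genuinely non-formal step.

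Finally I would assemble the biequivalence. The assignment $\aA\mapsto(\aA,\Id_\aA)$ defines a pseudofunctor $\Phi\colon\mathscr{A}b_r\to\mathscr{E}x_r^{\aA_r}$: the algebra axioms hold trivially because $\mathscr{I}_\aA=\mu_\aA=\Id_\aA$, and a right exact $F\colon\aA\to\aA'$ becomes an algebra morphism via the canonical isomorphism $\aA_r(F)\cong F$ (both restrict along $\mathscr{I}_\aA=\Id$ to $F$). This $\Phi$ is $2$-fully faithful: the $2$-cell datum of an algebra morphism $(\aA,\Id)\to(\aA',\Id)$ over $G$ is an invertible $2$-cell $\aA_r(G)\Rightarrow G$ obeying the algebra-morphism axioms, which — as for any idempotent (pseudo)monad — forces it to be the canonical one, so $\Hom_{\mathscr{E}x_r^{\aA_r}}(\Phi\aA,\Phi\aA')\simeq\Rex(\aA,\aA')=\Hom_{\mathscr{A}b_r}(\aA,\aA')$. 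And $\Phi$ is essentially surjective: for an algebra $(\eE,a)$, the equivalence $\mathscr{I}_\eE$ of the previous paragraph, together with the $2$-cell $\aA_r(\mathscr{I}_\eE)\cong\mathscr{I}_{\aA_r(\eE)}=\Id_{\aA_r(\eE)}\cong\mathscr{I}_\eE\circ a$, is an isomorphism $(\eE,a)\xrightarrow{\ \sim\ }(\aA_r(\eE),\Id_{\aA_r(\eE)})=\Phi(\aA_r(\eE))$ in $\mathscr{E}x_r^{\aA_r}$. A $2$-fully faithful, essentially surjective pseudofunctor is a biequivalence, so $\mathscr{A}b_r\simeq\mathscr{E}x_r^{\aA_r}$.
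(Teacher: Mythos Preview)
Your proof is correct and follows essentially the same strategy as the paper: exploit that $\mu$ is invertible (so the pseudomonad is idempotent), and for an algebra $(\eE,a)$ show that $\mathscr{I}_\eE$ and $a$ are mutually quasi-inverse, whence $\eE\simeq\aA_r(\eE)$ is abelian. The paper's argument for $\mathscr{I}_\eE\circ a\cong\Id_{\aA_r(\eE)}$ is slightly more direct than yours: from $a\circ\mathscr{I}_\eE\cong\Id_\eE$ one gets $\mathscr{I}_\eE\circ a\circ\mathscr{I}_\eE\cong\mathscr{I}_\eE\cong\Id_{\aA_r(\eE)}\circ\mathscr{I}_\eE$, and then the equivalence $\Rex(\aA_r(\eE),\aA_r(\eE))\simeq\Rex(\eE,\aA_r(\eE))$ given by precomposition with $\mathscr{I}_\eE$ cancels the $\mathscr{I}_\eE$ on the right; this avoids invoking pseudo-naturality of $\mathscr{I}$ on $a$ and the auxiliary computation of $\aA_r(a)$.

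One point worth noting: your paragraph on matching the exact structures---showing that the conflations of $\eE$ coincide with all short exact sequences once $\eE$ is known to be abelian as an additive category---is a genuine step that the paper's proof elides. The paper simply writes ``$\eE\simeq\aA_r(\eE)\in\mathscr{A}b_r$ is abelian'' without addressing why the given exact structure on $\eE$ must be the maximal one; your argument via right exactness of the equivalence $a$ fills this in correctly. So your version is more complete on this point, even if otherwise parallel.
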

\begin{proof}
	It is straightforward to verify that $(\aA_r, \mathscr{I}, \mu)$ is a monad.
	
	For $\aA\in \mathscr{A}b_r$, define $T_\aA\colon \aA_r(\aA) \to \aA$ as the quasi-inverse of $\mathscr{I}_{\aA} \colon \aA\xrightarrow{\simeq} \aA_r(\aA)$. Then the pair $(\aA, T_\aA)$ is an algebra over $(\aA_r, \mathscr{I}, \mu)$ and, for $\aA$, $\aA'$ in $\mathscr{A}b_r$, any $\Phi \in \Hom_{\mathscr{A}b_r}(\aA, \aA')$ satisfies $T_{\aA'} \circ \aA_r(\Phi) \simeq \Phi \circ T_{\aA}$, i.e. $\mathscr{A}b_r$ is a subcategory of $\mathscr{E}x_r^{\aA_r}$. 
	
	We check that the inclusion $\mathscr{A}b_r \to \mathscr{E}x_r^{\aA_r}$ is an equivalence. By definition of an algebra over a monad, for $(\eE, T_{\eE}\colon \aA_r(\eE) \to \eE)\in \mathscr{E}x_r^{\aA_r}$, the composite $\eE \xrightarrow{\mathscr{I}_\eE} \aA_r(\eE) \xrightarrow{T_{\eE}}\eE$ is isomorphic to $\Id_{\eE}$. It follows that $\mathscr{I}_\eE \circ T_\eE\circ \mathscr{I}_\eE \simeq \mathscr{I}_{\eE} \simeq \Id_{\aA_r(\eE)} \circ \mathscr{I}_{\eE}$. As the composition with $\mathscr{I}_\eE$ yields an equivalence $\Rex(\aA_r(\eE), \aA_r(\eE)) \simeq \Rex(\eE, \aA_r(\eE))$, we conclude that $\mathscr{I}_\eE \circ T_\eE\simeq \Id_{\aA_r(\eE)}$, i.e. $T_\eE$ is an equivalence. In particular, $\eE \simeq \aA_r(\eE) \in \mathscr{A}b_r$ is abelian.
\end{proof}

\vspace{0.3cm}
\subsection{Abelian categories as envelopes for their exact subcategories}~\\

The following theorem, based on a result from \cite{KasSch2}, is an useful source of examples of abelian envelopes.

\begin{THM}\label{thm_if_quotient_then_envelope}
	Let $\aA$ be an abelian category and $\eE\subset \aA$ a fully exact subcategory closed under kernels of epimorphisms. If every object of $\aA$ is a quotient of an object of $\eE$ then $\aA$ is the right abelian envelope for $\eE$.
\end{THM}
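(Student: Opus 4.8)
My plan is to check directly that the inclusion functor $i_R\colon\eE\hookrightarrow\aA$, which is exact and hence right exact, satisfies the universal property of Definition \ref{def_rig_ab_env}: for every abelian category $\bB$ the restriction $(-)\circ i_R\colon\Rex(\aA,\bB)\to\Rex(\eE,\bB)$ is an equivalence. Two consequences of the hypotheses will be used throughout. First, every $A\in\aA$ admits a presentation $E_1\xrightarrow{d}E_0\xrightarrow{p}A\to 0$ with $E_0,E_1\in\eE$: take an epimorphism $p\colon E_0\twoheadrightarrow A$ with $E_0\in\eE$, then an epimorphism $E_1\twoheadrightarrow\ker p$ with $E_1\in\eE$, and set $d\colon E_1\twoheadrightarrow\ker p\hookrightarrow E_0$. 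Second, since $\eE$ is closed under kernels of epimorphisms, any $\aA$-epimorphism between two objects of $\eE$ is a deflation of $\eE$; in particular, if $G\colon\aA\to\bB$ is right exact then applying it to such a presentation gives $\operatorname{coker}\big(G(E_1)\to G(E_0)\big)\simeq G(A)$, naturally in $A$.

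For full faithfulness of $(-)\circ i_R$, I would show that a natural transformation $\alpha\colon G_1|_\eE\to G_2|_\eE$ between restrictions of right exact functors $G_1,G_2\colon\aA\to\bB$ extends uniquely to $\widetilde\alpha\colon G_1\to G_2$. For $A\in\aA$ and any epimorphism $p\colon E_0\twoheadrightarrow A$ with $E_0\in\eE$, naturality of $\alpha$ along $d\colon E_1\to E_0$ (a morphism of $\eE$) together with $p\circ d=0$ forces $G_2(p)\circ\alpha_{E_0}$ to annihilate $\operatorname{im}(G_1(d))=\ker(G_1(p))$, so it factors uniquely through $G_1(p)$, defining $\widetilde\alpha_A$. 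Since any two such epimorphisms are both dominated by $(p,p')\colon E_0\oplus E_0'\twoheadrightarrow A$, the morphism $\widetilde\alpha_A$ is independent of the choice; the same characterization shows $\widetilde\alpha$ is natural in $A$ and restricts to $\alpha$ on $\eE$, while uniqueness (hence also faithfulness of $(-)\circ i_R$) is immediate because $G_1(p)$ is an epimorphism.

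Essential surjectivity is the heart of the matter, and this is where the result of \cite{KasSch2} enters. Given a right exact $F\colon\eE\to\bB$ with $\bB$ abelian, one sets $\widetilde F(A):=\operatorname{coker}\big(F(d)\colon F(E_1)\to F(E_0)\big)$ for a presentation $E_1\xrightarrow{d}E_0\to A\to 0$ as above (for $A=E\in\eE$ one may take $E_1=0$, $E_0=E$, whence $\widetilde F|_\eE\simeq F$). The cited result, which applies precisely because $\eE$ is closed under kernels of epimorphisms and every object of $\aA$ is an $\eE$-quotient, supplies that $\widetilde F(A)$ is independent of the presentation up to canonical isomorphism, that $\widetilde F$ is a functor, and that it is right exact. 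Combined with the previous paragraph, $F\mapsto\widetilde F$ is a quasi-inverse of $(-)\circ i_R$, so this functor is an equivalence and $\aA$, with $i_R$ the inclusion, is the right abelian envelope of $\eE$.

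The main obstacle is exactly the step I am delegating to \cite{KasSch2}: well-definedness of $\widetilde F(A)$ and right exactness of $\widetilde F$. The difficulty is that objects of $\eE$ need not be projective in $\aA$, so one cannot compare two presentations by lifting epimorphisms; instead one dominates both presentations by a third, built from $\eE$-covers of the relevant pullbacks and kernels, where closure of $\eE$ under kernels of epimorphisms guarantees these covers are genuine deflations and right exactness of $F$ transports the comparison to $\bB$. An alternative packaging would be to first prove that $A\mapsto\Hom_\aA(i_R(-),A)$ embeds $\aA$ as a fully exact subcategory of $\Lex(\eE^{\opp},\Ab)$ restricting to the Gabriel-Quillen embedding on $\eE$ (using effaceability of the relevant cokernel presheaves) and then deduce the universal property, but the technical core — comparing $\eE$-resolutions without projectivity — is the same.
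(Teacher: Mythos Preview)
Your proposal is correct and hinges on the same external input as the paper, namely \cite[Theorem 8.7.2]{KasSch2}. The organization differs slightly. The paper invokes that theorem in its packaged form: under the sole hypothesis that every object of $\aA$ is a quotient of an object of $\eE$, the embedding induces an equivalence between $\Rex(\aA,\bB)$ and the category of functors $\eE\to\bB$ preserving every ``one-sided'' exact sequence $E'\to E\to E''\to 0$ with all terms in $\eE$. The remaining work is then a short explicit check, using closure under kernels of epimorphisms, that a right exact functor $\Phi$ on the exact category $\eE$ automatically preserves such sequences (factor $E'\to E$ through $\wt E=\ker(E\to E'')\in\eE$ and note that both $E_1\to E'\to\wt E$ and $\wt E\to E\to E''$ are conflations). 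You instead prove full faithfulness by hand via presentations and delegate essential surjectivity to \cite{KasSch2}; this is fine, but your remark that the cited result ``applies precisely because $\eE$ is closed under kernels of epimorphisms'' slightly misplaces the hypothesis---in the paper's decomposition, \cite{KasSch2} needs only the quotient hypothesis, and closure under kernels is used exactly to identify the Kashiwara--Schapira functor class with $\Rex(\eE,\bB)$.
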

\begin{proof}
	Let $\bB$ be an abelian category.
	According to \cite[Theorem 8.7.2]{KasSch2}, if $\eE$ is a full subcategory of an abelian category $\aA$ and  every object of $\aA$ is a quotient of an object of $\eE$, then the embedding functor $\eE\to \aA$ induces an equivalence of  $\Rex(\aA, \bB)$ with the category of functors $\eE \to \bB$ which take any one-sided exact sequence $E' \to E \xrightarrow{d} E'' \to 0$ with all terms in $\eE$ to a one-sided exact sequence in $\bB$. We shall check that the latter category is equivalent to $\Rex(\eE, \bB)$. By definition, then $\aA =\aA_r(\eE )$. 
	
	Let $\Phi \colon  \eE\to \bB$ be a right exact functor. By assumption the kernel $\wt{E}$ of $d$ is an object of $\eE$. Moreover, the surjective map $E' \to \wt{E}$ has a kernel $E_1$ which is also an object of $\eE$. The right exactness of $\Phi$ implies that sequences $\Phi(E_1) \to \Phi(E') \to \Phi(\wt{E}) \to 0$, $\Phi(\wt{E}) \to \Phi(E) \to \Phi(E'') \to 0$ are exact. Hence so is $\Phi(E') \to \Phi(E) \to \Phi(E'') \to 0$. 
\end{proof}

 This theorem has a nice application to the category of vector bundles. Recall that a noetherian scheme $X$ has a \emph{resolution property} if every coherent sheaf on $X$ is a quotient of a locally free sheaf. 
\begin{COR}\label{cor_Coh_as_env}
	Let $X$ be a noetherian scheme with a resolution property. Then the category $\Coh(X)$ of coherent sheaves is the right abelian envelope for its fully exact subcategory $\textrm{Bun}(X)$ of locally free sheaves.
\end{COR}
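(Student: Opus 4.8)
The plan is to deduce this directly from Theorem \ref{thm_if_quotient_then_envelope}. I need to verify that the pair $\textrm{Bun}(X) \subset \Coh(X)$ satisfies the three hypotheses of that theorem: (i) $\textrm{Bun}(X)$ is a fully exact subcategory of the abelian category $\Coh(X)$; (ii) it is closed under kernels of epimorphisms; and (iii) every coherent sheaf is a quotient of a locally free one.

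First I would observe that $\Coh(X)$ is abelian because $X$ is noetherian, and that $\textrm{Bun}(X)$ is closed under extensions in $\Coh(X)$: an extension of a locally free sheaf by a locally free sheaf is locally free, since over a local ring extensions of free modules are free (the extension is locally split as $\Ext^1$ vanishes for free modules over any ring when the quotient is free/projective). Hence $\textrm{Bun}(X)$ inherits the structure of a fully exact subcategory, where conflations are the short exact sequences in $\Coh(X)$ all of whose terms are locally free. This gives hypothesis (i).

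Next, hypothesis (iii) is precisely the resolution property of $X$, which is assumed. For hypothesis (ii), I would argue that if $d\colon E \to E''$ is an epimorphism in $\Coh(X)$ with $E, E'' \in \textrm{Bun}(X)$, then its kernel $K = \ker(d)$ is again locally free. Indeed, working locally, $d$ corresponds to a surjection of free modules of finite rank over a noetherian local ring; since $E''$ is free, hence projective, the surjection splits, so $K$ is locally a direct summand of a free module of finite rank, i.e. finitely generated projective, hence free over the local ring. Therefore $K$ is locally free, so $\textrm{Bun}(X)$ is closed under kernels of epimorphisms. (Note this uses that the quotient $E''$, not merely $E$, is locally free; this is exactly the form in which hypothesis (ii) of Theorem \ref{thm_if_quotient_then_envelope} is stated.)

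With all three hypotheses verified, Theorem \ref{thm_if_quotient_then_envelope} applies verbatim and yields that $\Coh(X)$ is the right abelian envelope of $\textrm{Bun}(X)$. I do not anticipate a genuine obstacle here; the only point requiring a little care is the local splitting argument for hypothesis (ii), ensuring that the kernel of a surjection of vector bundles is again a vector bundle — this is where the hypothesis ``closed under kernels of epimorphisms'' from the general theorem is checked, and it is where noetherianity of $X$ (guaranteeing coherence of the kernel) together with local freeness of the quotient does the work.
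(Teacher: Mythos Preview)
Your proposal is correct and follows essentially the same approach as the paper: verify that $\textrm{Bun}(X)\subset\Coh(X)$ is closed under extensions and under kernels of epimorphisms, invoke the resolution property, and apply Theorem~\ref{thm_if_quotient_then_envelope}. The paper simply asserts the first two closure properties without the local splitting argument you spell out, so your version is just a more detailed rendering of the same proof.
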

\begin{proof}
	The subcategory $\textrm{Bun}(X) \subset \Coh(X)$ is closed under extensions and kernels of epimorphisms. By assumption, every $F\in \Coh(X)$ is a quotient of some $E\in \textrm{Bun}(X)$. We conclude by Theorem \ref{thm_if_quotient_then_envelope}.
\end{proof}

\vspace{0.3cm}
\subsection{Abelian envelopes for additive categories with weak (co)kernels}\label{ssec_ab_env_for_add}~\\

For an additive category $\cC$ the category $\textrm{fp}(\cC)$ of \emph{finitely presented contravariant functors} is the full subcategory of $\Fun(\cC^{\opp}, \aA b)$ whose objects are functors $\Phi$ admitting a presentation $\Hom_{\cC}(-,C_1) \to \Hom_{\cC}(-,C_2) \to \Phi \to 0$. The Yoneda embedding $\cC \to \Fun(\cC^{\opp}, \aA b)$ restricts to a fully faithful functor $h \colon \cC \to \textrm{fp}(\cC)$, $C\mapsto h^C(-) = \Hom_{\cC}(-,C)$.

We say that an additive category $\cC$ is \emph{additively generated} by objects $C_1,\ldots,C_n$, if any $C\in \cC$ is isomorphic to a finite direct sum of copies of $C_i$. 
\begin{LEM}\label{lem_fp_as_modules}
	Let $\cC$ be a category additively generated by $\{C_i\}_{i=1}^n$. Then $\textrm{fp}(\cC)$ is equivalent to the category of finitely presented right modules over the ring $A:=\End_{\cC}(\bigoplus_{i=1}^nC_i)$.
\end{LEM}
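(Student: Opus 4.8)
The plan is to realise $\textrm{fp}(\cC)$ and $\textrm{mod-}A$ as the categories of finitely presented objects over their respective subcategories of projectives, and to build an equivalence between those subcategories. Set $C:=\bigoplus_{i=1}^n C_i$ and let $e_i\in A=\End_\cC(C)$ be the idempotent cutting out the summand $C_i$, so that $\sum_i e_i=\Id_C$. Consider the evaluation functor
$$
E\colon \textrm{fp}(\cC)\longrightarrow \textrm{Mod-}A,\qquad E(\Phi)=\Phi(C),\qquad x\cdot a:=\Phi(a)(x)\ \ (x\in\Phi(C),\ a\in A);
$$
contravariance of $\Phi$ makes this a \emph{right} $A$-module. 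By Yoneda $E\simeq\Hom_{\textrm{fp}(\cC)}(h^C,-)$, and since cokernels in $\textrm{fp}(\cC)$ are computed pointwise in $\Fun(\cC^{\opp},\Ab)$, the functor $E$ preserves cokernels and epimorphisms; in particular $h^C$ is projective in $\textrm{fp}(\cC)$.

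First I would analyse $E$ on representable functors. Because every object of $\cC$ is a finite direct sum of copies of the $C_i$, the full subcategory $\rR\subset\textrm{fp}(\cC)$ of representables $h^{C'}$ ($C'\in\cC$) is the additive closure of $\{h^{C_i}\}$. Yoneda gives $\Hom_{\textrm{fp}(\cC)}(h^{C'},h^{C''})=\Hom_\cC(C',C'')$; postcomposition with $\iota_i\colon C_i\hookrightarrow C$ identifies $E(h^{C_i})=\Hom_\cC(C,C_i)$ with the right ideal $e_iA$, whence $E(h^C)=A_A$ and $E(\rR)$ equals the additive closure $\pP$ of $\{e_iA\}_{i=1}^n$, which contains every free module $A^{\oplus m}=E\big((h^C)^{\oplus m}\big)$. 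Using $\Hom_A(e_iA,M)\simeq Me_i$ one checks $\Hom_A(Eh^{C_i},Eh^{C_j})\simeq e_jAe_i=\Hom_\cC(C_i,C_j)$, and that this isomorphism is precisely the one induced by $E$ on morphisms; by additivity $E|_{\rR}\colon\rR\xrightarrow{\simeq}\pP$ is an equivalence.

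Next I would bootstrap from $\rR$ to all of $\textrm{fp}(\cC)$. Any $\Phi\in\textrm{fp}(\cC)$ has a presentation $h^{C_2}\xrightarrow{f}h^{C_1}\to\Phi\to 0$ with $C_1,C_2\in\cC$; as $E$ preserves cokernels, $E(\Phi)=\textrm{coker}(Ef)$ is a cokernel of a morphism between objects of $\pP\subset\textrm{proj-}A$, hence finitely presented, so $E$ factors through $\textrm{mod-}A$. Conversely a presentation $A^{\oplus m}\to A^{\oplus l}\to M\to 0$ of $M\in\textrm{mod-}A$ arises, via the equivalence $E|_{\rR}$, from a morphism $(h^C)^{\oplus m}\to(h^C)^{\oplus l}$ whose cokernel $\Phi$ satisfies $E(\Phi)\simeq M$, giving essential surjectivity onto $\textrm{mod-}A$. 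For full faithfulness, apply $\Hom_{\textrm{fp}(\cC)}(-,\Psi)$ and $\Hom_A(-,E\Psi)$ to the presentation of $\Phi$: both yield left exact sequences, $E|_{\rR}$ induces the isomorphism $\Hom_{\textrm{fp}(\cC)}(h^{C_j},\Psi)=\Psi(C_j)\simeq (E\Psi)e_j=\Hom_A(e_jA,E\Psi)$ on the two right-hand terms (naturally in $\Psi$), and a diagram chase gives $\Hom_{\textrm{fp}(\cC)}(\Phi,\Psi)\xrightarrow{\simeq}\Hom_A(E\Phi,E\Psi)$. The only point needing genuine care — the main obstacle — is the bookkeeping of left versus right $A$-module structures and of the idempotents $e_i$: that $E$ lands in right modules, that $E(h^{C_i})\simeq e_iA$ compatibly with composition, and that the Yoneda identification and the identification $\Hom_A(e_iA,-)\simeq(-)e_i$ are mutually compatible, so that the final chase is legitimate. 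Everything else is the standard fact that a cokernel-preserving functor which is an equivalence on projective generators is itself an equivalence.
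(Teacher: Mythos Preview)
Your proof is correct and is essentially the same underlying argument as the paper's: both realize the equivalence $\textrm{fp}(\cC)\simeq\textrm{mod-}A$ via evaluation at $C=\bigoplus_i C_i$. The paper packages this more abstractly---it observes that $\cC$ is the free additive completion $\textrm{Add}(\cC_0)$ of the full $\Ab$-enriched subcategory $\cC_0$ on the $C_i$, and then invokes the standard chain $\Fun(\cC^{\opp},\Ab)\simeq\mathscr{F}(\cC_0^{\opp},\Ab)\simeq\textrm{Mod-}A$, from which the statement follows by restricting to finitely presented objects on both sides. You instead verify full faithfulness and essential surjectivity of the evaluation functor directly on $\textrm{fp}(\cC)$ via presentation arguments. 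Your route is more explicit and self-contained; the paper's is shorter but defers the actual work to the cited universal property. Mathematically they are the same equivalence, just proved at different levels of abstraction.
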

\begin{proof}
	Consider the full $\Ab$-enriched subcategory $\cC_0 \subset \cC$ on objects $C_1,\ldots, C_n$, i.e. the subcategory together with the structure of abelian groups on $\Hom$-spaces. 	By \cite[Exercise VIII.2.6(a)]{MacLane2}, there exists a universal additive category $\textrm{Add}(\cC_0)$.
	Functor $\f\colon \textrm{Add}(\cC_0) \to \cC$, extending the embedding $\cC_0 \to \cC$, is essentially surjective and fully faithful, because $\Hom$-groups are canonically extended to direct sums. Therefore, $\f$ is an equivalence. 
	The statement follows from the equivalences $\textrm{Fun}(\textrm{Add}(\cC_0), \aA b) \simeq \mathscr{F}(\cC_0, \aA b) \simeq \textrm{Mod-}A$, for the category $\mathscr{F}(\cC_0, \aA b)$ of $\aA b$-enriched functors $\cC_0 \to \aA b$.
\end{proof}

Recall that $K \xrightarrow{i} X$ is a \emph{weak kernel} of $X \xrightarrow{\f}Y$ if $\f\circ i =0$ and for any $f\colon Z\to X$ such that $\f \circ f =0$ there exist (not necessarily unique) $g\colon Z\to K$ such that $i\circ g = f$. \emph{Weak cokernels} are defined dually.

\begin{PROP}\label{prop_envelope_if_weak_kernels}
	Let $\cC$ be an additive category with weak kernels. Endow $\cC$ with the split exact structure. Then $h\colon \cC \to \textrm{fp}(\cC)$ is the right abelian envelope.
\end{PROP}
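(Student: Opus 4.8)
The plan is to show directly that, for every abelian category $\bB$, the restriction functor $(-)\circ h\colon \Rex(\textrm{fp}(\cC),\bB)\to \Rex(\cC,\bB)$ is an equivalence by constructing an explicit quasi-inverse. First note that with the split exact structure every conflation in $\cC$ has the form $X\to X\oplus Z\to Z$, and an additive functor carries it to a split short exact sequence in $\bB$; hence $\Rex(\cC,\bB)=\Fun(\cC,\bB)$, and in particular $h$ itself is right exact, so the statement is well posed. Next I would recall the classical fact (Freyd, Auslander) that the weak kernel hypothesis is precisely what makes $\textrm{fp}(\cC)$ abelian: a weak kernel $K\to C_1$ of $f\colon C_1\to C_2$ yields an epimorphism $h^K\to \ker(h^f)$ in $\Fun(\cC^{\opp},\Ab)$, whence $\ker(h^f)$ is finitely generated, and iterating shows $\textrm{fp}(\cC)$ is closed under kernels inside the functor category. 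Since the inclusion $\textrm{fp}(\cC)\hookrightarrow\Fun(\cC^{\opp},\Ab)$ is then exact, Yoneda shows that $\Hom_{\textrm{fp}(\cC)}(h^C,-)$ is the (exact) evaluation at $C$, so each representable $h^C$ is projective; and by the very definition of $\textrm{fp}(\cC)$ every object $M$ admits a presentation $h^{C_1}\xrightarrow{h^f}h^{C_2}\to M\to 0$ with $f\colon C_1\to C_2$.

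Given $F\in\Fun(\cC,\bB)$, I would set $\tilde F(M):=\mathrm{coker}\bigl(F(f)\colon F(C_1)\to F(C_2)\bigr)$ for such a presentation. Well-definedness and functoriality in $M$ follow from the standard comparison argument: any two representable presentations of $M$ (or a morphism of such) can, after forming direct sums, be connected by a morphism of complexes lifting the identity (or the given map), using projectivity of representables; applying the additive functor $F$ and passing to cokernels produces a canonical isomorphism. The identity $\tilde F\circ h\simeq F$ is then immediate, since $h^C$ has a presentation with $C_1=0$, giving $\tilde F(h^C)=\mathrm{coker}(0\to F(C))=F(C)$, naturally in $C$. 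The technical core is the right exactness of $\tilde F$: for a short exact sequence $0\to M'\to M\to M''\to 0$ in $\textrm{fp}(\cC)$ one constructs, by a horseshoe-type argument exploiting that representables are projective and generate and that $\textrm{fp}(\cC)$ is abelian, a commutative diagram of representable presentations whose rows form compatible short exact sequences; applying $F$ termwise and taking cokernels, the right exactness of the cokernel functor on $\bB$ together with the snake lemma gives exactness of $\tilde F(M')\to\tilde F(M)\to\tilde F(M'')\to 0$.

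Conversely, for $G\in\Rex(\textrm{fp}(\cC),\bB)$ and any $M$ presented by $h^{C_1}\xrightarrow{h^f}h^{C_2}\to M\to 0$, right exactness of $G$ gives $G(M)\simeq\mathrm{coker}\bigl((G\circ h)(f)\bigr)$, which is exactly $\widetilde{G\circ h}(M)$; chasing naturality yields $G\simeq\widetilde{G\circ h}$. Combined with $\tilde F\circ h\simeq F$, this shows that $(-)\circ h$ and $F\mapsto\tilde F$ are mutually quasi-inverse, so $h\colon\cC\to\textrm{fp}(\cC)$ satisfies the universal property of Definition \ref{def_rig_ab_env}, i.e.\ it is the right abelian envelope. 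The step I expect to cost the most work is verifying that $\tilde F$ is right exact, because that is where one must manufacture compatible representable presentations of a short exact sequence in $\textrm{fp}(\cC)$ and hence genuinely uses that $\textrm{fp}(\cC)$ is abelian, i.e.\ the weak kernel hypothesis; the remaining verifications are formal manipulations with Yoneda and cokernels.
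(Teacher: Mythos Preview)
Your proposal is correct and takes essentially the same approach as the paper: both observe that $\Rex(\cC,\bB)=\Fun(\cC,\bB)$ for the split exact structure, cite Freyd for abelianness of $\textrm{fp}(\cC)$, and then use that representables are projective generators to extend any $F\in\Fun(\cC,\bB)$ via cokernels of projective presentations. The only organizational difference is that the paper abstracts the last step into a separate general statement (Proposition~\ref{prop_right_ext_fun_and_A_obj}): for any projectively generating subcategory $\pP\subset\eE$ of an exact category, restriction gives an equivalence $\Rex(\eE,\bB)\simeq\Fun(\pP,\bB)$; your horseshoe argument for right exactness and your comparison of presentations are exactly the content of that proposition and the accompanying Lemma~\ref{lem_proj_resol}.
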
	 
\begin{proof}
	The category $\textrm{fp}(\cC)$ is abelian if and only if $\cC$ has weak kernels. This was essentially proved in \cite[Theorem 1.4]{Freyd1} and later formulated in \cite[Proposition 4.5]{Bel}.
	
	Clearly, $\Rex(\cC, \bB)$ is equivalent to $\textrm{Fun}(\cC, \bB)$, for any abelian $\bB$. As $\cC\subset \textrm{fp}(\cC)$ is projectively generating, the fact follows from Proposition \ref{prop_right_ext_fun_and_A_obj} below, formulated in a more general set-up.	
\end{proof}

\begin{PROP}\cite[Proposition 3.6]{Bel} \label{prop_proj_in_fp}
	Let $\cC$ be an idempotent split additive category with weak kernels. Then $\cC \subset \textrm{fp}(\cC)$ is the category of projective objects.
\end{PROP}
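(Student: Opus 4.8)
The plan is to identify the essential image $h(\cC) \subset \textrm{fp}(\cC)$ with the full subcategory of projective objects by the classical Yoneda argument, using the weak-kernels hypothesis only to know that $\textrm{fp}(\cC)$ is abelian and the idempotent-splitting hypothesis only in the final step.

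First I would recall, as in the proof of Proposition~\ref{prop_envelope_if_weak_kernels}, that $\textrm{fp}(\cC)$ is abelian whenever $\cC$ has weak kernels; in fact $\textrm{fp}(\cC)$ is then closed under kernels and cokernels inside $\Fun(\cC^{\opp}, \Ab)$, so the inclusion is exact, and $\textrm{fp}(\cC)$ is idempotent complete. Next I would show that every representable $h^C$ is projective: by the Yoneda lemma $\Hom_{\textrm{fp}(\cC)}(h^C, \Phi) \simeq \Phi(C)$ naturally in $\Phi$, so $\Hom_{\textrm{fp}(\cC)}(h^C, -)$ is evaluation at $C$; evaluation is exact on $\Fun(\cC^{\opp}, \Ab)$ since (co)kernels there are computed objectwise, and it stays exact when restricted to the exact subcategory $\textrm{fp}(\cC)$. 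Hence $h^C$ is projective, and the fully faithful functor $h$ lands inside the full subcategory of projectives.

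For the converse, let $P \in \textrm{fp}(\cC)$ be projective. By the definition of finite presentation there is an epimorphism $\pi \colon h^{C} \twoheadrightarrow P$. Projectivity makes $\pi$ split, giving $s \colon P \to h^C$ with $\pi s = \Id_P$; then $e := s\pi \in \End_{\textrm{fp}(\cC)}(h^C)$ is an idempotent with $\mathrm{im}(e) \simeq P$, realising $P$ as a direct summand of $h^C$. Full faithfulness of $h$ produces an idempotent $\bar e \in \End_{\cC}(C)$ with $h^{\bar e} = e$. Since $\cC$ is idempotent split, $\bar e$ has an image $C' \in \cC$, i.e. $\bar e = \iota r$ with $r \colon C \to C'$, $\iota \colon C' \to C$ and $r \iota = \Id_{C'}$; this gives a decomposition $C \simeq C' \oplus C''$ in which $\bar e$ is the projection onto $C'$. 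Applying the additive, fully faithful functor $h$ yields $h^C \simeq h^{C'} \oplus h^{C''}$ with $e = h^{\bar e}$ the projection onto $h^{C'}$, whence $P \simeq \mathrm{im}(e) \simeq h^{C'}$ is representable. Together with the previous paragraph, this shows $h$ identifies $\cC$ with the full subcategory of projective objects of $\textrm{fp}(\cC)$.

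The only substantive point is this last step: recognising a direct summand of a representable as again representable. That is exactly where the idempotent-splitting hypothesis enters, and the mechanism is that the Yoneda embedding $h$, being additive and fully faithful, transports the idempotent $e$ on $h^C$ to an idempotent $\bar e$ on $C$ and transports a splitting of $\bar e$ in $\cC$ back to the corresponding splitting of $e$ in $\textrm{fp}(\cC)$. Everything else — projectivity of representables and the existence of a representable cover of any finitely presented functor — is formal once $\textrm{fp}(\cC)$ is known to be abelian.
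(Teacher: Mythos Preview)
Your proof is correct and is precisely the standard argument for this result. Note that the paper does not supply its own proof here; it simply cites \cite[Proposition 3.6]{Bel}, so there is nothing in the paper to compare against beyond observing that your argument is the expected one.
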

 
By considering the opposite categories, we get
\begin{PROP}
	Let $\cC$ be an additive category with weak cokernels. Endow $\cC$ with the split exact structure. Then $\cC \to \textrm{fp}(\cC^{\opp})^{\opp}$ is the left abelian envelope.
\end{PROP}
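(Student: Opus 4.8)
The plan is to obtain this statement as a formal consequence of Proposition \ref{prop_envelope_if_weak_kernels} by passing to opposite categories.

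First I would record the relevant dualities. If $\cC$ has weak cokernels then $\cC^{\opp}$ has weak kernels, since these two notions are exchanged under $(-)^{\opp}$. The split exact structure is self-dual: the inflations of the split structure on $\cC^{\opp}$ are the opposites of the deflations of the split structure on $\cC$, and conversely, so the opposite of $\cC$ with its split exact structure is exactly $\cC^{\opp}$ with its split exact structure. Hence Proposition \ref{prop_envelope_if_weak_kernels} applies to $\cC^{\opp}$ endowed with the split exact structure and yields that the Yoneda embedding $h\colon \cC^{\opp}\to\textrm{fp}(\cC^{\opp})$ is the right abelian envelope of $\cC^{\opp}$; in particular $\aA_r(\cC^{\opp})\simeq\textrm{fp}(\cC^{\opp})$ with structure functor $i_R=h$.

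Next I would invoke the identification (\ref{rem_envs_for_opp}): whenever $\eE$ admits a right abelian envelope, the left abelian envelope of $\eE^{\opp}$ exists and $\aA_l(\eE^{\opp})\simeq\aA_r(\eE)^{\opp}$, the functor $\eE^{\opp}\to\aA_l(\eE^{\opp})$ being obtained from $i_R\colon\eE\to\aA_r(\eE)$ by passing to opposites. Applying this with $\eE=\cC^{\opp}$, so that $\eE^{\opp}=\cC$, gives $\aA_l(\cC)\simeq\aA_r(\cC^{\opp})^{\opp}\simeq\textrm{fp}(\cC^{\opp})^{\opp}$, and the canonical functor $\cC\to\textrm{fp}(\cC^{\opp})^{\opp}$ realizing the left envelope is the opposite of $h\colon\cC^{\opp}\to\textrm{fp}(\cC^{\opp})$, which is precisely the functor named in the statement.

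There is essentially no mathematical content here beyond Proposition \ref{prop_envelope_if_weak_kernels} and the formal duality; the only point that needs care --- the ``main obstacle'', such as it is --- is to verify that all the identifications line up: that the split structure is genuinely self-dual under $(-)^{\opp}$, that the envelope functor in (\ref{rem_envs_for_opp}) is literally the opposite of the one for $\eE$, and that the resulting functor $\cC\to\textrm{fp}(\cC^{\opp})^{\opp}$ is the expected assignment $C\mapsto\Hom_{\cC}(C,-)$, so that nothing becomes twisted along the way.
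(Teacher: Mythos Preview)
Your proof is correct and follows exactly the paper's approach: the paper simply introduces this proposition with ``By considering the opposite categories, we get'' and gives no further argument. Your writeup spells out the duality in more detail than the paper does, but the content is identical.
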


\vspace{0.3cm}
\subsection{Envelopes in the presence of projective/injective generators}\label{ssec_env_if_proj_inj_gen}~\\

Let $\pP \subset \eE$ be  a projectively generating subcategory of an exact category. Then every $E\in \eE$ admits a \emph{projective presentation}, by which we mean a pair of composable morphisms $P_1\xrightarrow{i\circ q'} P_0 \xrightarrow{q} E$ given by a choice of two conflations $E'\xrightarrow{i} P_0 \xrightarrow{q} E$ and $E'' \xrightarrow{i'} P_1 \xrightarrow{q'} E'$. 
Let $P_1 \xrightarrow{i\circ q'} P_0 \xrightarrow{q} E$, $P'_1 \xrightarrow{j\circ p'} P'_0 \xrightarrow{p} F$ be projective presentations of objects $E , F\in \eE$. A \emph{morphism of projective presentations} is a commutative diagram
\begin{equation}\label{eqtn_morph_of_proj_pres}
\xymatrix{P'_1\ar[r]^{j\circ p'} & P'_0 \ar[r]^{p} & F \\
	P_1 \ar[r]^{i\circ q'} \ar[u]^{f_1} & P_0 \ar[r]^q \ar[u]^{f_0} & E \ar[u]^f}
\end{equation}
If $\bB$ is an abelian category and $F \colon \eE\to \bB$ a right exact functor, then $F$ takes a projective presentation of $E$ to an exact sequence $F(P_1) \to F(P_0) \to F(E) \to 0$ in $\bB$.

\begin{LEM}cf. \cite[Lemma 3.6.1]{Pop}\label{lem_proj_resol}
	Let $\pP\subset \eE$ be a projectively generating subcategory of an exact category.  
	\begin{itemize} 
		\item[(i)] Any morphism $f\colon E \to F$ in $\eE$ admits a lift to a morphism of projective presentations (\ref{eqtn_morph_of_proj_pres}).
		\item[(ii)] For an additive functor  $\Phi \colon \pP\to \bB$ to an abelian category $\bB$, the induced morphism $\textrm{coker }\Phi(i\circ q') \to \textrm{coker }\Phi(j \circ p')$ does not depend on the choice of $f_0$ and $f_1$.
	\end{itemize} 
\end{LEM}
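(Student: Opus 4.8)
The plan is to prove (i) by a two-step lifting argument that uses only projectivity of the objects of $\pP$ together with the fact that the structure maps of a projective presentation are deflations, and then to deduce (ii) from (i) by the exact-category analogue of the classical comparison lemma for projective resolutions.

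For (i), I would fix the two chosen presentations, writing the defining conflations $E'\xrightarrow{i}P_0\xrightarrow{q}E$, $E''\xrightarrow{i'}P_1\xrightarrow{q'}E'$ for $E$, and $F'\xrightarrow{j}P_0'\xrightarrow{p}F$, $F''\xrightarrow{j'}P_1'\xrightarrow{p'}F'$ for $F$, so that $i\circ q'$ and $j\circ p'$ are the left-hand horizontal maps in (\ref{eqtn_morph_of_proj_pres}). First, since $P_0$ is projective and $p$ is a deflation, I lift $f\circ q\colon P_0\to F$ through $p$ to obtain $f_0\colon P_0\to P_0'$ with $p\circ f_0=f\circ q$; this is the right-hand square. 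Next, from $q\circ i=0$ I get $p\circ(f_0\circ i)=f\circ q\circ i=0$, so $f_0\circ i$ factors through the kernel $j\colon F'\to P_0'$ of $p$, say $f_0\circ i=j\circ g$ with $g\colon E'\to F'$ (uniquely so, since $j$ is a monomorphism, though only existence is needed). Finally, since $P_1$ is projective and $p'$ is a deflation, I lift $g\circ q'\colon P_1\to F'$ through $p'$ to get $f_1\colon P_1\to P_1'$ with $p'\circ f_1=g\circ q'$; then $(j\circ p')\circ f_1=j\circ g\circ q'=f_0\circ(i\circ q')$, which is the left-hand square, so $(f_0,f_1)$ is the required morphism of presentations.

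For (ii), I would first observe that the induced map $\textrm{coker}\,\Phi(i\circ q')\to\textrm{coker}\,\Phi(j\circ p')$ is precisely the map obtained by descending $\Phi(f_0)$ along the two cokernel projections — this is legitimate because, by (i), $\Phi(f_0)\circ\Phi(i\circ q')=\Phi(j\circ p')\circ\Phi(f_1)$ lies in the image of $\Phi(j\circ p')$ — so $f_1$ does not enter the resulting map and it suffices to prove independence of $f_0$. Given two admissible choices $f_0,f_0'$ with $p\circ f_0=p\circ f_0'=f\circ q$, set $h_0:=f_0-f_0'$; then $p\circ h_0=0$, so $h_0=j\circ s$ for some $s\colon P_0\to F'$, and since $P_0$ is projective and $p'$ is a deflation I lift $s$ through $p'$ to $t\colon P_0\to P_1'$, which gives $h_0=(j\circ p')\circ t$. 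Hence $\Phi(h_0)=\Phi(j\circ p')\circ\Phi(t)$ becomes zero in $\textrm{coker}\,\Phi(j\circ p')$, so the descent of $\Phi(f_0)$ agrees with that of $\Phi(f_0')$; this proves (ii).

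I do not expect a genuine obstacle: the argument is the standard comparison lemma for projectives, and the only points requiring attention are that the several "factor through the kernel" steps use kernels which are part of the conflation data of the given presentations, and that throughout one invokes the lifting property of projectives against deflations in place of abelian-style surjectivity.
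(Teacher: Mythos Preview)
Your proof is correct and is precisely the standard comparison-lemma argument the paper has in mind: the paper's own proof consists of the single sentence that the usual abelian-category proof (\emph{cf.} \cite[Lemma 3.6.1]{Pop}) carries over to the exact setting, and what you have written is exactly that adaptation, using projectivity against deflations and factoring through the kernels provided by the conflation data.
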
 
\begin{proof}
	The standard proof for the case when $\eE$ is abelian (\textit{cf.} \cite[Lemma 3.6.1]{Pop}) works for the more general case of an exact $\eE$ as well.
\end{proof}

\begin{PROP}\label{prop_right_ext_fun_and_A_obj}
	Let $\mathcal{P} \subset \eE$ be a projectively generating subcategory of an exact category $\eE$. Then $\Gamma\colon \Rex(\eE, \bB) \to \Fun(\mathcal{P},\bB)$, $\Gamma(F) =F|_{\mathcal{P}}$, is an equivalence for any abelian category $\bB$.
\end{PROP}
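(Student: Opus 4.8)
This proposition is the exact-category counterpart of the classical fact that a right exact functor out of a module category is determined by its restriction to the projectives. The plan is to construct an explicit quasi-inverse $\Psi\colon\Fun(\pP,\bB)\to\Rex(\eE,\bB)$ to $\Gamma$. Given an additive functor $\Phi\colon\pP\to\bB$, choose for every $E\in\eE$ a projective presentation $P_1^E\to P_0^E\to E$ (possible since $\pP$ is projectively generating) and set $\widetilde{\Phi}(E):=\textrm{coker}(\Phi(P_1^E)\to\Phi(P_0^E))$, which exists because $\bB$ is abelian. By Lemma \ref{lem_proj_resol}(i), any $f\colon E\to F$ lifts to a morphism of projective presentations, and by part (ii) the induced morphism of cokernels $\widetilde{\Phi}(E)\to\widetilde{\Phi}(F)$ is independent of the lift; since composites of lifts are lifts of composites, $\widetilde{\Phi}$ is a functor, additive since the direct sum of presentations of $E$ and $E'$ presents $E\oplus E'$, and part (ii) applied to two presentations of a single object shows $\widetilde{\Phi}$ is well defined up to canonical isomorphism. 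A natural transformation $\sigma\colon\Phi\to\Phi'$ induces on each chosen cokernel a morphism $\widetilde{\Phi}(E)\to\widetilde{\Phi'}(E)$ natural in $E$, which defines $\Psi$ on morphisms.

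The heart of the proof is that $\widetilde{\Phi}$ is right exact, i.e. (as $\bB$ is abelian) that $\widetilde{\Phi}(X)\to\widetilde{\Phi}(Y)\to\widetilde{\Phi}(Z)\to0$ is exact for every conflation $X\xrightarrow{i}Y\xrightarrow{d}Z$ in $\eE$. For this I would first establish a horseshoe lemma inside $\eE$: choosing deflations $P_0^X\to X$ and $P_0^Z\to Z$ with projective source, pull $d$ back along $P_0^Z\to Z$ (axiom (Ex 2)) to get a conflation $X\to Y'\to P_0^Z$ which splits since $P_0^Z$ is projective, so that $Y'\simeq X\oplus P_0^Z$ and $Y'\to Y$ is a deflation; precomposing with $P_0^X\oplus P_0^Z\to X\oplus P_0^Z$ yields a deflation $P_0^Y:=P_0^X\oplus P_0^Z\to Y$, and the $3\times3$ lemma for exact categories (see \cite{Buehl}) puts its kernel in a conflation of the kernels of the three covers. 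Iterating once more produces projective presentations of $X$, $Y$, $Z$ with $P_j^Y=P_j^X\oplus P_j^Z$ and differentials block upper triangular with respect to these decompositions. Applying the additive functor $\Phi$ gives a short exact sequence of two-term complexes in $\bB$; its long exact homology sequence, in which $\textrm{coker}$ is the degree-zero homology, yields the desired exactness of $\widetilde{\Phi}(X)\to\widetilde{\Phi}(Y)\to\widetilde{\Phi}(Z)\to0$.

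It remains to check that $\Gamma$ and $\Psi$ are mutually quasi-inverse. For $P\in\pP$ the presentation $0\to P\xrightarrow{\Id}P$ gives $\widetilde{\Phi}(P)=\textrm{coker}(0\to\Phi(P))=\Phi(P)$, naturally in $P$, so $\Gamma\Psi\simeq\Id_{\Fun(\pP,\bB)}$. Conversely, for $F\in\Rex(\eE,\bB)$ right exactness of $F$ makes $F(P_1^E)\to F(P_0^E)\to F(E)\to0$ exact, so $F(E)\simeq\textrm{coker}(F(P_1^E)\to F(P_0^E))=\Psi(F|_{\pP})(E)$, and this isomorphism is natural in $E$ by Lemma \ref{lem_proj_resol}(ii); hence $\Psi\Gamma\simeq\Id_{\Rex(\eE,\bB)}$. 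The main obstacle is thus the horseshoe construction together with the homological bookkeeping in $\bB$ behind right exactness of $\widetilde{\Phi}$; once Lemma \ref{lem_proj_resol} is available everything else is formal.
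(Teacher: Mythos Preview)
Your proof is correct and follows the same strategy as the paper: define the quasi-inverse via $\widetilde{\Phi}(E)=\textrm{coker}(\Phi(P_1^E)\to\Phi(P_0^E))$, use Lemma~\ref{lem_proj_resol} for well-definedness and functoriality, and verify the adjunction identities. The paper's proof is terser --- it simply asserts ``one easily checks that $\widetilde{\Phi}$ is a right exact extension'' --- whereas you spell out the horseshoe construction and the long exact sequence argument in $\bB$; this is exactly the standard verification the paper has in mind.
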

\begin{proof}
	Let $\Phi \colon \pP\to \bB$ be an additive functor. The quasi-inverse of $\Gamma$ is defined by extending $\Phi$ to a functor $\wt{\Phi} \colon \eE\to \bB$ defined on an object $E\in \eE$ via $\wt{\Phi}(E) := \textrm{coker } \Phi(i \circ q')$, for a projective presentation $P_1 \xrightarrow{i \circ q'} P_0 \xrightarrow{q} E$. One easily checks that $\wt{\Phi}$ is a right exact extension of $\Phi$. The uniqueness is guaranteed by Lemma \ref{lem_proj_resol}.
\end{proof}

We use Propositions \ref{prop_envelope_if_weak_kernels} and \ref{prop_right_ext_fun_and_A_obj} to construct the right abelian envelope of an exact category with a projectively generating subcategory $\pP$. 

\begin{THM}\label{thm_right_env_if_proj_gen}
	Assume that a projectively generating subcategory $\pP$ of an exact category $\eE$ has weak kernels.
	Then the functor $h^{\wedge} \colon \eE\to \textrm{fp}(\pP)$, $h^{\wedge}(E)(-) = \Hom(-,E)$ is the right abelian envelope for $\eE$. Moreover, $\eE$ is a fully exact subcategory of $\textrm{fp}(\pP)$.
\end{THM}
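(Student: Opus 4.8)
The plan is to factor the argument through two already-established facts: Proposition \ref{prop_envelope_if_weak_kernels}, which says that for an additive category $\cC$ with weak kernels (equipped with the split exact structure) the Yoneda functor $h\colon\cC\to\textrm{fp}(\cC)$ is the right abelian envelope, and Proposition \ref{prop_right_ext_fun_and_A_obj}, which says that restriction along a projectively generating subcategory $\pP\subset\eE$ gives an equivalence $\Rex(\eE,\bB)\xrightarrow{\simeq}\Fun(\pP,\bB)$ for every abelian $\bB$. First I would verify that $h^\wedge$ is well-defined and right exact: since $\pP\subset\eE$ is projectively generating, every $E\in\eE$ has a projective presentation $P_1\to P_0\to E$, and applying $\Hom_\eE(-,E)$ restricted to $\pP^{\opp}$ to this presentation exhibits $h^\wedge(E)$ as a cokernel of $h^{P_1}\to h^{P_0}$ inside $\Fun(\pP^{\opp},\Ab)$, so $h^\wedge(E)\in\textrm{fp}(\pP)$; that $h^\wedge$ is right exact follows because a conflation $X\to Y\to Z$ in $\eE$ yields, after applying $\Hom_\eE(P,-)$ for $P\in\pP$ projective, short exact sequences, which assemble to the required right-exactness of the image sequence in $\textrm{fp}(\pP)$.

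Next I would identify $h^\wedge$ with the composite $\eE\xrightarrow{\;?\;}\aA_r(\eE)$ coming from the two propositions. Applying Proposition \ref{prop_right_ext_fun_and_A_obj} with $\bB=\textrm{fp}(\pP)$ and the additive functor $\Phi=h|_\pP\colon\pP\to\textrm{fp}(\pP)$ produces a right exact functor $\wt\Phi\colon\eE\to\textrm{fp}(\pP)$, and by the explicit formula in the proof of that proposition, $\wt\Phi(E)=\textrm{coker}(h^{P_1}\to h^{P_0})\cong h^\wedge(E)$; so $h^\wedge=\wt\Phi$ is right exact with $h^\wedge|_\pP\cong h$. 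Now for any abelian $\bB$, precomposition with $h^\wedge$ gives $\Rex(\textrm{fp}(\pP),\bB)\to\Rex(\eE,\bB)$; I would show this is an equivalence by building the following commuting triangle of functors: $\Rex(\textrm{fp}(\pP),\bB)\to\Rex(\eE,\bB)\to\Fun(\pP,\bB)$, where the second arrow is $\Gamma$ of Proposition \ref{prop_right_ext_fun_and_A_obj} (an equivalence), and the composite is restriction along $\pP\xrightarrow{h}\textrm{fp}(\pP)$, i.e. restriction along the right abelian envelope of $\pP$ from Proposition \ref{prop_envelope_if_weak_kernels} followed by $\Rex\to\Fun$ on $\pP$ with its split structure — which is again an equivalence since $\Rex(\textrm{fp}(\pP),\bB)\simeq\Rex(\pP,\bB)\simeq\Fun(\pP,\bB)$ (the split exact structure makes every additive functor right exact). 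Two out of three functors in the triangle being equivalences forces the first to be an equivalence, which is precisely the universal property defining $\aA_r(\eE)\simeq\textrm{fp}(\pP)$ with structure functor $h^\wedge$.

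Finally, for the last assertion, that $\eE$ is a fully exact subcategory of $\textrm{fp}(\pP)$: since $i_R=h^\wedge$ is here genuinely the Yoneda-type functor $E\mapsto\Hom_\eE(-,E)|_\pP$, it is faithful by Lemma \ref{lem_envelope_fully_exact_in_Lex}, and I would check it is full directly — a natural transformation $h^\wedge(E)\to h^\wedge(F)$ restricted to the projective objects $P_0,P_1$ of a presentation of $E$ gives, via Yoneda on $\pP$, a morphism of presentations, hence a morphism $E\to F$ inducing it, using Lemma \ref{lem_proj_resol}(ii) for well-definedness. Once $h^\wedge$ is fully faithful, Lemma \ref{lem_if_I_R_full_then_exact} applies and gives that $h^\wedge$ is exact, reflects exactness, and realizes $\eE$ as a fully exact subcategory of $\textrm{fp}(\pP)$. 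The main obstacle I anticipate is the bookkeeping in the second paragraph: making the triangle of $\Rex$/$\Fun$ categories genuinely commute up to coherent natural isomorphism (rather than just on objects), which requires tracking the explicit quasi-inverse from Proposition \ref{prop_right_ext_fun_and_A_obj} and checking it agrees with the canonical extension through $\textrm{fp}(\pP)$; once that compatibility is pinned down, the two-out-of-three argument closes the proof cleanly.
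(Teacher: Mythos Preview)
Your proposal is correct and follows essentially the same two-out-of-three triangle as the paper: precompose $\Rex(\textrm{fp}(\pP),\bB)\xrightarrow{(-)\circ h^\wedge}\Rex(\eE,\bB)\xrightarrow{(-)\circ i}\Fun(\pP,\bB)$, note the second arrow is an equivalence by Proposition~\ref{prop_right_ext_fun_and_A_obj} and the composite is $(-)\circ h$, an equivalence by Proposition~\ref{prop_envelope_if_weak_kernels}, hence so is the first. Two remarks. First, the ``obstacle'' you flag is not one: the composite $h^\wedge\circ i$ equals the Yoneda embedding $h\colon\pP\to\textrm{fp}(\pP)$ on the nose, so the triangle commutes strictly and no coherence bookkeeping is needed. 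Second, for fullness the paper uses a cleaner factorization than your presentation argument: $h^\wedge$ is the composite of the Yoneda embedding $\eE\to\Rex(\eE,\Ab^{\opp})$ (fully faithful by Yoneda) with restriction along $i\colon\pP\to\eE$ (fully faithful by Proposition~\ref{prop_right_ext_fun_and_A_obj} applied with $\bB=\Ab^{\opp}$), hence $h^\wedge$ is fully faithful directly; then Lemma~\ref{lem_if_I_R_full_then_exact} finishes as you say.
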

\begin{proof}
	Proposition \ref{prop_envelope_if_weak_kernels} implies that the Yoneda embedding $h \colon \pP \to \textrm{fp}(\pP)$ is the right abelian envelope. 
	
	Fix an abelian category $\bB$ and consider the composite $\Upsilon \colon \Rex(\textrm{fp}(\pP), \bB) \xrightarrow{(-)\circ h^\wedge} \Rex(\eE, \bB) \xrightarrow{(-)\circ i} \Fun(\pP, \bB)$, where $i\colon \pP\to \eE$ is the inclusion functor. By Proposition \ref{prop_right_ext_fun_and_A_obj}, the second functor is an equivalence. The composite $h^\wedge \circ i \colon \pP \to \textrm{fp}(\pP)$ is the Yoneda embedding $h$, hence $\Upsilon(-) = (-) \circ h^\wedge \circ i = (-) \circ h$ is an equivalence. It follows that the precomposition with $h^\wedge$ is an equivalence $\Rex(\textrm{fp}(\pP), \bB) \simeq \Rex(\eE, \bB)$, i.e. $h^\wedge$ is the right abelian envelope of $\eE$.
		
	By Lemma \ref{lem_if_I_R_full_then_exact}, to show that $h^{\wedge} \colon \eE \to \textrm{fp}(\pP)$ is an embedding of a fully exact subcategory, it suffices to check that $h^{\wedge}$ is full.
	Since $\pP$ is projectively generating, the composite of Yoneda embedding $h\colon \eE\to \Rex(\eE, \Ab^{\opp})$ with the restriction functor $(-)\circ i \colon \Rex(\eE, \Ab^{\opp}) \to \Fun(\pP, \Ab^{\opp})$ takes values in $\textrm{fp}(\pP)$. The induced functor $\eE \to \textrm{fp}(\pP)$ is $h^\wedge$.
	Functor $h$ is fully faithful by the Yoneda lemma and so is $(-)\circ i$ by Proposition \ref{prop_right_ext_fun_and_A_obj}.
Hence, $h^\wedge$ is fully faithful too.
\end{proof}

Dually, we have
\begin{THM}\label{thm_left_env_if_inj_gen}
	Let $\mathcal{I} \subset \eE$ be an injectively generating subcategory of an
	exact category $\eE$. Assume that $\mathcal{I}$ has weak cokernels. Then the functor $h^\vee\colon \eE\to \textrm{fp}(\mathcal{I}^{\opp})^{\opp}$, $h^\vee(E)(-) = \Hom(E,-)$ is the left abelian envelope for $\eE$. Moreover, $\eE$ is a fully exact subcategory of $\textrm{fp}(\mathcal{I}^{\opp})^{\opp}$.
\end{THM}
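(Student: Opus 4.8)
The plan is to obtain this statement as the formal dual of Theorem \ref{thm_right_env_if_proj_gen}. First I would record the translation to opposite categories: by the definitions recalled in Section \ref{ssec_proj_gen_in_thin}, an injectively generating subcategory $\mathcal{I}\subset\eE$ corresponds under $\eE\mapsto\eE^{\opp}$ to a projectively generating subcategory $\mathcal{I}^{\opp}\subset\eE^{\opp}$, and a weak cokernel in $\mathcal{I}$ is, literally, a weak kernel in $\mathcal{I}^{\opp}$. Hence the hypotheses of Theorem \ref{thm_right_env_if_proj_gen} hold for $\eE^{\opp}$ with its projectively generating subcategory $\mathcal{I}^{\opp}$.

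Next I would apply Theorem \ref{thm_right_env_if_proj_gen} to $\eE^{\opp}$. This gives that $\eE^{\opp}$ lies in $\mathscr{E}x_r$, that $h^{\wedge}\colon\eE^{\opp}\to\textrm{fp}(\mathcal{I}^{\opp})$, $E\mapsto\Hom_{\eE^{\opp}}(-,E)$, is the right abelian envelope of $\eE^{\opp}$, and that $h^{\wedge}$ identifies $\eE^{\opp}$ with a fully exact subcategory of $\textrm{fp}(\mathcal{I}^{\opp})$. Since $\eE^{\opp}\in\mathscr{E}x_r$, the isomorphism (\ref{rem_envs_for_opp}) (applied with $\eE$ replaced by $\eE^{\opp}$) shows that $\eE$ admits a left abelian envelope and $\aA_l(\eE)\simeq\aA_r(\eE^{\opp})^{\opp}\simeq\textrm{fp}(\mathcal{I}^{\opp})^{\opp}$.

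It remains to identify the structure functor and the fully exact embedding. Transferring diagram (\ref{triangle}) to opposite categories, exactly as was done to deduce (\ref{rem_envs_for_opp}), the canonical functor $i_L\colon\eE\to\aA_l(\eE)$ is carried by the equivalence $\aA_l(\eE)\simeq\textrm{fp}(\mathcal{I}^{\opp})^{\opp}$ to the opposite of $h^{\wedge}$, which sends an object $E$ to the functor $\Hom_{\eE^{\opp}}(-,E)=\Hom_{\eE}(E,-)$ on $\mathcal{I}$; this is precisely $h^{\vee}$ as in the statement. Likewise, taking opposites of the inclusion of the fully exact subcategory $\eE^{\opp}\hookrightarrow\textrm{fp}(\mathcal{I}^{\opp})$ shows that $h^{\vee}$ identifies $\eE$ with a fully exact subcategory of $\textrm{fp}(\mathcal{I}^{\opp})^{\opp}$.

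No genuine difficulty is expected beyond bookkeeping; the one point that deserves a moment of care is the tracking of variances, namely verifying that dualizing $h^{\wedge}$ produces exactly the functor $h^{\vee}(E)(-)=\Hom_{\eE}(E,-)$ and not some twist of it, which is the elementary identification $\Hom_{\eE^{\opp}}(-,E)=\Hom_{\eE}(E,-)$ used above.
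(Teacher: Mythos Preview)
Your proposal is correct and follows essentially the same approach as the paper: both dualize to $\eE^{\opp}$, invoke Theorem~\ref{thm_right_env_if_proj_gen}, and then apply formula~(\ref{rem_envs_for_opp}). Your version spells out more of the variance bookkeeping (identifying $(h^{\wedge})^{\opp}$ with $h^{\vee}$), whereas the paper's proof is a two-line pointer to these same ingredients.
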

\begin{proof}
	Injectively generating subcategory $\mathcal{I}\subset \eE$ yields a projectively generating subcategory in $\eE^{\opp}$. The statement follows from Theorem \ref{thm_right_env_if_proj_gen} and formula (\ref{rem_envs_for_opp}).
\end{proof}

The following lemma gives a criterion for existence of weak kernels which we shall use for projectively generating subcategories. 
\begin{LEM}\label{lem_P_has_weak_kernels}
	Let $\cC$ be a $k$-linear additive category generated by finitely many objects $\{C_i\}_{i=1}^n$. If the algebra $A_C:=\End_{\cC}(\bigoplus_{i=1}^n C_i)$ is right coherent,  then $\cC$ has weak kernels.
\end{LEM}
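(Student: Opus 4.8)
The plan is to realize $\cC$, up to equivalence, as a full subcategory of the category of right $A_C$-modules, and then produce weak kernels there by taking finite free presentations of module-theoretic kernels — these exist precisely because $A_C$ is right coherent. First I would fix the orthogonal idempotents $e_i\in A_C$ ($i=1,\dots,n$), where $e_i$ is the composite of the projection $\bigoplus_j C_j\to C_i$ with the inclusion $C_i\to\bigoplus_j C_j$, so that $1_{A_C}=e_1+\dots+e_n$ and $A_C=\bigoplus_{i=1}^n e_iA_C$ as a right $A_C$-module. Composing the Yoneda embedding $h\colon\cC\to\textrm{fp}(\cC)$ with the equivalence of Lemma \ref{lem_fp_as_modules} between $\textrm{fp}(\cC)$ and the category of finitely presented right $A_C$-modules, and then with the inclusion into $\textrm{Mod-}A_C$, yields a fully faithful additive functor $\Theta\colon\cC\to\textrm{Mod-}A_C$ with $\Theta(C_i)\cong e_iA_C$. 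Since $\cC$ is additively generated by $\{C_i\}$, the functor $\Theta$ sends the objects of $\cC$ exactly to the finite direct sums of the $e_iA_C$'s; these are finitely generated projective right $A_C$-modules, and every finitely generated free module $A_C^{\oplus\ell}\cong\Theta\big((\bigoplus_iC_i)^{\oplus\ell}\big)$ lies in the essential image of $\Theta$.

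Next, given a morphism $\f\colon X\to Y$ in $\cC$, I would look at $\Theta(\f)\colon P\to Q$ with $P=\Theta(X)$, $Q=\Theta(Y)$. Both $P$ and $Q$ are direct summands of finitely generated free modules, hence coherent right $A_C$-modules because $A_C$ is right coherent; therefore $K:=\ker\Theta(\f)\subseteq P$ is again coherent, in particular finitely generated, so one can choose a surjection $\pi\colon A_C^{\oplus\ell}\twoheadrightarrow K$ for some $\ell$. Choosing $Z\in\cC$ with $\Theta(Z)\cong A_C^{\oplus\ell}$ and letting $\kappa\colon Z\to X$ be the morphism of $\cC$ corresponding under $\Theta$ to the composite $A_C^{\oplus\ell}\xrightarrow{\pi}K\hookrightarrow P$ (this uses full faithfulness of $\Theta$), faithfulness of $\Theta$ gives $\f\circ\kappa=0$.

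To finish, I would check universality of $\kappa$: any $\psi\colon W\to X$ in $\cC$ with $\f\circ\psi=0$ becomes a map $\Theta(W)\to P$ whose image lies in $K$; since $\Theta(W)$ is projective it lifts along $\pi$ to a map $\Theta(W)\to\Theta(Z)$, which by full faithfulness comes from a morphism $W\to Z$ in $\cC$, and applying $\Theta$ shows its composite with $\kappa$ equals $\psi$. Hence $\kappa\colon Z\to X$ is a weak kernel of $\f$, so $\cC$ has weak kernels.

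I expect the only genuinely load-bearing step to be the use of right coherence of $A_C$, which enters exactly to guarantee that $K=\ker\Theta(\f)$ is finitely generated (equivalently, that kernels of maps between finitely generated free right $A_C$-modules are finitely generated); the rest is bookkeeping with the idempotents $e_i$ so that free modules are hit by $\Theta$, together with full faithfulness of $\Theta$. As a shorter alternative one may instead combine Lemma \ref{lem_fp_as_modules} with the two classical equivalences ``$A_C$ right coherent $\iff$ the category of finitely presented right $A_C$-modules is abelian'' and ``$\textrm{fp}(\cC)$ is abelian $\iff$ $\cC$ has weak kernels'' (the latter being the statement already quoted in the proof of Proposition \ref{prop_envelope_if_weak_kernels}); the explicit argument above is essentially an unwinding of this.
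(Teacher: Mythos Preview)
Your proof is correct and follows essentially the same approach as the paper: embed $\cC$ into $\textrm{fp}(\cC)\simeq\textrm{mod-}A_C$ via Lemma~\ref{lem_fp_as_modules}, use right coherence so that kernels of maps between projectives are finitely generated (equivalently, $\textrm{fp}(\cC)$ is abelian), cover the kernel by an object in the image of $\cC$, and pull back by full faithfulness. The paper's version is exactly the ``shorter alternative'' you sketch at the end; your explicit argument with idempotents and the projective lifting along $\pi$ simply unpacks the verification that the resulting morphism is indeed a weak kernel, which the paper leaves implicit.
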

\begin{proof}
	By Lemma \ref{lem_fp_as_modules}, $\textrm{fp}(\cC) \simeq \textrm{mod-}A_C$. As algebra $A_C$ is right coherent, $\textrm{fp}(\cC)$ is abelian. 
	
	The essential image of $h\colon \cC\to \textrm{fp}(\cC)$ consists of projective $A_C$-modules and every $A_C$-module is a quotient of $h^C$, for some $C\in \cC$. 	
	Consider $f\in \Hom_{\cC}(C, C')$. The kernel $K$ of $h^f \colon h^{C} \to h^{C'}$ is a quotient of $h^{C_K}$, for some $C_K \in \cC$. Then morphism $g\colon C_K \to C$ whose image under $h$ is the composite $h^{C_K} \to K \to h^C$ is a weak kernel of $f$.
\end{proof}

\begin{COR}\label{cor_envel_for_thin}
	Consider a thin category $\eE$. Denote by $\pP\subset \eE$, respectively $\iI \subset \eE$,  the subcategory of projective, respectively injective, objects. Then $\textrm{fp}(\pP)$, respectively $\textrm{fp}(\iI^{\opp})^{\opp}$, is the right, respectively left, abelian envelope for $\eE$.
\end{COR}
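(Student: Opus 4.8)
The plan is to deduce the corollary from Theorem \ref{thm_right_env_if_proj_gen} and its dual Theorem \ref{thm_left_env_if_inj_gen}, the only real work being to check the hypotheses of those theorems for a thin $\eE$. First I would recall from Proposition \ref{prop_proj_in_thin} that $\pP=\fF(\{P(\la)\}_{\la\in\Lambda})$ is projectively generating in $\eE$. Since each $P(\la)$ is projective we have $\Ext^1_\eE(P(\mu),P(\la))=0$ for all $\la,\mu\in\Lambda$, so every conflation with terms in $\textrm{add}\{P(\la)\}$ splits; hence $\textrm{add}\{P(\la)\}$ is already a fully exact subcategory of $\eE$, and therefore $\pP=\textrm{add}\{P(\la)\}$. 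In particular $\pP$ is additively generated by the \emph{finite} set $\{P(\la)\}_{\la\in\Lambda}$, the poset $\Lambda$ being finite by the definition of a thin category.

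Next I would verify that $\pP$ has weak kernels by means of Lemma \ref{lem_P_has_weak_kernels}: it suffices that the algebra $A_\pP:=\End_\eE(\bigoplus_{\la\in\Lambda}P(\la))$ be right coherent. But $\eE$ is $\Hom$-finite over $k$ and $\Lambda$ is finite, so $A_\pP$ is a finite-dimensional $k$-algebra, hence right noetherian and a fortiori right coherent. Thus $\pP$ has weak kernels, and Theorem \ref{thm_right_env_if_proj_gen} applies: the functor $h^\wedge\colon\eE\to\textrm{fp}(\pP)$ is the right abelian envelope of $\eE$ and embeds $\eE$ as a fully exact subcategory of $\textrm{fp}(\pP)$ (which, by Lemma \ref{lem_fp_as_modules}, is moreover equivalent to $\textrm{mod-}A_\pP$).

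For the left envelope I would pass to opposite categories. The notion of thin category is self-dual, so $\eE^{\opp}$ is thin; by Remark \ref{rem_injective_in_thin} the subcategory $\iI=\fF(\{I(\la)\}_{\la\in\Lambda})=\textrm{add}\{I(\la)\}$ is injectively generating in $\eE$, i.e. $\iI^{\opp}$ is projectively generating in $\eE^{\opp}$, and $\End_{\eE^{\opp}}(\bigoplus_{\la}I(\la))=\End_\eE(\bigoplus_{\la}I(\la))^{\opp}$ is again a finite-dimensional $k$-algebra, hence coherent. So $\iI^{\opp}$ has weak kernels, equivalently $\iI$ has weak cokernels, and Theorem \ref{thm_left_env_if_inj_gen} gives that $h^\vee\colon\eE\to\textrm{fp}(\iI^{\opp})^{\opp}$ is the left abelian envelope of $\eE$ (this also follows from the previous paragraph applied to $\eE^{\opp}$ together with (\ref{rem_envs_for_opp})). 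I do not foresee a genuine obstacle here; the single point needing a little care is the passage from the weak-kernel condition to coherence of the relevant endomorphism algebra, and this is immediate from the finiteness hypotheses built into the definition of a thin category.
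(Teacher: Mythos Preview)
Your proposal is correct and follows essentially the same route as the paper's proof: invoke Proposition \ref{prop_proj_in_thin} and Remark \ref{rem_injective_in_thin} for the generating subcategories, use Hom-finiteness to get finite-dimensional (hence coherent) endomorphism algebras, apply Lemma \ref{lem_P_has_weak_kernels} for weak (co)kernels, and conclude via Theorems \ref{thm_right_env_if_proj_gen} and \ref{thm_left_env_if_inj_gen}. Your extra observation that $\fF(\{P(\la)\})=\textrm{add}\{P(\la)\}$ is a harmless clarification the paper leaves implicit.
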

\begin{proof}
	By definition a thin category $\eE$ is Hom-finite.
	By Proposition \ref{prop_proj_in_thin} and Remark \ref{rem_injective_in_thin}, category $\eE$ has a projective generator $\bigoplus_{\la \in \Lambda}P(\la)$ and an injective generator $\bigoplus_{\la \in \Lambda} I(\la)$. Algebras  $\End(\bigoplus_{\la \in \Lambda}P(\la))$ and $\End(\bigoplus_{\la \in \Lambda}I(\la))^{\opp}$ are finite dimensional, hence coherent. Then	Lemma \ref{lem_P_has_weak_kernels} implies that $\pP = \textrm{add}\{P(\la)\}$ has weak kernels and $\iI = \textrm{add}\{I(\la)\}$ has weak cokernels. 
	The statement follows from Theorems \ref{thm_right_env_if_proj_gen} and \ref{thm_left_env_if_inj_gen}.
\end{proof}

\vspace{0.5cm}
\subsection{Derived equivalence of $\eE$ and $\aA_r(\eE)$}\label{ssec_der_equiv}~\\

Recall that the derived category $\dD^-(\eE)$ of a weakly idempotent split exact category $\eE$ is the quotient of the homotopy category of complexes $\kK^-(\eE)$ by the full subcategory of acyclic complexes. The following fact is well-known.
\begin{LEM}\cite[Example 12.2]{Kel5}\label{lem_homotopy_cat_of_proj}
	Let $\pP \subset \eE$ be a projectively generating subcategory in a  weakly idempotent split exact category $\eE$. Then $\dD^-(\eE) \simeq \kK^-(\pP)$.
\end{LEM}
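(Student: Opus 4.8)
The plan is to prove that the triangulated functor $\Psi\colon \kK^-(\pP)\to\dD^-(\eE)$ — the composite of the inclusion $\kK^-(\pP)\hookrightarrow\kK^-(\eE)$ (which lands in a triangulated subcategory, as $\kK^-(\pP)$ is closed under shifts and cones) with the Verdier localization $\kK^-(\eE)\to\dD^-(\eE)=\kK^-(\eE)/\aA^-(\eE)$ — is an equivalence. Recall from Section~\ref{ssec_der_admiss} that weak idempotent splitness of $\eE$ guarantees $\aA^-(\eE)$ is a thick subcategory, so this quotient is well behaved. I would establish full faithfulness and essential surjectivity of $\Psi$ separately.

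For full faithfulness the crucial input is the vanishing
\[
\Hom_{\kK^-(\eE)}(P^\bullet, A^\bullet) = 0 \qquad \text{for every } P^\bullet\in\kK^-(\pP) \text{ and every acyclic } A^\bullet\in\kK^-(\eE).
\]
I would prove this by producing a contracting homotopy for any chain map $f^\bullet\colon P^\bullet\to A^\bullet$ by descending induction on the degree: writing the acyclicity of $A^\bullet$ as a splicing of conflations $Z^n\rightarrowtail A^n\twoheadrightarrow Z^{n+1}$, one checks using the induction hypothesis and $d^2=0$ that $f^n-h^{n+1}d_P^n$ becomes zero after composition with $A^n\to A^{n+1}$, hence factors through the inflation $Z^n\rightarrowtail A^n$; projectivity of $P^n$ then lifts the resulting map $P^n\to Z^n$ along the deflation $A^{n-1}\twoheadrightarrow Z^n$ to yield $h^n$, and boundedness above starts the induction. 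Since, for fixed $P^\bullet$, the complexes $Y^\bullet$ with $\Hom_{\kK^-(\eE)}(P^\bullet, Y^\bullet)=0$ form a thick subcategory containing all acyclic complexes, the vanishing extends to all of $\aA^-(\eE)$. Now a morphism $P^\bullet\to Q^\bullet$ in $\dD^-(\eE)$ is a roof $P^\bullet\xleftarrow{s}X^\bullet\xrightarrow{g}Q^\bullet$ with $\mathrm{cone}(s)\in\aA^-(\eE)$; applying $\Hom_{\kK^-(\eE)}(P^\bullet,-)$ to the triangle of $s$ and invoking the vanishing forces $s_*\colon\Hom_{\kK^-(\eE)}(P^\bullet,X^\bullet)\xrightarrow{\sim}\Hom_{\kK^-(\eE)}(P^\bullet,P^\bullet)$ to be an isomorphism, so $s$ admits a homotopy section $t$ and the roof equals $\Psi$ of the honest chain map $g\circ t\colon P^\bullet\to Q^\bullet$, giving fullness. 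Faithfulness is then automatic, since a chain map between complexes over $\pP$ that dies in $\dD^-(\eE)$ factors through an object of $\aA^-(\eE)$, hence is null-homotopic by the same vanishing.

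For essential surjectivity I would use that $\pP$ is projectively generating: iterating deflations from $\pP$ resolves any $E\in\eE$ by a bounded-above complex over $\pP$ quasi-isomorphic to $E$, and, more generally, every bounded-above complex $X^\bullet$ over $\eE$ admits a quasi-isomorphism $P^\bullet\to X^\bullet$ with $P^\bullet$ a bounded-above complex over $\pP$ concentrated in the same range of degrees. This is the exact-category analogue of the classical construction of a projective resolution of a bounded-above complex: one resolves the top term, then at each successive lower degree resolves the appropriate object of $\eE$ arising as a kernel of a deflation and a pullback, the needed deflations existing precisely by the projectively generating hypothesis and their assembly into a complex being controlled by the pullback/pushout stability results of Section~\ref{sec_exact_cat} (e.g.\ Lemmas~\ref{lem_push_def} and~\ref{lem_defl_in_the_middle}) together with Quillen's obscure axiom (Lemma~\ref{lem_obscure_axiom}). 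Granting this, every object of $\dD^-(\eE)$ is isomorphic to one in the image of $\Psi$, and combined with full faithfulness this proves $\Psi$ is an equivalence.

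The main obstacle is the resolution of an \emph{arbitrary} bounded-above complex, needed for essential surjectivity: one must keep track of degreewise exactness as the construction descends, replacing the elementary diagram chases of the abelian setting by manipulations of conflations, pullbacks and pushouts and appeals to Quillen's obscure axiom. By contrast, the homotopy-vanishing computation behind full faithfulness is entirely routine once the descending induction is set up correctly.
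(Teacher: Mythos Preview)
Your proposal is correct and follows the standard route. The paper itself does not supply a proof of this lemma; it simply cites \cite[Example 12.2]{Kel5} and calls the fact well-known. Keller's argument there proceeds along exactly the lines you sketch: the vanishing $\Hom_{\kK^-(\eE)}(P^\bullet,A^\bullet)=0$ for $P^\bullet\in\kK^-(\pP)$ and $A^\bullet$ acyclic (giving full faithfulness via the usual Verdier-localization criterion), together with the construction of a bounded-above $\pP$-resolution of an arbitrary bounded-above complex (giving essential surjectivity). One small simplification: since $\eE$ is weakly idempotent split, $\aA^-(\eE)$ \emph{is} the category of acyclic complexes (as the paper notes in Section~\ref{ssec_der_admiss}), so your passage to the thick closure is unnecessary, though harmless.
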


For an exact category $\eE$ with a projectively generating subcategory $\pP$ with weak kernels, denote by $Li_R\colon \dD^\sharp(\eE) \to \dD^\sharp(\aA_r(\eE))$ the functor of the bounded (bounded above) derived categories induced by the exact functor $i_R\colon \eE\to \aA_r(\eE)$ (see Theorem \ref{thm_right_env_if_proj_gen}).

\begin{PROP}\label{prop_derived_equiv}
	Let $\eE$ be a weakly idempotent split exact category admitting a projectively generating subcategory $\pP$ with weak kernels. Then  $L i_R\colon \dD^-(\eE) \xrightarrow{\simeq} \dD^-(\aA_r(\eE))$ is an equivalence.
\end{PROP}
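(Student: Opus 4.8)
The plan is to show that both $\dD^-(\eE)$ and $\dD^-(\aA_r(\eE))$ are canonically equivalent to the homotopy category $\kK^-(\pP)$ and that $Li_R$ intertwines these two identifications. On the source side this is immediate: since $\eE$ is weakly idempotent split and $\pP$ is projectively generating, Lemma \ref{lem_homotopy_cat_of_proj} gives an equivalence $\dD^-(\eE)\simeq \kK^-(\pP)$, realised by the canonical composite $\kK^-(\pP)\hookrightarrow \kK^-(\eE)\to\dD^-(\eE)$.

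For the target side I would first invoke Theorem \ref{thm_right_env_if_proj_gen} to identify $\aA_r(\eE)$ with the abelian category $\textrm{fp}(\pP)$, under which $i_R$ becomes the functor $h^\wedge$ whose restriction to $\pP$ is the Yoneda embedding $h\colon \pP\to\textrm{fp}(\pP)$, $P\mapsto \Hom_\pP(-,P)$. The representable functors $h^P$ are projective in $\textrm{fp}(\pP)$ (by Yoneda, $\Hom_{\textrm{fp}(\pP)}(h^P,-)$ is evaluation at $P$, hence exact), and by the very definition of a finitely presented functor every object of $\textrm{fp}(\pP)$ is a quotient of some $h^P$; thus $h(\pP)$ is a full additive projectively generating subcategory of $\textrm{fp}(\pP)$. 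Being abelian, $\textrm{fp}(\pP)$ is idempotent complete, in particular weakly idempotent split, so a second application of Lemma \ref{lem_homotopy_cat_of_proj} yields $\dD^-(\aA_r(\eE))=\dD^-(\textrm{fp}(\pP))\simeq \kK^-(h(\pP))$, again via the canonical composite $\kK^-(h(\pP))\hookrightarrow\kK^-(\textrm{fp}(\pP))\to\dD^-(\textrm{fp}(\pP))$.

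It remains to compare $Li_R$ with these equivalences. The functor $i_R$ is exact — it embeds $\eE$ as a fully exact subcategory of $\aA_r(\eE)$ by Theorem \ref{thm_right_env_if_proj_gen}, cf.\ Lemma \ref{lem_if_I_R_full_then_exact} — so it carries acyclic complexes to acyclic complexes and $Li_R$ is simply the functor induced by $\kK^-(i_R)\colon\kK^-(\eE)\to\kK^-(\aA_r(\eE))$ on the derived quotients. Restricting $\kK^-(i_R)$ along $\kK^-(\pP)\hookrightarrow\kK^-(\eE)$ produces $\kK^-(h^\wedge|_\pP)=\kK^-(h)$, and this restriction is plainly compatible with passage to $\dD^-$. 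Hence, under the two equivalences above, $Li_R$ is identified with $\kK^-(h)\colon \kK^-(\pP)\xrightarrow{\simeq}\kK^-(h(\pP))$, which is an equivalence because $h\colon\pP\to h(\pP)$ is one. Therefore $Li_R$ is an equivalence.

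I expect no serious obstacle: the argument is formal once Lemma \ref{lem_homotopy_cat_of_proj} is applied on both sides, and in particular one avoids any delicate statement about idempotent completions of $\kK^-$ by invoking that lemma directly with the (possibly non-idempotent-complete) projectively generating subcategories $\pP$ and $h(\pP)$ rather than with full subcategories of projectives. The only points needing care are the identification $\aA_r(\eE)\simeq\textrm{fp}(\pP)$ supplied by Theorem \ref{thm_right_env_if_proj_gen}, the projectivity of the representables, and writing the compatibility square cleanly; if one instead wanted to prove essential surjectivity and full faithfulness of $Li_R$ by hand, the harder half would be essential surjectivity, which is precisely what the second application of Lemma \ref{lem_homotopy_cat_of_proj} delivers for free.
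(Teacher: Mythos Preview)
Your argument is correct and follows the same route as the paper: identify $\aA_r(\eE)$ with $\textrm{fp}(\pP)$ via Theorem \ref{thm_right_env_if_proj_gen}, observe that $\pP$ (resp.\ $h(\pP)$) is projectively generating in $\eE$ (resp.\ $\textrm{fp}(\pP)$), and apply Lemma \ref{lem_homotopy_cat_of_proj} on both sides. You have in fact been more careful than the paper by explicitly checking that $Li_R$ intertwines the two identifications with $\kK^-(\pP)$ and by noting that $\textrm{fp}(\pP)$, being abelian, is weakly idempotent split.
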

\begin{proof}
	Theorem \ref{thm_right_env_if_proj_gen} implies that $\aA_r(\eE) \simeq \textrm{fp}(\pP)$.
	Since $\pP$ is projectively generating both in  $\eE$ and in $\textrm{fp}(\pP) \simeq \aA_r(\eE)$, Lemma \ref{lem_homotopy_cat_of_proj} implies that $\dD^-(\eE) \simeq \kK^-(\pP) \simeq \dD^-(\aA_r(\eE))$.
\end{proof}

We leave to the reader the formulation of an analogous statement for exact categories with injectively generating subcategories and bounded below derived categories.

\begin{THM}\label{thm_bounded_derived_equi}
	Let $\eE$ be a weakly idempotent split exact category admitting a projectively generating subcategory $\pP$ with weak kernels. Assume that $\pP$ is idempotent split and $\aA_r(\eE)$  is of finite global dimension. Then $Li_R \colon \dD^b(\eE) \to \dD^b(\aA_r(\eE))$ is an equivalence.
\end{THM}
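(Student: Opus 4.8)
The plan is to show that the equivalence $Li_R\colon\dD^-(\eE)\xrightarrow{\simeq}\dD^-(\aA_r(\eE))$ of Proposition \ref{prop_derived_equiv} restricts to an equivalence of the bounded derived subcategories. Write $\aA:=\aA_r(\eE)$. By Theorem \ref{thm_right_env_if_proj_gen} and Proposition \ref{prop_envelope_if_weak_kernels}, $\aA\simeq\textrm{fp}(\pP)$ is abelian and $i_R\colon\eE\to\aA$ (which is the functor $h^{\wedge}$ under this identification) is the inclusion of a fully exact subcategory, in particular an exact functor; and since $\pP$ is idempotent split, Proposition \ref{prop_proj_in_fp} identifies $\pP$ with the full subcategory of projective objects of $\aA$, so that $\aA$ has enough projectives.

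First I would establish full faithfulness of $Li_R\colon\dD^b(\eE)\to\dD^b(\aA)$. As $i_R$ is exact, $Li_R$ is computed termwise, so it carries $\kK^b(\eE)$ into $\kK^b(\aA)$ and the square
\[
\xymatrix{\dD^b(\eE)\ar[r]^{Li_R}\ar[d]&\dD^b(\aA)\ar[d]\\ \dD^-(\eE)\ar[r]^{Li_R}&\dD^-(\aA)}
\]
commutes, where the vertical arrows are the canonical functors and the lower horizontal one is an equivalence (Proposition \ref{prop_derived_equiv}). The right vertical arrow is fully faithful, being the usual comparison functor for an abelian category, and the left vertical arrow is fully faithful since $\eE$ is weakly idempotent split (cf. Section \ref{ssec_der_admiss}). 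Chasing $\Hom$-groups around the square then yields, for $X,Y\in\dD^b(\eE)$,
\[
\Hom_{\dD^b(\eE)}(X,Y)=\Hom_{\dD^-(\eE)}(X,Y)\simeq\Hom_{\dD^-(\aA)}(Li_RX,Li_RY)=\Hom_{\dD^b(\aA)}(Li_RX,Li_RY).
\]

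Then I would prove essential surjectivity, and it is here that the finite global dimension of $\aA$ is used. Combined with the existence of enough projectives it implies that every object of $\aA$ admits a finite resolution by objects of $\pP$ --- the relevant syzygy is projective, and it lies in $\pP$ itself rather than merely as a direct summand because $\pP$ is idempotent split. Hence each object of $\aA$, viewed through such a resolution, lies in $\kK^b(\pP)$, which is a full triangulated subcategory of $\dD^b(\aA)$; since $\dD^b(\aA)$ is generated as a triangulated category by the shifts of objects of $\aA$, it follows that $\kK^b(\pP)=\dD^b(\aA)$. As $\pP\subset\eE$, any $P^{\bcdot}\in\kK^b(\pP)$ is a bounded complex over $\eE$, hence defines an object of $\dD^b(\eE)$ whose image under $Li_R$ is $P^{\bcdot}$ regarded in $\kK^b(\aA)$. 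Therefore $Li_R\colon\dD^b(\eE)\to\dD^b(\aA_r(\eE))$ is essentially surjective, and together with the previous step it is an equivalence.

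The main obstacle is the essential-surjectivity step --- precisely the reduction of a bounded complex over $\aA_r(\eE)$ to a bounded complex of projectives; this is exactly where finite global dimension is indispensable, since in general one obtains only a bounded-\emph{above} complex of objects of $\pP$, which need not lie in the essential image of $\dD^b(\eE)$. A secondary point to verify along the way is the full faithfulness of $\dD^b(\eE)\to\dD^-(\eE)$ in the weakly idempotent split setting, which is standard.
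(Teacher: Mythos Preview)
Your argument is correct, and it is organised a little differently from the paper's. Both proofs rest on the identification $\kK^b(\pP)\simeq\dD^b(\aA_r(\eE))$ coming from finite global dimension together with Proposition \ref{prop_proj_in_fp}. The paper then spends its effort on the other side: it proves that the canonical functor $\kK^b(\pP)\to\dD^b(\eE)$ is an equivalence, for which it must show that every $E\in\eE$ has a \emph{finite} $\pP$-resolution that is acyclic \emph{in $\eE$} (the paper flags this as nontrivial, since acyclicity of $i_R(E_\bcdot)$ in $\aA_r(\eE)$ does not automatically imply acyclicity of $E_\bcdot$ in $\eE$). You sidestep that point entirely: for essential surjectivity you only need finite $\pP$-resolutions of objects of $\aA_r(\eE)$, which is immediate, and you obtain full faithfulness by pulling the $\dD^-$-equivalence of Proposition \ref{prop_derived_equiv} back along the fully faithful embeddings $\dD^b\hookrightarrow\dD^-$. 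Your route is a bit more economical; the paper's route has the side benefit of producing finite projective resolutions of objects of $\eE$ inside $\eE$ itself, hence an explicit equivalence $\dD^b(\eE)\simeq\kK^b(\pP)$. The one thing you invoke but do not prove---full faithfulness of $\dD^b(\eE)\to\dD^-(\eE)$ for weakly idempotent split $\eE$---is indeed standard (it reduces to $\aA^b(\eE)=\kK^b(\eE)\cap\aA^-(\eE)$, both sides being thick in this setting); a precise reference would strengthen the write-up.
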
 
\begin{proof}
	By Proposition \ref{prop_proj_in_fp}, the idempotent split category $\pP$ is equivalent to the category of projective objects in $\textrm{fp}(\pP)$, which is equivalent to $\aA_r(\eE)$ by Theorem \ref{thm_right_env_if_proj_gen}. The finite global dimension of $\aA_r(\eE)$ implies that $\dD^b(\aA_r(\eE)) \simeq \kK^b(\pP)$.
	
	Let $\textrm{gldim} \aA_r(\eE) = n$.
	We check that any $E\in \eE$ admits a finite projective resolution, i.e. there exists an acyclic complex $0 \to P_{n+1} \to P_{n} \to \ldots \to P_0 \to E \to 0$ in $\kK(\eE)$ with $P_i \in \pP$. This is not obvious because the acyclicity of a complex $i_R(E_\bcdot) \in \kK(\aA_r(\eE))$ does not necessarily imply the acyclicity of $E_\bcdot \in \kK(\eE)$. Since $\pP\subset \eE$ is projectively generating, object $E\in \eE$ admits a resolution $P_\bcdot = \{P_{n}\to \ldots \to P_0\to E \to 0\}$ with $P_i\in \pP$. By Theorem \ref{thm_right_env_if_proj_gen}, $i_R(\eE) \subset\aA_r(\eE)$ is a fully exact subcategory, hence $i_R(P_\bcdot)\in \kK(\aA_r(\eE))$ is an acyclic complex with $i_R(P_i)\in \aA_r(\eE)$ projective. The assumption on the global dimension of $\aA_r(\eE)$ implies that the kernel $K$ of $i_R(P_{n}) \to i_R(P_{n-1})$ is projective. Hence, $K \simeq i_R(P_{n+1})$,  for some $P_{n+1} \in \pP$ (see Proposition \ref{prop_proj_in_fp}).
	Then $0 \to P_{n+1} \to P_{n} \to \ldots \to P_0 \to E \to 0$ is a finite projective resolution of $E$.
		
	It follows that the functor $\iota \colon \kK^b(\pP) \to \dD^b(\eE)$ is an equivalence. Indeed,  Lemma \ref{lem_homotopy_cat_of_proj} implies that $\iota$ is fully faithful and the essential surjectivity of $\iota$ follows from the existence of finite projective resolutions by the unwinding of complexes.
	Then, the equivalence $Li_R$ of Proposition \ref{prop_derived_equiv} restricts to an equivalence $Li_R \colon \dD^b(\eE) \to \dD^b(\aA_r(\eE))$.
\end{proof}

The following Lemma ensures that the condition in Theorem \ref{thm_bounded_derived_equi} on category $\pP$ to be idempotent split is verified when $\pP$ is Hom-finite and additively generated by a finite number of objects.

\begin{LEM}\label{lem_idempotent_split}
	Let $\cC$ be a $k$-linear, Hom-finite additive category additively generated by objects $C_1,\ldots,C_n$. If, for all $i$, $\End_\cC(C_i)$ is local, then $\cC$ is idempotent split.
\end{LEM}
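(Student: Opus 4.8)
The plan is to identify $\cC$ with the category of finitely generated projective modules over a finite-dimensional algebra, where idempotent completeness is a formality. First I would set $A := \End_\cC\big(\bigoplus_{i=1}^n C_i\big)$, which is a finite-dimensional $k$-algebra by Hom-finiteness, and let $e_i \in A$ be the idempotent corresponding to the summand $C_i$, so that $\sum_{i=1}^n e_i = 1_A$, the $e_i$ are pairwise orthogonal, and $e_iAe_i \cong \End_\cC(C_i)$. The hypothesis that each $\End_\cC(C_i)$ is local then says precisely that each $e_i$ is a primitive idempotent; writing $P_i := e_iA$ for the corresponding indecomposable projective right $A$-module, we have $A_A \cong \bigoplus_{i=1}^n P_i$ and $\End_A(P_i) \cong e_iAe_i$ is local.

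Next I would invoke Lemma \ref{lem_fp_as_modules}, which gives an equivalence $\textrm{fp}(\cC) \simeq \textrm{mod-}A$; under it the Yoneda embedding $h\colon \cC \to \textrm{fp}(\cC)$, $C \mapsto \Hom_\cC(-,C)$, sends $C_i$ to the right $A$-module $\Hom_\cC(\bigoplus_{j} C_j, C_i) \cong e_iA = P_i$. Since $\cC$ is additively generated by the $C_i$ and $h$ is fully faithful, $h$ identifies $\cC$ with the full subcategory of $\textrm{mod-}A$ consisting of the finite direct sums $\bigoplus_i P_i^{a_i}$.

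I would then argue that this subcategory is in fact all of $\textrm{proj-}A$, the category of finitely generated projective right $A$-modules: any such module $P$ is a direct summand of a free module $A^m \cong \bigoplus_i P_i^m$; as $A$ is finite-dimensional, $\End_A(P)$ is Artinian, so $P$ decomposes into finitely many indecomposables, and the Krull--Schmidt--Azumaya theorem applied to $P \oplus P' \cong \bigoplus_i P_i^m$ (all $\End_A(P_i)$ being local) forces these indecomposables to be among the $P_i$; hence $P \cong \bigoplus_i P_i^{b_i}$ lies in the essential image of $h$. Thus $h$ induces an equivalence $\cC \xrightarrow{\simeq} \textrm{proj-}A$. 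Finally, $\textrm{proj-}A$ is idempotent complete: for $P \in \textrm{proj-}A$ and an idempotent $\epsilon \in \End_A(P)$ one has $P = \epsilon P \oplus (1-\epsilon)P$, so $\epsilon P$ is again finitely generated projective and the inclusion and projection split $\epsilon$. Transporting this across the equivalence shows every idempotent in $\cC$ splits.

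The main obstacle---and the one place where localness of $\End_\cC(C_i)$ is genuinely used---is the third step, identifying the essential image of $h$ with all of $\textrm{proj-}A$: this relies on the $e_i$ being primitive and on the Krull--Schmidt property for finitely generated projective $A$-modules, available because $A$ is a finite-dimensional, hence semiperfect, algebra. Without this a finitely generated projective need not be a sum of the $P_i$ (as already the case $A = \mathbb{Z}$ shows), and essential surjectivity fails. The fully faithfulness of $h$ and the final idempotent-splitting step are purely formal.
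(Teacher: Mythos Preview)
Your proof is correct and follows essentially the same approach as the paper: embed $\cC$ fully faithfully into $\textrm{mod-}A$ via Lemma~\ref{lem_fp_as_modules}, use that the $C_i$ are indecomposable (since $\End_\cC(C_i)$ is local) together with Krull--Schmidt in $\textrm{mod-}A$ to see that any direct summand of an object of $\cC$ is again a sum of $C_i$'s. The paper phrases the embedding as $\cC\hookrightarrow \aA_r(\cC)\simeq\textrm{fp}(\cC)\simeq\textrm{mod-}A$, routing through Lemma~\ref{lem_P_has_weak_kernels} and Theorem~\ref{thm_right_env_if_proj_gen}, whereas you go directly to $\textrm{mod-}A$ and identify the image with $\textrm{proj-}A$; this is a mild streamlining but not a genuinely different argument.
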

\begin{proof}
	Algebra $A_C:=\End_{\cC}(\bigoplus_{i=1}^n C_i)$ is finite dimensional, hence right coherent. By Lemma \ref{lem_P_has_weak_kernels}, category $\cC$ has weak kernels. By Theorem \ref{thm_right_env_if_proj_gen}, $\cC$ is a full subcategory of the Deligne finite category $\aA_r(\cC)\simeq \textrm{fp}(\cC)\simeq \textrm{mod-}A_C$ (see Appendix \ref{sec_abelian_cat} and Lemma \ref{lem_fp_as_modules}). For any $i$, object $C_i\in \aA_r(\cC)$ is indecomposable, as $\End_{\cC}(C_i) \simeq \End_{\aA_r(\cC)}(C_i)$ has no non-trivial idempotent. Since $\aA_r(\cC)$ is	 Krull-Schmidt, any direct summand of $C := \bigoplus C_i^{a_i} \in \cC$ is isomorphic to a direct sum of copies of $C_i$'s. As $\textrm{fp}(\cC)$ is idempotent split, $\cC$ is idempotent split too.
\end{proof}

\vspace{0.3cm}
\subsection{Abelian envelopes and admissible subcategories}\label{ssec_ab_env_adm_subcat}~\\

Here we discuss how the idea of left and right admissible subcategories of Section \ref{sec_exact_cat}	match the idea of abelian envelopes of this section. Recall that right and left admissible subcategories are in bijection with perpendicular torsion pairs (see Definition \ref{def_right_and_left_adm}).

\begin{LEM}\label{lem_enveloping_adjointness}
	Let	
	$\Phi \colon \eE\to \eE'$, $\Psi \colon \eE' \to \eE$ be 1-morphisms in $\mathscr{E}x_r$ such that $\Phi$ is the left adjoint functor to $\Psi$.
Then $\aA_r(\Phi)$ is left adjoint to $\aA_r(\Psi)$ and $\aA_r(\Psi)$ is exact.
\end{LEM}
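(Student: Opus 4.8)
The plan is to derive everything from the pseudo-functoriality of $\aA_r$ established in Proposition~\ref{prop_envelope_functorial}, together with the general principle that a pseudo-functor of $2$-categories carries internal adjunctions to internal adjunctions. First I would observe that the unit $\eta\colon \Id_\eE \to \Psi\Phi$ and the counit $\varepsilon\colon \Phi\Psi \to \Id_{\eE'}$ of the adjunction $\Phi\dashv\Psi$ are $2$-morphisms of $\mathscr{E}x_r$: the identity functors are exact, the composites $\Psi\Phi$ and $\Phi\Psi$ are right exact by Lemma~\ref{lem_comp_of_righ_ex_is_right_ex}, so $\eta$ and $\varepsilon$ are natural transformations of right exact functors, and they satisfy the triangle identities $(\varepsilon\Phi)\circ(\Phi\eta)=\Id_\Phi$ and $(\Psi\varepsilon)\circ(\eta\Psi)=\Id_\Psi$.

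Applying $\aA_r$ to this data and composing with the coherence isomorphisms $\aA_r(\Id)\simeq \Id$ and $\aA_r(G\circ F)\simeq \aA_r(G)\circ \aA_r(F)$ of Proposition~\ref{prop_envelope_functorial}, the $2$-morphisms $\aA_r(\eta)$ and $\aA_r(\varepsilon)$ transport to natural transformations
\[
\widehat{\eta}\colon \Id_{\aA_r(\eE)} \to \aA_r(\Psi)\circ\aA_r(\Phi), \qquad
\widehat{\varepsilon}\colon \aA_r(\Phi)\circ\aA_r(\Psi) \to \Id_{\aA_r(\eE')}.
\]
Because $\aA_r$ is compatible with horizontal (whiskering) and vertical composition of $2$-cells and the coherence isomorphisms are natural, the triangle identities for $(\eta,\varepsilon)$ go over to the triangle identities for $(\widehat\eta,\widehat\varepsilon)$; equivalently, one simply invokes that a pseudo-functor sends an adjunction to an adjunction. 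Hence $\aA_r(\Phi)$ is left adjoint to $\aA_r(\Psi)$.

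It remains to prove that $\aA_r(\Psi)$ is exact. By its very construction in Proposition~\ref{prop_envelope_functorial} it is a $1$-morphism of $\mathscr{E}x_r$ between the abelian categories $\aA_r(\eE')$ and $\aA_r(\eE)$, hence a right exact functor. By the previous step it is also a right adjoint between abelian categories, so it preserves all limits that exist and is in particular left exact; a functor of abelian categories that is simultaneously left and right exact is exact. I expect the only mildly delicate point to be the bookkeeping in the second paragraph — transporting the triangle identities through a mere pseudo-functor rather than a strict $2$-functor — but this introduces no genuine difficulty, being exactly the content of the general statement that pseudo-functors preserve internal adjunctions.
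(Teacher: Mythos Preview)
Your proof is correct and follows essentially the same approach as the paper: invoke that the pseudo-functor $\aA_r$ preserves adjunctions (Proposition~\ref{prop_envelope_functorial}), then observe that $\aA_r(\Psi)$ is right exact by construction and left exact as a right adjoint, hence exact. The paper simply asserts that a pseudo-functor maps adjoint functors to adjoint functors without spelling out the unit/counit transport, which you have made explicit.
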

\begin{proof} 
	As $\aA_r$ is a pseudo-functor $\mathscr{E}x_r \to \mathscr{E}x_r$ (see Proposition \ref{prop_envelope_functorial}), it maps adjoint functors to adjoint functors, i.e. $\aA_r(\Phi) \dashv \aA_r(\Psi)$.
Functor $\aA_r(\Psi)$ is right adjoint, hence it is left exact. By definition, it is also right exact, which implies that $\aA_r(\Psi)$ is exact.
\end{proof} 

Let $(\tT, \fF)$ be a perpendicular torsion pair in an exact category $\eE$. Assume that $\aA_r(\eE)$ and $\aA_r(\tT)$ exist. By Lemma \ref{lem_enveloping_adjointness}, the adjoint pair $\infl_*\dashv \infl^!$ of exact functors (see Theorem \ref{thm_admissible_iff_adjoint_exact}) yields an adjoint pair $\aA_r(\infl_*)\dashv \aA_r(\infl^!)$ of functors between the abelian envelopes with $\aA_r(\infl^!)$ exact. We check that $\aA_r(\fF)$ is the kernel of $\aA_r(\infl^!)$:

\begin{PROP}\label{prop_envelope_becomes_Serre_subcat}
		Let $(\tT, \fF)$ be a perpendicular torsion pair in an exact category $\eE$. 
		If
		$\aA_r(\eE)$ and $\aA_r(\tT)$ exist, then so does $\aA_r(\fF)$, and $\aA_r(\defl_*)$ identifies it with the colocalising subcategory $\kK$, the kernel of $ \aA_r(\infl^!)$. 
		The universal functor $i_R^{\fF} \colon \fF\to \aA_r(\fF) \simeq \kK$ is the unique functor which fits into a commutative square
		\[
		\xymatrix{\kK \ar[r]^{\kappa_*} & \aA_r(\eE)\\
	\fF \ar[u]^{i_R^{\fF}} \ar[r]^{\defl_*} & \eE, \ar[u]^{i_R} }
		\] 
		where $\kappa_*\colon \kK\to \aA_r(\eE)$ denotes the inclusion.
\end{PROP}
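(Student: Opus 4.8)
The plan is to realise $\aA_r(\fF)$ as the kernel of an exact functor between the two envelopes we are given, and then to verify its universal property through a chain of natural equivalences.

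First I would organise the data. By Theorem~\ref{thm_admissible_iff_adjoint_exact} the functors $\infl^!\colon\eE\to\tT$ and $\defl^*\colon\eE\to\fF$ are exact, so together with the exact inclusions $\infl_*,\defl_*$ they are $1$-morphisms of $\mathscr{E}x_r$. Applying $\aA_r$ (Proposition~\ref{prop_envelope_functorial}) yields $\aA_r(\infl^!)\colon\aA_r(\eE)\to\aA_r(\tT)$ with $\aA_r(\infl^!)\circ i_R\simeq i_R^{\tT}\circ\infl^!$, and since $\infl_*\dashv\infl^!$, Lemma~\ref{lem_enveloping_adjointness} makes $\aA_r(\infl^!)$ exact with left adjoint $\aA_r(\infl_*)$; moreover $\aA_r(\infl_*)$ is fully faithful, as a pseudo-functor sends the adjunction $\infl_*\dashv\infl^!$, whose unit is invertible, to one with invertible unit. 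Hence $\kK:=\ker\aA_r(\infl^!)$ is a colocalising (in particular Serre, hence abelian) subcategory of $\aA_r(\eE)$, with $\aA_r(\infl^!)$ identifying $\aA_r(\eE)/\kK\simeq\aA_r(\tT)$ (cf.\ Appendix~\ref{sec_abelian_cat}); thus the inclusion $\kappa_*\colon\kK\to\aA_r(\eE)$ is exact, reflects exactness and admits a left adjoint $\kappa^*\colon X\mapsto\textrm{coker}\bigl(\aA_r(\infl_*)\aA_r(\infl^!)X\to X\bigr)$, which is right exact with $\kappa^*\circ\kappa_*\simeq\Id_{\kK}$ and $\kappa^*\circ\aA_r(\infl_*)=0$. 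Since $(\tT,\fF)$ is a torsion pair, $\infl^!\circ\defl_*=0$, so $\aA_r(\infl^!)\circ i_R\circ\defl_*\simeq i_R^{\tT}\circ\infl^!\circ\defl_*=0$; therefore $i_R\circ\defl_*$ factors through $\kK$ as $\kappa_*\circ j$ for a unique $j\colon\fF\to\kK$, and $j$ is right exact because $i_R$ is, $\defl_*$ is exact, and $\kappa_*$ reflects exactness. I claim $(\kK,j)$ is the right abelian envelope of $\fF$, so that $i_R^{\fF}=j$.

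To prove this, fix an abelian $\bB$ and construct natural equivalences
\[
\Rex(\kK,\bB)\xrightarrow{\ \simeq\ }\{\widehat F\in\Rex(\aA_r(\eE),\bB)\mid \widehat F\circ\aA_r(\infl_*)=0\}\xrightarrow{\ \simeq\ }\{F\in\Rex(\eE,\bB)\mid F\circ\infl_*=0\}\xrightarrow{\ \simeq\ }\Rex(\fF,\bB).
\]
The first is $\widehat F\mapsto\widehat F\circ\kappa_*$, with quasi-inverse $H\mapsto H\circ\kappa^*$: one has $H\circ\kappa^*\circ\kappa_*\simeq H$, and if $\widehat F$ vanishes on the image of $\aA_r(\infl_*)$ then applying right exactness of $\widehat F$ to $\aA_r(\infl_*)\aA_r(\infl^!)X\to X\to\kappa^*X\to0$ gives $\widehat F\simeq(\widehat F\circ\kappa_*)\circ\kappa^*$. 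The second is the universal equivalence $\Rex(\aA_r(\eE),\bB)\simeq\Rex(\eE,\bB)$, $\widehat F\mapsto\widehat F\circ i_R$, restricted appropriately: if $F=\widehat F\circ i_R$ kills $\tT$ then $\widehat F\circ\aA_r(\infl_*)$ is a right exact functor on $\aA_r(\tT)$ whose restriction along $i_R^{\tT}$ vanishes, hence is $0$ by the universal property of $\aA_r(\tT)$, while the converse uses $\aA_r(\infl_*)\circ i_R^{\tT}\simeq i_R\circ\infl_*$. The third records that $\defl^*$ presents $\fF$ as the localisation of $\eE$ inverting the deflations $\eta_E\colon E\to\defl_*\defl^*E$ (whose kernels $\infl_*\infl^!E$ lie in $\tT$): a right exact $F$ kills $\tT$ exactly when it inverts every $\eta_E$, i.e.\ when $F\simeq(F\circ\defl_*)\circ\defl^*$, and $G\mapsto G\circ\defl^*$, $F\mapsto F\circ\defl_*$ are mutually quasi-inverse via $\defl^*\circ\defl_*\simeq\Id_{\fF}$. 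Chasing $H$ through the three steps yields $H\circ\kappa^*\circ i_R\circ\defl_*\simeq H\circ\kappa^*\circ\kappa_*\circ j\simeq H\circ j$, so the composite is naturally $(-)\circ j$; hence $(-)\circ j\colon\Rex(\kK,\bB)\xrightarrow{\simeq}\Rex(\fF,\bB)$ for every $\bB$, i.e.\ $\aA_r(\fF)$ exists and equals $\kK$ with $i_R^{\fF}=j$.

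It remains to identify this with $\aA_r(\defl_*)$ and to deduce uniqueness. By Proposition~\ref{prop_envelope_functorial}, $\aA_r(\defl_*)$ is the unique right exact functor with $\aA_r(\defl_*)\circ i_R^{\fF}\simeq i_R\circ\defl_*$; since $\kappa_*$ is right exact and $\kappa_*\circ j=i_R\circ\defl_*$, necessarily $\aA_r(\defl_*)\simeq\kappa_*$, so $\aA_r(\defl_*)$ identifies $\aA_r(\fF)\simeq\kK$ with the colocalising subcategory $\kK\subset\aA_r(\eE)$ and makes the displayed square commute; and any $j'$ with $\kappa_*\circ j'\simeq i_R\circ\defl_*$ satisfies $j'\simeq j$ because $\kappa_*$ is fully faithful, which gives the asserted uniqueness. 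The main obstacle is the universal property: it forces one to exploit the universal properties of \emph{both} $\aA_r(\eE)$ and $\aA_r(\tT)$ simultaneously, and the colocalising presentation of $\kK$ (equivalently, fully faithfulness of $\aA_r(\infl_*)$) is exactly what produces the adjoint $\kappa^*$ and hence the first of the three equivalences.
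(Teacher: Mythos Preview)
Your proof is correct and follows essentially the same approach as the paper: both establish that $\kK$ is colocalising via the adjunction $\aA_r(\infl_*)\dashv\aA_r(\infl^!)$, factor $i_R\circ\defl_*$ through $\kK$, and verify the universal property using the left adjoint $\kappa^*$, the relation $\kappa^*\circ i_R\simeq i_R^{\fF}\circ\defl^*$, and the universal property of $\aA_r(\tT)$ to pass between vanishing on $\tT$ and vanishing on its image in $\aA_r(\eE)$. The only difference is packaging: you organise the argument as a composite of three explicit equivalences of functor categories, whereas the paper checks essential surjectivity, faithfulness and fullness of $(-)\circ i_R^{\fF}$ by hand; the underlying computations are the same.
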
 
\begin{proof}
	Functor $\aA_r(\infl^!)$ is  exact, as it has left adjoint $\aA_r(\infl_*)$ (see Lemma \ref{lem_enveloping_adjointness}). As $\aA_r(\infl_*)$ is fully faithful, category $\kK\subset \aA_r(\eE)$ is a colocalising subcategory (see Proposition \ref{prop_Popescu}). We denote by $\kappa^*\colon \aA_r(\eE) \to \kK$ the functor left adjoint to $\kappa_*$ (see Lemma \ref{lem_adjoint_to_coloc}).
	
	The composite $\fF \xrightarrow{\defl_*} \eE \xrightarrow{i_R} \aA_r(\eE) \xrightarrow{\aA_r(\infl^!)} \aA_r(\tT)$ is zero, as $\infl^! \circ \defl_* \colon \fF \to \tT$ is. Hence, there exists $i_R^{\fF}\colon \fF \to \aA_r(\eE)$ such that $\kappa_*\circ i_R^{\fF}= i_R\circ \defl_*$. Functor $i_R^{\fF}$ is right exact, because $\kappa_*\circ i_R^{\fF}$ is.
	
	Let $\defl^* \colon \eE\to \fF$ be the left adjoint to the inclusion $\defl_*\colon \fF\to \eE$. As functor $\kappa^*\circ i_R \colon \eE \to \kK$ vanishes on $\tT \subset \eE$, decomposition (\ref{decomposition}) implies an isomorphism $\kappa^* \circ  i_R \simeq \kappa^*\circ  i_R\circ  \defl_*\circ  \defl^*$. Then, $\kappa^* \circ  i_R \simeq \kappa^*\circ  i_R\circ  \defl_*\circ  \defl^*\simeq \kappa_*\circ \kappa^*\circ  i_R^{\fF}\circ   \defl^* \simeq i_R^{\fF}\circ  \defl^*$.  
	
	Let $\bB$ be an abelian category. We check that $(-) \circ i_R^{\fF} \colon \Rex(\kK, \bB) \to \Rex(\fF,\bB)$ is essentially surjective and fully faithful.
	
	For $F\in \Rex(\fF, \bB)$, functor $F \circ \defl^*$ is an object of $\Rex(\eE, \bB)$. Hence, there exist $\ol{F}\in \Rex(\aA_r(\eE), \bB)$ such that $\ol{F}\circ i_R \simeq F \circ \defl^*$. Then $\ol{F}\circ \kappa_*$ is a functor in $\Rex(\kK, \bB)$ such that $\ol{F} \circ \kappa_* \circ i_R^{\fF} \simeq \ol{F} \circ i_R \circ \defl_* \simeq F\circ \defl^* \circ \defl_* \simeq F$.
	
	A natural transformation $\nu \colon G_1 \to G_2$ of functors in $\Rex(\kK, \bB)$ induces a natural transformation $\nu \circ \kappa^*$ of functors in $\Rex(\aA_r(\eE), \bB)$. The composite with $i_R$ gives a natural transformation $\nu \circ \kappa^* \circ i_R \simeq \nu \circ i_R^{\fF}\circ \defl^*$ of functors in $\Rex(\eE, \bB)$. If $\nu \circ i_R^{\fF}$ is zero, then so is $\nu \circ \kappa^* \circ i_R$. As $(-)\circ i_R \colon \Rex(\aA_r(\eE), \bB) \xrightarrow{\simeq} \Rex(\eE, \bB)$ is faithful, natural transformation $\nu \circ \kappa^* \circ i_R$ is zero if and only if $\nu \circ \kappa^*$ is. If it is the case, then $\nu \simeq \nu \circ \kappa^* \circ \kappa_*$ is zero, i.e. $(-)\circ i_R^{\fF}\colon \Rex(\kK, \bB) \to \Rex(\fF,\bB)$ is faithful.
	
	Finally, let $\nu \colon G_1\circ i_R^{\fF} \to G_2\circ  i_R^{\fF}$ be a natural transformation. It induces a natural transformation $\nu \circ \defl^*$ of functors in $\Rex(\eE, \bB)$. As $G_j \circ i_R^{\fF} \circ \defl^* \simeq G_j\circ \kappa^* \circ i_R$, for $j=1,2$, fullness of $(-)\circ i_R \colon \Rex(\aA_r(\eE), \bB) \to \Rex(\eE, \bB)$ implies existence of  $\ol{\nu} \colon G_1 \circ \kappa^* \to G_2 \circ \kappa^*$ such that $\ol{\nu} \circ i_R = \nu \circ \defl^*$. Then $\ol{\nu} \circ \kappa_*\colon G_1 \circ \kappa^* \circ \kappa_*\simeq G_1 \to G_2 \simeq G_2 \circ \kappa^* \circ \kappa_*$ is a natural transformation such that $\ol{\nu}\circ \kappa_* \circ i_R^{\fF} \simeq \ol{\nu} \circ i_R \circ \defl_* \simeq \nu \circ \defl^* \circ \defl_*\simeq \nu$.
\end{proof}
Dually we have:
\begin{PROP}
		Let $(\tT, \fF)$ be a perpendicular torsion pair in an exact category $\eE$. 
		If
	$\aA_l(\eE)$ and $\aA_l(\fF)$ exist, then so does $\aA_l(\tT)$ and $\aA_l(\infl_*)$ identifies it with the localising subcategory $\kK$, the kernel of $ \aA_l(\defl^*)$. 
		The universal functor $i_L^{\tT} \colon \tT\to \aA_l(\tT) \simeq \kK$ is the unique functor which fits into a commutative square
	\[
	\xymatrix{\kK \ar[r]^{\kappa_*} & \aA_l(\eE)\\
		\tT \ar[u]^{i_L^{\tT}} \ar[r]^{\infl_*} & \eE, \ar[u]^{i_L} }
	\]
	where $\kappa_*\colon \kK\to \aA_l(\eE)$ denotes the inclusion.
\end{PROP}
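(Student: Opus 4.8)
The proof proposal is to reduce everything to the right-envelope case via the duality $\aA_l(\eE^{\opp}) \simeq \aA_r(\eE)^{\opp}$ from~(\ref{rem_envs_for_opp}). First I would observe that a perpendicular torsion pair $(\tT, \fF)$ in $\eE$ becomes a perpendicular torsion pair $(\fF^{\opp}, \tT^{\opp})$ in $\eE^{\opp}$: indeed the $\Hom$ and $\Ext^1$ vanishing conditions are symmetric under passing to the opposite category, and the conflation $T \to E \to F$ becomes the conflation $F^{\opp} \to E^{\opp} \to T^{\opp}$, so the roles of torsion and torsion-free class are interchanged. Under this identification, the embedding $\infl_*\colon \tT \to \eE$ is the opposite of the embedding of the torsion-free class $\tT^{\opp} \subset \eE^{\opp}$, which plays the role of ``$\defl_*$'' for the opposite torsion pair; likewise $\infl^!$ is the opposite of the associated ``$\defl^*$''.

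Next I would translate the hypotheses: the assumption that $\aA_l(\eE)$ and $\aA_l(\fF)$ exist says, by~(\ref{rem_envs_for_opp}), that $\aA_r(\eE^{\opp}) \simeq \aA_l(\eE)^{\opp}$ and $\aA_r(\fF^{\opp}) \simeq \aA_l(\fF)^{\opp}$ exist. But $\fF^{\opp}$ is precisely the torsion class of the opposite torsion pair on $\eE^{\opp}$. So Proposition~\ref{prop_envelope_becomes_Serre_subcat}, applied to the exact category $\eE^{\opp}$ with the perpendicular torsion pair $(\fF^{\opp}, \tT^{\opp})$, gives us: $\aA_r(\tT^{\opp})$ exists; the functor $\aA_r$ of the embedding ``$\defl_*$'' $= (\infl_*)^{\opp}$ identifies it with the colocalising subcategory $\kK' \subset \aA_r(\eE^{\opp})$ that is the kernel of $\aA_r$ of ``$\infl^!$'' $= (\defl^*)^{\opp}$; and the universal functor $\tT^{\opp} \to \aA_r(\tT^{\opp})$ is characterized by the analogous commutative square.

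Finally I would transport each conclusion back along $(-)^{\opp}$. The existence of $\aA_r(\tT^{\opp}) \simeq \aA_l(\tT)^{\opp}$ means $\aA_l(\tT)$ exists. A colocalising subcategory $\kK' \subset \aA_r(\eE^{\opp})$ corresponds to a localising subcategory $\kK := (\kK')^{\opp} \subset \aA_l(\eE)$, since passing to opposite categories interchanges localising and colocalising subcategories (a left-exact right adjoint becomes a right-exact left adjoint and vice versa, cf.\ the appendix). Since $\aA_r$ of ``$\infl^!$'' on $\eE^{\opp}$ is $(\aA_l(\defl^*))^{\opp}$, its kernel $\kK'$ is the opposite of the kernel of $\aA_l(\defl^*)$, so $\kK$ is exactly the kernel of $\aA_l(\defl^*)$, and $\aA_l(\infl_*)$ identifies $\aA_l(\tT)$ with $\kK$. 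The characterizing commutative square for $i_R^{\fF}$ in $\eE^{\opp}$ dualizes verbatim to the stated square for $i_L^{\tT}$ in $\eE$, with $\kappa_* \colon \kK \to \aA_l(\eE)$ the inclusion. The main (and essentially only) obstacle is bookkeeping: one must verify carefully that ``localising'' and ``colocalising'' are swapped by $(-)^{\opp}$ and that the adjoints $\infl_*\dashv\infl^!$, $\defl^*\dashv\defl_*$ are correctly matched up with the primed functors on $\eE^{\opp}$ — once those identifications are pinned down, the statement follows from Proposition~\ref{prop_envelope_becomes_Serre_subcat} with no further work. It may be worth noting explicitly that this kind of dualization is exactly why the paper only proves the ``right'' versions of such statements in detail.
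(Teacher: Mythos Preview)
Your proposal is correct and is exactly the approach the paper takes: the proposition is stated immediately after Proposition~\ref{prop_envelope_becomes_Serre_subcat} with the words ``Dually we have'' and no further proof, so the intended argument is precisely the $(-)^{\opp}$-dualization you have spelled out. Your bookkeeping on how the torsion pair, the adjunctions, and the (co)localising property swap under passing to opposites is accurate.
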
 

The following proposition gives a suitable characterisation of the exact category with a perpendicular torsion pair as a subcategory in its right abelian envelope.

We say that $(\tT, \fF)$ is a perpendicular \emph{torsion pair with right envelopes} in an exact category $\eE$ if the right abelian envelopes $\aA_r(\eE)$, $\aA_r(\tT)$, and $\aA_r(\fF)$ exist. 

\begin{PROP}\label{prop_E_in_A_r}
	Let $(\tT, \fF)$ be a perpendicular torsion pair with right envelopes in a $k$-linear exact category $\eE$. Assume that $i_R\colon \eE \to \aA_r(\eE)$ is full. 
	Then an object $A \in \aA_r(\eE)$ is isomorphic to $i_R(E)$, for some $E\in \eE$, if and only if 
	\begin{enumerate}
		\item $\aA_r(\defl^*)(A) \simeq i_R(F)$, for some $F\in \fF$,
		\item $\aA_r(\infl^!)(A) \simeq  i_R(T)$, for some $T\in \tT$, and
		\item the adjunction counit $\aA_r(\infl_*) \aA_r(\infl^!)A \to A$ is a monomorphism.
	\end{enumerate}  
\end{PROP}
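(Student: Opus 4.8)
The plan is to prove both implications using the explicit description of $\aA_r(\eE)$ as a subcategory of $\Lex(\eE^{\opp}, \aA b)$ together with the decomposition \eqref{decomposition} for the torsion pair $(\tT, \fF)$. For the ``only if'' direction, suppose $A \simeq i_R(E)$. Since $i_R$ is exact (Lemma \ref{lem_if_I_R_full_then_exact}) and, by Lemma \ref{lem_enveloping_adjointness}, $\aA_r(\infl^!)$, $\aA_r(\defl^*)$ are the envelopes of the exact functors $\infl^!$, $\defl^*$, one expects $\aA_r(\defl^*)(i_R E) \simeq i_R(\defl^* E)$ and $\aA_r(\infl^!)(i_R E) \simeq i_R(\infl^! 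E)$ directly from the pseudo-naturality of $\mathscr I$ (Proposition \ref{prop_envelope_functorial}); this gives conditions (1) and (2) with $F = \defl^* E$, $T = \infl^! E$. For condition (3), the adjunction counit $\aA_r(\infl_*)\aA_r(\infl^!)(i_R E)\to i_R E$ is identified, under the same naturality, with $i_R$ applied to the counit $\infl_*\infl^! E \to E$, which is an inflation in $\eE$ by Proposition \ref{prop_torsion=adjoint}; since $i_R$ is exact and faithful it sends inflations to monomorphisms, so (3) holds.

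For the converse, assume $A$ satisfies (1)--(3). Set $T := $ the object of $\tT$ with $\aA_r(\infl^!)(A)\simeq i_R(T)$ and $F :=$ the object of $\fF$ with $\aA_r(\defl^*)(A)\simeq i_R(F)$. The decomposition \eqref{decomposition} in $\aA_r(\eE)$, i.e. the exact triangle (here a short exact sequence, since $\aA_r(\infl^!)$ is exact and by (3) the counit is mono)
\[
\aA_r(\infl_*)\aA_r(\infl^!)A \to A \to \aA_r(\defl_*)\aA_r(\defl^*)A,
\]
becomes a short exact sequence
\[
0 \to i_R(\infl_* T) \to A \to i_R(\defl_* F) \to 0
\]
in $\aA_r(\eE)$, using that $\aA_r(\infl_*)i_R^{\tT} \simeq i_R\infl_*$ and $\aA_r(\defl_*)i_R^{\fF}\simeq i_R\defl_*$ (Proposition \ref{prop_envelope_becomes_Serre_subcat} and its dual, or directly from pseudo-naturality of $\mathscr I$). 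The class of this extension lies in $\Ext^1_{\aA_r(\eE)}(i_R\defl_* F, i_R\infl_* T)$. The key point is that this $\Ext^1$ group is computed inside $\eE$: since $i_R$ embeds $\eE$ as a \emph{fully exact} subcategory of $\aA_r(\eE)$ (Lemma \ref{lem_if_I_R_full_then_exact}), one has $\Ext^1_{\aA_r(\eE)}(i_R X, i_R Y) \simeq \Ext^1_\eE(X,Y)$ for $X,Y\in\eE$ — this identification, or a direct argument that the extension is the $i_R$-image of an extension in $\eE$, produces a conflation $\infl_* T \to E \to \defl_* F$ in $\eE$ with $i_R(E)\simeq A$.

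The main obstacle is the middle step: showing that an extension in $\aA_r(\eE)$ between objects of the form $i_R X$, $i_R Y$ comes from an extension in $\eE$, i.e. that $i_R$ identifies $\Ext^1$-groups and not merely Hom-groups. The paper establishes that $i_R$ is an inclusion of a fully exact subcategory (Lemma \ref{lem_if_I_R_full_then_exact}), which gives exactly that $\Ext^1_\eE(X,Y)\xrightarrow{\sim}\Ext^1_{\aA_r(\eE)}(i_R X, i_R Y)$ when the subcategory is closed under extensions — so the argument reduces to invoking this together with the Gabriel--Quillen-type compatibility. A secondary subtlety is verifying that, given $i_R(E)\simeq A$, the resulting $\defl^* E$ really maps to the prescribed $F$ and similarly for $T$; this follows because $i_R$ is faithful and intertwines the adjunctions, so the isomorphisms in (1) and (2) are forced. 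Once these naturality statements are in place, both directions are routine diagram chases with \eqref{decomposition}.
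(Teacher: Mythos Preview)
Your overall strategy matches the paper's: for the ``if'' direction you exhibit $A$ as an extension of $i_R(\defl_*F)$ by $i_R(\infl_*T)$ and then invoke that $i_R(\eE)\subset\aA_r(\eE)$ is closed under extensions (Lemma~\ref{lem_if_I_R_full_then_exact}). However, there is one genuine gap and one unnecessary detour.

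\textbf{The gap.} You assert that
\[
\aA_r(\infl_*)\aA_r(\infl^!)A \to A \to \aA_r(\defl_*)\aA_r(\defl^*)A
\]
is a short exact sequence, referring to ``the decomposition \eqref{decomposition} in $\aA_r(\eE)$''. But \eqref{decomposition} is the torsion-pair decomposition \emph{in $\eE$}; nothing so far says that $(\aA_r(\infl_*)\aA_r(\tT),\,\aA_r(\defl_*)\aA_r(\fF))$ is a torsion pair in $\aA_r(\eE)$, nor that this sequence is even right-exact for an arbitrary $A$. Your parenthetical justification (``$\aA_r(\infl^!)$ is exact and by (3) the counit is mono'') gives injectivity on the left but says nothing about exactness at $A$ or surjectivity on the right. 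The paper fills this as follows: by Proposition~\ref{prop_envelope_becomes_Serre_subcat}, $\aA_r(\fF)\subset\aA_r(\eE)$ is a \emph{colocalising} subcategory with quotient functor $\aA_r(\infl^!)$; then Lemma~\ref{lem_adjoint_to_coloc} yields the right-exact sequence
\[
\aA_r(\infl_*)\aA_r(\infl^!)A \xrightarrow{\varepsilon_A} A \to \aA_r(\defl_*)\aA_r(\defl^*)A \to 0,
\]
and condition~(3) makes $\varepsilon_A$ injective. This is the substantive input you are missing.

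\textbf{The detour.} You single out as the ``main obstacle'' showing that an extension in $\aA_r(\eE)$ between $i_R$-images arises from one in $\eE$, and discuss comparing $\Ext^1$ groups. This is not needed: once you have the short exact sequence $0\to i_R(\infl_*T)\to A\to i_R(\defl_*F)\to 0$ and know that the essential image of $i_R$ is closed under extensions (Lemma~\ref{lem_if_I_R_full_then_exact}), the middle term $A$ lies in $i_R(\eE)$ tautologically. The paper's proof ends precisely there.
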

\begin{proof}
	By Lemma \ref{lem_if_I_R_full_then_exact} functor $i_R$ is exact and its essential image is closed under extensions.
	
	An object $E\in \eE$ fits into a conflation $\infl_*\infl^!E \to E \to \defl_*\defl^*E$. As $i_R$ is exact, 
	$$
	 0 \to \aA_r(\infl_*) \aA_r(\infl^!)i_R(E) \to i_R(E) \to \aA_r(\defl_*) \aA_r(\defl^*)i_R(E) \to 0
	 $$ 
	 is a short exact sequence in $\aA_r(\eE)$.
	
	Now let $A \in \aA_r(\eE)$ satisfy condition $(1)$ and $(2)$.
	Then $\aA_r(\defl_*)\aA_r(\defl^*)A \simeq i_R(\defl_*F)$ and $\aA_r(\infl_*)\aA_r(\infl^!)A \simeq i_R(\infl_*T)$. By Proposition \ref{prop_envelope_becomes_Serre_subcat}, $\aA_r(\fF) \subset \aA_r(\eE)$ is a colocalising subcategory with quotient  functor $\aA_r(\infl^!)$. By Lemma \ref{lem_adjoint_to_coloc}, sequence
	$$
	\aA_r(\infl_*) \aA_r(\infl^!) A \xrightarrow{\varepsilon_A} A \to \aA_r(\defl_*) \aA_r(\defl^*)A \to 0
	$$
	is exact. 
	By condition $(3)$, $\varepsilon_A$ is injective, hence $A$ is an extension of $i_R(\defl_*F)\simeq \aA_r(\defl_*) \aA_r(\defl^*)A$ by $i_R(\infl_* T)\simeq \aA_r(\infl_*) \aA_r(\infl^!) A$. Since $\eE\subset \aA_r(\eE)$ is closed under extensions, we conclude that $A \in \eE$. 
\end{proof}
 By Proposition \ref{prop_strict_filtr_on_thin}, a lower ideal $I$ in the canonical poset of a thin category gives a perpendicular torsion pair $(\tT_I, \fF_I)$.
\begin{PROP}
Let $\eE$ be a thin category and $\Lambda$ its canonical poset. For a lower ideal $I\subset \Lambda$, the perpendicular torsion pair $(\tT_I, \fF_I)$ 
yields an abelian recollement
$$
\aA_r(\fF_I) \xrightarrow{{i_I}_*} \aA_r(\eE) \xrightarrow{j_I^*} \aA_r(\tT_I).
$$
\end{PROP}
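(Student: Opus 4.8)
The plan is to exhibit $\aA_r(\fF_I)$ as a bi-localising subcategory of $\aA_r(\eE)$ and then to invoke the equivalence between bi-localising subcategories and abelian recollements from the Appendix (Proposition~\ref{prop_charact_of_abel_recol}).

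First I would check the hypotheses needed and record the shape of $\aA_r(\eE)$. The canonical filtration of $\eE$ restricts, by Lemma~\ref{lem_right_admis_in_subcat}, to a thin filtration of $\tT_I$ (with canonical poset $I$), so $\tT_I$ is again a thin category; hence both $\aA_r(\eE)$ and $\aA_r(\tT_I)$ exist by Corollary~\ref{cor_envel_for_thin}. Moreover, by Corollary~\ref{cor_envel_for_thin} and Lemma~\ref{lem_fp_as_modules}, $\aA_r(\eE)\simeq\textrm{fp}(\pP)\simeq\textrm{mod-}A$ for the finite dimensional algebra $A=\End_\eE(\bigoplus_{\la\in\Lambda}P(\la))$ (here $\eE$ is Hom-finite); in particular $\aA_r(\eE)$ is a Deligne finite category, that is, of finite length and with a projective generator.

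Next I would apply Proposition~\ref{prop_envelope_becomes_Serre_subcat} to the perpendicular torsion pair $(\tT_I,\fF_I)$ of Proposition~\ref{prop_strict_filtr_on_thin}. Writing $\infl_*,\infl^!,\defl_*,\defl^*$ for the functors attached to this torsion pair as in Proposition~\ref{prop_torsion=adjoint}, it follows that $\aA_r(\fF_I)$ exists and that $\aA_r(\defl_*)$ identifies it with $\kK:=\ker\bigl(\aA_r(\infl^!)\bigr)\subset\aA_r(\eE)$, a colocalising subcategory. Put ${i_I}_*:=\aA_r(\defl_*)$ and $j_I^*:=\aA_r(\infl^!)\colon\aA_r(\eE)\to\aA_r(\tT_I)$. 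By Lemma~\ref{lem_enveloping_adjointness}, $j_I^*$ is exact and has the fully faithful left adjoint $\aA_r(\infl_*)$; by the standard (co)localisation dictionary this forces $j_I^*$ to induce an equivalence $\aA_r(\eE)/\kK\xrightarrow{\simeq}\aA_r(\tT_I)$.

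Finally, $\kK$, being the kernel of the exact functor $j_I^*$, is a Serre subcategory of $\aA_r(\eE)$; since $\aA_r(\eE)$ is Deligne finite, Proposition~\ref{prop_biloc_in_Df_n} shows that $\kK$ is in fact bi-localising. Proposition~\ref{prop_charact_of_abel_recol} then yields an abelian recollement $\kK\to\aA_r(\eE)\to\aA_r(\eE)/\kK$ with inclusion and quotient functors, and under the identifications $\kK\simeq\aA_r(\fF_I)$ and $\aA_r(\eE)/\kK\simeq\aA_r(\tT_I)$ this is precisely the recollement $\aA_r(\fF_I)\xrightarrow{{i_I}_*}\aA_r(\eE)\xrightarrow{j_I^*}\aA_r(\tT_I)$ claimed. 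The only genuinely nontrivial step is the passage from ``colocalising'' --- all that Proposition~\ref{prop_envelope_becomes_Serre_subcat} supplies on its own --- to ``bi-localising''; this is exactly where the finiteness built into a thin category is used, through the Appendix result on Deligne finite categories. Everything else is assembly of statements already proved.
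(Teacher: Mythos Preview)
Your proof is correct and follows essentially the same route as the paper: invoke Proposition~\ref{prop_envelope_becomes_Serre_subcat} to realise $\aA_r(\fF_I)$ as a colocalising (Serre) subcategory of $\aA_r(\eE)$, observe via Corollary~\ref{cor_envel_for_thin} and Lemma~\ref{lem_fp_as_modules} that $\aA_r(\eE)$ is Deligne finite, upgrade to bi-localising via Proposition~\ref{prop_biloc_in_Df_n}, and conclude with Proposition~\ref{prop_charact_of_abel_recol}. Your version is a bit more explicit about why $\aA_r(\tT_I)$ exists and why it is the quotient, but the argument is the same.
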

\begin{proof} 
By Proposition \ref{prop_envelope_becomes_Serre_subcat}, $\aA_r(\fF_I) \subset \aA_r(\eE)$ is a Serre subcategory. Corollary \ref{cor_envel_for_thin} and Lemma \ref{lem_fp_as_modules} imply that $\aA_r(\eE)$ is a Deligne finite category, see Appendix \ref{ssec_ab_cat_fin_len}.  By Proposition \ref{prop_biloc_in_Df_n}, $\aA_r(\fF_I) \subset \aA_r(\eE)$  is bi-localising, hence it yields an abelian recollement by Proposition \ref{prop_charact_of_abel_recol}.
\end{proof}
We denote by $Li_I^* \colon \dD^-(\aA_r(\eE)) \to \dD^-(\aA_r(\fF_I))$ the derived functor of $i_I^*$. It exists because $\aA_r(\eE)$ has enough projective objects (see Corollary \ref{cor_envel_for_thin}).

\begin{LEM}\label{lem_kern_of_counit}
	For $A\in \aA_r(\eE)$, the kernel $K_A$ of the adjunction unit ${j_I}_!j_I^*A \to A$ is isomorphic to ${i_I}_*L^1i_I^*A$.
\end{LEM}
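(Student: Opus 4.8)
The plan is to work inside the abelian recollement $\aA_r(\fF_I)\xrightarrow{{i_I}_*}\aA_r(\eE)\xrightarrow{j_I^*}\aA_r(\tT_I)$ of the preceding proposition, with all six functors available; write $i^*:=i_I^*$ for the left adjoint of $i_*:={i_I}_*$ and $j_!:={j_I}_!$ for the left adjoint of $j^*:=j_I^*$. All three categories have enough projectives (Corollary~\ref{cor_envel_for_thin}), so $Li^*$ and its cohomologies $L^ki^*$ make sense. First I would extract from the canonical morphism $\varepsilon_A\colon j_!j^*A\to A$ of the statement the four-term exact sequence $0\to K_A\to j_!j^*A\xrightarrow{\varepsilon_A}A\to i_*i^*A\to 0$: applying the exact functor $j^*$ together with the recollement identities $j^*j_!\simeq\Id$, $i^*j_!=0$, $j^*i_*=0$ shows that $j^*\varepsilon_A$ is identified with the identity, hence $j^*K_A=0$ and $\mathrm{coker}\,\varepsilon_A\simeq i_*i^*A$; from $j^*K_A=0$ we get $K_A\simeq i_*i^*K_A$, so it suffices to produce an isomorphism $i^*K_A\simeq L^1i^*A$ in $\aA_r(\fF_I)$.

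Then I would factor $\varepsilon_A$ through its image $M$, giving short exact sequences $0\to K_A\to j_!j^*A\to M\to 0$ and $0\to M\to A\to i_*i^*A\to 0$, and note $i^*M=0$ (since $M$ is a quotient of $j_!j^*A$ and $i^*(j_!j^*A)=0$). The one elementary homological input is $L^1i^*(j_!B)=0$ for every $B\in\aA_r(\tT_I)$: if $0\to B'\to Q\to B\to 0$ is exact with $Q$ projective, then $j_!Q\to j_!B$ is a deflation with $j_!Q$ projective (left adjoints of exact functors preserve projectives), its kernel $Z$ is a quotient of $j_!B'$, so $L^1i^*(j_!B)=\ker(i^*Z\to i^*j_!Q)=i^*Z$ is a quotient of $i^*j_!B'=0$. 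Feeding this and the two short exact sequences into the long exact sequences of $L^\bullet i^*$ yields $i^*K_A\simeq L^1i^*M$ from the first and $L^1i^*M\simeq L^1i^*A$ from the second, hence $i^*K_A\simeq L^1i^*A$ and the lemma follows — but the second isomorphism needs $L^1i^*(i_*i^*A)=0$ and $L^2i^*(i_*i^*A)=0$.

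The main obstacle is exactly this last point: the vanishing $L^ki^*(i_*Y)=0$ for all $Y\in\aA_r(\fF_I)$ and $k\ge1$, equivalently that the Serre inclusion ${i_I}_*$ is homological (equivalently, that it extends to a fully faithful embedding $\dD^-(\aA_r(\fF_I))\hookrightarrow\dD^-(\aA_r(\eE))$ with essential image the complexes whose cohomology lies in $\aA_r(\fF_I)$). As one sees already for module categories of self-injective algebras, this fails for a general abelian recollement, so here it must be deduced from the thin structure: via Corollary~\ref{cor_envel_for_thin} one identifies $\aA_r(\eE)$ with $\textrm{fp}(\pP)\simeq\textrm{mod-}A$ for $A=\End(\bigoplus_{\la}P(\la))$ and $\aA_r(\fF_I)$ with the modules annihilated by the idempotent $e$ attached to the lower ideal $I$, and the content is that the two-sided ideal $AeA$ is a heredity (stratifying) ideal, so that $Ae\otimes_{eAe}eA\xrightarrow{\sim}AeA$ and $\Ext^\bullet_{A/AeA}\simeq\Ext^\bullet_A$ on $A/AeA$-modules. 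Once this is in hand, a cleaner finish than the diagram chase above is to derive the recollement directly: $j^*$ is exact, $Lj_!$ and $Li^*$ exist, $Li^*\circ Lj_!=0$ (from $i^*j_!=0$ and preservation of projectives), and the derived counit fits into a triangle $Lj_!j^*A\to A\to i_*Li^*A\to Lj_!j^*A[1]$; taking cohomology — with $A$ in degree $0$ and $Lj_!j^*A$ in non-positive degrees — reads off $0\to i_*L^1i^*A\to j_!j^*A\to A\to i_*i^*A\to 0$ at once, i.e. $K_A\simeq{i_I}_*L^1i_I^*A$.
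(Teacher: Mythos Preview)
Your argument is correct, but it takes a longer route than the paper and leans on a fact the paper deliberately avoids at this point. The paper's proof is a one--step snake lemma: choose a short exact sequence $0\to B\to P\to A\to 0$ with $P$ projective in $\aA_r(\eE)$; since $P\in\pP\subset\eE$, Proposition~\ref{prop_E_in_A_r} gives that the counit $j_!j^*P\to P$ is a monomorphism, i.e.\ $K_P=0$. Applying the snake lemma to the map of sequences
\[
\xymatrix{0\ar[r]& B\ar[r]& P\ar[r]& A\ar[r]&0\\
 & j_!j^*B\ar[r]\ar[u]& j_!j^*P\ar[r]\ar[u]& j_!j^*A\ar[r]\ar[u]&0}
\]
immediately yields $K_A\simeq\ker(i_*i^*B\to i_*i^*P)=i_*L^1i^*A$. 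No vanishing of $L^{\ge1}i^*i_*$ is needed.

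By contrast, your route through the image factorisation and the long exact sequence of $Li^*$ genuinely requires $L^1i^*(i_*i^*A)=L^2i^*(i_*i^*A)=0$, i.e.\ that $AeA$ is a stratifying ideal. This is true here, but in the paper's logical order it amounts to a forward reference: the quasi-hereditary structure of $\aA_r(\eE)$ is only established in Theorem~\ref{thm_right_env_is_h_w} (Section~\ref{sec_high_weigh_as_env}), whereas Lemma~\ref{lem_kern_of_counit} sits in Section~\ref{ssec_ab_env_adm_subcat}. Your derived--recollement ``cleaner finish'' has the same dependency, since the triangle $Lj_!j^*A\to A\to i_*Li^*A\to{}$ presupposes that $i_*$ is fully faithful on $\dD^-$. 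What your approach buys is a structural viewpoint that becomes natural once heredity is in hand; what the paper's approach buys is a self-contained two--line argument using only the easy direction of Proposition~\ref{prop_E_in_A_r}. Your vanishing $L^1i^*(j_!(-))=0$ is correct and heredity--free, but it is not enough on its own to close the gap you identified.
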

\begin{proof}
	Let $A\in \aA_r(\eE)$ and $0 \to B \to P \to A \to 0$ an exact sequence with $P\in \aA_r(\eE)$ projective. Object ${i_I}_*L^1i_{I}^*A$ is isomorphic to the kernel of ${i_I}_*i_I^*B \to {i_I}_*i_I^*P$. As, for any $A'\in \aA_r(\eE)$, 
	the cokernel of ${j_I}_!j_I^*A' \to A'$ is isomorphic to  ${i_I}_*i_I^*A'$ (see Lemma \ref{lem_adjoint_to_coloc}),  the snake lemma for
	\[
	\xymatrix{0 \ar[r]& B \ar[r] & P \ar[r] & A \ar[r]& 0\\
	& {j_I}_!j_I^*B \ar[r] \ar[u] & {j_I}_!j_I^*P \ar[r] \ar[u] & {j_I}_!j_I^*A \ar[r] \ar[u] & 0 }
	\]
	yields an exact sequence $K_{B}\to K_P \to K_A \to {i_I}_*{i_I}^*B \to {i_I}_*{i_I}^*P \to {i_I}_*{i_I}^*A \to 0$. Projective $P\in \aA_r(\eE)$ lies in the subcategory $\eE\subset \aA_r(\eE)$, hence Proposition \ref{prop_E_in_A_r} implies that the adjunction counit ${j_I}_!j_I^*P \to P$ is injective, i.e. $K_P \simeq 0$. Hence, $K_A \simeq \textrm{ker }({i_I}_*{i_I}^*B \to {i_I}_*{i_I}^*P)\simeq {i_I}_*L^1i_I^*A$.
\end{proof}

\vspace{0.3cm}
\subsection{Induced filtrations on abelian envelopes}\label{ssec_col_str_filt_on_env}~\\ \label{ssec_stric_filt}

For an abelian category $\aA$, let $\textrm{Serre}(\aA)$ denote the poset of Serre subcategories in $\aA$ with the inclusion order.

Let $\lL$ be  a finite lattice with the minimal element $0$ and the maximal element $1$. A \emph{localising $\lL$-filtration} on an abelian category $\aA$ is a map of posets $\lL \to \textrm{Serre}(\aA)$, $I\mapsto \aA_I$, such that 
\begin{itemize}
	\item[(Si)] for any $I\in \lL$, $\aA_I\subset \aA$ is a localising subcategory, $\aA_0 \simeq 0$, $\aA_1\simeq \aA$, 
	\item[(Sii)] for any $I, J \in \lL$, $\aA_{I\cap J} = \aA_I \cap \aA_J$ and $\aA_{I \cup J}^\perp \simeq \aA_I^\perp \cap \aA_J^\perp$.
\end{itemize}
A \emph{colocalising $\lL$-filtration} on $\aA$ is a map of posets $\lL \to \textrm{Serre}(\aA)$, $I\mapsto \aA_I$, such that 
\begin{itemize}
	\item[(Si)] for any $I\in \lL$, $\aA_I\subset \aA$ is a colocalising subcategory, $\aA_0 \simeq 0$, $\aA_1\simeq \aA$, 
	\item[(Sii)] for any $I, J \in \lL$, $\aA_{I\cap J} = \aA_I \cap \aA_J$ and ${}^\perp\aA_{I \cup J} \simeq {}^\perp\aA_I \cap {}^\perp\aA_J$.
\end{itemize}
We say that a map of posets $\lL \to \textrm{Serre}(\aA)$ is a \emph{bi-localising} $\lL$-filtration if it is both localising and colocalising.

We say that a (co)localising $\lL$-filtration is \emph{strict} if
\begin{enumerate}
	\item[(Siii)] for any $I, J \in \lL$, $\aA_{I\cup J}/\aA_{I \cap J} \simeq \aA_I/\aA_{I\cap J} \oplus \aA_J/\aA_{I \cap J}$.
\end{enumerate}

Unlike for exact categories, condition $\aA_{I\cap J}^\perp \cap \aA_J \simeq \aA_I^\perp \cap \aA_{I\cup J}$ does not imply (Siii):
\begin{EXM}\label{exm_strictness}
	Let $\aA$ be the category of representations of the quiver with relations:
	\[
	\xymatrix{1 \ar@<1ex>[r]^a & 2 \ar@<1ex>[l]^b \ar@<1ex>[r]^c & 3 \ar@<1ex>[l]^d} \quad ba=0,\, bdca = 0,\, cd =0,\, cabd = 0.
	\]
	Consider the partial order $\Lambda$
	with $2\prec 1$ and $2 \prec 3$. The lattice $\lL$ of lower ideals in $\Lambda$ has 5 elements $\emptyset$, $\{2\}$, $\{1, 2\}$, $\{2, 3\}$, $\{1, 2, 3\}$ ordered by inclusion. The map $\lL\to \textrm{Serre}(\aA)$, $I \mapsto \fF(S_i)_{i\in I}$ is a bi-localising $\lL$-filtration on $\aA$ (see Proposition \ref{prop_biloc_in_Df_n}).
	
	If $I = \{1,2\}$, $J = \{2, 3\}$, then $I\cap J  =\{2\}$ and $I \cup J = \{1, 2,3\}$. The category $\aA_{I\cap J}^\perp \cap \aA_{J} \simeq \aA_{23}/\aA_2 \simeq k\textrm{-vect}$ is equivalent to $\aA_{I}^\perp \cap \aA_{I\cup J} \simeq \aA/\aA_{12} \simeq k \textrm{-vect}$, while the quotient $\aA_{I \cup J}/\aA_{I \cap J} = \aA/\aA_2$ is  the category of representations of the quiver 
	\[
	\xymatrix{1 \ar@<1ex>[r]^{ca} & 3 \ar@<1ex>[l]^{bd}}\quad bdca =0,\, cabd =0.
	\]
\end{EXM}

Let $\lL$ be as above. For $I\in \lL$, let $\lL/I$ be the sublattice of $\lL$ of the elements $J$ with $I\prec J$. Then $\lL/I$ is a lattice with the minimal element $I$ and the maximal element $1$.
\begin{LEM}\label{lem_filtr_on_quotient}
	Let $\{\aA_J\}$ be a strict (co)localising $\lL$-filtration on $\aA$. Then $\{\aA_J/\aA_I\}$ is a strict (co)localising $\lL/I$-filtration on $\aA/\aA_I$.
\end{LEM}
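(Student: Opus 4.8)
The plan is to establish the colocalising case and obtain the localising one by passing to the opposite category, on which $\{\aA_J^{\opp}\}_{J\in\lL}$ is a strict colocalising $\lL$-filtration with the same sublattice $\lL/I$; the bi-localising case then follows at once. So assume $\{\aA_J\}$ is a strict colocalising $\lL$-filtration. Recall that $\lL/I$ is closed under the meet and join of $\lL$, has bottom $I$ and top $1$, and that $\aA_I\subseteq\aA_J$ for every $J\in\lL/I$. Write $q_I\colon\aA\to\aA/\aA_I$ and, for $J\in\lL/I$, $q_J\colon\aA\to\aA/\aA_J$ and $\bar q_J\colon\aA/\aA_I\to\aA/\aA_J$ for the quotient functors, so that $\bar q_J\circ q_I=q_J$ under the standard identification $(\aA/\aA_I)/(\aA_J/\aA_I)\simeq\aA/\aA_J$ valid for nested Serre subcategories. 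Since $\aA_I$ is Serre and contained in $\aA_J$, the essential image $\aA_J/\aA_I$ of $\aA_J$ under $q_I$ is a Serre subcategory of $\aA/\aA_I$, and $J\mapsto\aA_J/\aA_I$ is a map of posets. The bottom and top of $\lL/I$ give $\aA_I/\aA_I\simeq 0$ and $\aA_1/\aA_I=\aA/\aA_I$, which are the boundary conditions of (Si); the remaining part of (Si), that each $\aA_J/\aA_I$ is colocalising in $\aA/\aA_I$, is proved next.

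The crux is therefore to show that $\bar q_J$ has a left adjoint. By hypothesis $q_I$ and $q_J$ admit fully faithful left adjoints $\ell_I$ and $\ell_J$, whose essential images are the coclosed subcategories ${}^\perp\aA_I$ and ${}^\perp\aA_J$ (the standard description of colocalising subcategories; cf.\ the Appendix). From $\aA_I\subseteq\aA_J$ we get ${}^\perp\aA_J\subseteq{}^\perp\aA_I$, so every object $\ell_JY$ lies in the essential image of $\ell_I$, and the counit of $\ell_I\dashv q_I$ gives a natural isomorphism $\ell_Iq_I\ell_J\simeq\ell_J$. Combining this with the adjunctions $\ell_I\dashv q_I$, $\ell_J\dashv q_J$ and the equality $\bar q_Jq_I=q_J$ produces, for $Y\in\aA/\aA_J$ and $Z\in\aA$, natural isomorphisms $\Hom_{\aA/\aA_I}(q_I\ell_JY,\,q_IZ)\simeq\Hom_{\aA}(\ell_JY,Z)\simeq\Hom_{\aA/\aA_J}(Y,q_JZ)\simeq\Hom_{\aA/\aA_J}(Y,\bar q_Jq_IZ)$; since $q_I$ is essentially surjective, this identifies $q_I\circ\ell_J$ as a left adjoint of $\bar q_J$. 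Hence $\aA_J/\aA_I$ is colocalising, and its coclosed subcategory ${}^\perp(\aA_J/\aA_I)$ is the essential image of $q_I\circ\ell_J$, namely $q_I({}^\perp\aA_J)$. I also record two bookkeeping facts: since $\bar q_Jq_I=q_J$ and $\aA_J$ is Serre, $q_I(X)\in\aA_J/\aA_I$ if and only if $X\in\aA_J$; and the restriction of $q_I$ to ${}^\perp\aA_I$ (the essential image of $\ell_I$) is fully faithful.

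It remains to check (Sii) and the strictness (Siii). For the first identity in (Sii): every object of $\aA/\aA_I$ has the form $q_I(X)$ with $X\in\aA$, and $q_I(X)\in(\aA_{J_1}/\aA_I)\cap(\aA_{J_2}/\aA_I)$ iff $X\in\aA_{J_1}\cap\aA_{J_2}=\aA_{J_1\cap J_2}$ iff $q_I(X)\in\aA_{J_1\cap J_2}/\aA_I$. For the second, ${}^\perp(\aA_{J_1\cup J_2}/\aA_I)=q_I({}^\perp\aA_{J_1\cup J_2})=q_I({}^\perp\aA_{J_1}\cap{}^\perp\aA_{J_2})$, and this coincides with $q_I({}^\perp\aA_{J_1})\cap q_I({}^\perp\aA_{J_2})$ because ${}^\perp\aA_{J_i}\subseteq{}^\perp\aA_I$ and $q_I$ is fully faithful on ${}^\perp\aA_I$. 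Finally, applying the nested-Serre identification to the chains $\aA_I\subseteq\aA_{J_1\cap J_2}\subseteq\aA_{J_i}$ and $\aA_{J_1\cap J_2}\subseteq\aA_{J_1\cup J_2}$ identifies $(\aA_{J_1\cup J_2}/\aA_I)/(\aA_{J_1\cap J_2}/\aA_I)$ with $\aA_{J_1\cup J_2}/\aA_{J_1\cap J_2}$ and each $(\aA_{J_i}/\aA_I)/(\aA_{J_1\cap J_2}/\aA_I)$ with $\aA_{J_i}/\aA_{J_1\cap J_2}$, so (Siii) for $\{\aA_J/\aA_I\}$ reduces word for word to (Siii) for $\{\aA_J\}$. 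The only genuinely non-formal point is the colocalising claim of the second paragraph, i.e.\ the identification of the left adjoint of $\bar q_J$ as $q_I\circ\ell_J$; once that is in hand, everything else is routine manipulation with Serre quotients, and the localising case is entirely dual.
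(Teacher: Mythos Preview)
Your proof is correct and follows essentially the same route as the paper's: nested Serre quotients to identify $(\aA/\aA_I)/(\aA_J/\aA_I)\simeq\aA/\aA_J$, identification of ${}^\perp(\aA_J/\aA_I)$ via the left adjoint, and reduction of (Siii) to the original filtration. The main difference is presentational: you construct the left adjoint $q_I\circ\ell_J$ of $\bar q_J$ explicitly and verify the intersection of perpendiculars using full faithfulness of $q_I$ on ${}^\perp\aA_I$, whereas the paper cites Lemma~\ref{lem_shriek_commute} for the colocalising claim and phrases the perpendicular identity more tersely via the abstract equivalences ${}^\perp(\aA_J/\aA_I)\simeq\aA/\aA_J$.
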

\begin{proof}
	If $\aA_I \subset \aA_J$ and $\aA_J\subset \aA$ are colocalising subcategories, then $\aA_J/\aA_I \subset \aA/\aA_I$ is a colocalising subcategory, \textit{cf.} Lemma \ref{lem_shriek_commute}. We have:
	\begin{align*}
	&(\aA_J/\aA_I) \cap (\aA_K/\aA_I) = (\aA_J \cap \aA_K)/\aA_I = \aA_{J \cap K}/\aA_I.&
	\end{align*}
	By Proposition \ref{prop_colocalising}, ${}^\perp\aA_J$ is identified with  $\aA/\aA_J$  via the functor left adjoint to the quotient functor. Since $(\aA/\aA_I)/(\aA_J/\aA_I) \simeq \aA/\aA_J$, we have  ${}^{\perp}(\aA_J/\aA_I) = \aA/\aA_J$. Hence, 
	\begin{align*} 
	&{}^\perp(\aA_{J\cup K}/\aA_I) = \aA/\aA_{J \cup K} = \aA/\aA_J \cap \aA/\aA_K = {}^\perp(\aA_J/\aA_I) \cap {}^\perp(\aA_K/\aA_I).&
	\end{align*} 
		Equivalence $(\aA_{K\cup J}/\aA_I)/(\aA_{K\cap J}/\aA_I) \simeq \aA_{K \cup J}/\aA_{K\cap J}$ implies (Siii).
\end{proof}

\begin{THM}\label{thm_filtr_on_envelope}
	Consider a finite lattice $\lL$ and a strict left admissible $\lL$-filtration $\{\fF_I\}$ on an exact category $\eE$. Assume that the exact category $\fF_I \cap {}^\perp \fF_J$ has right abelian envelope, for any $I, J\in \lL$. Then $\{\aA_r(\fF_I)\}$ is a strict colocalising $\lL$-filtration on $\aA_r(\eE)$.
\end{THM}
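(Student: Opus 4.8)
The plan is to transport the conditions defining a strict left admissible filtration from $\{\fF_I\}$ on $\eE$ to $\{\aA_r(\fF_I)\}$ on $\aA_r(\eE)$ along the pseudo-functor $\aA_r\colon\mathscr{E}x_r\to\mathscr{E}x_r$ of Proposition~\ref{prop_envelope_functorial}. Specialising the hypothesis already furnishes every envelope needed: taking the first index to be $1$ shows $\aA_r({}^\perp\fF_J)=\aA_r(\fF_1\cap{}^\perp\fF_J)$ exists for all $J$; taking the second index to be $0$ shows $\aA_r(\fF_I)=\aA_r(\fF_I\cap{}^\perp\fF_0)$ exists for all $I$; and $\aA_r(\eE)$ exists. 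By Lemma~\ref{lem_right_admis_in_subcat}, every $\fF_I\cap{}^\perp\fF_J$ is (left, resp.\ right) admissible in each of the ambient subcategories $\fF_I$, ${}^\perp\fF_J$, $\fF_{I\cup J}$ that we shall use, so Proposition~\ref{prop_envelope_becomes_Serre_subcat} applies inside all of them.

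First I would establish (Si) together with monotonicity. For each $I$, applying Proposition~\ref{prop_envelope_becomes_Serre_subcat} to the perpendicular torsion pair $({}^\perp\fF_I,\fF_I)$ identifies $\aA_r(\fF_I)$, via $\aA_r(\defl_{I*})$, with the colocalising subcategory $\kK_I:=\ker(\aA_r(\infl^!_I))\subset\aA_r(\eE)$, where $\aA_r(\infl^!_I)$ is the quotient functor and $\aA_r(\infl_{I*})$ its fully faithful left adjoint. For $I\preceq J$ one has $\fF_I\subseteq\fF_J$, hence ${}^\perp\fF_J\subseteq{}^\perp\fF_I$; by Lemma~\ref{lem_right_admis_in_subcat}, ${}^\perp\fF_J$ is right admissible in ${}^\perp\fF_I$, so $\infl^!_J\simeq\rho\circ\infl^!_I$ with $\rho$ the embedding--adjoint, and applying $\aA_r$ gives $\kK_I\subseteq\kK_J$, i.e.\ $\aA_r(\fF_I)\subseteq\aA_r(\fF_J)$ inside $\aA_r(\eE)$, compatibly with the universal functors by the commutative square of Proposition~\ref{prop_envelope_becomes_Serre_subcat}. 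With $\aA_r(\fF_0)=0$ and $\aA_r(\fF_1)=\aA_r(\eE)$, this yields (Si).

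Next, fix $I,J$ and put $\yY:=\fF_{I\cup J}\cap{}^\perp\fF_{I\cap J}$, $\yY_J:=\fF_I\cap{}^\perp\fF_{I\cap J}$, $\yY_I:=\fF_J\cap{}^\perp\fF_{I\cap J}$. Transferring Proposition~\ref{prop_direct_sum} to $\eE^{\opp}$, where the strict left admissible filtration becomes a strict right admissible one, yields $\yY=\yY_I\oplus\yY_J$ with the two summands mutually perpendicular. Applying Proposition~\ref{prop_envelope_becomes_Serre_subcat} inside $\fF_{I\cup J}$, $\fF_I$ and $\fF_J$ to the perpendicular torsion pairs $(\yY,\fF_{I\cap J})$, $(\yY_J,\fF_{I\cap J})$, $(\yY_I,\fF_{I\cap J})$ identifies, compatibly with the inclusions above,
\[
\aA_r(\fF_{I\cup J})/\aA_r(\fF_{I\cap J})\simeq\aA_r(\yY),\qquad
\aA_r(\fF_I)/\aA_r(\fF_{I\cap J})\simeq\aA_r(\yY_J),\qquad
\aA_r(\fF_J)/\aA_r(\fF_{I\cap J})\simeq\aA_r(\yY_I).
\]
Since $\aA_r$ sends a finite direct sum of exact categories to the product of their envelopes (immediate from Definition~\ref{def_rig_ab_env}), one gets $\aA_r(\yY)\simeq\aA_r(\yY_I)\oplus\aA_r(\yY_J)$, which, read through these identifications, is exactly (Siii). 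The first equality of (Sii) follows too: $\aA_r(\fF_{I\cap J})\subseteq\aA_r(\fF_I)\cap\aA_r(\fF_J)$ is monotonicity, and conversely any $A$ in $\aA_r(\fF_I)\cap\aA_r(\fF_J)$ lies in $\aA_r(\fF_{I\cup J})$, with image in $\aA_r(\fF_{I\cup J})/\aA_r(\fF_{I\cap J})\simeq\aA_r(\yY_I)\oplus\aA_r(\yY_J)$ lying in both direct summands, hence zero, so $A\in\aA_r(\fF_{I\cap J})$.

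Finally, for the second equality of (Sii), note that by the first step ${}^\perp\aA_r(\fF_K)$ is the essential image of the fully faithful $\aA_r(\infl_{K*})$, i.e.\ $\aA_r({}^\perp\fF_K)$ embedded in $\aA_r(\eE)$. The dual family $\{{}^\perp\fF_I\}$ is a strict right admissible $\lL^{\opp}$-filtration (Lemma~\ref{lem_dual_strict}) with ${}^\perp\fF_{I\cup J}={}^\perp\fF_I\cap{}^\perp\fF_J$, so ${}^\perp\aA_r(\fF_{I\cup J})\subseteq{}^\perp\aA_r(\fF_I)\cap{}^\perp\aA_r(\fF_J)$ is again monotonicity, and for the reverse inclusion one repeats the previous step with joins and meets interchanged, now applying Proposition~\ref{prop_direct_sum} directly to $\{{}^\perp\fF_I\}$ to decompose $\yY=({}^\perp\fF_I\cap\fF_{I\cup J})\oplus({}^\perp\fF_J\cap\fF_{I\cup J})$ and matching an element of ${}^\perp\aA_r(\fF_I)\cap{}^\perp\aA_r(\fF_J)$ against the two summands of $\aA_r(\eE)/\aA_r(\fF_{I\cup J})\simeq\aA_r({}^\perp\fF_{I\cup J})$; collecting the three steps gives (Si), (Sii) and (Siii). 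I expect this last step to be the main obstacle: because $\aA_r$ is not a self-dual construction --- unlike the passage to derived categories in Theorem~\ref{thm_srict_filt_on_derived}, where one may simply invoke the opposite category --- the ``$\perp$-join'' half of (Sii) does not follow from the ``meet'' half by formal duality, and one must check that the left-adjoint realisation $\aA_r(\infl_{K*})$ of the coreflective subcategories ${}^\perp\aA_r(\fF_K)$ is compatible with the two a priori distinct direct-sum decompositions of $\yY$, the one obtained from $\{\fF_I\}$ via $\eE^{\opp}$ and the one obtained from $\{{}^\perp\fF_I\}$. The cleanest way to arrange this is to record once and for all that a perpendicular torsion pair with right envelopes exhibits $\aA_r(\eE)$ as a recollement of $\aA_r(\tT)$ and $\aA_r(\fF)$, and then to iterate this along the filtration using Lemma~\ref{lem_filtr_on_quotient}.
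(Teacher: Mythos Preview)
Your proof is essentially the paper's, organised almost identically: Proposition~\ref{prop_envelope_becomes_Serre_subcat} for (Si), the direct-sum decomposition of Proposition~\ref{prop_direct_sum} (transferred via $\eE^{\opp}$) together with $\aA_r(\yY_I\oplus\yY_J)\simeq\aA_r(\yY_I)\oplus\aA_r(\yY_J)$ for (Siii), and the dual filtration for the ${}^\perp$-half of (Sii). Your derivation of the meet-half of (Sii) from (Siii) via the quotient is a harmless reorganisation of the paper's direct computation $\aA_r(\fF_I)\cap\aA_r(\fF_J)=\aA_r(\xX_I)^\perp\cap\aA_r(\xX_J)^\perp\cap\aA_r(\fF_{I\cup J})=\aA_r(\xX_I\oplus\xX_J)^\perp\cap\aA_r(\fF_{I\cup J})=\aA_r(\fF_{I\cap J})$.

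The one place where your write-up goes off the rails is the ${}^\perp$-half of (Sii). You propose to ``match an element of ${}^\perp\aA_r(\fF_I)\cap{}^\perp\aA_r(\fF_J)$ against the two summands of $\aA_r(\eE)/\aA_r(\fF_{I\cup J})$'', but that quotient is $\aA_r(\tT_{I\cup J})$, which carries no relevant direct-sum decomposition. The correct ambient category is $\aA_r(\tT_{I\cap J})\simeq{}^\perp\aA_r(\fF_{I\cap J})$: inside it one has colocalising subcategories $\aA_r(\xX_I)$, $\aA_r(\xX_J)$, $\aA_r(\xX_I\oplus\xX_J)$ coming from the torsion pairs $(\tT_J,\xX_I)$, $(\tT_I,\xX_J)$, $(\tT_{I\cup J},\xX_I\oplus\xX_J)$ in $\tT_{I\cap J}$, and Proposition~\ref{prop_colocalising} gives $\aA_r(\tT_I)={}^\perp\aA_r(\xX_J)$, $\aA_r(\tT_J)={}^\perp\aA_r(\xX_I)$, $\aA_r(\tT_{I\cup J})={}^\perp\aA_r(\xX_I\oplus\xX_J)$. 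Since ${}^\perp(\bB_1\oplus\bB_2)={}^\perp\bB_1\cap{}^\perp\bB_2$, this yields the desired intersection identity inside $\aA_r(\tT_{I\cap J})$, and compatibility of the embeddings $\aA_r(\infl_*)$ transports it to $\aA_r(\eE)$. This is precisely the ``analogous argument'' the paper invokes. Your closing suggestion to pass through recollements is not available in this generality: Proposition~\ref{prop_envelope_becomes_Serre_subcat} only produces \emph{colocalising} subcategories, and bi-localisation (Proposition~\ref{prop_biloc_in_Df_n}) requires the Deligne-finite hypothesis, which is not assumed here.
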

\begin{proof}
	For $I\in \lL$, let $\tT_I ={}^\perp \fF_I$. By Proposition \ref{prop_envelope_becomes_Serre_subcat}, $\aA_r(\fF_I) \subset \aA_r(\eE)$ is a colosalising subcategory with $\aA_r(\eE)/\aA_r(\fF_I) \simeq \aA_r(\tT_I)$. By Proposition \ref{prop_colocalising}, ${}^\perp(\aA_r(\fF_I)) \simeq \aA_r(\tT_I)$ considered as a subcategory of $\aA_r(\eE)$ via the functor $\aA_r(\infl_*)$, right adjoint to $\aA_r(\infl^!)$, see Lemma \ref{lem_enveloping_adjointness}.  As $0 \simeq \aA_r(0)$, condition (Si) holds.
	
	Propositions \ref{prop_colocalising} and \ref{prop_envelope_becomes_Serre_subcat} imply that, for a perpendicular torsion pair $(\tT, \fF)$ in an exact category $\eE$, $\aA_r(\fF) = \aA_r(\tT)^\perp$ and $\aA_r(\tT) = {}^\perp \aA_r(\fF)$.
	
	Since the $\lL$-filtration is strict, Proposition \ref{prop_direct_sum} implies existence of the perpendicular torsion pairs $(\xX_I, \fF_I)$, $(\xX_J, \fF_J)$, $(\xX_I \oplus \xX_J, \fF_{I\cap J})$ in $\fF_{I\cup J}$.
	Hence, $\aA_r(\fF_I) = \aA_r(\xX_I)^\perp \cap \aA_r(\fF_{I \cup J})$, similarly for $\aA_r(\fF_J)$ and $\aA_r(\fF_{I\cap J})$. Then:
	\begin{align*} 
	&\aA_r(\fF_I) \cap \aA_r(\fF_J) = 
		\aA_r(\xX_I)^\perp \cap \aA_r(\xX_J)^\perp \cap \aA_r(\fF_{I\cup J}) = &\\
		& =\aA_r(\xX_I \oplus \xX_J)^\perp \cap \aA_r(\fF_{I \cup J}) = 
	\aA_r(\fF_{I \cap J}).&
	\end{align*} 
	By Lemma \ref{lem_dual_strict}, the left dual $\lL^{\opp}$ filtration is also strict. An analogous argument for perpendicular torsion pairs in $\tT_{I \cap J}$, $\tT_I$ and $\tT_J$ analogously implies that ${}^\perp\aA_r(\fF_{I \cup J}) = {}^\perp \aA_r(\fF_I) \cap {}^\perp \aA_r(\fF_J)$.
	
		Consider $I, J \in \lL$.  By Proposition \ref{prop_envelope_becomes_Serre_subcat}, the quotient $\aA_r(\fF_{I \cup J})/\aA_r(\fF_{I \cap J})$ is equivalent to $\aA_r(\fF_{I \cup J} \cap {}^\perp\fF_{I \cap J}) =  \aA_r((\fF_I \cap {}^\perp \fF_{I \cap J}) \oplus (\fF_J \cap {}^\perp \fF_{I\cap J}))$, see Proposition \ref{prop_direct_sum}. As a right exact functor on $(\fF_I \cap {}^\perp \fF_{I \cap J} )\oplus( \fF_J \cap {}^\perp \fF_{I\cap J})$ is a pair of right exact functors, the category $\aA_r(\fF_I \cap {}^\perp \fF_{I \cap J}) \oplus \aA_r(\fF_J \cap {}^\perp \fF_{I\cap J})$ is the right abelian envelope for $(\fF_I \cap {}^\perp \fF_{I \cap J}) \oplus (\fF_J \cap {}^\perp \fF_{I\cap J})$, i.e. $\aA_r(\fF_{I \cup J})/\aA_r(\fF_{I \cap J}) \simeq  \aA_r(\fF_I \cap {}^\perp \fF_{I \cap J}) \oplus \aA_r(\fF_J \cap {}^\perp \fF_{I\cap J})$ and (Siii) holds.
\end{proof}

\section{Highest weight categories as abelian envelopes of thin categories}\label{sec_high_weigh_as_env}

We recall the definition of a highest weight category and assign two thin categories to it, the subcategory $\fF(\Delta_\Lambda)$ of standardly stratified objects and the subcategory $\fF(\nabla_\Lambda)$ of costandardly stratified objects. We prove that any highest weight category is the right abelian envelope of $\fF(\Delta_\Lambda)$ and the left abelian envelope of $\fF(\nabla_\Lambda)$. Conversely, we prove that the left and right abelian envelopes of any thin category are highest weight categories.
 
\vspace{0.3cm}
\subsection{Standarizable collections}~\\

Let $\Lambda$ be a finite poset. 
By generalizing the definition of standarizable collections in \cite{DR}, we say that a collection  $\{E(\la)\}_{\la \in \Lambda}$ of objects in a $k$-linear exact category $\eE$ is a \emph{weakly $\Lambda$-standarizable collection} if $\Ext^1(E(\la), E(\mu)) =0=\Hom(E(\la), E(\mu))$, for $\la \succ \mu$ or $\la$ and $\mu$ non-comparable. A weakly $\Lambda$-standarizable collection is \emph{$\Lambda$-standarizable} if in addition  $\Hom(E(\la), E(\la)) = k$, $\Ext^1(E(\la),E(\la))=0$, for any $\la \in \Lambda$. 

Let $\{E(\la)\}_{\la \in \Lambda}$ be a weakly $\Lambda$-standarizable collection in an exact category $\eE$. For a lower ideal $I \subset \Lambda$, denote by $\fF_I := \fF(\{ E(\la)\}_{\la \in I})$ the extension closure of $\{E(\la)\}_{\la \in I}$, similarly denote $\tT_I:= \fF(\{E(\la)\}_{\la \notin I})$. 
\begin{LEM}\label{lem_weak_stand_and_tors_pair}
	Let  $\{E(\la)\}_{\la \in \Lambda}$ be a weakly $\Lambda$-standarizable collection in an 
	exact category $\eE$. 
	Then, $\fF_\Lambda$ has  a perpendicular torsion pair $(\tT_{I}, \fF_{I})$, for any lower ideal $I\subset \Lambda$.
\end{LEM}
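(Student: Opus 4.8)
The plan is to deduce the statement from Lemma \ref{lem_filtration_torsion}, applied inside the exact category $\fF_\Lambda$ with torsion class $\tT_I$ and torsion-free class $\fF_I$. By construction $\tT_I$ and $\fF_I$ are fully exact subcategories of $\fF_\Lambda$, so what has to be checked is: (a) $\fF_I \subset \tT_I^{\perp}$, the perpendicular being computed in $\fF_\Lambda$; and (b) every object of $\fF_\Lambda$ admits a filtration whose graded factors all lie in $\tT_I$ or in $\fF_I$. Given (a) and (b), Lemma \ref{lem_filtration_torsion} yields that $\tT_I$ is right admissible in $\fF_\Lambda$ with $\tT_I^{\perp} = \fF_I$, and a torsion pair with $\fF_I = \tT_I^{\perp}$ is perpendicular, which is exactly the claim.

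For (a), I would first record the poset observation: since $I$ is a lower ideal, no $\la \notin I$ satisfies $\la \preceq \mu$ with $\mu \in I$, so $\la$ and $\mu$ are either non-comparable or satisfy $\la \succ \mu$; in both cases the definition of a weakly $\Lambda$-standarizable collection gives $\Hom(E(\la), E(\mu)) = 0 = \Ext^1(E(\la), E(\mu))$. To pass from generators to extension closures, note that $\fF_\Lambda$, being a fully exact subcategory, has the same $\Hom$- and $\Ext^1$-groups as the ambient abelian category, so the six-term sequences (\ref{eqtn_6_term_ex_seq}) are available. One then runs a two-stage induction: fixing a generator $E(\mu)$ with $\mu \in I$ and inducting along a filtration of an object $T \in \tT_I$ by the $E(\la)$'s (using the second sequence in (\ref{eqtn_6_term_ex_seq})) gives $\Hom(T, E(\mu)) = 0 = \Ext^1(T, E(\mu))$; then fixing $T$ and inducting along a filtration of an object $F \in \fF_I$ by the $E(\mu)$'s (using the first sequence) gives $\Hom(T, F) = 0 = \Ext^1(T, F)$. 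Hence $\fF_I \subset \tT_I^{\perp}$.

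For (b), consider the full subcategory $\gG \subset \fF_\Lambda$ of objects admitting a finite filtration with graded factors isomorphic to the $E(\la)$, $\la \in \Lambda$. It contains every $E(\la)$ and is closed under extensions: given a conflation $A' \to A \to A''$ with $A', A'' \in \gG$, splice a filtration of $A'$ with the pullback along the deflation $A \to A''$ of a filtration of $A''$ (the pullback legs are again inflations by Lemma \ref{lem_push_def}, and the new graded factors are those of $A''$). Since $\fF_\Lambda$ is the smallest fully exact subcategory containing the $E(\la)$, we get $\fF_\Lambda \subset \gG$, so every object of $\fF_\Lambda$ has such a filtration; each graded factor $E(\la)$ lies in $\tT_I$ if $\la \notin I$ and in $\fF_I$ if $\la \in I$, which is (b).

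I do not expect a serious obstacle: once the reduction to Lemma \ref{lem_filtration_torsion} is in place everything is formal. The only point requiring care is the inductive step in (a), where at each stage one must invoke the correct one of the two six-term sequences of (\ref{eqtn_6_term_ex_seq}) — the contravariant one in the first variable when building up objects of $\tT_I$, the covariant one in the second variable when building up objects of $\fF_I$ — and use that the relevant $\Hom$ and $\Ext^1$ groups already vanish at the previous stage of the filtration so that the outer terms in the six-term sequence are zero.
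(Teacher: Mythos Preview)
Your proposal is correct and follows exactly the same route as the paper: verify $\fF_I \subset \tT_I^{\perp}$ and then invoke Lemma~\ref{lem_filtration_torsion}. The paper's own proof is a two-line version of yours, dismissing part (a) as ``clearly'' and leaving part (b) implicit in the citation of Lemma~\ref{lem_filtration_torsion}; your write-up simply spells out the details the paper omits.
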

\begin{proof}
	Clearly $\fF_{I} \subset \tT_I^\perp$. 
	By Lemma \ref{lem_filtration_torsion}, $\fF_{\Lambda}$ admits a perpendicular torsion pair $(\tT_I, \fF_{I})$.
\end{proof}

\begin{DEF}
	Let $\Lambda$, $\Lambda'$ be posets  on a finite set.
	We say that $\Lambda$ \emph{dominates} $\Lambda'$ if
	\begin{equation*}
	\la\preceq_{\Lambda'}\mu \, \Rightarrow \la \preceq_{\Lambda} \mu. 
	\end{equation*}
\end{DEF}

\begin{PROP}\label{prop_F_stand_thin}
	Consider a $\Lambda$-standarizable collection  $\{E(\la)\}_{\la \in \Lambda}$  in a Hom and $\Ext^1$ finite $k$-linear exact category $\eE$. Then the subcategory $\fF_\Lambda\subset \eE$ is thin with irreducible objects $\{E(\la)\}$ and
	$I\mapsto \fF_I$ is a strict left admissible $\iI(\Lambda)$-filtration on $\fF_{\Lambda}$.
\end{PROP}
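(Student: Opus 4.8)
\end{PROP}

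\begin{proof}[Plan of proof]
I plan to reduce the whole statement to one combinatorial lemma about the subcategories $\fF_S:=\fF(\{E(\la)\}_{\la\in S})$, $S\subseteq\Lambda$, and then feed it into the formalism of Sections \ref{sec_exact_cat}--\ref{sec_thin_cat}. First some bookkeeping: being extension closed in $\eE$, the subcategory $\fF_\Lambda$ is a fully exact subcategory, hence again a $\Hom$- and $\Ext^1$-finite $k$-linear exact category; for each lower ideal $I\in\iI(\Lambda)$ Lemma \ref{lem_weak_stand_and_tors_pair} gives a perpendicular torsion pair $(\tT_I,\fF_I)$ in $\fF_\Lambda$, so $\fF_I$ is left admissible with ${}^\perp\fF_I=\tT_I=\fF(\{E(\la)\}_{\la\notin I})$, and $I\mapsto\fF_I$ is clearly a map of posets. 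Also, by the usual description of extension closures in an exact category, $\fF_S$ consists precisely of the objects admitting a finite filtration with graded factors among $\{E(\la)\}_{\la\in S}$.

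The key lemma I would isolate is: \emph{for lower ideals $L_1\subseteq L_2$ of $\Lambda$, one has $\fF_{L_2}\cap\tT_{L_1}=\fF_{L_2\setminus L_1}$.} The inclusion $\supseteq$ is immediate, since for $\la\in L_2\setminus L_1$ the object $E(\la)$ lies in both $\fF_{L_2}$ and $\tT_{L_1}$ and the intersection is extension closed. For $\subseteq$, I would take $X\in\fF_{L_2}\cap\tT_{L_1}$, write $X$ via a filtration with layers $E(\la_1),\dots,E(\la_m)$ (bottom to top), all $\la_i\in L_2$, and reorder the layers. Weak $\Lambda$-standarizability gives $\Ext^1(E(\la),E(\mu))\neq0\Rightarrow\la\prec\mu$; hence whenever a layer indexed in $L_1$ sits directly below one indexed in $K:=L_2\setminus L_1$, the group $\Ext^1(E(\la_{i+1}),E(\la_i))$ vanishes (otherwise $\la_{i+1}\prec\la_i\in L_1$ would force $\la_{i+1}\in L_1$, as $L_1$ is a lower ideal), so those two layers may be transposed. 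A bubble sort then yields a conflation $X_0\to X\to X_1$ with $X_0\in\fF_K$ and $X_1\in\fF_{L_1}$; since $X\in\tT_{L_1}={}^\perp\fF_{L_1}$, the deflation $X\to X_1$ lies in $\Hom(X,\fF_{L_1})=0$, so $X_1=0$ and $X=X_0\in\fF_K$.

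Everything else is assembly. Condition (Li) of Definition \ref{def_left_amd_filtr} is trivial. For (Lii): $\fF_I\cap\fF_J=\tT_I^\perp\cap\tT_J^\perp$ is the class of objects right perpendicular to $\tT_I\cup\tT_J$, hence, by induction on filtration length via the six-term sequences (\ref{eqtn_6_term_ex_seq}), right perpendicular to the extension closure $\tT_{I\cap J}$ of $\tT_I\cup\tT_J$, so $\fF_I\cap\fF_J=\tT_{I\cap J}^\perp=\fF_{I\cap J}$; dually ${}^\perp\fF_I\cap{}^\perp\fF_J=\tT_I\cap\tT_J=\tT_{I\cup J}={}^\perp\fF_{I\cup J}$. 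Strictness (condition (Liii) of Definition \ref{def_strict_right_filtr}) follows from the key lemma and ${}^\perp\fF_K=\tT_K$: both $\fF_{I\cup J}\cap{}^\perp\fF_J$ and $\fF_I\cap{}^\perp\fF_{I\cap J}$ equal $\fF_{I\setminus J}$, using $(I\cup J)\setminus J=I\setminus J=I\setminus(I\cap J)$. The $\la$-graded factor is $\eE_\la={}^\perp\fF_{I_{<\la}}\cap\fF_{I_\la}=\fF_{\{\la\}}=\textrm{add}(E(\la))$ by the key lemma, and this is equivalent to $k\textrm{-vect}$ because $\Ext^1(E(\la),E(\la))=0$ and $\Hom(E(\la),E(\la))=k$. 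Finally, to see $\fF_\Lambda$ is thin I would pick a chain of lower ideals $\emptyset=I_0\subsetneq\dots\subsetneq I_n=\Lambda$ with $I_k=I_{k-1}\cup\{\nu_k\}$; the subcategories $\tT_k:={}^\perp\fF_{I_{n-k}}$, right admissible by Proposition \ref{right-left}, form an increasing chain from $0$ to $\fF_\Lambda$ whose $k$-th graded factor equals $(\tT_{k-1})^\perp\cap\tT_k=\fF_{I_{n-k+1}}\cap\tT_{I_{n-k}}=\fF_{\{\nu_{n-k+1}\}}\simeq k\textrm{-vect}$ by Proposition \ref{right-left} and the key lemma. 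Thus $\fF_\Lambda$ is thin, and by Lemma \ref{lem_irr_in_thin} its irreducible objects are exactly $\{E(\la)\}_{\la\in\Lambda}$.

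The main obstacle is the key lemma, and within it the filtration-reordering step: one must be careful that a $\Lambda$-standarizable collection really lets one sort the layers of a filtration compatibly with the partial order, with the needed $\Ext^1$-vanishing always available because the index sets occurring are complements of lower ideals. After that the argument is routine manipulation of torsion pairs and of the exact sequences (\ref{eqtn_6_term_ex_seq}).
\end{proof}
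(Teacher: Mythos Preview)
Your proof is correct. The paper takes a slightly different and shorter route: after verifying (as you do) that $\fF_\Lambda$ is thin with irreducibles $\{E(\la)\}$, it does not check (Li)--(Liii) by hand. Instead it observes that the vanishing conditions in a $\Lambda$-standarizable collection force the canonical poset $\Lambda_{\textrm{can}}$ of $\fF_\Lambda$ to satisfy ``$\Lambda$ dominates $\Lambda_{\textrm{can}}^{\opp}$'', so every lower ideal of $\Lambda$ is a lower ideal of $\Lambda_{\textrm{can}}^{\opp}$, and then simply invokes Proposition \ref{prop_strict_filtr_on_thin} (via its right-dual formulation, Lemma \ref{lem_dual_strict}) to obtain the strict left admissible $\iI(\Lambda)$-filtration at once. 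Your approach instead isolates the explicit key lemma $\fF_{L_2}\cap\tT_{L_1}=\fF_{L_2\setminus L_1}$ and derives everything from it; your bubble-sort reordering is essentially a direct re-proof of Lemma \ref{lem_filtration_torsion} in this context (one could shortcut it by applying that lemma inside $\fF_{L_2}$ to the pair $(\fF_{L_2\setminus L_1},\fF_{L_1})$). Your argument is more self-contained and makes the combinatorics fully explicit; the paper's is terser and foregrounds the canonical poset, which is reused later (e.g.\ Propositions \ref{prop_can_poset_on_hw} and \ref{prop_hw_iff_dominated}).
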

\begin{proof}
	By Lemma \ref{lem_weak_stand_and_tors_pair}, category $\fF_{\Lambda}$ admits perpendicular torsion pairs $ (\tT_I, \fF_{I})$, for $I\in \iI(\Lambda)$. A choice of a full order $\la_1 \prec \la_2 \prec \ldots\prec \la_n$ on $\Lambda$  compatible with the poset structure  gives lower ideals $I_i := \{\la_1,\ldots, \la_i\}$. Then 
	$0 = \tT_{I_n} \subset \tT_{I_{n-1}} \subset \ldots \subset \tT_0 = \fF_{\Lambda}$ is a right admissible filtration. Graded factors $(\fF_\Lambda)^o_i := \tT_{I_{i}}^\perp \cap \tT_{I_{i-1}}$ are equivalent to $\fF(E(\la_{i}))$. As $\Hom(E(\la_i),E(\la_i)) = k$ and $\Ext^1(E(\la_i), E(\la_i)) = 0$, the category $(\fF_\Lambda)^o_i$ is equivalent to $k \textrm{-vect}$, hence $\fF_\Lambda$ is thin.
	By Lemma \ref{lem_irr_in_thin}, $E(\la)$'s are the irreducible objects in $\fF_\Lambda$.
	
	The vanishing of $\Hom$ and $\Ext^1$ groups in a $\Lambda$-standarizable collection implies that the opposite of the canonical poset $\Lambda_\textrm{can}$ (\ref{def-poset}) of $\fF_{\Lambda}$ is dominated by the original poset on $\Lambda$, hence a lower ideal in $\Lambda$ is a lower ideal in $\Lambda_\textrm{can}^{\opp}$. It follows from Proposition \ref{prop_strict_filtr_on_thin} that $I \mapsto \fF_I$ is a strict left admissible $\iI(\Lambda)$-filtration on $\fF_\Lambda$.
\end{proof}

\vspace{0.3cm}
\subsection{From highest weight categories to thin categories and back}\label{ssec_from_hw_to_thin_and_back}~\\

Let $k$ be an algebraically closed field, $\aA$ a Deligne finite $k$-linear category (see Appendix \ref{ssec_ab_cat_fin_len}),
and $\Lambda$ a partial order on the set of  isomorphism classes of irreducible objects in $\aA$. Let $L(\la)$ denote the irreducible object corresponding to $\la \in \Lambda$, $P(\la)$ its projective cover, and $I(\la)$ its injective hull. Denote by $\Delta(\la)$ the \emph{standard object}, i.e. the maximal quotient of $P(\la)$ with simple factors isomorphic to $L(\mu)$, for $\mu \preceq \la$. Let $\fF(\Delta_\Lambda)\subset \aA$ (or simply $\fF(\Delta)$ when the partial order is fixed) be the extension closure of the $\Delta(\la)$'s.

The partial order $\Lambda$ is \emph{adapted} if for any $M\in \aA$ with top $L(\la_1)$ and socle $L(\la_2)$, with $\la_1$ and $\la_2$ incomparable, there exists $\mu \in \Lambda$ such that $\mu\geq \la_1$, $\mu \geq \la_2$ and $L(\mu)$ is a graded factor of a filtration of $M$ (see details in \cite{DR}).

We say that $(\aA, \Lambda)$ is a  \emph{highest weight category} (hw category) if, for any $\la \in \Lambda$,  
\begin{itemize}
	\item[$(st1)$]  $\End_{\aA}(\Delta(\la))\simeq k$ and
	\item[$(st2)$] $P(\la)\in \fF(\Delta_\Lambda)$.
\end{itemize} 

By \cite[Theorem 1]{DR}, if $\Lambda$ is adapted conditions ($st1$) and ($st2$) are equivalent to the following more traditional conditions for an hw category:
\begin{enumerate}
	\item[($st1^\prime$)]  there exists an epimorphism $\Delta(\la) \to L(\la)$ whose kernel admits a filtration with graded factors $L(\mu)$, where $\mu \prec \la$,
	\item[($st2^\prime$)] there exists an epimorphism $P(\la) \to \Delta(\la)$ whose kernel admits a filtration with graded factors $\Delta(\mu)$, where $\mu \succ\la$.
\end{enumerate}

\begin{LEM}\label{lem_delta_proj_in_subcat}
	Let $(\aA, \Lambda)$ be a Deligne finite category and a partial order on the set of isomorphism classes of simple objects in $\aA$. Assume there exist objects $\{\ol{\Delta}(\la)\}_{\la \in \Lambda}\subset \aA$ which satisfy ($st1^\prime$) and ($st2^\prime$). Then $\{\ol{\Delta}(\la)\}_{\la \in \Lambda}$ are the standard objects, hence $(\aA, \Lambda)$ is an hw category.
\end{LEM}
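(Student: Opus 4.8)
The claim is that if objects $\{\ol{\Delta}(\la)\}_{\la\in\Lambda}$ satisfy the traditional conditions $(st1')$ and $(st2')$, then they coincide with the standard objects $\Delta(\la)$ (the maximal quotient of $P(\la)$ with composition factors $L(\mu)$, $\mu\preceq\la$), and hence $(\aA,\Lambda)$ is a highest weight category. The strategy is to show that $\ol{\Delta}(\la)$ satisfies the defining universal property of $\Delta(\la)$: it is a quotient of $P(\la)$ whose composition factors are all of the form $L(\mu)$ with $\mu\preceq\la$, and it is the \emph{largest} such quotient.

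First I would record what $(st1')$ and $(st2')$ give directly. Condition $(st1')$ produces an epimorphism $\ol{\Delta}(\la)\to L(\la)$ with kernel filtered by $L(\mu)$, $\mu\prec\la$; hence every composition factor of $\ol{\Delta}(\la)$ is $L(\mu)$ with $\mu\preceq\la$, and $L(\la)$ occurs with multiplicity one (at the top). Condition $(st2')$ gives an epimorphism $p_\la\colon P(\la)\to\ol{\Delta}(\la)$; combined with $\ol{\Delta}(\la)\to L(\la)$, this realizes $\ol{\Delta}(\la)$ as a quotient of $P(\la)$ lying over the (unique, up to scalar) surjection $P(\la)\to L(\la)$. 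So $\ol{\Delta}(\la)$ is a quotient of $P(\la)$ with composition factors among $\{L(\mu):\mu\preceq\la\}$, which already forces $\ol{\Delta}(\la)$ to be a quotient of $\Delta(\la)$, i.e. there is an epimorphism $\Delta(\la)\twoheadrightarrow\ol{\Delta}(\la)$.

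The remaining point is the reverse: $\ol{\Delta}(\la)$ is not a \emph{proper} quotient of $\Delta(\la)$, i.e. the map $\Delta(\la)\to\ol\Delta(\la)$ is an isomorphism. Here I would use an induction on the poset $\Lambda$ (equivalently, on $|\Lambda|$), ordering so that $\la$ is a maximal element among those not yet treated, or argue by the following multiplicity/projectivity bookkeeping. Using $(st2')$ for all the $\ol\Delta$'s, the kernel of $p_\la\colon P(\la)\to\ol{\Delta}(\la)$ is filtered by $\ol{\Delta}(\mu)$ with $\mu\succ\la$; by induction each such $\ol\Delta(\mu)$ equals $\Delta(\mu)$, and in particular $P(\la)$ lies in $\fF(\ol\Delta_\Lambda)=\fF(\Delta_\Lambda)$. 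Now compare with the genuine standard object: $\Delta(\la)$ is also a quotient of $P(\la)$, and a standard BGG-reciprocity / projective-cover count shows the multiplicity $[\Delta(\la):L(\la)]=1$ and that $\Delta(\la)$ is, by its maximality definition, the largest quotient of $P(\la)$ with factors $L(\mu)$, $\mu\preceq\la$. Since $\ol\Delta(\la)$ is such a quotient, $\Delta(\la)\twoheadrightarrow\ol\Delta(\la)$; to see this is injective, observe that the kernel, if nonzero, would have a composition factor $L(\nu)$ with $\nu\preceq\la$, $\nu\ne\la$ — but then $\ol\Delta(\la)$ would have strictly fewer occurrences of $L(\nu)$ than $\Delta(\la)$, contradicting that both $\{\Delta(\mu)\}$ and $\{\ol\Delta(\mu)\}$ give the \emph{same} multiplicities $[\,\cdot:L(\nu)\,]$ in the projective $P(\la)$ (these multiplicities are determined by the $\fF(\Delta)$-filtration of $P(\la)$, which by induction is the same for the barred and unbarred families). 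Hence $\Delta(\la)\xrightarrow{\ \sim\ }\ol\Delta(\la)$.

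\textbf{Main obstacle.} The delicate step is the last one: upgrading the surjection $\Delta(\la)\twoheadrightarrow\ol\Delta(\la)$ to an isomorphism. The clean way is the multiplicity argument: for every $\nu$, the total multiplicity $\sum_{\mu}[\,\text{gr of }P(\la)\text{ in its }\Delta\text{-filtration}=\Delta(\mu)\,]\cdot[\Delta(\mu):L(\nu)]=[P(\la):L(\nu)]=\dim\Hom(P(\nu),P(\la))$, and the same identity with bars. By the inductive hypothesis $\ol\Delta(\mu)=\Delta(\mu)$ for $\mu\ne\la$ (or $\mu\succ\la$), so the only possibly differing term forces $[\ol\Delta(\la):L(\nu)]=[\Delta(\la):L(\nu)]$ for all $\nu$; a surjection of finite-length objects with equal composition multiplicities is an isomorphism. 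One must be careful to set up the induction so that all $\ol\Delta(\mu)$ appearing in the $\ol\Delta$-filtration of $P(\la)$ (namely $\mu=\la$ and $\mu\succ\la$) are handled — this means inducting \emph{downward} in $\Lambda$, treating maximal elements first, where $\ol\Delta(\la_{\max})=L(\la_{\max})=P(\la_{\max})=\Delta(\la_{\max})$ by $(st1')$ and $(st2')$ trivially. Once the base and inductive step are in place, $\{\ol\Delta(\la)\}$ are the standard objects, $(st1)$ and $(st2)$ follow from $(st1')$, $(st2')$ via \cite[Theorem 1]{DR} cited just above, and $(\aA,\Lambda)$ is a highest weight category.
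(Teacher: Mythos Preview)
Your outline correctly isolates the surjection $\Delta(\la)\twoheadrightarrow\ol\Delta(\la)$ as the crux, but the proposed way of upgrading it to an isomorphism is circular. Your multiplicity comparison hinges on having \emph{two} filtrations of $P(\la)$, one by $\ol\Delta$'s and one by $\Delta$'s, whose subquotient multiplicities can be compared. The $\ol\Delta$-filtration comes from $(st2')$, but a $\Delta$-filtration of $P(\la)$ is precisely condition $(st2)$, i.e.\ the highest-weight property you are trying to prove. Invoking ``the $\fF(\Delta)$-filtration of $P(\la)$, which by induction is the same for the barred and unbarred families'' assumes the conclusion: the inductive hypothesis $\ol\Delta(\mu)=\Delta(\mu)$ for $\mu\succ\la$ only converts the $\ol\Delta$-filtration of $P(\la)$ into one whose lower layers are $\Delta(\mu)$'s and whose top layer is $\ol\Delta(\la)$; it does not produce an independent $\Delta$-filtration against which to compare. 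Likewise, BGG reciprocity presupposes the hw structure. (A small slip in the base case: for $\la$ maximal one gets $\ol\Delta(\la)=P(\la)=\Delta(\la)$, not $=L(\la)$.)

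The paper avoids induction altogether. It identifies $\Delta(\la)$ as the projective cover of $L(\la)$ in the Serre subcategory $\bB_\la:=\fF(\{L(\mu)\}_{\mu\preceq\la})$ (via the left adjoint $i^*$ to the inclusion and Lemma~\ref{lem_preserve_projective}), and then shows directly that $\ol\Delta(\la)$ has the defining properties of that projective cover: simple top $L(\la)$ (because $\ol\Delta(\la)$ is a quotient of $P(\la)$, so $\Hom(\ol\Delta(\la),L(\mu))\subset\Hom(P(\la),L(\mu))=\delta_{\la\mu}\,k$) and $\Ext^1(\ol\Delta(\la),L(\mu))=0$ for $\mu\preceq\la$. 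The latter vanishing is the key computation you are missing: letting $K=\ker(P(\la)\to\ol\Delta(\la))$, one has $\Ext^1(\ol\Delta(\la),L(\mu))\simeq\Hom(K,L(\mu))$; and $K$ is filtered by $\ol\Delta(\nu)$ with $\nu\succ\la$, each of which has simple top $L(\nu)\neq L(\mu)$, so $\Hom(\ol\Delta(\nu),L(\mu))=0$ and hence $\Hom(K,L(\mu))=0$ by left exactness. This argument uses only $(st1')$, $(st2')$ and the fact that each $\ol\Delta(\nu)$ is a quotient of $P(\nu)$; no induction or multiplicity bookkeeping is needed.
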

\begin{proof}

Let $i_*\colon \fF(\{L(\mu)\}_{\mu \preceq \la}) \to \aA$ be the inclusion of the extension closure of $\{L(\mu)\}_{\mu \preceq \la}$ and $i^*$ its left adjoint. Then the standard object $\Delta(\la) = i^*P(\la)$ is the projective cover of $L(\la)$ in $\fF(\{L(\mu)\}_{\mu \preceq \la})$, see Lemma \ref{lem_preserve_projective}. We check that $\ol{\Delta}(\la)$ is a projective object in $\fF(\{L(\mu)\}_{\mu \preceq \la})$ with the maximal semi-simple quotient $L(\la)$. These properties characterise the projective cover of $L(\la)$ up to an isomorphism, hence $\ol{\Delta}(\la) \simeq \Delta(\la)$.

By ($st1^\prime$), $\ol{\Delta}(\la) \in \fF(\{L(\mu)\}_{\mu \preceq \la})$. By ($st2^\prime$), $\ol{\Delta}(\la)$ is a quotient of $P(\la)$. Hence, for any $A\in \aA$, $\Hom(\ol{\Delta}(\la), A)$ is a subspace of $\Hom(P(\la), A)$. In particular, $\Hom(\ol{\Delta}(\la), L(\mu)) \simeq 0$, for $\mu \neq \la$, and $\Hom(\ol{\Delta}(\la), L(\la)) \simeq k$. It follows that $L(\la)$ is the maximal semi-simple quotient of $\ol{\Delta}(\la)$.

Similarly, $\Hom(\ol{\Delta}(\mu), L(\nu)) \simeq 0$, for $\nu \neq \mu$. Together with ($st2^\prime$) it implies that, for the kernel $K$ of the surjective map $P(\la) \to \ol{\Delta}(\la)$ and $\mu \leq \la$, $\Hom(K, L(\mu)) =0$. It follows that, for $\mu \preceq \la$, $\Ext^1(\ol{\Delta}(\la), L(\mu)) \simeq \Ext^1(P(\la), L(\mu))\simeq 0$ i.e. $\ol{\Delta}(\la)$ is projective in $\fF(\{L(\mu)\}_{\mu \preceq \la})$.
\end{proof}

\begin{LEM}\label{lem_preserve_projective}\cite[Lemma 3.2.7]{Pop}
	Let $F\colon \aA \to \bB$ be a functor of abelian categories with exact right adjoint $G$. Then $F$ maps projective $P\in \aA$ to a projective object in $\bB$.
\end{LEM}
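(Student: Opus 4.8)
The plan is to use the adjunction isomorphism together with exactness of $G$; this is a one-line argument. First I would recall that, since $F$ is left adjoint to $G$, for every object $B\in\bB$ there is a natural isomorphism $\Hom_{\bB}(F(P),B)\simeq \Hom_{\aA}(P,G(B))$. In other words, the functor $\Hom_{\bB}(F(P),-)\colon \bB\to \Ab$ is naturally isomorphic to the composite $\Hom_{\aA}(P,-)\circ G$.

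Next I would observe that this composite is an exact functor: $G$ is exact by hypothesis, and $\Hom_{\aA}(P,-)$ is exact precisely because $P$ is projective in $\aA$. A composite of exact functors is exact, so $\Hom_{\bB}(F(P),-)$ is exact, which is exactly the assertion that $F(P)$ is a projective object of $\bB$.

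There is no genuine obstacle here; the only point meriting a word of care is that one must use the \emph{natural} isomorphism of functors, not merely a pointwise isomorphism of abelian groups, so that exactness is transported along it — but this naturality is built into the definition of an adjunction.
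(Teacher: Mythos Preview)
Your proof is correct and takes essentially the same approach as the paper: both use the adjunction isomorphism $\Hom_{\bB}(F(P),-)\simeq \Hom_{\aA}(P,G(-))$ together with exactness of $G$ and projectivity of $P$. The paper phrases it via preservation of epimorphisms (checking that $\Hom(F(P),-)$ sends an epimorphism to a surjection) while you phrase it via exactness of the composite functor, but this is the same argument.
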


If $P$ is a projective generator in an hw category $\aA$, then $\aA$ is equivalent to the category of finite dimensional right modules over the algebra $A:=\End (P)$.  Then the pair $(A,\Lambda)$, of a finite dimensional $k$-algebra and a partial order on its simple modules is a \emph{quasi-hereditary algebra} \cite{CPS}. There is a one-to-one correspondence between Morita equivalence classes of quasi-hereditary algebras and hw categories \cite[Theorem 3.6]{CPS}. A possible choice for $P$ is $P=\oplus_{\la \in \Lambda} P(\la )$, algebra $A=\End (\bigoplus_{\la \in \Lambda}P(\la))$ is then the unique up to isomorphism basic algebra in the Morita equivalence class.

By \cite[Theorem 4.3]{ParSco}, algebra $(A^{\opp},\Lambda)$ is also quasi-hereditary. In particular, $\aA^{\opp}\simeq (\textrm{mod-}A)^{\opp} \simeq \textrm{mod-}A^{\opp}$ is an hw category. By transporting the standard objects from $\aA^{\opp}$ to $\aA$, we get \emph{costandard objects} $\nabla(\la)$, labelled by $\la \in \Lambda$, defined by conditions: 
\begin{itemize}
	\item[($cost1$)] there exists an injective morphism $L(\la) \to \nabla(\la)$ whose cokernel admits a filtration with graded factors $L(\mu)$, where $\mu \prec \la$,
	\item[($cost2$)] 
	there exists an injective morphism $\nabla(\la) \to I(\la)$ whose cokernel admits a filtration with graded factors $\nabla(\mu)$, where  $\mu \succ\la$.	
\end{itemize} 

$\{\Delta(\la)\}_{\la \in \Lambda}$ is a $\Lambda$-standarizable collection in a hw category $(\aA, \Lambda)$ and while  $\{\nabla(\la)\}_{\la \in \Lambda}$ is $\Lambda^{\opp}$-standarizable (\textit{cf.} \cite{DR}).

For an hw category $(\aA, \Lambda)$, denote by $\fF(\nabla_\Lambda)\subset \aA$  the extension closure of the $\nabla(\la)$'s.
\begin{PROP}\label{prop_F_Delta_thin}
	Let $(\aA, \Lambda)$ be an hw category. Then $\fF(\Delta_\Lambda)$ and $\fF(\nabla_\Lambda)$ are thin categories with irreducible objects $\{\Delta(\la)\}_{\la\in \Lambda}$, respectively $\{\nabla(\la)\}_{\la \in \Lambda}$.
\end{PROP}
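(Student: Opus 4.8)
The plan is to derive both halves of the statement directly from Proposition \ref{prop_F_stand_thin} on $\Lambda$-standarizable collections, so the whole argument is essentially two applications of that result. First I would record the ambient finiteness: since $(\aA,\Lambda)$ is a Deligne finite category, $\aA$ is $\Hom$-finite by definition, and choosing a projective generator $P$ identifies $\aA$ with $\textrm{mod-}A$ for the finite-dimensional algebra $A=\End(P)$, whence $\aA$ is also $\Ext^1$-finite. Throughout I regard $\aA$ as an exact category equipped with its abelian exact structure, so that it is a legitimate input for Proposition \ref{prop_F_stand_thin}.

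For the standard side, the key input is that $\{\Delta(\la)\}_{\la\in\Lambda}$ is a $\Lambda$-standarizable collection in $\aA$. This is exactly the fact recorded in the discussion preceding the proposition (and ultimately \cite{DR}): axiom ($st1$) gives $\End_\aA(\Delta(\la))=k$ and $\Ext^1_\aA(\Delta(\la),\Delta(\la))=0$, while the usual filtration argument from ($st1^\prime$), ($st2^\prime$) gives $\Hom_\aA(\Delta(\la),\Delta(\mu))=\Ext^1_\aA(\Delta(\la),\Delta(\mu))=0$ whenever $\la\succ\mu$ or $\la,\mu$ are incomparable. Granting this, Proposition \ref{prop_F_stand_thin} applied with $\eE=\aA$ and the collection $\{\Delta(\la)\}$ yields at once that $\fF(\Delta_\Lambda)=\fF_\Lambda$ is thin and that its irreducible objects are precisely the $\Delta(\la)$'s (and, as a bonus, that $I\mapsto\fF_I$ is a strict left admissible $\iI(\Lambda)$-filtration).

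For the costandard side I would argue symmetrically. The relevant fact, again from the discussion before the proposition and ultimately from \cite[Theorem 4.3]{ParSco} (which makes $\aA^{\opp}$ an hw category whose standard objects are the images of the $\nabla(\la)$'s), is that $\{\nabla(\la)\}_{\la\in\Lambda}$ is a $\Lambda^{\opp}$-standarizable collection in $\aA$. A second application of Proposition \ref{prop_F_stand_thin}, now with the poset $\Lambda^{\opp}$ and the collection $\{\nabla(\la)\}$, then gives that $\fF(\nabla_\Lambda)$ is thin with irreducible objects $\{\nabla(\la)\}_{\la\in\Lambda}$. Alternatively one may invoke the self-duality of thinness (noted in Section \ref{sec_thin_cat}) to deduce the $\nabla$-statement from the $\Delta$-statement applied to $\aA^{\opp}$.

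The only step requiring genuine care — and hence what I would single out as the main, though minor, obstacle — is the verification that the $\Hom$- and $\Ext^1$-vanishing defining a standarizable collection really does hold for $\{\Delta(\la)\}$. This is precisely where the highest weight axioms ($st1^\prime$), ($st2^\prime$) are used: one runs an induction on the lengths of the simple filtration of $\Delta(\la)$ and of the standard filtration of $\ker\bigl(P(\la)\to\Delta(\la)\bigr)$, propagating the vanishing through the six-term exact sequences (\ref{eqtn_6_term_ex_seq}). Once this bookkeeping is in place, no further difficulty arises, and the proposition follows from the two invocations of Proposition \ref{prop_F_stand_thin} described above.
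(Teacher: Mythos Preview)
Your proposal is correct and follows essentially the same route as the paper: the paper's proof is the single line ``Follows from Proposition \ref{prop_F_stand_thin}'', relying on the sentence immediately preceding the proposition, which records that $\{\Delta(\la)\}$ is $\Lambda$-standarizable and $\{\nabla(\la)\}$ is $\Lambda^{\opp}$-standarizable (with reference to \cite{DR}). Your write-up simply unpacks this invocation and supplies the ambient finiteness and vanishing bookkeeping that the paper leaves implicit.
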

\begin{proof}
	Follows from Proposition \ref{prop_F_stand_thin}.
\end{proof}

\begin{PROP}\label{prop_highest_weuight_is_right_env}
	If $(\aA, \Lambda)$ is an hw category, then $\aA$ is the right abelian envelope of $\fF(\Delta_\Lambda)$ and the left abelian envelope of $\fF(\nabla_\Lambda)$.
\end{PROP}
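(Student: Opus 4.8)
The plan is to reduce the statement to the general criterion for right abelian envelopes established earlier, namely Theorem \ref{thm_if_quotient_then_envelope}, together with the projective/injective-generator description of the envelope from Theorem \ref{thm_right_env_if_proj_gen} and Corollary \ref{cor_envel_for_thin}. First I would recall that, by Proposition \ref{prop_F_Delta_thin}, the subcategory $\fF(\Delta_\Lambda)\subset \aA$ is thin with irreducible objects the standard objects $\{\Delta(\la)\}_{\la\in\Lambda}$. By Proposition \ref{prop_proj_in_thin}, $\fF(\Delta_\Lambda)$ then carries a projective generating subcategory $\pP$; the key observation is that the indecomposable projectives $P(\la)$ of the thin category $\fF(\Delta_\Lambda)$ coincide with the projective covers $P(\la)$ of $L(\la)$ in $\aA$. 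This is precisely the content of condition $(st2')$: each $P(\la)$ lies in $\fF(\Delta_\Lambda)$, it is projective there (being projective in the larger category $\aA$), and it admits a deflation onto the irreducible object $\Delta(\la)$ with kernel having a $\Delta(\mu)$-filtration with $\mu\succ\la$, i.e. lying in $\tT_{I_{<\la}}$; the uniqueness part of Proposition \ref{prop_proj_in_thin} then forces the identification.

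Next I would verify the hypotheses of Theorem \ref{thm_if_quotient_then_envelope} for the fully exact subcategory $\fF(\Delta_\Lambda)\subset\aA$. The subcategory is closed under extensions by definition. It is closed under kernels of epimorphisms because $\fF(\Delta_\Lambda)$, being the category of $\Delta$-filtered objects in a highest weight category, is well known to be closed under kernels of epimorphisms between its objects --- this follows from the standard fact $\Ext^{\ge 2}(\Delta(\la),\nabla(\mu))=0$ together with the Ext-orthogonality $\Ext^1(\Delta(\la),\nabla(\mu))=0$, which characterizes $\fF(\Delta_\Lambda)=\{A\in\aA\,|\,\Ext^1(A,\nabla(\mu))=0\ \forall\mu\}$; alternatively one can argue directly from the strict left admissible filtration of Proposition \ref{prop_F_stand_thin} and the six-term sequences. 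Finally, every object of $\aA$ is a quotient of an object of $\fF(\Delta_\Lambda)$: indeed every $A\in\aA$ receives an epimorphism from a projective $\bigoplus P(\la)^{\oplus n_\la}$, and by the identification above each $P(\la)$ lies in $\fF(\Delta_\Lambda)$. Theorem \ref{thm_if_quotient_then_envelope} then yields $\aA\simeq\aA_r(\fF(\Delta_\Lambda))$ with the embedding functor as the universal right exact functor.

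For the dual statement, the cleanest route is to pass to the opposite category. By \cite[Theorem 4.3]{ParSco} the pair $(\aA^{\opp},\Lambda)$ is again a highest weight category, and under the anti-equivalence the costandard objects $\nabla(\la)\in\aA$ correspond to the standard objects $\Delta(\la)^{\opp}\in\aA^{\opp}$, so that $\fF(\nabla_\Lambda)^{\opp}\simeq\fF(\Delta^{\opp}_\Lambda)$. Applying the first half of the argument to $(\aA^{\opp},\Lambda)$ gives $\aA^{\opp}\simeq\aA_r(\fF(\nabla_\Lambda)^{\opp})$, and then formula (\ref{rem_envs_for_opp}), $\aA_l(\eE^{\opp})\simeq\aA_r(\eE)^{\opp}$, translates this back to $\aA\simeq\aA_l(\fF(\nabla_\Lambda))$, with $i_L\colon\fF(\nabla_\Lambda)\to\aA$ the inclusion.

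The main obstacle I anticipate is not the envelope machinery itself but the verification that $\fF(\Delta_\Lambda)$ is closed under kernels of epimorphisms inside $\aA$ --- this is the one place where genuine highest-weight input (the characterization of $\Delta$-filtered objects via vanishing of $\Ext^1$ against costandards, and the higher Ext-vanishing between standards and costandards) is needed, rather than formal nonsense about torsion pairs. Everything else is bookkeeping: matching the thin-category projectives $P(\la)$ with the highest-weight projective covers, and chasing the opposite-category identifications for the left envelope. One should also take a moment to confirm that the universal right exact functor produced by Theorem \ref{thm_if_quotient_then_envelope} is canonically the inclusion $\fF(\Delta_\Lambda)\hookrightarrow\aA$ (equivalently, the $i_R$ of Theorem \ref{thm_right_env_if_proj_gen} for the projective generator $\bigoplus_\la P(\la)$), so that the statement is not merely an abstract equivalence but identifies $\aA$ with the envelope via the natural embedding.
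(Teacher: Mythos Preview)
Your proposal is correct and follows essentially the same route as the paper: verify the hypotheses of Theorem \ref{thm_if_quotient_then_envelope} for $\fF(\Delta_\Lambda)\subset\aA$ (every object a quotient via projectives in $\fF(\Delta_\Lambda)$, and closure under kernels of epimorphisms), then dualise for $\fF(\nabla_\Lambda)$. The paper's version is somewhat more economical: it invokes $(st2)$ directly to place each $P(\la)$ in $\fF(\Delta_\Lambda)$ without passing through the thin-category projectives of Proposition \ref{prop_proj_in_thin}, and it cites \cite[Theorem 3]{Rin} for the kernel-closure property rather than the $\Ext^1(-,\nabla)$ characterisation; your detour through the uniqueness clause of Proposition \ref{prop_proj_in_thin} is correct but not needed for the argument.
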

\begin{proof}
	By ($st2$),  $P(\la) \in \fF(\Delta_\Lambda)$, for any $\la \in \Lambda$. Hence, any object of $\aA$ is a quotient of an object of $\fF(\Delta_\Lambda)$. By \cite[Theorem 3]{Rin}, category $\fF(\Delta_\Lambda)$ is closed under the kernels of epimorphisms. By Theorem \ref{thm_if_quotient_then_envelope}, $\aA$ is the right abelian envelope of $\fF(\Delta_\Lambda)$.
\end{proof}

\vspace{0.5cm}
\subsection{Envelopes of thin categories are highest weight categories}~\\

Consider a thin category $\eE$ with the canonical poset $\Lambda$ on the set $\{E(\la)\}$ of irreducible objects in it, as in (\ref{def-poset}). By Corollary \ref{cor_envel_for_thin}, the right and left  abelian envelopes for $\eE$, $\aA_r(\eE)$ and $\aA_l(\eE)$, exist. 

\begin{THM}\label{thm_right_env_is_h_w}
	Let $\eE$ be a thin category.
	Then $(\aA_r(\eE), \Lambda^{\opp})$ is an hw category with standard objects $\{\Delta(\la):=i_R(E(\la))\}$, and $i_R$ induces an equivalence $\fF(\Delta_{\Lambda^{\opp}})\simeq \eE$. Also, $(\aA_l(\eE), \Lambda)$ is an hw category with costandard objects $\{\nabla(\la):=i_L(E(\la))\}$, and $\fF(\nabla_{\Lambda})\simeq \eE$.	
\end{THM}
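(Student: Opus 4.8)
The plan is to reduce the theorem to the statement about $\aA_r(\eE)$, to realise $\aA_r(\eE)$ as a finite-dimensional module category, and then to verify the Dlab--Ringel axioms $(st1')$, $(st2')$ for the candidate standard objects $i_R(E(\la))$ by invoking Lemma~\ref{lem_delta_proj_in_subcat}. For the reduction: a thin category is self-dual, so $\eE^{\opp}$ is thin, and inspection of \eqref{def-poset} shows its canonical poset is $\Lambda^{\opp}$. By \eqref{rem_envs_for_opp} one has $\aA_l(\eE)\simeq\aA_r(\eE^{\opp})^{\opp}$, and passing to the opposite turns an hw category into an hw category with the same poset and with standard and costandard objects interchanged (this is the construction of costandard objects preceding Proposition~\ref{prop_F_Delta_thin}). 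So the $\aA_l$-assertions for $\eE$ follow from the $\aA_r$-assertions for $\eE^{\opp}$, and it is enough to deal with $\aA_r(\eE)$.

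By Corollary~\ref{cor_envel_for_thin} and Lemma~\ref{lem_fp_as_modules}, $\aA_r(\eE)\simeq\textrm{fp}(\pP)\simeq\textrm{mod-}A$ for the finite-dimensional algebra $A=\End_\eE(\bigoplus_{\la\in\Lambda}P(\la))$, and by Theorem~\ref{thm_right_env_if_proj_gen} the functor $i_R$ is fully faithful and exact and identifies $\eE$ with a fully exact subcategory, which I use to regard $\eE\subset\aA_r(\eE)$. The projectives $P(\la)$ of Proposition~\ref{prop_proj_in_thin} have local endomorphism rings and are pairwise non-isomorphic, hence, $\aA_r(\eE)$ being Krull--Schmidt, they are precisely its indecomposable projectives; write $L(\la)$ for the simple top of $P(\la)$, so that $\{L(\la)\}_{\la\in\Lambda}$ are the simple objects of $\aA_r(\eE)$ and $P(\la)\to L(\la)$ is the projective cover (that $\pP$ is the subcategory of projectives also follows from Lemma~\ref{lem_idempotent_split} and Proposition~\ref{prop_proj_in_fp}).

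Now I would check the hypotheses of Lemma~\ref{lem_delta_proj_in_subcat} for $\bar\Delta(\la):=i_R(E(\la))=E(\la)$ with respect to the order $\Lambda^{\opp}$. For $(st2')$: the deflation $d_\la\colon P(\la)\to E(\la)$ of Proposition~\ref{prop_proj_in_thin} has kernel in $\tT_{I_{<\la}}=\fF(\{E(\mu)\}_{\mu\prec_{\Lambda}\la})$ (Proposition~\ref{prop_strict_filtr_on_thin}), which is a thin subcategory whose irreducible objects are the $E(\mu)$ with $\mu\prec_{\Lambda}\la$ (restrict a thin filtration, Lemma~\ref{lem_irr_in_thin}); therefore this kernel carries a filtration, in $\eE$ and hence in $\aA_r(\eE)$ since the embedding is exact, with graded factors $\bar\Delta(\mu)=E(\mu)$, $\mu\prec_{\Lambda}\la$, i.e.\ $\mu\succ_{\Lambda^{\opp}}\la$. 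For $(st1')$: $E(\la)\neq0$ since $\End_{\aA_r(\eE)}E(\la)=\End_\eE E(\la)=k$, and as $E(\la)$ is a quotient of the indecomposable projective $P(\la)$ its top is $L(\la)$, so there is a surjection $E(\la)\to L(\la)$; the composition multiplicity $[E(\la):L(\mu)]$ equals $\dim_k\Hom_{\aA_r(\eE)}(P(\mu),E(\la))$, which vanishes unless $\la\preceq_{\Lambda}\mu$ by Proposition~\ref{prop_proj_in_thin} and equals $\dim_k\End E(\la)=1$ for $\mu=\la$ (cf.\ Proposition~\ref{prop_proj_in_thin}), so the kernel of $E(\la)\to L(\la)$ is filtered by simples $L(\mu)$ with $\la\prec_{\Lambda}\mu$, i.e.\ $\mu\prec_{\Lambda^{\opp}}\la$. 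Lemma~\ref{lem_delta_proj_in_subcat} then gives that $(\aA_r(\eE),\Lambda^{\opp})$ is an hw category with standard objects $\Delta(\la)=E(\la)=i_R(E(\la))$. Finally, $i_R$ being an exact embedding of a fully exact subcategory, $\eE$ is extension-closed in $\aA_r(\eE)$, and every object of $\eE$ has a thin filtration with graded factors among the $E(\la)$; hence $\fF(\Delta_{\Lambda^{\opp}})$, the extension closure of $\{E(\la)\}$ in $\aA_r(\eE)$, coincides with $\eE$, so $i_R$ induces $\fF(\Delta_{\Lambda^{\opp}})\simeq\eE$. Applying all of this to $\eE^{\opp}$ and passing to opposites yields the $\aA_l$-statement, with costandard objects $\nabla(\la)=i_L(E(\la))$ and $\fF(\nabla_{\Lambda})\simeq\eE$.

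The hard part will be the composition-factor bookkeeping for $E(\la)$ inside $\aA_r(\eE)$: one has to work in the module-category model to give "top", "radical" and the identity $[M:L(\mu)]=\dim_k\Hom(P(\mu),M)$ a meaning (algebraic closedness of $k$ enters here through $\End L(\mu)=k$), to plug in the $\Hom$-vanishing $\Hom(P(\mu),E(\la))=0$ for $\la\not\preceq_{\Lambda}\mu$ supplied by Proposition~\ref{prop_proj_in_thin}, and --- the likeliest place to make an error --- to keep the orientation of the poset straight: the relevant order comes out as $\Lambda^{\opp}$, not $\Lambda$, precisely because the deflation $P(\la)\to E(\la)$ has kernel in the lower piece $\tT_{I_{<\la}}$.
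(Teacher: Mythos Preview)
Your proposal is correct and follows essentially the same route as the paper: realise $\aA_r(\eE)$ as $\textrm{mod-}A$ via Corollary~\ref{cor_envel_for_thin}, identify the $P(\la)$ with the indecomposable projectives, and then verify $(st1')$ and $(st2')$ for the candidates $i_R(E(\la))$ using the $\Hom$-vanishings from Proposition~\ref{prop_proj_in_thin} before invoking Lemma~\ref{lem_delta_proj_in_subcat}. Your explicit reduction of the $\aA_l$-statement to the $\aA_r$-statement for $\eE^{\opp}$ via \eqref{rem_envs_for_opp} is a clean addition; the paper leaves this duality step implicit.
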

\begin{proof}
	We shall prove that $\aA_r(\eE)$ is hw with standard objects $\{\Delta(\la)\}$. 
	
	With Proposition \ref{prop_proj_in_thin} we have constructed indecomposable projective objects $P(\la)$ in $\eE$. By Corollary \ref{cor_envel_for_thin}, $\aA_r(\eE)\simeq \textrm{fp}(\pP) \simeq \textrm{mod-}A$, where $\pP = \textrm{add }\{P(\la)\}_{\la \in \Lambda}$ is a projectively generating subcategory and $A:=\End_{\eE}(\bigoplus_{\la \in \Lambda}P(\la))$. 
	
	Since the Yoneda embedding is fully faithful and $P(\la)$ and $P(\mu)$ are not isomorphic for $\la \neq \mu$ (see Proposition \ref{prop_proj_in_thin}), indecomposable projective $A$-modules $\{P(\la)\}_{\la \in \Lambda}$ are pairwise non-isomorphic. For $\la \in \Lambda$, let $L(\la)\in \aA_r(\eE)$ be the only irreducible $A$-module that allows a cover (i.e. a non-trivial morphism) $P(\la) \to L(\la)$. Then $\dim_k \Hom_{\aA_r(\eE)}(P(\la), L(\mu)) = \delta_{\la,\mu}$ and the number of irreducible factors of $\Delta(\la)$ isomorphic to $L(\mu)$ equals the dimension of $\Hom_{\aA_r(\eE)}(P(\mu), \Delta(\la))$. 

	We check that $\Delta(\la)$ verify ($st1^\prime$) and ($st2^\prime$).  By  Lemma \ref{lem_delta_proj_in_subcat} it follows that $\Delta(\la)$ is the standard object.
	
	Since functor $i_R$ is fully faithful and exact (see Theorem \ref{thm_right_env_if_proj_gen} and Lemma \ref{lem_P_has_weak_kernels}),  $\Hom_{\eE}(P(\mu), E(\la)) \simeq \Hom_{\aA_r(\eE)}(P(\mu), \Delta(\la))$ and Proposition \ref{prop_proj_in_thin} implies that the kernel of the surjective map $P(\la) \to \Delta(\la)$ admits a filtration with factors $\Delta(\mu)$, for $\mu\prec \la$. Hence, ($st2^\prime$) holds. 
	
	Now we calculate irreducible factors of $\Delta(\la)$. 
	If $\la \notin I_{\mu}$, then $E(\la) \in \fF_{I_\mu}$ and $\Hom(P(\mu), E(\la)) =0$, since $P(\mu)\in \tT_{I_{\mu}}$, by Proposition \ref{prop_proj_in_thin}. 
	Therefore, irreducible factor $L(\mu)$ may occur in $E(\lambda)$ only for $\la \preceq \mu$.
	
	Moreover, taking $\Hom_\eE (-,E(\lambda))$ out of the conflation $K(\la) \to P(\la) \to E(\la)$ with $K(\la) \in \tT_{I_{< \la}}$ (see Proposition \ref{prop_proj_in_thin}) implies that $\Hom_\eE(P(\la), E(\la)) \simeq \Hom(E(\la), E(\la)) = k$, i.e. $\Delta(\la)$ has one irreducible factor isomorphic to $L(\la)$.
	$\Hom(\Delta(\la), L(\mu)) \subset \Hom(P(\la), L(\mu))$ vanish for $\la \neq \mu$,
	hence $\Delta(\la)$ is the only possible irreducible quotient of $P(\la)$. Since the other irreducible factors are $L(\mu)$ with $\la\prec \mu$,
	($st1^\prime$) holds.
	
	By Theorem \ref{thm_right_env_if_proj_gen}, $i_R\colon \eE \to \aA_r(\eE)$ is an inclusion of fully exact subcategory. Since it maps irreducible objects in $\eE$ to standard objects in $\aA_r(\eE)$, $i_R$ induces an equivalence $\eE \simeq \fF(\Delta_{\Lambda^{\opp}})$.
\end{proof}

\begin{COR} (\textit{cf.} \cite[Lemma 7.1]{MenSan}, \cite{Krause})\label{cor_derived_equiv}
	Let $(\aA, \Lambda)$ be an hw category.  Then the embeddings  $\fF(\Delta_\Lambda) \to \aA$, $\fF(\nabla_\Lambda) \to \aA$ induce derived equivalences 
	\begin{equation}\label{eqtn_F_Delta_der_equiv}
	\dD^b(\fF(\Delta_\Lambda)) \simeq \dD^b(\aA) \simeq \dD^b(\fF(\nabla_\Lambda)).
	\end{equation} 
\end{COR}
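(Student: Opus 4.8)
The plan is to derive both equivalences from Theorem \ref{thm_bounded_derived_equi}, the bounded derived equivalence between a thin exact category and its abelian envelope, applied once to $\eE=\fF(\Delta_\Lambda)$ and once to $\fF(\nabla_\Lambda)^{\opp}$.

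First I would collect the hypotheses of Theorem \ref{thm_bounded_derived_equi} for $\eE=\fF(\Delta_\Lambda)$. By Proposition \ref{prop_F_Delta_thin} this is a thin category, hence weakly idempotent split by Lemma \ref{lem_thin_weakly_idem_split}. By Proposition \ref{prop_proj_in_thin} it has a projective generator $\bigoplus_{\la\in\Lambda}P(\la)$ with each $\End(P(\la))$ local, so $\pP:=\textrm{add}\,\{P(\la)\}_{\la\in\Lambda}$ is a projectively generating subcategory; it has weak kernels by Lemma \ref{lem_P_has_weak_kernels} (the algebra $\End(\bigoplus_{\la}P(\la))$ being finite dimensional, hence coherent), and it is idempotent split by Lemma \ref{lem_idempotent_split}, being Hom-finite and additively generated by finitely many objects with local endomorphism rings. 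By Proposition \ref{prop_highest_weuight_is_right_env} one has $\aA_r(\fF(\Delta_\Lambda))\simeq\aA$, and, since the proof of that proposition goes through Theorem \ref{thm_if_quotient_then_envelope}, the universal functor $i_R$ is identified with the inclusion $\fF(\Delta_\Lambda)\hookrightarrow\aA$. Finally, $\aA$ is the category of finite dimensional modules over a quasi-hereditary algebra, hence has finite global dimension \cite{CPS, DR}. Theorem \ref{thm_bounded_derived_equi} then yields that the inclusion induces an equivalence $\dD^b(\fF(\Delta_\Lambda))\simeq\dD^b(\aA)$.

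For $\fF(\nabla_\Lambda)$ I would pass to opposite categories. By \cite[Theorem 4.3]{ParSco} the algebra opposite to the above is again quasi-hereditary, so $(\aA^{\opp},\Lambda)$ is a highest weight category, and under the duality $\aA\simeq(\aA^{\opp})^{\opp}$ its subcategory of $\Delta$-filtered objects corresponds to $\fF(\nabla_\Lambda)^{\opp}$. Applying the previous paragraph to $(\aA^{\opp},\Lambda)$ gives an equivalence $\dD^b(\fF(\nabla_\Lambda)^{\opp})\simeq\dD^b(\aA^{\opp})$ induced by the inclusion; using $\dD^b(\cC^{\opp})\simeq\dD^b(\cC)^{\opp}$ for a weakly idempotent split exact category $\cC$ and dualising back, the inclusion $\fF(\nabla_\Lambda)\hookrightarrow\aA$ induces an equivalence $\dD^b(\fF(\nabla_\Lambda))\simeq\dD^b(\aA)$. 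Combined, the two equivalences are exactly (\ref{eqtn_F_Delta_der_equiv}).

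The only input not already assembled in the preceding sections is the classical fact that quasi-hereditary algebras have finite global dimension, which is precisely the remaining hypothesis of Theorem \ref{thm_bounded_derived_equi}; everything else is a bookkeeping of Propositions \ref{prop_F_Delta_thin}, \ref{prop_proj_in_thin}, \ref{prop_highest_weuight_is_right_env} and Lemmata \ref{lem_P_has_weak_kernels}, \ref{lem_idempotent_split}. I expect the mildly delicate points to be the verification of idempotent-splitness of $\pP$ and the identification of $i_R$ with the plain inclusion; there is no genuine obstacle.
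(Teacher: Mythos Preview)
Your proof is correct and follows essentially the same route as the paper: verify the hypotheses of Theorem \ref{thm_bounded_derived_equi} for the thin category $\fF(\Delta_\Lambda)$ using Propositions \ref{prop_F_Delta_thin}, \ref{prop_proj_in_thin}, \ref{prop_highest_weuight_is_right_env} and Lemmata \ref{lem_thin_weakly_idem_split}, \ref{lem_P_has_weak_kernels}, \ref{lem_idempotent_split}, together with finite global dimension of quasi-hereditary algebras. For the $\nabla$-side the paper simply invokes ``Theorem \ref{thm_bounded_derived_equi} and its dual'' (via the injectively generating subcategory $\iI\subset\fF(\nabla_\Lambda)$ and the left envelope), which is exactly your passage to opposite categories spelled out more tersely.
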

\begin{proof} 
Category $\aA$ has finite homological dimension \cite[Theorem 4.3]{ParSco}. It is the right abelian envelope for thin category $\fF(\Delta_\Lambda)$ (see Propositions \ref{prop_F_Delta_thin} and \ref{prop_highest_weuight_is_right_env}) and the left abelian envelope for $\fF(\nabla_\Lambda)$.
By Lemma \ref{lem_thin_weakly_idem_split}, categories $\fF(\Delta_\Lambda)$ and $\fF(\nabla_\Lambda)$ are weakly idempotent split. Projectively generating subcategory $\pP=\fF(\{P(\la)\}_{\la \in \Lambda})$ in $\fF(\Delta_\Lambda)$ and injectively generating subcategory $\iI = \fF(\{I(\la)\}_{\la \in \Lambda})$ in $\fF(\nabla_\Lambda)$ are generated by finitely many objects with local endomorphisms rings (see Proposition \ref{prop_proj_in_thin} and Remark \ref{rem_injective_in_thin}). 
  By Lemma \ref{lem_idempotent_split}, both $\pP$ and $\iI$ are idempotent split. By Lemma \ref{lem_P_has_weak_kernels}, category $\pP$ has weak kernels while $\iI$ has weak cokernels. 
The statement follows from Theorem \ref{thm_bounded_derived_equi} and its dual.
\end{proof} 

\vspace{0.3cm}
\subsection{An equivalence relation on highest weight structures}\label{ssec_equiv_on_hw_str}~\\

An abelian category $\aA$ can be hw for different partial orders on isomorphism classes of simples. We introduce the following equivalence relation on the hw structures on $\aA$:
\begin{align}\label{eqtn_equiv_hw_st}
&(\aA, \Lambda) \sim (\aA, \Lambda') & & \textrm{ if } \fF(\Delta_\Lambda) \textrm{ and } \fF(\Delta_{\Lambda'}) \textrm{ are equal subcategories of } \aA. &
\end{align}
\begin{EXM} 
The category $\aA$ of modules over a directed algebra has at least two equivalence classes of hw structures. Indeed, let $\{S_i\}_{i=1}^n\subset \aA$ be simple objects and $\{P_i\}_{i=1}^n$ their projective covers such that $\Hom(P_i,P_j) =0$, for $i<j$.  Then for the order $\Lambda:S_1\preceq S_2 \preceq \ldots \preceq S_n$, $\fF(\Delta_\Lambda)\subset \aA$ is the subcategory of projective objects in $\aA$. For the order $\Lambda': S_n \preceq S_{n-1} \preceq \ldots \preceq S_1$,  $\fF(\Delta_{\Lambda'})$ is $\aA$ itself.
\end{EXM}
By Proposition \ref{prop_F_Delta_thin}, the set $\Lambda$ of isomorphism classes of simple objects in an hw category $(\aA, \Lambda)$ is in bijection with the set of isomorphism classes of irreducible objects in  thin categories $\fF(\Delta_\Lambda)$ and $\fF(\nabla_\Lambda)$.
This allows us to compare the poset $\Lambda$ of $\aA$ with the canonical poset $\Lambda_{\Delta}$ of $\fF(\Delta_\Lambda)$, as in (\ref{def-poset}). 

\begin{PROP}
	Let $(\aA, \Lambda)$ be an hw category and $\Lambda_{\Delta}$ the canonical poset on the irreducible objects of the thin category $\fF(\Delta_{\Lambda})$. Then $\Lambda_{\Delta}^{\opp}$ is adapted.
\end{PROP}
\begin{proof}
	For $\la \in \Lambda$, let $P(\lambda)$ be an indecomposable projective cover of $\Delta(\la)$, as constructed in Proposition  \ref{prop_proj_in_thin}. We write $\la <_c \mu$, respectively $\la \leq_c \mu$, when $\la$ is less than, respectively less than or equal to, $\mu$ in the canonical order $\Lambda_{\Delta}$.
	
	By \cite[Lemma 2.10]{Rodriguez}, an order $(\Lambda, >)$  is adapted if and only if for any morphism $\f \colon P(\lambda) \to P(\mu)$, with $\lambda$ and $\mu$ not comparable, the image of $\f$ is contained in $\sum \textrm{im }(\psi\colon P(\nu) \to P(\mu))$, where the sum is taken over all $\nu > \mu$ and all $\psi$ in $\Hom_{\aA}(P(\nu), P(\mu))$ (this is a rephrasing of condition 1 of Lemma 2.10 in \textit{loc.cit.}).
	
	We prove by induction on $|\Lambda|$ that any $\f \colon P(\la) \to P(\mu)$, with $\la$ and $\mu$ not comparable in $\Lambda_{\Delta}$, decomposes as $\sum \psi_i \tau_i$, for some $\tau_i \colon P(\la) \to P(\nu_i)$, $\psi_i \colon P(\nu_i) \to P(\mu)$ and $\nu_i <_c \mu$. The case $|\Lambda| =1$ is clear.
	
	For the inductive step, let $\la_m \in \Lambda_{\Delta}$ be a minimal element and let $\f$ be an element of $\Hom_{\aA}(P(\la), P(\mu))$ with $\la$ and $\mu$ not comparable in the canonical order. We consider three cases: $\la = \la_m$, $\mu = \la_m$, and $\la, \mu \in \Lambda \setminus \{\la_m\}$. In the first case, $P(\la) = \Delta(\la) = \Delta(\la_m)$ is the standard object, see Proposition \ref{prop_proj_in_thin}. Moreover, $\Hom_{\aA}(P(\la), P(\mu)) =0$, as $P(\mu)$ admits a filtration with $\Delta(\nu)$ for $\nu \leq_c \mu$, again by Proposition \ref{prop_proj_in_thin}, and $\Hom_{\aA}(\Delta(\la_m), \Delta(\nu)) =0$ for any $\nu \neq \la_m$, see \eqref{def-poset}, where $E_i$'s must be replaced by $\Delta(\la)$'s. Hence, $\f =0$ has the required decomposition.
	
	If $\mu = \la_m$ and $\mu$ and $\la$ are not comparable in $\Lambda_{\Delta}$ then again $\Hom_{\aA}(P(\la), P(\mu)) =0$. Indeed, analogously as above, if $\Hom(P(\la), P(\mu)) =\Hom_{\aA}(P(\la), \Delta(\la_m))\neq 0$ then $\Hom(\Delta(\nu), \Delta(\la_m)) \neq 0$, for some $\nu \leq_c \la$. Hence, $\mu=\la_m \leq_c \nu \leq_c \la$. This contradiction implies that $\f$ is again the zero morphism, hence it has the required decomposition.
	
	It remains to consider the case $\la, \mu \in \Lambda \setminus\{\la_m\}$. By Proposition \ref{prop_F_stand_thin}, the category $\fF(\{\Delta(\la)\}_{\la \in \Lambda \setminus \{\la_m\}})$ is thin with irreducible objects $\{\Delta(\la)\}_{\la \in \Lambda \setminus \{\la_m\}}$. Note that, formula \eqref{def-poset} implies that the canonical order on the irreducible objects in $\fF(\{\Delta(\la)\}_{\la \in \Lambda \setminus \{\la_m\}})$ is induced by the embedding $\Lambda\setminus \{\la_m\} \subset \Lambda$.
	
%	Note that,  if $\nu$ is less than $\mu$ in the canonical order \eqref{def-poset} on the set $\Lambda\setminus \{\la_m\}$ of irreducible objects in $\fF(\{\Delta(\la)\}_{\la \in \Lambda \setminus \{\la_m\}})$ then $\nu<_c \mu$. Indeed, by \eqref{def-poset}, there are $(\nu_i)_{i=0}^n \in \Lambda\setminus \{\la_m\}$ with $\Delta(\nu_i) \notin \Delta(\nu_{i+1})^\perp$ and $\{\nu_0, \nu_n\} = \{\nu, \mu\}$. Then $\nu <_c \mu$ follows. Moreover, the above argument implies that  if $\la$ and $\mu$ are not comparable in $\Lambda_{\Delta}$ they are not comparable in the canonical order on $\Lambda \setminus \{\la_m\}$.

	The proof of Proposition \ref{prop_proj_in_thin} implies that $P(\la)$, respectively $P(\mu)$, is the universal extension \eqref{univ_extens} of $Q(\la)$, respectively $Q(\mu)$, by $\Delta(\la_m)$, for projective covers $Q(\la)$, $Q(\mu)$ of $\Delta(\la)$ and of $\Delta(\mu)$ in $\fF(\{\Delta(\la)\}_{\la \neq \la_m})$. In other words, $P(\la)$ and $P(\mu)$ fit into short exact sequences
	\begin{align*}
	 &0 \to \Delta(\la_m) \otimes V_{\la} \xrightarrow{i_{\la}} P(\la) \xrightarrow{d_{\la}} Q(\la) \to 0,&\\
	 &0 \to \Delta(\la_m) \otimes V_{\mu} \xrightarrow{i_{\mu}} P(\mu) \xrightarrow{d_{\mu}} Q(\mu) \to 0,&
	\end{align*}
	for vector spaces $V_{\la} = \Ext^1_{\aA}(Q(\la), \Delta(\la_m))^{\vee}$, $V_{\mu} = \Ext^1_{\aA}(Q(\mu), \Delta(\la_m))^{\vee}$.
	
	Since $\la_m \in \Lambda_\Delta$ is minimal, $\Delta(\la_m) = P(\la_m)$ and  $\Delta(\nu) \in \Delta(\la_m)^\perp$, for any $\nu \neq \la_m$, see \eqref{def-poset}. In particular, $\Hom_{\aA}(\Delta(\la_m), Q(\mu)) =0$, \textit{cf.} Proposition \ref{prop_proj_in_thin}. Applying functor $\Hom_{\aA}(-, Q(\mu))$ to the first short exact sequence implies that the composition with $d_{\la}$ is an isomorphism $\Hom_{\aA}(Q(\la), Q(\mu)) \xrightarrow{\simeq} \Hom_{\aA}(P(\la), Q(\mu))$. Hence, for $\f \in \Hom_{\aA}(P(\la), P(\mu))$ there exists a unique $\ol{\f}\in \Hom_{\aA}(Q(\la), Q(\mu))$ such that $\ol{\f} d_{\la} = d_{\mu} \f$. As $\la$ and $\mu$ are not comparable in the canonical order on $\Lambda \setminus \{\la_m\}$ (see above), the inductive assumption implies that $\ol{\f} = \sum\ol{\psi}_i \ol{\tau}_i$, for some $\ol{\psi}_i \in \Hom_{\aA}(Q(\nu_i), Q(\mu))$, $\ol{\tau_i} \in \Hom_{\aA}(Q(\la), Q(\nu_i))$ and $\nu_i<\mu$ in the canonical order on $\Lambda\setminus \{\la_m\}$. Since the orders are consistent, we also have $\nu_i <_c \mu$. 
	
	As  $P(\nu_i)$ are projective in $\aA$, $\ol{\psi}_i d_{\nu_i}$ admits a lift to $\psi_i \colon P(\nu_i) \to P(\mu)$, i.e. $\ol{\psi}_i d_{\nu_i} = d_{\mu}  \psi_i$. Analogously, we have $\tau_i \colon P(\la) \to P(\nu_i)$ such that $\ol{\tau}_i  d_{\la} = d_{\nu_i} \tau_i$. Then
	\begin{align*}
	d_{\mu} \sum \psi_i \tau_i = \sum \ol{\psi}_i d_{\nu_i} \tau_i = \sum \ol{\psi}_i \ol{\tau}_i d_{\la} = \ol{\f} d_{\la}.
	\end{align*}
	It follows that the difference $\f- \sum \psi_i \tau_i$ lies in the kernel $\Hom_{\aA}(P(\la), \Delta(\la_m) \otimes V_{\mu})$ of $d_\mu \circ(-) \colon \Hom_{\aA}(P(\la), P(\mu)) \to \Hom_{\aA}(P(\la), Q(\mu))$, i.e. it decomposes as $P(\la) \to \Delta(\la_m)\otimes V_{\mu} = P(\la_m) \otimes V_\mu\to P(\mu)$. If $\la_m <_c \mu$, this finishes the proof that $\f$ factors via $P(\nu_i) $ with $\nu_i <_c\mu$. If, on the other hand, $\la_m$ is not comparable to $\mu$, then $d_{\mu} \colon P(\mu) \xrightarrow{\simeq} Q(\mu)$ is an isomorphism, hence so is $d_{\mu} \circ(-)$. It follows that $\f = \sum \psi_i \tau_i$, as required.
	\end{proof}

\begin{PROP}\label{prop_can_poset_on_hw}
	Let $(\aA, \Lambda)$ be an hw category and
	$\Lambda_\Delta$, respectively $\Lambda_\nabla$, the canonical poset on the irreducible objects of the thin category $\fF(\Delta_\Lambda)$, respectively $\fF(\nabla_\Lambda)$. Then  $(\aA, \Lambda_\Delta^{\opp})$ is hw with the same  standard objects, i.e. $(\aA, \Lambda) \sim (\aA, \Lambda_\Delta^{\opp})$, and $(\aA, \Lambda_\nabla)$ is hw with the same costandard objects $\nabla(\la)$.
\end{PROP}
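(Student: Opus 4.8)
The plan is to deduce the statement directly from Theorem~\ref{thm_right_env_is_h_w} together with Proposition~\ref{prop_highest_weuight_is_right_env}; these two results carry all the substance, and what remains is only to match up the thin category and its canonical poset with the data appearing in the present statement.

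First I would set $\eE:=\fF(\Delta_\Lambda)$. By Proposition~\ref{prop_F_Delta_thin}, $\eE$ is a thin category whose set of irreducible objects is $\{\Delta(\la)\}_{\la\in\Lambda}$; consequently the canonical poset of $\eE$ in the sense of \eqref{def-poset} is precisely the poset denoted $\Lambda_\Delta$ in the statement. By Proposition~\ref{prop_highest_weuight_is_right_env}, $\aA$ equipped with the inclusion functor $i_R\colon\eE\hookrightarrow\aA$ is the right abelian envelope of $\eE$ --- here no transport along an abstract equivalence is needed, $i_R$ being literally the inclusion. Applying Theorem~\ref{thm_right_env_is_h_w} to the thin category $\eE$ with its canonical poset $\Lambda_\Delta$ then yields that $(\aA,\Lambda_\Delta^{\opp})$ is a highest weight category whose standard objects are $i_R(\Delta(\la))=\Delta(\la)$, i.e. exactly the standard objects of $(\aA,\Lambda)$. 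Since $\fF(\Delta_{\Lambda_\Delta^{\opp}})$ is by definition the extension closure of those objects, it coincides with $\fF(\Delta_\Lambda)$, and hence $(\aA,\Lambda)\sim(\aA,\Lambda_\Delta^{\opp})$ by \eqref{eqtn_equiv_hw_st}.

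The assertion about costandard objects follows by the dual run of the same argument. Put $\eE':=\fF(\nabla_\Lambda)$; by Proposition~\ref{prop_F_Delta_thin} this is a thin category with irreducibles $\{\nabla(\la)\}$, hence with canonical poset $\Lambda_\nabla$. By Proposition~\ref{prop_highest_weuight_is_right_env}, $\aA$ with the inclusion $i_L$ is the left abelian envelope of $\eE'$, and the $\aA_l$-part of Theorem~\ref{thm_right_env_is_h_w} gives that $(\aA,\Lambda_\nabla)$ is highest weight with costandard objects $i_L(\nabla(\la))=\nabla(\la)$, which are the costandard objects of $(\aA,\Lambda)$.

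The argument is essentially formal, so there is no serious obstacle; the one place that wants a moment's care is the first identification, namely that the canonical poset attached in Section~\ref{ssec_can_str_filt_on_thin} to the thin category $\fF(\Delta_\Lambda)$ really is what we call $\Lambda_\Delta$. This reduces to the fact, recorded in Proposition~\ref{prop_F_Delta_thin} (via Lemma~\ref{lem_irr_in_thin}), that the irreducible objects of $\fF(\Delta_\Lambda)$ are exactly the $\Delta(\la)$, so that the labelling of irreducibles by $\Lambda$ used in the statement agrees with the intrinsic one; everything else is a verbatim invocation of the cited results.
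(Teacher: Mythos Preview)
Your proposal is correct and follows essentially the same approach as the paper: invoke Proposition~\ref{prop_highest_weuight_is_right_env} to identify $\aA$ with $\aA_r(\fF(\Delta_\Lambda))$, then apply Theorem~\ref{thm_right_env_is_h_w} to the thin category $\fF(\Delta_\Lambda)$ with its canonical poset $\Lambda_\Delta$. The paper's proof is terser but the logic is identical; your extra care about matching up the irreducibles and the canonical poset is a reasonable elaboration.
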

\begin{proof}
	By Proposition \ref{prop_highest_weuight_is_right_env}, $\aA \simeq \aA_r(\fF(\Delta_\Lambda))$. By Theorem \ref{thm_right_env_is_h_w}, $(\aA, \Lambda_\Delta^{\opp})$ is an hw category with standard objects $\{\Delta(\lambda)\}_{\la \in \Lambda}$. Hence, $(\aA, \Lambda)\sim(\aA, \Lambda_{\Delta}^{\opp})$.
\end{proof}

\begin{PROP}\label{prop_compare_posets}
	Let $(\aA, \Lambda)$ be an hw category and 
	$\Lambda_\Delta$ the canonical poset structure on the irreducible objects of the thin category $\fF(\Delta_\Lambda)$. Then $\Lambda_\Delta^{\opp}$is  dominated by $\Lambda$.
	
\end{PROP}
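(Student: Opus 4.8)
The plan is to unwind definition (\ref{def-poset}) of the canonical poset for the thin category $\fF(\Delta_\Lambda)$ and compare its generating relations with the opposite order $\Lambda^{\opp}$, exploiting the asymmetric vanishing built into the notion of a $\Lambda$-standarizable collection. In fact the assertion is already contained in the proof of Proposition \ref{prop_F_stand_thin}, applied to the $\Lambda$-standarizable collection $\{\Delta(\la)\}_{\la\in\Lambda}$ in $\aA$ (recalled in Subsection \ref{ssec_from_hw_to_thin_and_back}, \emph{cf.} \cite{DR}): with $\fF_\Lambda=\fF(\Delta_\Lambda)$ in the notation there, that proof shows the opposite of the canonical poset of $\fF_\Lambda$ is dominated by $\Lambda$. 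So one option is simply to cite it; the other is to spell out the short self-contained argument below.

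First I would recall from Proposition \ref{prop_F_Delta_thin} that $\fF(\Delta_\Lambda)$ is thin with irreducible objects $\{\Delta(\la)\}_{\la\in\Lambda}$, so by (\ref{def-poset}) the poset $\Lambda_\Delta$ is the \emph{minimal} partial order on $\Lambda$ with the property that $\Delta(i)\notin\Delta(j)^{\perp}$ forces $i\preceq_{\Lambda_\Delta}j$ (here $\Hom$ and $\Ext^1$ computed in $\fF(\Delta_\Lambda)$ agree with those in $\aA$, since $\fF(\Delta_\Lambda)\subset\aA$ is a fully exact subcategory). Next I would verify that $\Lambda^{\opp}$ itself enjoys this closure property. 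If $\Delta(i)\notin\Delta(j)^{\perp}$, then $\Hom_\aA(\Delta(j),\Delta(i))\neq 0$ or $\Ext^1_\aA(\Delta(j),\Delta(i))\neq 0$; since $\{\Delta(\la)\}$ is $\Lambda$-standarizable, the case $\la=j$, $\mu=i$ of the defining vanishing rules out both $j\succ_\Lambda i$ and $j,i$ being $\Lambda$-incomparable, hence $j\preceq_\Lambda i$, i.e. $i\preceq_{\Lambda^{\opp}}j$. By minimality of $\Lambda_\Delta$ the relation $\preceq_{\Lambda_\Delta}$ is therefore contained in $\preceq_{\Lambda^{\opp}}$; equivalently, $\la\preceq_{\Lambda_\Delta^{\opp}}\mu$ implies $\la\preceq_\Lambda\mu$, which is exactly the claim that $\Lambda$ dominates $\Lambda_\Delta^{\opp}$.

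There is no genuine obstacle here; the only thing demanding care is the bookkeeping of directions — keeping straight the original order $\Lambda$, the canonical poset $\Lambda_\Delta$ of the thin category, and their opposites, and matching the asymmetry in $\Delta(i)\notin\Delta(j)^{\perp}$ (which concerns morphisms and extensions \emph{from} $\Delta(j)$ \emph{to} $\Delta(i)$) against the asymmetric vanishing $\Hom(\Delta(\la),\Delta(\mu))=\Ext^1(\Delta(\la),\Delta(\mu))=0$ for $\la\succ_\Lambda\mu$ in the definition of a $\Lambda$-standarizable collection.
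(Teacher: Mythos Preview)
Your proposal is correct and the underlying idea coincides with the paper's: both arguments reduce to the implication $\Delta(\mu)\notin\Delta(\la)^\perp\Rightarrow\la\preceq_\Lambda\mu$ and then invoke the minimality of the canonical poset $\Lambda_\Delta$.

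The only difference is in how that implication is obtained. You cite the fact, recorded earlier in Subsection~\ref{ssec_from_hw_to_thin_and_back} (with reference to \cite{DR}), that $\{\Delta(\la)\}_{\la\in\Lambda}$ is a $\Lambda$-standarizable collection, and read off the vanishing directly from that definition; you also correctly observe that the desired domination statement already appears verbatim in the proof of Proposition~\ref{prop_F_stand_thin}. The paper instead re-derives the vanishing on the spot from the structural conditions $(st1')$ and $(st2')$: $\Hom(\Delta(\la),\Delta(\mu))\neq 0$ forces $\la\preceq\mu$ because $L(\la)$ is the top of $\Delta(\la)$ and all composition factors of $\Delta(\mu)$ are $L(\nu)$ with $\nu\preceq\mu$; and $\Ext^1(\Delta(\la),\Delta(\mu))\neq 0$ forces $\Hom(K(\la),\Delta(\mu))\neq 0$ for the kernel $K(\la)$ of $P(\la)\to\Delta(\la)$, whose $\Delta$-filtration has factors $\Delta(\nu)$ with $\nu\succ\la$, yielding $\mu\succeq\nu\succ\la$. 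Your route is more economical; the paper's is more self-contained and in fact gives the slightly sharper statement that $\Ext^1(\Delta(\la),\Delta(\mu))\neq 0$ implies the \emph{strict} inequality $\la\prec\mu$, though this extra sharpness is not needed for the proposition.
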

\begin{proof}
	As simple factors $L(\la)$ of $\Delta(\mu)$ satisfy $\la\preceq \mu$ and $L(\la)$ is the maximal semi-simple quotient of $\Delta(\la)$, 
	$\Hom(\Delta(\la), \Delta(\mu))\neq 0$ implies $\la \preceq \mu$. If $\Ext^1(\Delta(\la), \Delta(\mu)) \neq 0$, then $\Hom(K(\la), \Delta(\mu)) \neq 0$, for the kernel $K(\la)$ of the epimorphism $P(\la) \to \Delta(\la)$ in ($st2^\prime$). Hence, $\Hom(\Delta(\nu), \Delta(\mu)) \neq 0$, for some $\nu \succ \la$, i.e. $\mu \succeq \nu \succ \la$. In other words,  
	\begin{equation}\label{eqtn_poset_on_std}
	\Delta(\mu) \notin \Delta(\la)^{\perp} \, \Rightarrow\, \la \preceq \mu.
	\end{equation} 

	In view of definition (\ref{def-poset}), this implies 
that $\Lambda_\Delta^{\opp}$ is dominated by $\Lambda$.
\end{proof}

\begin{PROP}\label{prop_hw_iff_dominated}
	Let $(\aA, \Lambda)$ be an hw category and 
	$\Lambda_\Delta$ the canonical poset on the irreducible objects of the thin category $\fF(\Delta_\Lambda)$. For another partial order $\Lambda'$, $(\aA, \Lambda')$ is an hw structure equivalent to $(\aA, \Lambda)$ if and only if $\Lambda_\Delta^{\opp}$ is dominated by $\Lambda'$.
\end{PROP}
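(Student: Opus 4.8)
The plan is to establish the equivalence by combining two ingredients already at hand: the characterization of the thin subcategory $\fF(\Delta_{\Lambda'})$ in terms of the torsion pairs coming from a thin filtration, and the fact (Proposition \ref{prop_can_poset_on_hw}) that $\fF(\Delta_\Lambda)$ depends only on the equivalence class of $(\aA,\Lambda)$, with canonical poset $\Lambda_\Delta$.

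\emph{($\Leftarrow$)} Assume $\Lambda_\Delta^{\opp}$ is dominated by $\Lambda'$. First I would note that the objects $\{\Delta(\la)\}_{\la\in\Lambda}$ form a $\Lambda_\Delta^{\opp}$-standarizable collection in $\aA$: this is exactly the content of (\ref{eqtn_poset_on_std}) in the proof of Proposition \ref{prop_compare_posets}, together with $\End_\aA(\Delta(\la))\simeq k$ from ($st1$). Domination then upgrades this to a $\Lambda'$-standarizable collection, since fewer $\Hom$/$\Ext^1$ vanishings are required for the coarser-incomparability order $\Lambda'$. By Proposition \ref{prop_F_stand_thin}, $\fF(\{\Delta(\la)\})=\fF(\Delta_\Lambda)$ is thin with irreducible objects $\Delta(\la)$ and $I\mapsto \fF_I$ is a strict left admissible $\iI(\Lambda')$-filtration on it. Next I would define, for each $\la$, the object $\ol\Delta'(\la)$ to be the standard object attached to $\Lambda'$, i.e. the maximal quotient of $P(\la)$ with composition factors $L(\mu)$, $\mu\preceq_{\Lambda'}\la$; I must show the $\ol\Delta'(\la)$ satisfy ($st1'$) and ($st2'$) for $\Lambda'$, so that $(\aA,\Lambda')$ is hw by Lemma \ref{lem_delta_proj_in_subcat}. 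For this, since $\aA\simeq \aA_r(\fF(\Delta_\Lambda))$ by Proposition \ref{prop_highest_weuight_is_right_env}, I can transport the problem through $i_R$: apply Theorem \ref{thm_right_env_is_h_w} to the thin category $\fF(\Delta_\Lambda)$ equipped with the poset $\Lambda'$ (a thin filtration for $\Lambda'$ exists because $\Lambda'$ dominates $\Lambda_\Delta^{\opp}$, and a lower ideal in $\Lambda'$ is a lower ideal in $\Lambda_\Delta^{\opp}$). That theorem produces an hw structure on $\aA_r(\fF(\Delta_\Lambda))\simeq\aA$ with standard objects $i_R(\Delta(\la))=\Delta(\la)$ and with $\fF(\Delta_{\Lambda'})\simeq \fF(\Delta_\Lambda)$ as subcategories of $\aA$; hence $(\aA,\Lambda')\sim(\aA,\Lambda)$.

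\emph{($\Rightarrow$)} Conversely, assume $(\aA,\Lambda')$ is hw and equivalent to $(\aA,\Lambda)$, so $\fF(\Delta_{\Lambda'})=\fF(\Delta_\Lambda)$ as subcategories of $\aA$. Applying Proposition \ref{prop_compare_posets} to the hw category $(\aA,\Lambda')$ gives that $(\Lambda'_{\Delta})^{\opp}$ is dominated by $\Lambda'$, where $\Lambda'_\Delta$ is the canonical poset of $\fF(\Delta_{\Lambda'})$. But $\fF(\Delta_{\Lambda'})=\fF(\Delta_\Lambda)$ as exact categories, and the canonical poset (\ref{def-poset}) is an invariant of the exact category structure together with its irreducibles; the identification of isomorphism classes of irreducibles with $\Lambda$ is compatible on both sides via the simple-top correspondence $\Delta(\la)\mapsto L(\la)$. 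Hence $\Lambda'_\Delta=\Lambda_\Delta$, and therefore $\Lambda_\Delta^{\opp}$ is dominated by $\Lambda'$, as required.

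The main obstacle I anticipate is the bookkeeping in the forward direction: one must be careful that the \emph{same} labelling set $\Lambda$ is used consistently for (i) the simples of $\aA$, (ii) the irreducibles of $\fF(\Delta_\Lambda)$, and (iii) the standard objects for \emph{both} orders $\Lambda$ and $\Lambda'$, and that passing a thin filtration for $\Lambda'$ on $\fF(\Delta_\Lambda)$ through $i_R$ genuinely recovers the $\Lambda'$-standard objects of $\aA$ (not merely some abstractly isomorphic collection). This is where Theorem \ref{thm_right_env_is_h_w} and Lemma \ref{lem_delta_proj_in_subcat} do the real work, and I would spell out the matching of labels explicitly rather than leaving it implicit. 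The remaining verifications — that domination preserves standarizability, that lower ideals transfer, and that ($st1'$), ($st2'$) hold — are routine once the framework is in place.
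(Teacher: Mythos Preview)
Your $(\Rightarrow)$ direction is exactly the paper's argument: the canonical poset is an invariant of the exact category $\fF(\Delta_\Lambda)=\fF(\Delta_{\Lambda'})$, so Proposition~\ref{prop_compare_posets} applied to $(\aA,\Lambda')$ gives the domination.

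For $(\Leftarrow)$ your plan is correct in spirit but takes a substantial detour, and one step is not justified as written. You propose to ``apply Theorem~\ref{thm_right_env_is_h_w} to the thin category $\fF(\Delta_\Lambda)$ equipped with the poset $\Lambda'$''. That theorem, however, does not take a poset as input: it takes a thin category $\eE$ and outputs an hw structure for the \emph{canonical} poset $\Lambda_\Delta^{\opp}$ only. Your parenthetical about the existence of a thin filtration for $\Lambda'$ does not help, since the theorem's statement and proof are phrased entirely in terms of the canonical order. So as stated you only recover $(\aA,\Lambda_\Delta^{\opp})$ hw with standard objects $\Delta(\la)$ --- which is just Proposition~\ref{prop_can_poset_on_hw} again.

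The paper's argument is the two-line fix: by Proposition~\ref{prop_can_poset_on_hw} the objects $\Delta(\la)$ satisfy ($st1'$) and ($st2'$) for the order $\Lambda_\Delta^{\opp}$; since $\Lambda'$ dominates $\Lambda_\Delta^{\opp}$, every inequality $\mu\prec\la$ appearing in those conditions also holds in $\Lambda'$, so ($st1'$) and ($st2'$) hold verbatim for $\Lambda'$; then Lemma~\ref{lem_delta_proj_in_subcat} concludes. Your introduction of the $\Lambda'$-standarizable collection, Proposition~\ref{prop_F_stand_thin}, the a priori objects $\ol\Delta'(\la)$, and the passage through $i_R$ are all unnecessary --- the monotonicity of ($st1'$), ($st2'$) under domination is the only missing observation.
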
 
\begin{proof}
	Since $\Lambda_\Delta$ depends only on the  thin category $\fF(\Delta_\Lambda)$, Proposition \ref{prop_compare_posets} implies that if $(\aA, \Lambda) \sim (\aA, \Lambda')$ then $\Lambda_\Delta^{\opp}$ is dominated by $\Lambda'$.
	
	By Proposition \ref{prop_can_poset_on_hw}, $\{\Delta(\la)\}_{\la \in \Lambda}$ are the standard objects for $(\aA, \Lambda_\Delta^{\opp})$. In particular, they satisfy ($st1^\prime$) and ($st2^\prime$). If $\Lambda_\Delta^{\opp}$ is dominated by $\Lambda'$, then  $\{\Delta(\la)\}_{\la \in \Lambda}$ satisfy ($st1^\prime$) and ($st2^\prime$) for $\Lambda'$. Hence, $(\aA, \Lambda')$ is hw with standard objects $\{\Delta(\la)\}_{\la \in \Lambda}$, by Lemma \ref{lem_delta_proj_in_subcat}, i.e.  $(\aA, \Lambda) \sim (\aA, \Lambda')$.
\end{proof}

\vspace{0.3cm}
\subsection{Characterization of a thin category in its right envelope}\label{ssec_char_of_thin_in_env}~\\

Let $\eE$ be a thin  category and $\Lambda$ its canonical poset. 
A lower ideal $I\subset \Lambda$ yields a perpendicular torsion pair $(\tT_I, \fF_I)$ on $\eE$ (see Proposition \ref{prop_strict_filtr_on_thin}). Category $\fF_I$ is thin, hence $\aA_r(\fF_I)$ exists and is a Deligne finite category (see Corollary \ref{cor_envel_for_thin}). It follows from Propositions \ref{prop_envelope_becomes_Serre_subcat} and \ref{prop_biloc_in_Df_n} that $\aA_r(\fF_I) \subset \aA_r(\eE)$ is a bi-localising subcategory.

The category $(\aA_r(\eE), \Lambda^{\opp})$ is hw (see Theorem \ref{thm_right_env_is_h_w}). In particular, the elements of $\Lambda$ are in bijection with isomorphism classes of simple objects in $\aA_r(\eE)$.

\begin{LEM}\label{lem_env_as_Serre_on_simple}
	The category $\aA_r(\fF_I)\subset \aA_r(\eE)$ is the full subcategory of objects whose simple factors are $L(\la)$, for $\la \in \Lambda \setminus I$.
\end{LEM}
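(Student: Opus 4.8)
We want to identify $\aA_r(\fF_I)$, viewed inside $\aA_r(\eE)$ via the fully faithful functor ${i_I}_* = \aA_r(\defl_*)$ from Proposition \ref{prop_envelope_becomes_Serre_subcat}, with the Serre subcategory $\aA_r(\eE)\langle L(\la) : \la \notin I\rangle$ generated by the simples $L(\la)$ with $\la \in \Lambda \setminus I$. Since $\aA_r(\eE)$ is a finite length category (it is Deligne finite by Corollary \ref{cor_envel_for_thin} and Lemma \ref{lem_fp_as_modules}) and $\aA_r(\fF_I) \subset \aA_r(\eE)$ is a Serre subcategory (Proposition \ref{prop_envelope_becomes_Serre_subcat}, together with Proposition \ref{prop_biloc_in_Df_n}), Lemma \ref{lem_biloc_in_fin_len} tells us that $\aA_r(\fF_I)$ is determined by which simple objects it contains. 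So the whole statement reduces to the claim: for $\la \in \Lambda$, the simple object $L(\la)$ lies in $\aA_r(\fF_I)$ if and only if $\la \notin I$.

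**Key steps.** First I would recall from Theorem \ref{thm_right_env_is_h_w} that $(\aA_r(\eE), \Lambda^{\opp})$ is highest weight with standard objects $\Delta(\la) = i_R(E(\la))$, and that the simple $L(\la)$ is the head of $P(\la)$, where $P(\la)$ is the projective constructed in Proposition \ref{prop_proj_in_thin}; moreover $i_R$ restricts to an equivalence $\eE \simeq \fF(\Delta_{\Lambda^{\opp}})$. Now take $\la \notin I$. Then $E(\la) \in \fF_I$ by definition of $\fF_I = \fF(\{E(\mu)\}_{\mu \notin I})$, so $\Delta(\la) = i_R(E(\la))$ lies in $\aA_r(\fF_I)$ (which is a fully exact, in fact Serre, subcategory). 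Since $L(\la)$ is a quotient of $\Delta(\la)$ (this is condition $(st1')$) and $\aA_r(\fF_I)$ is Serre, $L(\la) \in \aA_r(\fF_I)$. Conversely, suppose $\la \in I$. I would use the exact quotient functor $\aA_r(\infl^!) \colon \aA_r(\eE) \to \aA_r(\tT_I)$ whose kernel is exactly $\aA_r(\fF_I)$ (Proposition \ref{prop_envelope_becomes_Serre_subcat}). It suffices to show $\aA_r(\infl^!)(L(\la)) \neq 0$. For this, note $\aA_r(\tT_I)$ is itself the right envelope of the thin category $\tT_I$, with its own projectives; the composite $\aA_r(\infl^!) \circ i_R \simeq i_R^{\tT_I} \circ \infl^!$, and applying $\infl^!$ to the deflation $P(\la) \to E(\la)$ in $\eE$ together with the fact (Proposition \ref{prop_proj_in_thin}) that the kernel of $P(\la) \to E(\la)$ lies in $\tT_{I_{<\la}} \subset \tT_I$ shows that $\infl^!(P(\la)) \to \infl^!(E(\la))$ is still a deflation of nonzero objects in $\tT_I$; hence $\aA_r(\infl^!)(\Delta(\la)) \neq 0$, and since $\aA_r(\infl^!)$ is exact and $L(\la)$ is the unique simple quotient of $\Delta(\la)$, one checks the head survives, so $\aA_r(\infl^!)(L(\la)) \neq 0$, i.e. $L(\la) \notin \aA_r(\fF_I)$.

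**Main obstacle.** The delicate point is the converse direction: showing $\aA_r(\infl^!)(L(\la)) \neq 0$ for $\la \in I$, rather than merely $\aA_r(\infl^!)(\Delta(\la)) \neq 0$. One cannot conclude this purely formally since $\aA_r(\infl^!)$, though exact, can kill a quotient of something it does not kill. The clean fix is to identify $L(\la)$ inside $\aA_r(\tT_I)$: under the hw structure on $\aA_r(\tT_I)$ coming from Theorem \ref{thm_right_env_is_h_w} applied to $\tT_I$ (with canonical poset the restriction of $\Lambda$ to $\Lambda \setminus I^c$... more precisely to the irreducibles $E(\mu)$, $\mu \in I$), the simple $L(\la)$ of $\aA_r(\eE)$ for $\la \in I$ should restrict to the simple $L_I(\la)$ of $\aA_r(\tT_I)$; this follows because the quotient functor $\aA_r(\infl^!)$ sends $\Delta(\la) \mapsto \Delta_I(\la)$ (standard object of $\aA_r(\tT_I)$, since $\infl^!(E(\la)) = E(\la)$ for $\la \in I$ as $E(\la) \in \tT_I$), and a quotient functor between finite length categories with matching standard objects sends simple heads to simple heads whenever the source simple is not killed. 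Once $L(\la)$ for $\la \in I$ is shown to be a simple object of the quotient, it is in particular nonzero there, completing the argument. I would keep this identification of simples under the recollement quotient functor as the technical heart of the proof, invoking Lemma \ref{lem_biloc_in_fin_len} and the structure of bi-localising subcategories of Deligne finite categories (Proposition \ref{prop_biloc_in_Df_n}) to organize it.
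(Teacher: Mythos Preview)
Your reduction via Lemma \ref{lem_biloc_in_fin_len} and your forward direction ($\la\notin I \Rightarrow L(\la)\in\aA_r(\fF_I)$) are correct and essentially identical to what the paper does. The difference lies entirely in the converse, and here the paper's route is both shorter and avoids the obstacle you identify.

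The paper does not try to show $\aA_r(\infl^!)(L(\la))\neq 0$ for $\la\in I$ at all. Instead it argues by counting: $\fF_I$ is thin with set of irreducibles $\{E(\mu)\}_{\mu\notin I}$, so by Theorem \ref{thm_right_env_is_h_w} the category $\aA_r(\fF_I)$ is highest weight with poset $(\Lambda\setminus I)^{\opp}$ and therefore has exactly $|\Lambda\setminus I|$ isomorphism classes of simple objects. Since the forward direction already produced $|\Lambda\setminus I|$ pairwise non-isomorphic simples $L(\la)$, $\la\notin I$, in $\aA_r(\fF_I)$ (they are simple there because a Serre subcategory is closed under subobjects and quotients), these exhaust the simples of $\aA_r(\fF_I)$. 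By Lemma \ref{lem_biloc_in_fin_len} this determines the Serre subcategory completely.

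Your proposed fix for the converse has a genuine gap. You write that ``the head survives'' and that ``a quotient functor \ldots\ sends simple heads to simple heads whenever the source simple is not killed'', but ``not killed'' is precisely the assertion you are trying to prove, so the reasoning is circular. Knowing $\aA_r(\infl^!)(\Delta(\la))=\Delta_I(\la)\neq 0$ does not by itself prevent the exact functor $\aA_r(\infl^!)$ from annihilating the simple top $L(\la)$; a priori the surjection $\Delta_I(\la)\to \aA_r(\infl^!)(L(\la))$ could have zero target. One clean way to rescue your direct approach is to note that $P(\la)\in\tT_I$ for $\la\in I$ (Proposition \ref{prop_proj_in_thin}), so $\Hom_{\aA_r(\eE)}(P(\la),F)=\Hom_{\eE}(P(\la),F)=0$ for every $F\in\fF_I$; since $P(\la)$ is projective and every object of $\aA_r(\fF_I)$ is a quotient of some $F\in\fF_I$, this forces $\Hom(P(\la),\aA_r(\fF_I))=0$, contradicting $\Hom(P(\la),L(\la))=k$ if $L(\la)\in\aA_r(\fF_I)$. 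But the paper's counting argument is shorter still.
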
 
\begin{proof}
	Condition ($st1^\prime$) implies that, for any $\la \in \Lambda \setminus I$, $L(\la)$ is a quotient of an object in $\aA_r(\fF_I)$, hence $L(\la) \in \aA_r(\fF_I)$. As $\aA_r(\fF_I) \subset \aA_r(\eE)$ is a subcategory, for any $\la \in \Lambda \setminus I$,  $L(\la) \in \aA_r(\fF_I)$ is simple and, for distinct $\la, \mu \in \Lambda \setminus I$, objects $L(\la)$ and $L(\mu)$ are not isomorphic.
	
	On the other hand, $(\aA_r(\fF_I), (\Lambda \setminus I)^{\opp})$ is an hw category (see Theorem \ref{thm_right_env_is_h_w}). In particular, there are $|\Lambda \setminus I|$ non-isomorphic simple objects in $\aA_r(\fF_I)$. Hence, $\{L(\la)\}_{\la \in \Lambda \setminus I}$ is the set of isomorphism classes of simple objects in $\aA_r(\fF_I)$. 
\end{proof}

For $\la \in \Lambda$, the complement $\Lambda \setminus I_{\la}$ of the principal ideal $I_{\la} \subset \Lambda^{\opp}$ is a lower ideal in $\Lambda$. We denote by $(\tT_\la, \fF_\la)$ the corresponding perpendicular torsion pair (see Proposition \ref{prop_strict_filtr_on_thin}) and by ${\infl_{\la}}_*\colon \tT_\la \to \eE$, ${\defl_\la}_*\colon \fF_\la \to \eE$ the inclusion functors.

Denote ${\alpha_{\la}}_* = \aA_r({\defl_\la}_*) \colon \aA_r(\fF_\la) \to \aA_r(\eE)$, ${\beta_\la}_!=\aA_r({\infl_{\la}}_*)\colon \aA_r(\tT_\la) \to \aA_r(\eE)$.
By Proposition \ref{prop_envelope_becomes_Serre_subcat}, $\aA_r(\fF_\la) \subset \aA_r(\eE)$ is a Serre subcategory, $\aA_r(\eE)/\aA_r(\fF_\la) \simeq \aA_r(\tT_\la)$, and, by Lemma \ref{lem_enveloping_adjointness}, the right adjoint $\beta_\la^*$ to ${\beta_{\la}}_!$ is the quotient functor $\aA_r(\eE)\to \aA(\tT_\la)$. Note that, by Lemma \ref{lem_env_as_Serre_on_simple}, the set of isomorphism classes of simple objects in $\aA_r(\fF_\la)$ is $\{L(\mu)\}_{\mu \in I_{\la}}$. 

\begin{THM}\label{thm_thin_in_its_envelope}
	Consider a thin category $\eE$ with the canonical poset $\Lambda$. Then an object $A$ in $\aA:=\aA_r(\eE)$ is in the subcategory $\eE\subset \aA_r(\eE)$  if and only if, for any $\la \in \Lambda$, the adjunction counit $\varepsilon_{\la} \colon {\beta_{\la}}_! \beta_{\la}^* A\to A$ is a monomorphism.
\end{THM}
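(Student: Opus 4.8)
The plan is to deduce the theorem from the characterisation of $\eE$ inside $\aA:=\aA_r(\eE)$ given in Proposition \ref{prop_E_in_A_r}, combined with an induction on $|\Lambda|$. First I record that $i_R\colon\eE\to\aA$ is fully faithful (Theorem \ref{thm_right_env_if_proj_gen}), that for every lower ideal $I\subset\Lambda$ the subcategories $\tT_I$ and $\fF_I$ are again thin (so $\aA_r(\tT_I)$, $\aA_r(\fF_I)$ exist by Corollary \ref{cor_envel_for_thin}), and that under the identifications $\beta_\la^*=\aA_r({\infl_\la}^!)$, ${\beta_\la}_!=\aA_r({\infl_\la}_*)$ the morphism $\varepsilon_\la$ is exactly the counit occurring in condition $(3)$ of Proposition \ref{prop_E_in_A_r} for the perpendicular torsion pair $(\tT_\la,\fF_\la)$. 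Hence the ``only if'' direction is immediate: if $A\cong i_R(E)$ then, for each $\la$, all three conditions of Proposition \ref{prop_E_in_A_r} hold for $(\tT_\la,\fF_\la)$, and condition $(3)$ says precisely that $\varepsilon_\la\colon{\beta_\la}_!\beta_\la^*A\to A$ is a monomorphism.

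For the ``if'' direction I induct on $|\Lambda|$, the case $|\Lambda|=1$ being trivial since then $\aA=\eE$. Assume the statement for thin categories with strictly smaller poset, and fix a \emph{maximal} element $\mu_0\in\Lambda$; the complement $\Lambda\setminus I_{\mu_0}=\Lambda\setminus\{\mu_0\}$ is a lower ideal, yielding a perpendicular torsion pair $(\tT_{\mu_0},\fF_{\mu_0})$ with $\fF_{\mu_0}\simeq k\textrm{-vect}$ and $\aA_r(\fF_{\mu_0})\simeq\langle L(\mu_0)\rangle$ (Lemma \ref{lem_env_as_Serre_on_simple}), while $\tT_{\mu_0}=\fF(\{E(\mu)\}_{\mu\ne\mu_0})$ is thin with $|\Lambda|-1$ irreducibles, whose canonical poset is the restriction of that of $\Lambda$ to $\Lambda\setminus\{\mu_0\}$ (no generating relation of the canonical poset passes \emph{up} through the maximal element $\mu_0$, so removing it adds no new relations). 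Applying Proposition \ref{prop_E_in_A_r} to $(\tT_{\mu_0},\fF_{\mu_0})$ and the given $A$: condition $(1)$ is automatic since $\fF_{\mu_0}$ is abelian, and condition $(3)$ is ``$\varepsilon_{\mu_0}$ is a monomorphism'', part of the hypothesis; so $A\in\eE$ if and only if $\beta_{\mu_0}^*A$ lies in $i_R(\tT_{\mu_0})$. By the inductive hypothesis applied to $\tT_{\mu_0}$, this holds if and only if, for every $\sigma\in\Lambda\setminus\{\mu_0\}$, the counit $\varepsilon'_\sigma$ of $\tT_{\mu_0}$ at $\beta_{\mu_0}^*A$ is a monomorphism; it therefore remains to derive this from ``$\varepsilon_\la$ mono for all $\la\in\Lambda$''.

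The mechanism is the behaviour of the $\beta$-functors under the quotient $\beta_{\mu_0}^*\colon\aA\to\aA_r(\tT_{\mu_0})$. Fixing $\sigma$, the composite $(\beta'_\sigma)^*\circ\beta_{\mu_0}^*$ is the Serre quotient of $\aA$ by the Serre subcategory $\sS$ generated by $\aA_r(\fF_\sigma)$ and $\langle L(\mu_0)\rangle$ (identify these on simples via Lemma \ref{lem_env_as_Serre_on_simple}, using that iterated Serre quotients compose); moreover $\sS$ is again of the form $\aA_r(\fF_J)$ for the lower ideal $J=(\Lambda\setminus I_\sigma)\setminus\{\mu_0\}$, so Lemma \ref{lem_kern_of_counit} applies to it. The left adjoint of that quotient is ${\beta_{\mu_0}}_!\circ{\beta'_\sigma}_!$, and the counit of the composite adjunction at $A$ factors as $\varepsilon_{\mu_0}\circ{\beta_{\mu_0}}_!(\varepsilon'_\sigma)$. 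Since ${\beta_{\mu_0}}_!=\aA_r({\infl_{\mu_0}}_*)$ is fully faithful (the left adjoint of the quotient onto a colocalising subcategory), it is faithful and hence reflects monomorphisms, and $\varepsilon_{\mu_0}$ is a monomorphism; so it suffices to show the counit of the quotient by $\sS$ at $A$ is a monomorphism, equivalently (Lemma \ref{lem_kern_of_counit}) that $L^1$ of the reflection of $\aA$ onto $\sS$ vanishes at $A$. If $\mu_0\succeq\sigma$ then $\sS=\aA_r(\fF_\sigma)$ and this is exactly ``$\varepsilon_\sigma$ mono''. Otherwise $\mu_0$ is incomparable to every $\mu$ with $\mu\succeq\sigma$; since in the highest weight category $(\aA,\Lambda^{\opp})$ a non-vanishing $\Ext^1$ between simple objects forces comparability of the weights (an elementary consequence of $(st1^\prime)$ and $(st2^\prime)$, obtained by chasing the defining exact sequences of $L(\la)$, $\Delta(\la)$, $P(\la)$), the blocks $\langle L(\mu_0)\rangle$ and $\aA_r(\fF_\sigma)$ are $\Hom$- and $\Ext^1$-orthogonal, $\sS\simeq\aA_r(\fF_\sigma)\times\langle L(\mu_0)\rangle$, and the reflection onto $\sS$ together with all its left derived functors splits as the direct sum of the reflections onto the two factors; the $L^1$'s of the latter vanish at $A$ by the hypotheses on $\varepsilon_\sigma$ and $\varepsilon_{\mu_0}$ (via Lemma \ref{lem_kern_of_counit} again), so $L^1$ of the reflection onto $\sS$ vanishes at $A$ and we are done.

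The hard part will be everything in the previous paragraph, namely controlling the counit $\varepsilon'_\sigma$ for the sub-quotient envelope $\aA_r(\tT_{\mu_0})$ in terms of the counits $\varepsilon_\la$ for $\aA$ itself. Concretely, what needs to be made careful is: (i) the identification of $(\beta'_\sigma)^*\circ\beta_{\mu_0}^*$ with a single Serre quotient $\aA\to\aA/\aA_r(\fF_J)$ and the resulting factorisation of counits; (ii) the $\Ext^1$-orthogonality of the two blocks, which exhibits $\sS$ as an orthogonal product and so forces derived reflections to decompose; and (iii) the (harmless but needed) verification that the canonical poset of $\tT_{\mu_0}$ is the restriction of that of $\Lambda$, so that the $\varepsilon'_\sigma$ are genuinely indexed by $\Lambda\setminus\{\mu_0\}$. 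By contrast, the forward direction, the base of the induction, and the reduction via Proposition \ref{prop_E_in_A_r} are purely formal.
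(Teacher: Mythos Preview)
Your proof is correct and follows the same inductive skeleton as the paper's: pick a maximal element $\mu_0$, reduce via Proposition~\ref{prop_E_in_A_r} to showing $\beta_{\mu_0}^*A\in\tT_{\mu_0}$, and verify the inductive hypothesis for each $\sigma\in\Lambda\setminus\{\mu_0\}$ by distinguishing the cases $\mu_0\succ\sigma$ and $\mu_0$ incomparable to $\sigma$, using Lemma~\ref{lem_kern_of_counit} in the latter. Two technical steps are handled differently. To deduce that $\varepsilon'_\sigma$ is a monomorphism from the fact that ${\beta_{\mu_0}}_!(\varepsilon'_\sigma)$ is, you use that a faithful additive functor reflects monomorphisms; this is simpler than the paper's appeal to Lemma~\ref{lem_monomorphism_in_the_quotient}. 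In the incomparable case, you obtain the product decomposition $\sS\simeq\langle L(\mu_0)\rangle\times\aA_r(\fF_\sigma)$ via $\Ext^1$-orthogonality of the relevant simples in $\aA$ --- and your claim that $\Ext^1_\aA(L(\la),L(\mu))\ne 0$ forces comparability in the hw poset does follow from $(st1')$ and $(st2')$, by chasing the $\Delta$-filtration of $\rad P(\la)$ --- whereas the paper deduces the decomposition from the mutual perpendicularity of $E(\mu_0)$ and $\{E(\nu)\}_{\nu\succeq\sigma}$ in the thin category $\eE$, which is immediate from the definition of the canonical poset, and then passes to envelopes.
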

\begin{proof}
	Since $(\tT_\la, \fF_\la)$ is a perpendicular torsion pair in $\eE$, Proposition \ref{prop_E_in_A_r} implies that if $A\in \eE$, then ${\beta_\la}_! \beta_{\la}^*A\xrightarrow{\varepsilon_{\la}} A$ is a monomorphism.
	
	We prove the inverse by induction on the number $|\Lambda|$ of isomorphism classes of irreducible objects in $\eE$. The case $|\Lambda |=1$ is clear.
	
	Consider a maximal $\la_0 \in \Lambda$. Category $\aA':=\aA_r(\tT_{\la_0})$ is hw with poset $\Lambda':=\Lambda\setminus \{\la_0\}$. It follows from the definition (\ref{def-poset}) that $\Lambda'$ is the canonical poset of $\tT_{\la_0}$.
	
	Take $A\in \aA$ such that $\varepsilon_\la$ is mono on $A$, for all $\la \in \Lambda$. We check that $A\in \eE$. 
	Proposition \ref{prop_E_in_A_r} implies that $A\in \eE$ if and only if $\alpha_{\la_0}^*A \in \fF_{\la_0}$, $\beta_{\la_0}^*A \in \tT_{\la_0}$ and the adjunction counit ${\beta_{\la_0}}_! \beta_{\la_0}^*A \xrightarrow{\varepsilon_{\la_0}} A$ is mono. The first condition is obvious as $\fF_{\la_0} \simeq k\textrm{-vect} \simeq \aA_r(\fF_{\la_0})$ and the third condition holds by the assumption.
	Therefore, it suffices to check that $\beta_{\la_0}^*A \in \tT_{\la_0}$.
	
	By inductive hypothesis it suffices to check that, for any $\mu \in \Lambda'$, the adjunction counit ${\beta'_{\mu}}_! {\beta'_{\mu}}^* \beta_{\la}^*A \xrightarrow{\varepsilon'_{\mu}} \beta_{\la}^*A$ is mono. Here, for the principal ideal $I'_{\mu}\subset (\Lambda')^{\opp}$ generated by $\mu$, we denote by $\tT'_\mu:=\tT_{\Lambda'\setminus I'_\mu}$ the right admissible subcategory of $\tT_{\la_0}$, by  ${\beta'_{\mu}}_! \colon \aA_r(\tT'_\mu) \to \aA'$ the inclusion and by ${\beta'_{\mu}}^* $ its right adjoint.
	
	If $\la_0 \succ \mu$, then $\Lambda \setminus I_{\mu} = \Lambda' \setminus I'_{\mu}$, hence $\tT'_{\mu} \simeq \tT_{\mu}$. The composite $\aA_r(\tT'_\mu) \xrightarrow{{\beta'_{\mu}}_!} \aA' \xrightarrow{{\beta_{\la}}_!}\aA$ is ${\beta_{\mu}}_!$. The ${\beta_{\mu}}_!\dashv \beta_{\mu}^*$ adjunction counit $\varepsilon_{\mu}$ decomposes as ${\beta_{\la_0}}_!{\beta'_{\mu}}_!{\beta'_\mu}^*\beta_{\la_0}^*A \xrightarrow{{\beta_{\la_0}}_! \varepsilon'_{\mu}} {\beta_{\la_0}}_!\beta_{\la_0}^*A \xrightarrow{\varepsilon_{\la_0}}A$. Since $\varepsilon_{\mu}$ is a monomorphism when applied to $A$, so is ${\beta_{\la_0}}_!\varepsilon'_\mu$ when applied to $\beta_{\la_0}^*A$.
	
	If $\la_0$ and $\mu$ are not comparable, then $\tT'_\mu = \tT_{\Lambda \setminus(I_\mu \cup \{\la_0\})}$.  The inclusion ${\beta_{\la_0\mu}}_!$ of the subcategory $\aA_r(\tT'_\mu)\subset \aA$ decomposes as $\aA_r(\tT'_\mu) \xrightarrow{{\beta'_\mu}_!} \aA' \xrightarrow{{\beta_{\la_0}}_!}\aA$. The category $\fF_{\la_0\mu}:=\fF_{\Lambda\setminus(I_\mu \cup \{\la_0\})}$ is a direct sum of $\fF_{\la_0}$ and $\fF_\mu$. Hence, $\aA_r(\fF_{\la_0\mu})\simeq \aA_r(\fF_{\la_0})\oplus \aA_r(\fF_\mu)$ and, in the recollement
	$$
	\aA_r(\fF_{\la_0\mu}) \xrightarrow{{\alpha_{\la_0\mu}}_*} \aA \xrightarrow{{\beta_{\la_0\mu}}^*} \aA_r(\tT'_\mu)
	$$
	functor ${\alpha_{\la_0\mu}}_*$ is a direct sum of ${\alpha_{\la_0}}_*\colon \aA_r(\fF_{\la_0}) \to \aA$ and ${\alpha_\mu}_*\colon \aA_r(\fF_\mu) \to \aA$. It follows that the derived functor $L\alpha_{\la_0\mu}^*$ is a direct sum of $L\alpha_{\la_0}^*$ and $L\alpha_\mu^*$. In view of Lemma \ref{lem_kern_of_counit}, the injectivity of $\varepsilon_{\la_0}$ and $\varepsilon_{\mu}$ applied to $A$ implies $L^1\alpha_{\la_0}^*A = 0=L^1\alpha_\mu^*A$. Hence, $L^1\alpha_{\la_0\mu}^*A =0 $ and ${\beta_{\la_0\mu}}_!\beta_{\la_0\mu}^*A \to A$ is a monomorphism (see Lemma \ref{lem_kern_of_counit}). As above, the adjunction counit $\varepsilon_{\la_0\mu}$ decomposes as ${\beta_{\la_0}}_!{\beta'_\mu}_!{\beta'_\mu}^*\beta_{\la_0}^*A \xrightarrow{{\beta_{\la_0}}_!\varepsilon'_\mu} {\beta_{\la_0}}_!\beta_{\la_0}^*A \xrightarrow{\varepsilon_{\la_0}} A$. Hence, the injectivity of $\varepsilon_{\la_0\mu}$ applied to $A$ implies that ${\beta_{\la_0}}_!\varepsilon'_\mu$ is injective when applied to $\beta_{\la_0}^*A$.
	
	Since $\aA$ has enough projective objects, in either case the injectivity of ${\beta_{\la_0}}_! \varepsilon'_\mu$ applied to $\beta_{\la_0}^*A$ implies the injectivity of ${\beta'_\mu}_!{\beta'_\mu}^*\beta_{\la_0}^*A \xrightarrow{\varepsilon'_\mu} \beta_{\la_0}^*A$ (see Lemma \ref{lem_monomorphism_in_the_quotient} below). By inductive hypothesis, $\beta_{\la_0}^*A$ is an object in $\tT_{\la_0}$. It follows that $A\in \eE$, which finishes the proof.
\end{proof}

\begin{REM}\label{rem_high_weigh_non_alg_cl}
	Conditions ($st1$) and ($st2$) define highest weight categories over arbitrary  field $k$. If in the definition of the standarizable collection we require that $\Hom(E_i,E_i)$ is a division $k$-algebra, then all statements in Section \ref{sec_high_weigh_as_env} remain true for the more general definition of a thin category (see Remark \ref{rem_thin_not_alg_cl}). The proofs go \emph{mutatis mutandis}. 
\end{REM}

\section{Ringel duality}\label{sec_ring_dual}

We present the classical Ringel duality (see \cite{Rin}) of hw categories as a duality between the left and right abelian envelope. We define Ringel duality for thin categories.

\vspace{0.3cm}
\subsection{Ringel duality for highest weight categories}\label{ssec_ring_dual_for_hw}~\\

An hw category $(\aA, \Lambda)$ is the right abelian envelope of the thin category $\fF(\Delta_\Lambda)$ and the left abelian envelope of the thin category $\fF(\nabla_\Lambda)$ (see Proposition \ref{prop_highest_weuight_is_right_env}). Let $T = \bigoplus_{\la \in \Lambda}T(\la) \in \fF(\Delta_\Lambda)$  be the direct sum of the injective hulls of $\Delta(\la)$ 
(see Remark \ref{rem_injective_in_thin}). Object $T$ is inductively constructed by \emph{universal extensions} from the irreducible objects $\{\Delta(\la)\}$ in $\fF(\Delta_\Lambda)^{\opp}$, see Proposition \ref{prop_univ_ext_in_proj}. Hence, by \cite{DR}, $T\in \aA$ is the \emph{characteristic tilting object} introduced by M. Ringel in \cite{Rin}. The algebra $\End_{\aA}(T)$ is the \emph{Ringel dual} of $ \End_{\aA}(P)$, for a projective generator $P\in \aA$, \cite{Rin}. Both algebras $\End_{\aA}(P)$ and $\End_{\aA}(T)$ are quasi-hereditary, hence the duality is between quasi-hereditary algebras.

We define Ringel duality as a duality between hw categories.
\begin{DEF}\label{def_RD}
	Given a highest weight category $(\aA, \Lambda)$ its \emph{Ringel dual} is the hw category $(\mathbf{RD}(\aA), \Lambda^{\opp})$, where $\mathbf{RD}(\aA) = \aA_l(\fF(\Delta_\Lambda))$ is the left abelian envelope of the thin subcategory $\fF(\Delta_\Lambda)\subset \aA$. 
\end{DEF} 
Since $\aA$ and $\mathbf{RD}(\aA)$ are the right and left abelian envelopes of the same thin category $\fF(\Delta_\Lambda)$, we can think about Ringel duality as a transfer from the right to left envelope.

The category $\tT$ additively generated by $\{T(\la)\}$ is injectively generating in $ \fF(\Delta_\Lambda)$.

Derived equivalences (\ref{eqtn_F_Delta_der_equiv}) together with Proposition \ref{prop_full_exc_coll_in_D_thin} imply that $\dD^b(\aA)$ admits two full exceptional sequences $(\nabla(\la_n), \ldots, \nabla(\la_1))$ and $( \Delta(\la_1), \ldots, \Delta(\la_n))$, for any choice of a full order $\la_1 \le \ldots \le \la_n$ on $\Lambda$ compatible with the poset structure. In particular, $\dD^b(\aA)$ admits the \emph{Serre functor} $\mathbb{S}$ (see \cite[Proposition 3.8]{BK1}). 

Let $(E_1,\ldots,E_n)$ be a full exceptional sequence in a triangulated category $\dD$. The \emph{left dual} exceptional sequence $(F_n,\ldots,F_1)$ is characterised by the property
\begin{equation}\label{eqtn_dual_collec}
\Hom_{\dD}(E_i,F_j[l]) = \left\{\begin{array}{cl}k, & \textrm{if }l=0, i=j,\\0, & \textrm{otherwise.} \end{array} \right. 
\end{equation} 
Collection  $(E_1,\ldots,E_n)$ is said to be \emph{right dual} to $(F_n,\ldots,F_1)$ \cite{B}.

\begin{LEM}\cite[Lemma 2.1]{Krause2}\label{lem_two_exc_coll}
	Let $(\aA, \Lambda)$ be an hw category. Then, for any choice of a full order $\la_1 \leq \ldots \leq \la_n$ on $\Lambda$ compatible with the poset structure, the exceptional sequence $(\nabla(\lambda_n),\ldots, \nabla(\lambda_1) )$  is left dual to the collection $(\Delta(\lambda_1),\ldots, \Delta(\lambda_n) )$.
\end{LEM}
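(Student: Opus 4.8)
The statement to prove, Lemma~\ref{lem_two_exc_coll} (attributed to Krause), asserts that in a highest weight category $(\aA,\Lambda)$ the exceptional collection of costandard objects $(\nabla(\la_n),\ldots,\nabla(\la_1))$ is left dual to the collection of standard objects $(\Delta(\la_1),\ldots,\Delta(\la_n))$, for any compatible linear refinement of the partial order. In view of the defining property~(\ref{eqtn_dual_collec}), this amounts to computing $\Hom_{\dD^b(\aA)}(\Delta(\la_i),\nabla(\la_j)[l])$ and showing it equals $k$ when $l=0$ and $i=j$, and vanishes otherwise.

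\textbf{Plan.} The plan is to reduce everything to the single cohomological statement
\[
\Ext^l_{\aA}(\Delta(\la),\nabla(\mu)) \;=\; \begin{cases} k, & l=0,\ \la=\mu,\\ 0, & \text{otherwise,}\end{cases}
\]
since $\Hom_{\dD^b(\aA)}(\Delta(\la),\nabla(\mu)[l]) = \Ext^l_{\aA}(\Delta(\la),\nabla(\mu))$. First I would treat the case $l=0$. By condition ($cost1$), $\nabla(\mu)$ has a socle $L(\mu)$ with all other composition factors $L(\nu)$ for $\nu\prec\mu$; by ($st1^\prime$), $\Delta(\la)$ has head $L(\la)$ with all other composition factors $L(\nu)$ for $\nu\prec\la$. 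A nonzero map $\Delta(\la)\to\nabla(\mu)$ must carry the head of $\Delta(\la)$, hence $L(\la)$, into a nonzero semisimple subquotient of the image; comparing composition factors forces $\la\preceq\mu$ (from the head of the source) and $\mu\preceq\la$ (from the socle of the target), so $\la=\mu$, and one checks the Hom-space is then one-dimensional (the map factors as $\Delta(\la)\twoheadrightarrow L(\la)\hookrightarrow\nabla(\la)$, and scalars exhaust these). Next, for $l\geq 1$ I would use the two "BGG-type" resolutions coming from the thin structure: by Theorem~\ref{thm_right_env_is_h_w} (applied to $\fF(\Delta_\Lambda)$ and to $\fF(\nabla_\Lambda)$) together with Proposition~\ref{prop_proj_in_thin} and its injective dual, $\Delta(\la)$ admits a finite resolution by projectives in $\pP=\fF(\{P(\la)\})$ and $\nabla(\mu)$ admits a finite coresolution by injectives. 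It is more efficient to argue directly: a projective object $P(\la)$, lying in $\fF(\Delta_\Lambda)$, has a filtration by $\Delta(\nu)$'s, and likewise an injective $I(\mu)$ has a $\nabla$-filtration; combining this with the $l=0$ computation and the long exact sequences of $\Ext$ gives $\Ext^{\geq 1}_{\aA}(\Delta(\la),\nabla(\mu))=0$ by a standard dimension shift.

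The cleanest route to the higher vanishing is the classical one. I would first establish $\Ext^{\geq 1}_{\aA}(\Delta(\la),\nabla(\mu))=0$ for \emph{all} $\la,\mu$ by induction on $l$: choose a short exact sequence $0\to K(\la)\to P(\la)\to \Delta(\la)\to 0$ as in ($st2^\prime$), where $K(\la)$ has a $\Delta$-filtration with factors $\Delta(\nu)$, $\nu\succ\la$. Since $P(\la)$ is projective, $\Ext^{l}_{\aA}(\Delta(\la),\nabla(\mu))\cong\Ext^{l-1}_{\aA}(K(\la),\nabla(\mu))$ for $l\geq 2$, and for $l=1$ one gets $\Ext^1_{\aA}(\Delta(\la),\nabla(\mu))$ as a subquotient of $\Hom(K(\la),\nabla(\mu))$ modulo the image of $\Hom(P(\la),\nabla(\mu))$. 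Because every $\Delta$-factor $\Delta(\nu)$ of $K(\la)$ has $\nu\succ\la\neq\mu$ unless $\mu\succ\la$, one shows inductively (on the poset, or on $l$) that $\Hom(K(\la),\nabla(\mu))$ is accounted for entirely by $\Hom(P(\la),\nabla(\mu))$ — using the $l=0$ result $\Hom(\Delta(\nu),\nabla(\mu))=\delta_{\nu\mu}k$ and $\Hom(P(\la),\nabla(\mu))=\delta_{\la\mu}k$ (the latter because $P(\la)$ surjects onto $\Delta(\la)$ and the kernel has only $\Delta(\nu)$, $\nu\succ\la$, which are Hom-orthogonal to $\nabla(\mu)$ unless $\nu=\mu$). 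This yields $\Ext^1=0$, and the dimension shift propagates vanishing to all $l\geq 1$. Finally, assembling: for $l=0$ we got $\delta_{\la\mu}k$; for $l\geq 1$ we got $0$; this is exactly property~(\ref{eqtn_dual_collec}) with $E_i=\Delta(\la_i)$ and $F_j=\nabla(\la_j)$, so $(\nabla(\la_n),\ldots,\nabla(\la_1))$ is left dual to $(\Delta(\la_1),\ldots,\Delta(\la_n))$. (That both are full exceptional collections is already recorded in the paragraph preceding the lemma, so no further work is needed there.)

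\textbf{Main obstacle.} The one genuinely delicate point is the induction establishing $\Ext^{\geq 1}_{\aA}(\Delta(\la),\nabla(\mu))=0$: one must organize the double induction (on homological degree and on the poset $\Lambda$) so that when peeling off $\Delta$-factors from $K(\la)$ one only ever invokes pairs $(\nu,\mu)$ for which the relevant Hom/Ext groups are already known, and one must be careful that the relation "$\Delta(\nu)$ not Hom- or $\Ext^1$-orthogonal to $\nabla(\mu)$ implies $\nu=\mu$" is used in the correct direction. All of this is standard in quasi-hereditary algebra theory, and in the present framework it can alternatively be deduced formally from Corollary~\ref{cor_derived_equiv} together with the fact that $(\nabla(\la_n),\ldots,\nabla(\la_1))$ and $(\Delta(\la_1),\ldots,\Delta(\la_n))$ are full exceptional collections in $\dD^b(\aA)$: a full exceptional collection has a unique left dual, and the "staircase" vanishing $\Hom^{\bullet}(\Delta(\la_i),\nabla(\la_j))=0$ for $i\neq j$ follows from the semi-orthogonality built into the strict filtration of Proposition~\ref{prop_strict_filtr_on_thin} transported to the derived category via Theorem~\ref{thm_srict_filt_on_derived}, leaving only the normalization $\Hom(\Delta(\la),\nabla(\la))=k$, $\Ext^{\geq 1}(\Delta(\la),\nabla(\la))=0$ to check by the $l=0$ argument above plus one dimension shift.
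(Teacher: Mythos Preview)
The paper does not give its own proof of this lemma; it simply cites \cite[Lemma 2.1]{Krause2}. Your direct computation of $\Ext^l_{\aA}(\Delta(\la),\nabla(\mu))$ is exactly the standard argument one finds in that reference (and earlier in Ringel and Cline--Parshall--Scott), so your proposal is correct and matches the intended source.

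Two small comments. First, your $l=0$ argument is right but can be said more crisply: a nonzero image of $\Delta(\la)\to\nabla(\mu)$ is a quotient of $\Delta(\la)$ (hence has head $L(\la)$ with multiplicity one) and a subobject of $\nabla(\mu)$ (hence has socle $L(\mu)$); since $L(\la)$ occurs once in $\Delta(\la)$, the image has length one and $\la=\mu$. Second, the alternative route you sketch at the end --- deducing the staircase vanishing $\Hom^\bullet(\Delta(\la_i),\nabla(\la_j))=0$ for $i\neq j$ from Theorem~\ref{thm_srict_filt_on_derived} --- does not work as stated: the strict filtrations for $\fF(\Delta_\Lambda)$ and $\fF(\nabla_\Lambda)$ yield semi-orthogonal decompositions of $\dD^b(\aA)$ with \emph{different} pieces (one built from $\Delta$'s, the other from $\nabla$'s), and neither directly controls $\Hom(\Delta,\nabla)$. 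Stick with the dimension-shift argument; it is complete as you wrote it.
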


\begin{THM}\label{thm_Ringel_dual_for_hwc}
	Consider an hw category $(\aA, \Lambda)$.
	\begin{enumerate}
		\item The category $\bf{RD}(\aA)$ is equivalent to the right abelian envelope of $\fF(\nabla_\Lambda)$.
		\item Category $\bf{RD}(\aA)$ has costandard objects $\{\Delta(\la)\}$ and standard objects $\{\mathbb{S}^{-1}\nabla(\la)\}$.
	\end{enumerate}
\end{THM}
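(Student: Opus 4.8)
The plan is to prove the two claims by passing back and forth between $\aA$ and $\mathbf{RD}(\aA)$ through the common thin category $\fF(\Delta_\Lambda)$ and its injectively generating subcategory $\tT=\textrm{add}\{T(\la)\}$, using the machinery of Theorems \ref{thm_right_env_if_proj_gen}, \ref{thm_left_env_if_inj_gen}, \ref{thm_right_env_is_h_w} and the derived equivalences of Corollary \ref{cor_derived_equiv}. First I would establish (1). By Proposition \ref{prop_F_Delta_thin} the category $\fF(\Delta_\Lambda)$ is thin, and its opposite is thin as well (thinness is self-dual). I claim $\fF(\Delta_\Lambda)^{\opp}\simeq \fF(\nabla_\Lambda)$: indeed, applying Ringel's results (\cite{Rin}, as already used in the paper) or directly checking that the characteristic tilting objects $T(\la)$ are the injective hulls of $\Delta(\la)$ inside $\fF(\Delta_\Lambda)$ shows that $\fF(\Delta_\Lambda)$ has $\tT$ as an injectively generating subcategory whose opposite is projectively generating in $\fF(\Delta_\Lambda)^{\opp}$. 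Then by definition $\mathbf{RD}(\aA)=\aA_l(\fF(\Delta_\Lambda))\simeq \aA_r(\fF(\Delta_\Lambda)^{\opp})^{\opp}$ via (\ref{rem_envs_for_opp})… but the cleaner route is: $\fF(\nabla_\Lambda)$ is thin, so by Corollary \ref{cor_envel_for_thin} it has a right envelope, and by Theorem \ref{thm_right_env_is_h_w} that envelope is an hw category whose standard objects are $i_R(\nabla(\la))$. One must then identify $\aA_r(\fF(\nabla_\Lambda))$ with $\aA_l(\fF(\Delta_\Lambda))$; since $\fF(\nabla_\Lambda)\simeq \fF(\Delta_\Lambda)^{\opp}$ (the duality sending $\Delta(\la)$ to the injective hull $T(\la)$, whose role in $\fF(\Delta_\Lambda)^{\opp}$ mirrors that of $\nabla(\la)$ in $\fF(\nabla_\Lambda)$), formula (\ref{rem_envs_for_opp}) gives $\aA_r(\fF(\nabla_\Lambda))\simeq \aA_r(\fF(\Delta_\Lambda)^{\opp})\simeq \aA_l(\fF(\Delta_\Lambda))^{\opp}$ — and one must be careful about this last $\opp$, which is why I expect the identification of the two opposite structures to be the delicate bookkeeping point. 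The correct statement is that $\fF(\nabla_\Lambda)$, viewed as a thin category in its own right, has right envelope canonically equivalent to $\aA_l(\fF(\Delta_\Lambda))=\mathbf{RD}(\aA)$; this follows because $\fF(\Delta_\Lambda)$ and $\fF(\nabla_\Lambda)$ are Ringel dual thin categories whose derived categories are identified (Corollary \ref{cor_derived_equiv}) compatibly with the hearts.

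For (2), I would use the derived description. By Corollary \ref{cor_derived_equiv} we have $\dD^b(\aA)\simeq \dD^b(\fF(\Delta_\Lambda))$ and, applying the same corollary to the hw category $\mathbf{RD}(\aA)=\aA_l(\fF(\Delta_\Lambda))$, also $\dD^b(\mathbf{RD}(\aA))\simeq \dD^b(\fF(\Delta_\Lambda))$. Thus $\dD^b(\aA)$ and $\dD^b(\mathbf{RD}(\aA))$ are canonically identified; under this identification $\fF(\Delta_\Lambda)\subset \aA$ is carried to the thin subcategory of $\mathbf{RD}(\aA)$ which — by Theorem \ref{thm_right_env_is_h_w} applied with $\aA_l$ in place of $\aA_r$, i.e. its dual version — is exactly $\fF(\nabla')$ for the costandard objects $\nabla'$ of $\mathbf{RD}(\aA)$. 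Hence the costandard objects of $\mathbf{RD}(\aA)$ are the images $i_L(\Delta(\la))$ of the irreducibles $\Delta(\la)$ of the thin category $\fF(\Delta_\Lambda)$; under the derived identification these are just $\Delta(\la)\in\aA$, giving the first half of (2). For the standard objects: in $\dD^b(\aA)$ the collections $(\Delta(\la_1),\ldots,\Delta(\la_n))$ and $(\nabla(\la_n),\ldots,\nabla(\la_1))$ are dual (Lemma \ref{lem_two_exc_coll}), and by \cite[Proposition 3.8]{BK1} the Serre functor $\mathbb{S}$ converts a full exceptional collection into its dual up to shift; concretely, the left dual of $(\Delta(\la_i))$ is $(\mathbb{S}^{-1}\Delta(\la_i))$ reordered, so $\mathbb{S}^{-1}\Delta(\la_i)\simeq \nabla(\la_i)$ up to shift — equivalently $\mathbb{S}\nabla(\la)\simeq \Delta(\la)$ in $\dD^b(\aA)$. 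The standard objects of $\mathbf{RD}(\aA)$ are, by Theorem \ref{thm_right_env_is_h_w} (dual form) and (1), the objects $i_R(\nabla(\la))\in \aA_r(\fF(\nabla_\Lambda))=\mathbf{RD}(\aA)$; I must show these correspond under $\dD^b(\mathbf{RD}(\aA))\simeq \dD^b(\aA)$ to $\mathbb{S}^{-1}\nabla(\la)$. The point is that the hw structure on $\mathbf{RD}(\aA)$ has $\{\Delta(\la)\}$ as \emph{co}standard objects, and its standard objects are therefore the left duals of $\{\Delta(\la)\}$ in $\dD^b(\mathbf{RD}(\aA))=\dD^b(\aA)$, which by Lemma \ref{lem_two_exc_coll} and the Serre-functor characterization of duals are precisely $\{\mathbb{S}^{-1}\nabla(\la)\}$.

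The main obstacle, as flagged above, is bookkeeping with the two opposite-category identifications: one has to be scrupulous that the Ringel-dual thin category $\fF(\nabla_\Lambda)$, the opposite $\fF(\Delta_\Lambda)^{\opp}$, and the left envelope $\aA_l(\fF(\Delta_\Lambda))$ are linked by \emph{compatible} equivalences, so that the standard/costandard labels do not get swapped or dualized incorrectly. I would handle this by fixing once and for all a full order $\la_1\le\cdots\le\la_n$ compatible with $\Lambda$ and tracking everything through the resulting full exceptional collections in the single triangulated category $\dD^b(\aA)$, where all four categories $\fF(\Delta_\Lambda)$, $\fF(\nabla_\Lambda)$, $\aA$, $\mathbf{RD}(\aA)$ embed as hearts or thin subcategories; the relations among them then reduce to the elementary relation between an exceptional collection, its left dual, and the Serre functor, together with the already-proven fact (Theorem \ref{thm_right_env_is_h_w}, and its $\aA_l$-version) that the right/left envelope of a thin category is hw with (co)standard objects the images of the irreducibles. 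A secondary technical point is checking that $\mathbb{S}$ descends to the claimed relation $\mathbb{S}^{-1}\nabla(\la)\in\mathbf{RD}(\aA)$ (not merely in $\dD^b$), i.e. that these objects actually lie in the heart of $\mathbf{RD}(\aA)$ — but this is forced, since the standard objects of any hw category lie in its abelian heart and we have identified what they must be.
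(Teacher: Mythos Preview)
The claim $\fF(\Delta_\Lambda)^{\opp}\simeq \fF(\nabla_\Lambda)$ is false in general, and this breaks your approach to (1). By Proposition \ref{prop_highest_weuight_is_right_env}, $\aA_l(\fF(\nabla_\Lambda))\simeq \aA$, while by (\ref{rem_envs_for_opp}) $\aA_l(\fF(\Delta_\Lambda)^{\opp})\simeq \aA_r(\fF(\Delta_\Lambda))^{\opp}\simeq \aA^{\opp}$; so your claimed equivalence would force $\aA\simeq \aA^{\opp}$, which fails for most quasi-hereditary algebras. You already sensed trouble (``one must be careful about this last $\opp$''), but the problem is not bookkeeping: the two thin categories simply are not opposite to each other. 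The paper's argument for (1) avoids this entirely by using the characteristic tilting object $T=\bigoplus T(\la)$ as a bridge: $T$ is an injective generator of $\fF(\Delta_\Lambda)$ \emph{and} a projective generator of $\fF(\nabla_\Lambda)$ (the latter by \cite{DR}), so Corollary \ref{cor_envel_for_thin} together with Lemma \ref{lem_fp_as_modules} gives $\aA_l(\fF(\Delta_\Lambda))\simeq \textrm{mod-}\End_\aA(T)\simeq \aA_r(\fF(\nabla_\Lambda))$ directly.

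For (2) your strategy is essentially the paper's, but two specific assertions are wrong and should be dropped. First, $\mathbb{S}\nabla(\la)\simeq \Delta(\la)$ is not true in general; the Serre functor does not send an exceptional collection to its dual. Second, the standard objects of $\mathbf{RD}(\aA)$ are the \emph{right} dual collection to the costandards $\{\Delta(\la)\}$, not the left duals (Lemma \ref{lem_two_exc_coll} says costandards are left dual to standards). The correct computation is the one the paper does: if $\{G(\mu)\}$ is right dual to $\{\Delta(\la)\}$ then $\Hom(G(\mu),\Delta(\la)[l])\simeq \Hom(\Delta(\la),\mathbb{S}G(\mu)[-l])^\vee$, and comparing with the characterisation of $\{\nabla(\la)\}$ as left dual to $\{\Delta(\la)\}$ forces $\mathbb{S}G(\mu)\simeq \nabla(\mu)$, i.e.\ $G(\mu)\simeq \mathbb{S}^{-1}\nabla(\mu)$. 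So the conclusion you reach is right, but the justification needs to be replaced by this Serre-duality computation rather than the false identity $\mathbb{S}\nabla\simeq\Delta$.
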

\begin{proof}
	By Lemma \ref{lem_fp_as_modules} and Corollary \ref{cor_envel_for_thin}, $\mathbf{RD}(\aA) \simeq (\textrm{mod-}\End_{\aA}(T)^{\opp})^{\opp} \simeq (\End_{\aA}(T)\textrm{-mod})^{\opp} \simeq \textrm{mod-}\End_{\aA}(T)$. By \cite{DR}, $T$ is a projective generator for $\fF(\nabla_\Lambda)$. It follows from Lemma \ref{lem_fp_as_modules} and Corollary \ref{cor_envel_for_thin} that $\aA_r(\fF(\nabla_\Lambda)) \simeq \textrm{mod-}\End_{\aA}(T) \simeq \mathbf{RD}(\aA)$.
	
	Let $\Lambda_{\Delta}$ be the canonical poset of $\fF(\Delta_\Lambda)$. By Theorem \ref{thm_right_env_is_h_w}, $(\mathbf{RD}(\aA), \Lambda_{\Delta})$ is hw with costandard objects $\{\Delta(\la)\}$. It follows from Proposition \ref{prop_hw_iff_dominated} that $(\mathbf{RD}(\aA), \Lambda^{\opp})$ is hw. 
	
	Derived equivalence (\ref{eqtn_F_Delta_der_equiv}) implies that $\dD^b(\mathbf{RD}(\aA)) \simeq \dD^b(\fF(\Delta_\Lambda)) \simeq \dD^b(\aA)$. Lemma \ref{lem_two_exc_coll} applied to category $\mathbf{RD}(\aA)$ implies that standard objects in $\mathbf{RD}(\aA)$ form the collection right dual to $\{\Delta(\la)\}_{\la \in \Lambda}$. Then, isomorphisms $\Hom(\mathbb{S}^{-1}(\nabla(\mu)), \Delta(\la)) \simeq \Hom(\Delta(\la), \nabla(\mu))$ imply that $\{\mathbb{S}^{-1}\nabla(\la)\}_{\la \in \Lambda}$ are the standard objects in $\mathbf{RD}(\aA)$. 
\end{proof}

It is instructive to encode relations between various relevant categories in the diagram:
\begin{center}\label{ladder}
	\includegraphics{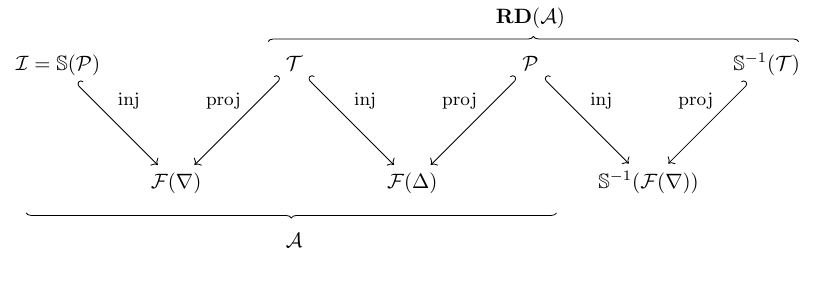}
\end{center}
All the categories in the picture are considered as full subcategories in $\dD^b(\aA)$. The categories in the upper row are additive and in the lower row are exact, while $\aA$ and $\mathbf{RD}(\aA)$ are abelian. Furthermore, category $\tT = \fF(\Delta_\Lambda) \cap \fF(\nabla_\Lambda)$ (\textit{cf.} \cite[Theorem 5]{Rin}) is a projectively generating subcategory in $\fF(\nabla_\Lambda)$ and an injectively generating subcategory in $\fF(\Delta_\Lambda)$.  Similar facts hold when shifting along the diagram.  

\vspace{0.3cm}
\subsection{Ringel duality for thin categories}\label{ssec_ring_dual_for_thin}~\\

The following fact, whose proof is basically due to Ringel, gives a characterization of a thin category $\eE$ inside its right abelian envelope.

\begin{PROP}\cite[Corollary 4]{Rin}\label{prop_F_Delta_as_vanishing_Ext_to_T}
	Let $I\in \eE$ be an injective generator for a thin category $\eE$. Then 
	$$
	\eE= \{X\in \aA_r(\eE)\,|\, \Ext^k(X,I) = 0, \textrm{ for }k\neq 0\}.
	$$
\end{PROP}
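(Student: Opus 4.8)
The plan is to prove both inclusions. The statement says that the thin category $\eE$, sitting inside its right abelian envelope $\aA_r(\eE)$ via the fully faithful exact functor $i_R$ (Theorem \ref{thm_right_env_if_proj_gen}, Corollary \ref{cor_envel_for_thin}), consists exactly of the objects $X$ with $\Ext^k_{\aA_r(\eE)}(X,I)=0$ for $k\neq 0$, where $I$ is an injective generator of $\eE$. Recall from Remark \ref{rem_injective_in_thin} that $\iI = \fF(\{I(\la)\}_{\la\in\Lambda})$ is an injectively generating subcategory of $\eE$, so $I = \bigoplus_{\la\in\Lambda} I(\la)$ is an injective generator; each $I(\la)$ is the injective hull of the irreducible object $E(\la)$.

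\textbf{Step 1: the inclusion $\eE\subset\{X\mid \Ext^k(X,I)=0,\ k\neq 0\}$.}
Here I would use the derived equivalence. By Corollary \ref{cor_derived_equiv} (or rather Theorem \ref{thm_bounded_derived_equi} applied via Corollary \ref{cor_envel_for_thin} — the thin hypothesis gives $\aA_r(\eE)$ finite global dimension since it is $\textrm{mod-}A$ for a quasi-hereditary $A$, cf. Theorem \ref{thm_right_env_is_h_w} and \cite{ParSco}), the embedding $i_R$ induces $\dD^b(\eE)\simeq \dD^b(\aA_r(\eE))$. Since $i_R$ is exact and fully faithful on $\eE$, for $X,Y\in\eE$ one has $\Ext^k_{\aA_r(\eE)}(i_R X, i_R Y)\simeq \Hom_{\dD^b(\eE)}(X,Y[k])$. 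Now $I$ is injective in the exact category $\eE$ (it is an injective generator of $\eE$), so $\Hom_{\dD^b(\eE)}(X,I[k])\simeq \Ext^k_\eE(X,I)=0$ for $k>0$ — here I would invoke that for a weakly idempotent split exact category, morphisms into an injective object in the bounded derived category vanish in positive degrees (dual of \cite[Example 12.2, Lemma 12.2 type statement]{Kel5}, using that $\iI\subset\eE$ is injectively generating and $\eE$ is weakly idempotent split by Lemma \ref{lem_thin_weakly_idem_split}, so $\dD^b(\eE)\simeq\kK^b(\iI)$ and $I$ is a term). For $k<0$, $\Hom_{\dD^b(\eE)}(X,I[k])=0$ because both $X$ and $I$ lie in the heart of the natural $t$-structure on $\dD^b(\aA_r(\eE))$ under the equivalence (equivalently $\Ext^{<0}=0$ always in an abelian category). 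Hence $\Ext^k_{\aA_r(\eE)}(X,I)=0$ for all $k\neq 0$.

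\textbf{Step 2: the reverse inclusion.}
Suppose $X\in\aA_r(\eE)$ satisfies $\Ext^k_{\aA_r(\eE)}(X,I)=0$ for $k\neq 0$. I want to show $X\in\eE$. The cleanest route is to use the characteristic tilting object and Ringel's argument directly, but staying internal to this paper I would argue as follows. By Theorem \ref{thm_right_env_is_h_w}, $(\aA_r(\eE),\Lambda^{\opp})$ is a highest weight category with standard objects $\Delta(\la)=i_R(E(\la))$ and hence $\fF(\Delta_{\Lambda^{\opp}})\simeq\eE$; its costandard objects $\nabla(\la)$ are the irreducibles of $\fF(\nabla_{\Lambda^{\opp}})$. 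The object $I = \bigoplus_\la I(\la)$ is precisely the characteristic tilting object $T$ of $(\aA_r(\eE),\Lambda^{\opp})$: indeed each $I(\la)$, being the injective hull in the thin category $\eE=\fF(\Delta)$ of the irreducible $\Delta(\la)$, is built by universal coextensions from the $\Delta(\mu)$'s (Proposition \ref{prop_univ_ext_in_proj} applied in $\eE^{\opp}$), so it lies in $\fF(\Delta)\cap\fF(\nabla)$, which by \cite[Theorem 5]{Rin} is the additive closure of the characteristic tilting summands; a counting/indecomposability argument (each $I(\la)$ indecomposable, pairwise non-isomorphic, $|\Lambda|$ of them) identifies $\textrm{add}(I)$ with $\textrm{add}(T)$. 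Now Ringel's criterion \cite[Corollary 4]{Rin} states exactly that $\fF(\Delta)=\{X\mid \Ext^k(X,T)=0,\ k\neq 0\}$ in a highest weight category, and $\fF(\Delta_{\Lambda^{\opp}})\simeq\eE$. This gives $X\in\eE$.

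\textbf{Main obstacle.}
The genuinely delicate point is Step 2: verifying carefully that the injective generator $I$ of the thin category $\eE$ coincides, up to additive equivalence, with the characteristic tilting object of the highest weight category $\aA_r(\eE)$. This requires matching the two descriptions — "injective hulls of $E(\la)$ in $\eE$" versus "the $\fF(\Delta)\cap\fF(\nabla)$ objects" — and then quoting \cite[Corollary 4]{Rin} in the correct variance (the paper's convention has $\aA_r(\eE)$ highest weight for $\Lambda^{\opp}$, so one must be careful that $\fF(\Delta)$ on Ringel's side corresponds to our $\eE$, not to $\fF(\nabla)$). Once that dictionary is in place the result is immediate from Ringel's theorem; everything else is bookkeeping with the derived equivalence and the highest-weight structure already established in Theorem \ref{thm_right_env_is_h_w}. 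An alternative, more self-contained proof of Step 2 would run an induction on $|\Lambda|$ using the recollement $\aA_r(\fF_I)\to\aA_r(\eE)\to\aA_r(\tT_I)$ from the previous subsection and the vanishing of $\Ext$ to the relevant injective summands, in the spirit of the proof of Theorem \ref{thm_thin_in_its_envelope}; I would mention this as the fallback if a direct citation is deemed insufficient.
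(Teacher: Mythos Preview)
Your proposal is correct and takes essentially the same approach as the paper: identify the injective generator $I$ of $\eE$ with the characteristic tilting object of the highest weight category $(\aA_r(\eE),\Lambda^{\opp})$ via Theorem \ref{thm_right_env_is_h_w} (the paper carries out this identification just before the proposition, in Section \ref{ssec_ring_dual_for_hw}), and then invoke Ringel's description of $\fF(\Delta)$ as $\{X\mid\Ext^k(X,T)=0,\ k\neq 0\}$. Your Step 1 is correct but redundant, since Ringel's result already gives the equality, not just one inclusion; the paper accordingly omits it.
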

\begin{proof}
	Let $\Lambda$ be the canonical poset of $\eE$. Category $(\aA_r(\eE), \Lambda^{\opp})$ is hw with $\eE\simeq \fF(\Delta)$ (see Theorem \ref{thm_right_env_is_h_w}). Hence, $I\in \aA_r(\eE)$ is the characteristic tilting module for $(\aA_r(\eE), \Lambda^{\opp})$. The statement follows from the description $\fF(\Delta) = \{X \in \aA\,|\, \Ext^k(X, T) = 0,\textrm{ for }k\neq 0\}$, for a characteristic tilting module $T$ in a hw category $(\aA, \Lambda)$ (see \cite[Corollary 3]{Rin}).
\end{proof}	
\begin{THM}\label{thm_E_is_the_int_of_hearts}
	A  thin category $\eE$ is the intersection $\aA_r(\eE)\cap \aA_l(\eE)$ in $\dD^b(\eE)$.
\end{THM}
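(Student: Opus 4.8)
The plan is to use the derived equivalences of Corollary \ref{cor_derived_equiv} to realize both $\aA_r(\eE)$ and $\aA_l(\eE)$ as hearts of $t$-structures on the common triangulated category $\dD^b(\eE)$, and then characterize $\eE$ as the objects lying in both hearts. By Theorem \ref{thm_right_env_is_h_w}, $(\aA_r(\eE),\Lambda^{\opp})$ is an hw category with $\eE\simeq \fF(\Delta_{\Lambda^{\opp}})$ and $(\aA_l(\eE),\Lambda)$ is an hw category with $\eE\simeq \fF(\nabla_\Lambda)$; under the derived equivalences $\dD^b(\fF(\Delta_{\Lambda^{\opp}}))\simeq\dD^b(\aA_r(\eE))$ and $\dD^b(\fF(\nabla_\Lambda))\simeq\dD^b(\aA_l(\eE))$ of Corollary \ref{cor_derived_equiv}, both envelopes sit inside $\dD^b(\eE)$ as the hearts of the respective standard $t$-structures. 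The inclusion $\eE\subset\aA_r(\eE)\cap\aA_l(\eE)$ is then immediate: an object of $\eE$ is in $\fF(\Delta)$, hence in the heart $\aA_r(\eE)$, and is in $\fF(\nabla)$, hence in the heart $\aA_l(\eE)$.

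For the reverse inclusion, first I would invoke Proposition \ref{prop_F_Delta_as_vanishing_Ext_to_T} to get the characterizations
$$
\eE \;=\; \{X\in\aA_r(\eE)\mid \Ext^k_{\dD^b(\eE)}(X,I)=0 \text{ for } k\neq 0\},
$$
where $I$ is an injective generator of $\eE$, together with the dual statement: $\eE$ is the full subcategory of $\aA_l(\eE)$ of objects $Y$ with $\Ext^k(P,Y)=0$ for $k\neq 0$, where $P$ is a projective generator of $\eE$. The key point is that $P$ and $I$ coincide, up to additive closure, with the object $\tT=\fF(\Delta_\Lambda)\cap\fF(\nabla_\Lambda)$ appearing in the ladder diagram — $\tT$ is projectively generating in $\fF(\nabla)$ and injectively generating in $\fF(\Delta)$, which in the thin language says $P$ (the projective generator of the thin category, living in $\aA_l$-picture) and $I$ (the injective generator of the thin category) are the same object up to equivalence. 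Then for $X\in\aA_r(\eE)\cap\aA_l(\eE)$ one wants to show $X\in\eE$. Since $X\in\aA_r(\eE)$, Proposition \ref{prop_F_Delta_as_vanishing_Ext_to_T} reduces this to proving $\Ext^k(X,I)=0$ for $k\neq 0$; and the hypothesis $X\in\aA_l(\eE)$ — i.e. $X$ lies in the heart of the $t$-structure for which $\fF(\nabla)$-objects, in particular $I$, are projective-like — should force exactly this vanishing.

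More concretely, the mechanism I expect to work: in $\aA_l(\eE)$ the object corresponding to $I$ is a projective generator (it is $i_L$ applied to the injective generator of the thin category, which under the hw-structure $(\aA_l(\eE),\Lambda)$ becomes the characteristic tilting object, and dually a projective generator of the Ringel-dual description). Being in the heart $\aA_l(\eE)$ means $X$ is concentrated in degree $0$ for the $t$-structure whose aisle is generated by that heart; and $\Ext^k(X,I)$ for $k<0$ vanishes because $I$ lies in the heart and $X$ is in the heart (no negative Homs between heart objects), while for $k>0$ vanishing follows from $I$ being projective in $\aA_l(\eE)$ — here one must translate $\Ext^k_{\dD^b(\eE)}(X,I)$ into $\Ext^k_{\aA_l(\eE)}(X,I)$ using that the derived equivalence is an equivalence of triangulated categories and both $X,I$ lie in the heart, so derived Homs agree with Homs computed in $\aA_l(\eE)$, which vanish in positive degree by projectivity of $I$. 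Combining, $\Ext^k(X,I)=0$ for all $k\neq 0$, whence $X\in\eE$ by Proposition \ref{prop_F_Delta_as_vanishing_Ext_to_T}.

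The main obstacle, and the step deserving the most care, is the identification of the relevant tilting/projective/injective objects across the two envelope descriptions: one must verify that the injective generator $I$ of $\eE$ used in Proposition \ref{prop_F_Delta_as_vanishing_Ext_to_T} is, as an object of $\dD^b(\eE)$, projective in the abelian category $\aA_l(\eE)$ (equivalently, that it is the characteristic tilting object for the hw structure $(\aA_l(\eE),\Lambda)$, which is precisely the content that $\tT=\fF(\Delta)\cap\fF(\nabla)$ is projectively generating in $\fF(\nabla)$, cited from \cite[Theorem 5]{Rin}). Once this bookkeeping of characteristic tilting objects is in place, the $\Ext$-vanishing argument and the appeal to Proposition \ref{prop_F_Delta_as_vanishing_Ext_to_T} are routine. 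I would also double-check the compatibility of the two derived equivalences of Corollary \ref{cor_derived_equiv} — that they identify $\dD^b(\aA_r(\eE))$ and $\dD^b(\aA_l(\eE))$ with the \emph{same} copy of $\dD^b(\eE)$ in a way compatible with the common subcategory $\eE$ — so that "intersection in $\dD^b(\eE)$" is unambiguous; this should follow because both equivalences restrict to the identity on $\eE$.
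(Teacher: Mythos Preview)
Your approach is exactly the paper's: invoke Proposition~\ref{prop_F_Delta_as_vanishing_Ext_to_T} and show the required $\Ext$-vanishing using the fact that $X$ lies in the heart $\aA_l(\eE)$. However, there is a genuine mix-up in the key step.

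You claim that the injective generator $I$ of $\eE$ becomes \emph{projective} in $\aA_l(\eE)$, and you justify this by appealing to ``$\tT$ is projectively generating in $\fF(\nabla)$''. This is the wrong identification. The fact about $\tT$ concerns the characteristic tilting of the hw category $\aA_r(\eE)$, viewed inside $\fF(\nabla_{\Lambda^{\opp}})\subset\aA_r(\eE)$, which is a \emph{different} thin category from $\eE$ (it is $\mathbf{RD}(\eE)$). What you actually need is that $I$ is \emph{injective} in $\aA_l(\eE)$, and this follows directly from Corollary~\ref{cor_envel_for_thin}: $\aA_l(\eE)\simeq \textrm{fp}(\iI^{\opp})^{\opp}$, so the Yoneda image of $\iI$ consists of injective objects. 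With ``projective'' your argument breaks, since projectivity of $I$ would give $\Ext^k(I,-)=0$, not the required $\Ext^k(-,I)=0$.

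Once you replace ``projective'' by ``injective'', the argument is exactly the paper's two-line proof: for $X\in\aA_r(\eE)\cap\aA_l(\eE)$, negative $\Ext$'s vanish since both $X$ and $I$ lie in the heart $\aA_l(\eE)$, positive $\Ext$'s vanish since $I$ is injective there, and Proposition~\ref{prop_F_Delta_as_vanishing_Ext_to_T} then gives $X\in\eE$. All of your bookkeeping about compatibility of derived equivalences and characteristic tilting objects is unnecessary.
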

\begin{proof} 
	 By Corollary \ref{cor_envel_for_thin}, an injective generator $I\in \eE$ is an injective generator for $\aA_l(\eE)$. In view of the equivalences (\ref{eqtn_F_Delta_der_equiv}), the statement follows from Proposition \ref{prop_F_Delta_as_vanishing_Ext_to_T}.
\end{proof}

\begin{DEF}
	 The \emph{Ringel dual} $\bf{RDT}(\eE)$  of a thin category $\eE$ is the intersection in $\dD^b(\eE)$:
	$$
	\bf{RDT}(\eE)=\aA_l(\eE)\cap \mathbb{S}^{-1} \aA_r(\eE).
	$$
\end{DEF}

\begin{PROP}\label{prop_RIn_dual_is_thin}
	The Ringel dual of a thin category is a thin category. We have a canonical equivalence of exact categories: $\bf{RDT}(\bf{RDT}(\eE))\simeq  \eE$.
\end{PROP}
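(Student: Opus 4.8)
The plan is to establish the two assertions of the proposition in turn, building on the characterization of thin categories inside their envelopes and on Ringel duality for highest weight categories.

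First I would show that $\mathbf{RD}(\eE)$ is thin. By Theorem \ref{thm_right_env_is_h_w}, $(\aA_r(\eE), \Lambda^{\opp})$ is a highest weight category with standard objects $\Delta(\la) = i_R(E(\la))$, and $\eE \simeq \fF(\Delta_{\Lambda^{\opp}})$. Similarly $(\aA_l(\eE), \Lambda)$ is highest weight with costandard objects $\nabla(\la) = i_L(E(\la))$ and $\eE \simeq \fF(\nabla_\Lambda)$. By Corollary \ref{cor_derived_equiv}, all three categories $\dD^b(\eE)$, $\dD^b(\aA_r(\eE))$, $\dD^b(\aA_l(\eE))$ are canonically equivalent, so all the relevant hearts and the Serre functor $\mathbb{S}$ (which exists by Proposition \ref{prop_full_exc_coll_in_D_thin} and \cite{BK1}) live inside the single triangulated category $\dD^b(\eE)$. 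Now $\aA:=\aA_l(\eE)$ is itself a highest weight category, hence the right abelian envelope of its own thin subcategory $\fF(\Delta^{\aA}_{\Lambda'})$ of standardly filtered objects (Proposition \ref{prop_highest_weuight_is_right_env}), where the standard objects of $\aA_l(\eE)$ are, by Theorem \ref{thm_Ringel_dual_for_hwc}(2) applied to $\aA_r(\eE)$ (whose Ringel dual is $\aA_l(\eE)$, cf. the displayed formula in the introduction), the objects $\mathbb{S}^{-1}\nabla(\la) = \mathbb{S}^{-1} i_L(E(\la))$ — wait, more directly: $\aA_l(\eE) = \mathbf{RD}(\aA_r(\eE))$ by the introduction's formula, so by Theorem \ref{thm_Ringel_dual_for_hwc}(2) its standard objects are $\mathbb{S}^{-1}\nabla(\la)$ where $\nabla(\la)$ are the costandard objects of $\aA_r(\eE)$. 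Then $\fF$ of these standard objects, which by Proposition \ref{prop_F_Delta_thin} is a thin category and by Proposition \ref{prop_highest_weuight_is_right_env} has $\aA_l(\eE)$ as its right abelian envelope, is precisely $\aA_l(\eE)\cap \mathbb{S}^{-1}\aA_r(\eE) = \mathbf{RD}(\eE)$: indeed $\mathbb{S}^{-1}\aA_r(\eE)$ is the heart $\mathbb{S}^{-1}(\text{heart of }\aA_r(\eE))$, and an object of $\aA_l(\eE)$ lies in it iff it is in the image under $\mathbb{S}^{-1}$ of an object of $\aA_r(\eE)$, which by Theorem \ref{thm_E_is_the_int_of_hearts}-type reasoning (or directly, Proposition \ref{prop_F_Delta_as_vanishing_Ext_to_T} applied with the tilting object) characterizes the standardly filtered subcategory. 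Thus $\mathbf{RD}(\eE) \simeq \fF(\{\mathbb{S}^{-1}\nabla(\la)\})$ is thin, with irreducible objects $\mathbb{S}^{-1}\nabla(\la)$, $\la\in\Lambda$.

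For the biduality $\mathbf{RD}(\mathbf{RD}(\eE)) \simeq \eE$, I would unwind the construction. Having identified $\mathbf{RD}(\eE)$ as the thin category $\fF(\{\mathbb{S}^{-1}\nabla(\la)\})$ inside $\dD^b(\eE)$, its right abelian envelope is $\aA_l(\eE)$ (as just noted) and its left abelian envelope, by the analogue of Theorem \ref{thm_Ringel_dual_for_hwc} and the formula $\aA_l(\mathbf{RD}(\eE)) = \mathbf{RD}(\aA_r(\mathbf{RD}(\eE))) = \mathbf{RD}(\aA_l(\eE))$, is $\aA_r(\eE)$; here one uses that $\mathbf{RD}$ on highest weight categories is an involution (which follows from Theorem \ref{thm_Ringel_dual_for_hwc}(1) together with the symmetric roles of $\fF(\Delta_\Lambda)$ and $\fF(\nabla_\Lambda)$, and the fact that $\aA_r(\fF(\nabla_\Lambda)) = \aA_l(\fF(\Delta_\Lambda))$'s own Ringel dual returns $\aA$). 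Then
$$
\mathbf{RD}(\mathbf{RD}(\eE)) = \aA_l(\mathbf{RD}(\eE)) \cap \mathbb{S}^{-1}\aA_r(\mathbf{RD}(\eE)) = \aA_r(\eE) \cap \mathbb{S}^{-1}\aA_l(\eE).
$$
It remains to check that $\aA_r(\eE)\cap \mathbb{S}^{-1}\aA_l(\eE) = \eE$ as subcategories of $\dD^b(\eE)$. One inclusion is clear: $\eE \subset \aA_r(\eE)$ by construction, and $\eE \subset \mathbb{S}^{-1}\aA_l(\eE)$ because $\mathbb{S}$ maps the standardly filtered subcategory $\fF(\Delta)$ of $\aA_l(\eE)$'s Ringel-dual picture onto $\eE$ — more carefully, by Theorem \ref{thm_E_is_the_int_of_hearts}, $\eE = \aA_r(\eE)\cap\aA_l(\eE)$, and $\aA_l(\eE)\subset \mathbb{S}^{-1}\aA_l(\eE)$ is false in general, so I instead argue via the tilting/Ext-vanishing criterion of Proposition \ref{prop_F_Delta_as_vanishing_Ext_to_T}: objects of $\aA_r(\eE)$ lying in $\mathbb{S}^{-1}\aA_l(\eE)$ are exactly those $X$ with $\mathbb{S}(X)$ in the heart $\aA_l(\eE)$; since $\mathbb{S}$ sends projectives of $\aA_r(\eE)$ to injectives and standards to costandards with a shift controlled by the finite global dimension, a direct $\Ext$-computation as in Ringel's argument identifies this intersection with $\fF(\Delta_{\Lambda^{\opp}}) = \eE$. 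The canonicity of the equivalence follows because every step — the identifications of envelopes, the Serre functor, the derived equivalences of Corollary \ref{cor_derived_equiv} — is canonical.

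The main obstacle I anticipate is the precise bookkeeping of which collection of objects plays the role of "standard" versus "costandard" at each stage, and in particular pinning down $\mathbb{S}^{-1}\aA_r(\eE)$ as a subcategory of $\dD^b(\eE)$: one must verify that applying $\mathbb{S}^{-1}$ to the heart $\aA_r(\eE)$ and intersecting with $\aA_l(\eE)$ recovers exactly the extension closure of $\{\mathbb{S}^{-1}\nabla(\la)\}$, which requires knowing that $\mathbb{S}^{-1}\nabla(\la) \in \aA_l(\eE)$ (true because $\nabla(\la)$ are tilting-like and $\mathbb{S}^{-1}$ of the tilting object generates the projectives of the Ringel dual) and that nothing else in $\aA_l(\eE)$ arises this way. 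This is essentially the content of Ringel's \cite[Corollary 3, Corollary 4]{Rin} transported through the derived equivalences, so the rest of the proof is a matter of assembling Theorems \ref{thm_right_env_is_h_w}, \ref{thm_Ringel_dual_for_hwc}, \ref{thm_E_is_the_int_of_hearts}, and Corollary \ref{cor_derived_equiv} in the correct order; I do not expect a genuinely new difficulty beyond this careful identification.
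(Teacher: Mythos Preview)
Your overall strategy matches the paper's: identify $\mathbf{RD}(\eE)$ with the subcategory $\fF_G\subset \aA_l(\eE)$ of standardly filtered objects (where $G(\la)$ are the standard objects of $\aA_l(\eE)$), then compute the envelopes of $\fF_G$ to get biduality. However, there is a genuine gap in your biduality step.

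You write $\aA_l(\mathbf{RD}(\eE)) = \mathbf{RD}(\aA_l(\eE)) = \aA_r(\eE)$, invoking the involutivity of Ringel duality on highest weight categories. But involutivity gives only an \emph{abstract equivalence} $\mathbf{RD}(\mathbf{RD}(\aA_r(\eE)))\simeq \aA_r(\eE)$; it does not say that the two hearts coincide as subcategories of $\dD^b(\eE)$. In fact they do not: the paper shows that $\aA_l(\fF_G)=\mathbb{S}^{-1}\aA_r(\eE)$, not $\aA_r(\eE)$, by observing that $\mathbb{S}^{-1}P$ (for $P$ the projective generator of $\aA_r(\eE)$) is a projective generator of the heart $\aA_l(\fF_G)$, since its standard objects are $\{\mathbb{S}^{-1}E(\la)\}$ by Theorem~\ref{thm_Ringel_dual_for_hwc}. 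With the correct identifications $\aA_r(\fF_G)=\aA_l(\eE)$ and $\aA_l(\fF_G)=\mathbb{S}^{-1}\aA_r(\eE)$, one gets
\[
\mathbf{RD}(\mathbf{RD}(\eE))=\aA_l(\fF_G)\cap\mathbb{S}^{-1}\aA_r(\fF_G)=\mathbb{S}^{-1}\aA_r(\eE)\cap\mathbb{S}^{-1}\aA_l(\eE)=\mathbb{S}^{-1}\eE,
\]
and the canonical equivalence $\mathbf{RD}(\mathbf{RD}(\eE))\simeq\eE$ is furnished by $\mathbb{S}$. Your formula $\aA_r(\eE)\cap\mathbb{S}^{-1}\aA_l(\eE)$ is a different subcategory; this is why you then struggle, with visible self-corrections, to identify it with $\eE$.

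The same confusion affects your first part: to show $\mathbf{RD}(\eE)=\fF_G$ you need precisely that $\mathbb{S}^{-1}\aA_r(\eE)=\aA_l(\fF_G)$ inside $\dD^b(\eE)$, so that Theorem~\ref{thm_E_is_the_int_of_hearts} applied to $\fF_G$ gives $\aA_l(\eE)\cap\mathbb{S}^{-1}\aA_r(\eE)=\aA_r(\fF_G)\cap\aA_l(\fF_G)=\fF_G$. Your appeal to ``Theorem~\ref{thm_E_is_the_int_of_hearts}-type reasoning'' and Proposition~\ref{prop_F_Delta_as_vanishing_Ext_to_T} gestures at this but does not supply the identification of hearts. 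The missing ingredient throughout is the concrete argument (via projective generators, as in the paper) pinning down \emph{which} $t$-structure on $\dD^b(\eE)$ has $\aA_l(\fF_G)$ as its heart.
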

\begin{proof}
	Let $\Lambda$ be the canonical poset of $\eE$ and $\{E(\la)\}_{\la \in \Lambda}$ the set of isomorphism classes of irreducible objects in $\eE$.
	Category $\aA_l(\eE)$ is hw with costandard objects $\{E(\la)\}_{\la \in \Lambda}$ (see Theorem \ref{thm_right_env_is_h_w}). Let $\{G(\la)\}_{\la \in \Lambda}$ denote the exceptional sequence right dual to $\{E(\la)\}_{\la \in \Lambda}$. Then $\{G(\la)\}_{\la \in \Lambda}$ are standard objects in $\aA_l(\eE)$ (see Lemma \ref{lem_two_exc_coll}). Denote by $\fF_G\subset \aA_l(\eE)$ their extension closure. The category $\aA_l(\fF_G)$, Ringel dual to $\aA_l(\eE)$, is hw with costandard objects $\{G(\la)\}_{\la \in \Lambda}$ and standard objects $\{\mathbb{S}^{-1}E(\la)\}_{\la \in \Lambda}$ (see Theorem \ref{thm_Ringel_dual_for_hwc}). In particular, the category of objects with a standard filtration in $\aA_l(\fF_G)$ is $\mathbb{S}^{-1}(\eE)$.
	
	Derived equivalence (\ref{eqtn_F_Delta_der_equiv}) implies that projective generator $P \in \eE$ is a projective generator for the heart $\aA_r(\eE)$ of a \tr e on $\dD^b(\eE)$. Similarly, as $\{\mathbb{S}^{-1}E(\la)\}_{\la \in \Lambda}$ are standard objects in $\aA_l(\fF_G)$, $\mathbb{S}^{-1}(P)$ is a projective generator for the heart $\aA_l(\fF_{G})$ of a \tr e on $\dD^b(\fF_G) \simeq\dD^b(\aA_l(\eE)) \simeq \dD^b(\eE)$. It follows that $\aA_l(\fF_G) = \mathbb{S}^{-1}(\aA_r(\eE))$ as subcategories of $\dD^b(\eE)$. 
	As $\{G(\la)\}_{\la \in \Lambda}$ is the set of isomorphism classes of standard objects in $\aA_l(\eE)$, $\aA_l(\eE) \simeq \aA_r(\fF_G)$ (see Proposition \ref{prop_highest_weuight_is_right_env}).  Hence, $\bf{RDT}(\eE) =\aA_l(\eE) \cap \mathbb{S}^{-1}\aA_r(\eE) \simeq \aA_r(\fF_G) \cap \aA_l(\fF_G) \simeq \fF_G$ (see Theorem \ref{thm_E_is_the_int_of_hearts}). Thus, the thin category $\fF_G$ (see Proposition \ref{prop_F_stand_thin}) is the Ringel dual of $\eE$.
	
	As $\mathbb{S}^{-1}(\aA_r(\eE))$ is the left envelope of $\fF_G$, the double Ringel dual of $\eE$ is $\bf{RDT}(\bf{RDT}(\eE)) = \mathbb{S}^{-1}(\aA_r(\eE)) \cap \mathbb{S}^{-1}(\aA_l(\eE))$. It follows from Theorem \ref{thm_E_is_the_int_of_hearts} that the inverse of the Serre functor induces an equivalence of $\eE$ and its double Ringel dual.
\end{proof}

\begin{PROP}
	For a thin category $\eE$ we have canonical equivalences:
\begin{align}
	&\aA_r(\bf{RDT}(\eE)) \simeq\bf{RD}(\aA_r(\eE))\simeq \aA_l(\eE),& \label{eqtn_A_r_RD1}\\
	&\aA_l(\bf{RDT}(\eE)) \simeq\bf{RD}(\aA_l(\eE))\simeq \aA_r(\eE).&\label{eqtn_A_r_RD2}
	\end{align}
\end{PROP}
\begin{proof}
	It follows form the proof of Proposition \ref{prop_RIn_dual_is_thin} that, for a thin category $\eE$, $\bf{RDT}(\eE)$ is the thin category of objects with a standard filtration in $\aA_l(\eE)$. Hence, $\aA_r(\bf{RDT}(\eE)) \simeq \aA_l(\eE)$ by Proposition \ref{prop_highest_weuight_is_right_env}.  Since $\eE$ is the category of objects with a standard filtration in $\aA_r(\eE)$, (see Theorem \ref{thm_right_env_is_h_w}) category $\aA_l(\eE)$ is, by Definition \ref{def_RD}, the Ringel dual of $\aA_r(\eE)$. Equivalences (\ref{eqtn_A_r_RD1}) follow.  Applying the Ringel duality $\bf{RD}$  them gives equivalences (\ref{eqtn_A_r_RD2}).
\end{proof}
\appendix

\section{Abelian categories}\label{sec_abelian_cat}

In this appendix we gather facts about abelian categories that we use in the main body of the paper. We discuss localising and co-localising subcategories and abelian recollements. We introduce Deligne finite categories, and discuss their Serre subcategories.
\vspace{0.3cm}
\subsection{(Co)localising subcategories}\label{ssec_coloc_subcat}~\\

A full subcategory $\bB$ of an abelian category $ \aA$ is a \emph{Serre subcategory} if $\bB$ is closed under extensions, subobjects and quotients. It follows that $\bB$ is abelian, too.

Given a Serre subcategory $\bB\subset \aA$,
$$
S = \{f\in \textrm{Mor}(\aA) \,|\, \ker f, \textrm{coker } f \in \bB\}
$$
is a multiplicative system. The \emph{quotient category} $\aA/\bB$ is the localisation of $\aA$ in $S$. 

The category $\aA/\bB$ is abelian and the quotient functor $j^*\colon \aA\to \aA/\bB$ is exact. Moreover, $j^*$ is the universal exact functor $F\colon \aA \to \cC$ to an abelian category $\cC$ such that $F(B) \simeq 0$, for any $B\in \bB$.

A Serre subcategory $\bB\subset\aA$ is \emph{localising} (\emph{colocalising}) if  $j^*\colon \aA \to \aA/\bB$ admits a right (resp. left) adjoint. 
A Serre subcategory $\bB\subset \aA$ is \emph{bi-localising} if it is both localising and colocalising.

\begin{LEM}\cite[Proposition 4.4.3]{Pop}\label{lem_adj_ff}
	Let $\bB \subset \aA$ be a localising, respectively colocalising, subcategory. Then the right, respectively left, adjoint to the quotient functor $j^*\colon \aA \to \aA/\bB$ is fully faithful.
\end{LEM}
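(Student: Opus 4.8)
The plan is to treat the localising case in detail; the colocalising case then follows by dualising, since for a Serre subcategory $\bB\subset\aA$ the opposite $\bB^{\opp}\subset\aA^{\opp}$ is again Serre, $(\aA/\bB)^{\opp}\simeq\aA^{\opp}/\bB^{\opp}$, and a left adjoint of $j^*$ corresponds under this equivalence to a right adjoint of the opposite quotient functor. So assume $\bB\subset\aA$ is localising, let $j_*\colon\aA/\bB\to\aA$ be the right adjoint of $j^*$, and write $\eta\colon\Id_\aA\to j_*j^*$ and $\varepsilon\colon j^*j_*\to\Id_{\aA/\bB}$ for the unit and counit. The first step is the standard reduction: a right adjoint is fully faithful exactly when its counit is a natural isomorphism, so it suffices to show that $\varepsilon_X$ is an isomorphism for every $X\in\aA/\bB$.

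Next I would use that $j^*$ is the identity on objects, in particular essentially surjective: writing $X=j^*A$ and invoking the triangle identity $\varepsilon_{j^*A}\circ j^*(\eta_A)=\Id_{j^*A}$, it is enough to prove that $j^*(\eta_A)$ is an isomorphism, for then $\varepsilon_{j^*A}$ is its inverse. Since $j^*$ is exact and annihilates precisely the objects of $\bB$ (an object $M$ has $j^*M\simeq 0$ if and only if the morphism $0\to M$ belongs to the multiplicative system defining the quotient, that is, if and only if $M\in\bB$), this reduces to showing that $\ker(\eta_A)$ and $\mathrm{coker}(\eta_A)$ both lie in $\bB$. The kernel is immediate from the triangle identity: $j^*(\eta_A)$ is a split monomorphism, hence $j^*(\ker\eta_A)=\ker j^*(\eta_A)=0$ and therefore $\ker(\eta_A)\in\bB$.

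The main obstacle is the cokernel. Here I would argue that $j_*j^*A$ is a $\bB$\emph{-closed} object, i.e. $\Hom_\aA(B,j_*j^*A)\simeq\Hom_{\aA/\bB}(j^*B,j^*A)=0$ and $\Ext^1_\aA(B,j_*j^*A)=0$ for all $B\in\bB$, the vanishing of $\Ext^1$ coming from the fact that any extension of $B\in\bB$ by $j_*j^*A$ is turned into a split one by $j^*$ and, via the adjunction $j^*\dashv j_*$, already splits in $\aA$. Consequently $\eta_A\colon A\to j_*j^*A$ exhibits $j_*j^*A$ as a $\bB$-closure of $A$, and the calculus of left and right fractions for the saturated system $S=\{f\,|\,\ker f,\ \mathrm{coker}\,f\in\bB\}$ identifies $\eta_A$ with the canonical comparison map of $A$ into that closure; in this description $\mathrm{coker}(\eta_A)$ is manifestly an object of $\bB$, so $j^*(\eta_A)$ is an isomorphism. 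Granting this last point — which is exactly the content of \cite[Proposition 4.4.3]{Pop} and on which one may simply rely — every $\varepsilon_{j^*A}$, hence every $\varepsilon_X$, is an isomorphism, and $j_*$ is fully faithful. The expected difficulty is entirely concentrated in the verification that $\mathrm{coker}(\eta_A)\in\bB$; everything else is a formal consequence of the adjunction and the exactness of $j^*$.
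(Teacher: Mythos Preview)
The paper does not give a proof of this lemma; it simply records the statement with a citation to \cite[Proposition~4.4.3]{Pop}. Your proposal therefore already goes beyond what the paper does.

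Your reduction is correct and standard: the counit of $j^*\dashv j_*$ being an isomorphism is equivalent to $j_*$ being fully faithful; essential surjectivity of $j^*$ lets you write every object as $j^*A$; the triangle identity reduces the question to $j^*(\eta_A)$ being an isomorphism; and exactness of $j^*$ together with $\ker j^*=\bB$ reduces this to $\ker(\eta_A),\,\mathrm{coker}(\eta_A)\in\bB$. The kernel argument is clean. Your sketch that $\Ext^1_\aA(B,j_*X)=0$ for $B\in\bB$ is also valid and, importantly, does \emph{not} presuppose that $\varepsilon$ is an isomorphism: given an extension $0\to j_*X\to E\to B\to 0$, the map $E\to j_*X$ adjoint to $j^*E\xrightarrow{\simeq} j^*j_*X\xrightarrow{\varepsilon_X}X$ is a retraction of the inclusion, as one checks directly from the adjunction bijection (under which $\Id_{j_*X}$ corresponds to $\varepsilon_X$).

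The one soft spot is the passage from ``$j_*j^*A$ is $\bB$-closed'' to ``$\mathrm{coker}(\eta_A)\in\bB$''. Being $\bB$-closed does not by itself force the cokernel of $\eta_A$ to lie in $\bB$; one still needs the explicit colimit description of $\Hom$ in the Serre quotient (or an equivalent argument) to close this gap. You flag this honestly and defer to \cite{Pop}, which is exactly what the paper does, so there is nothing to fault here.
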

We will focus on colocalising subcategories. The dual statements are left to the reader. 
\begin{PROP}\cite[Theorem 4.4.9]{Pop}\label{prop_Popescu}
	Let $T \colon \aA\to \aA'$ be an exact functor between abelian categories with fully faithful left adjoint. Then $\ker T\subset \aA$ is a colocalising subcategory and $T$ induces an equivalence $\aA' \simeq \aA/\ker T$.
\end{PROP}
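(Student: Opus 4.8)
The statement is Popescu's theorem, so the expository option is simply to invoke \cite[Theorem 4.4.9]{Pop}; but here is the argument I would give. First I would record that $\ker T\subset\aA$ is a Serre subcategory: since $T$ is exact, $\ker T$ is stable under subobjects, quotients and extensions. Hence the quotient functor $j^*\colon\aA\to\aA/\ker T$ is defined, and because $T$ is exact and annihilates $\ker T$, its universal property yields a unique exact functor $\bar T\colon\aA/\ker T\to\aA'$ with $\bar T\circ j^*\simeq T$. The whole statement then reduces to two points: (a) $\bar T$ is an equivalence, and (b) $j^*$ admits a left adjoint.

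The core of the proof is one lemma about the adjunction $L\dashv T$, where $L\colon\aA'\to\aA$ is the given fully faithful left adjoint. Write $\eta\colon\Id_{\aA'}\Rightarrow TL$ for the unit and $\varepsilon\colon LT\Rightarrow\Id_{\aA}$ for the counit; full faithfulness of $L$ is equivalent to $\eta$ being an isomorphism. The claim is that for every $A\in\aA$ the kernel and cokernel of $\varepsilon_A\colon LT(A)\to A$ lie in $\ker T$. To see this I would apply $T$ and use the triangle identity $T(\varepsilon_A)\circ\eta_{T(A)}=\Id_{T(A)}$: since $\eta_{T(A)}$ is invertible, $T(\varepsilon_A)$ is an isomorphism, so exactness of $T$ forces $T(\ker\varepsilon_A)=\ker T(\varepsilon_A)=0$ and $T(\mathrm{coker}\,\varepsilon_A)=\mathrm{coker}\,T(\varepsilon_A)=0$. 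Consequently $j^*(\varepsilon_A)$ is an isomorphism in $\aA/\ker T$, naturally in $A$. I expect this lemma to be the only substantive step; everything else is formal adjunction bookkeeping.

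Given the lemma, I would conclude as follows. On one side, $\bar T\circ(j^*L)\simeq TL$, which is isomorphic to $\Id_{\aA'}$ via $\eta^{-1}$. On the other side, the natural isomorphism $j^*(\varepsilon)\colon j^*LT\xrightarrow{\sim}j^*$ rewrites as $(j^*L\bar T)\circ j^*\xrightarrow{\sim}j^*$, and since $j^*$ is essentially surjective this descends to a natural isomorphism $j^*L\bar T\simeq\Id_{\aA/\ker T}$. Hence $\bar T$ is an equivalence with quasi-inverse $j^*L$, giving $\aA'\simeq\aA/\ker T$. Finally, $L\bar T\colon\aA/\ker T\to\aA$ is left adjoint to $j^*$, by the chain of natural isomorphisms $\Hom_{\aA}(L\bar T(X),A)\simeq\Hom_{\aA'}(\bar T(X),T(A))=\Hom_{\aA'}(\bar T(X),\bar T(j^*A))\simeq\Hom_{\aA/\ker T}(X,j^*A)$, where the first step is $L\dashv T$ and the last is full faithfulness of $\bar T$. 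Thus $\ker T$ is colocalising, completing both (a) and (b). In the actual write-up I would most likely keep this as a short sketch and refer the reader to \cite[Theorem 4.4.9]{Pop} for the details, since it is a standard ingredient of the Gabriel–Popescu circle of results.
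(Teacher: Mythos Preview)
The paper gives no proof of this proposition; it simply records the citation \cite[Theorem 4.4.9]{Pop}. Your proposal correctly identifies this and supplies a standard, correct sketch of the argument. The only step that deserves a word of justification is the descent of the natural isomorphism $j^*(\varepsilon)\colon (j^*L\bar T)\,j^*\xrightarrow{\sim}j^*$ to an isomorphism $j^*L\bar T\simeq\Id_{\aA/\ker T}$: this uses not just essential surjectivity of $j^*$ but the fact that precomposition with a Serre quotient functor is fully faithful on functor categories (equivalently, that $j^*$ is a $2$-categorical epimorphism), which is part of the universal property of the quotient. With that caveat, your argument is complete and matches the classical proof.
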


For a Serre subcategory $\bB \subset \aA$, the existence of left adjoint to the quotient functor implies the existence of left adjoint to the embedding $\bB \to \aA$:

\begin{LEM}\label{lem_adjoint_to_coloc}
	Let $i_*\colon \aA_1 \to \aA$ be an embedding of a colocalising subcategory, $j^* \colon \aA\to \aA/\aA_1$ the quotient functor and $j_!$ its left adjoint. Then $i_*$ admits the left adjont functor $i^* \colon \aA\to \aA_1$ and adjunction morphisms 
	fit into the exact sequence
	\begin{equation}\label{eqtn_def_of_i*i*} 
	j_!j^*A \xrightarrow{\varepsilon_A} A \xrightarrow{\eta_A} i_*i^*A \to 0.
	\end{equation}
\end{LEM}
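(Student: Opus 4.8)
The plan is to construct the left adjoint $i^*$ explicitly as a cokernel and then to read off the adjunction from a three-term exact sequence. First I would invoke Lemma \ref{lem_adj_ff}, which says that the left adjoint $j_!$ of the quotient functor is fully faithful; equivalently, the unit $\mathrm{id}_{\aA/\aA_1} \xrightarrow{\simeq} j^*j_!$ of the adjunction $j_! \dashv j^*$ is an isomorphism. Writing $\varepsilon$ for the counit and $\eta'$ for the unit of $j_! \dashv j^*$, the triangle identity $j^*(\varepsilon_A) \circ \eta'_{j^*A} = \mathrm{id}_{j^*A}$ together with invertibility of $\eta'_{j^*A}$ shows that $j^*(\varepsilon_A)\colon j^*j_!j^*A \to j^*A$ is an isomorphism for every $A \in \aA$. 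Since $j^*$ is exact and $\aA_1$ is precisely the subcategory of objects killed by $j^*$ (a standard feature of Gabriel quotients of Serre subcategories), it follows that both $\ker \varepsilon_A$ and $\textrm{coker}\,\varepsilon_A$ lie in $\aA_1$.

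Next I would \emph{define} $i^*A := \textrm{coker}(\varepsilon_A) \in \aA_1$ and let $\eta_A\colon A \to i_*i^*A$ be the canonical epimorphism onto the cokernel. Since $\varepsilon$ is a natural transformation, $i^*$ is automatically a functor and $\eta$ a natural transformation, and by construction the sequence $j_!j^*A \xrightarrow{\varepsilon_A} A \xrightarrow{\eta_A} i_*i^*A \to 0$ is exact (exactness at $A$ is the defining property of the cokernel, exactness at $i_*i^*A$ is surjectivity of $\eta_A$).

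To verify that $i^* \dashv i_*$, I would fix $B \in \aA_1$ and apply $\Hom_\aA(-, i_*B)$ to this sequence, obtaining the exact sequence $0 \to \Hom_\aA(i_*i^*A, i_*B) \to \Hom_\aA(A, i_*B) \to \Hom_\aA(j_!j^*A, i_*B)$. By $j_! \dashv j^*$ the rightmost group is $\Hom_{\aA/\aA_1}(j^*A, j^*i_*B)$, which vanishes because $j^*i_*B = 0$. Hence $\Hom_\aA(A, i_*B) \simeq \Hom_\aA(i_*i^*A, i_*B) \simeq \Hom_{\aA_1}(i^*A, B)$, the last isomorphism by full faithfulness of $i_*$; these identifications are natural in $A$ and $B$, so $i^*$ is left adjoint to $i_*$ with unit $\eta$. (Concretely, a map $g\colon A \to i_*B$ satisfies $g\varepsilon_A = 0$, since its transpose under $j_!\dashv j^*$ is the composite $j^*A \to j^*i_*B = 0$, hence $g$ factors uniquely through $\eta_A$.)

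I do not expect a genuine obstacle here. The only two points worth a word of justification are that $j^*$ applied to the counit $\varepsilon_A$ is invertible — the elementary consequence of $j_!$ being fully faithful used above — and that an object annihilated by $j^*$ truly belongs to $\aA_1$, which is part of the basic theory of Serre quotients; everything else is adjunction bookkeeping around the exact sequence just constructed.
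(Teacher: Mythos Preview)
Your proposal is correct and follows essentially the same approach as the paper's proof: both define $i^*$ as the cokernel of the counit $\varepsilon$, use full faithfulness of $j_!$ (Lemma \ref{lem_adj_ff}) to see that $j^*(\varepsilon_A)$ is an isomorphism so that this cokernel lands in $\aA_1=\ker j^*$, and then apply $\Hom_\aA(-,i_*B)$ to the resulting right-exact sequence to obtain the adjunction. Your write-up is slightly more explicit about the triangle identity and naturality, but there is no substantive difference.
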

\begin{proof}
	Fully faithfulness of $j_!$ (see Lemma \ref{lem_adj_ff}) implies that $j^*(\varepsilon_A)\colon j^*j_!j^* A\xrightarrow{\simeq} j^*A$ is an isomorphism. As $j^*$ is exact, it follows that $j^*( \textrm{coker }\varepsilon_A)=0$, i.e. $i_*i^*A\simeq \textrm{coker }\varepsilon_A$ is indeed an object of $\aA_1\simeq \ker j^*$. 
	
	We define the functor $i^*\colon \aA\to \aA_1$ as the cokernel of $\varepsilon\colon j_!j^* \to \Id_{\aA}$ and check that it is left adjoint to $i_*$. Applying $\Hom(-,i_*B)$ to (\ref{eqtn_def_of_i*i*}) gives an exact sequence:
	$$
	0 \to \Hom_{\aA}(i_*i^*A, i_*B) \to \Hom_{\aA}(A, i_*B) \to \Hom_{\aA}(j_!j^*A, i_*B) \simeq \Hom_{\aA_2}(j^*A, j^*i_*B) \simeq 0,
	$$
	hence the isomorphism $\Hom_{\aA_1}(i^*A, B) \simeq \Hom_\aA(i_*i^*A,i_*B) \simeq \Hom(A,i_*B)$. 
\end{proof}

By \cite{Gabriel}, the perpendicular category to a colocalising subcategory (closed objects) is equivalent to the quotient category. The following proposition, used in Section \ref{ssec_col_str_filt_on_env}, assures that a colocalising subcategory coincides with its double perpendicular.
\begin{PROP}\label{prop_colocalising}
	Let $i_*\colon \aA_1\to \aA$ be an embedding of a colocalising subcategory and $j^* \colon \aA\to \aA/\aA_1$ the quotient functor. Then 
	\begin{enumerate}
		\item $i_*\aA_1\subset \aA$ is a torsion-free part of a torsion pair,
		\item ${}^\perp(i_*\aA_1) \simeq j_!( \aA/\aA_1)$,
		\item $(j_!(\aA/\aA_1))^\perp \simeq i_*\aA_1$.
	\end{enumerate}
\end{PROP}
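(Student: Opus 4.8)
The three claims are tightly linked, and the plan is to establish them in the order (1), then (2), then (3), using Lemma~\ref{lem_adjoint_to_coloc} as the main input (together with its dual, which produces the right adjoint $j_*$ to $j^*$). First I would set up notation: write $\aA_1 \subset \aA$ for the colocalising subcategory, $j^* \colon \aA \to \aA/\aA_1$ for the quotient, $j_!$ for its (fully faithful, by Lemma~\ref{lem_adj_ff}) left adjoint, and $i^* \colon \aA \to \aA_1$ for the left adjoint to $i_*$ furnished by Lemma~\ref{lem_adjoint_to_coloc}, fitting into the exact sequence $j_!j^*A \xrightarrow{\varepsilon_A} A \xrightarrow{\eta_A} i_*i^*A \to 0$.

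For (1): I claim $({}^\perp(i_*\aA_1), i_*\aA_1)$ is a torsion pair, so I must check $\Hom({}^\perp(i_*\aA_1), i_*\aA_1) = 0$ (which is the definition of the left-hand class) and that every $A \in \aA$ sits in a short exact sequence $0 \to A' \to A \to A'' \to 0$ with $A' \in {}^\perp(i_*\aA_1)$ and $A'' \in i_*\aA_1$. The natural candidate is the sequence $0 \to \ker\eta_A \to A \xrightarrow{\eta_A} i_*i^*A \to 0$: here $i_*i^*A \in i_*\aA_1$ by construction, so the work is to show $K_A := \ker\eta_A \in {}^\perp(i_*\aA_1)$, i.e. $\Hom(K_A, i_*B) = 0$ for all $B \in \aA_1$. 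Since $\eta_A$ is the counit-to-cokernel map, $K_A$ is the image of $\varepsilon_A \colon j_!j^*A \to A$; because $j^*(\varepsilon_A)$ is an isomorphism (Lemma~\ref{lem_adj_ff}), $j^*(K_A) \simeq j^*A$ and moreover $j^*$ kills the cokernel of $K_A \hookrightarrow A$ and the kernel of $j_!j^*A \twoheadrightarrow K_A$. Then for $B \in \aA_1 = \ker j^*$, applying $\Hom(-, i_*B)$ to $j_!j^*A \twoheadrightarrow K_A$ gives an injection $\Hom(K_A, i_*B) \hookrightarrow \Hom(j_!j^*A, i_*B) \simeq \Hom_{\aA/\aA_1}(j^*A, j^*i_*B) = \Hom_{\aA/\aA_1}(j^*A, 0) = 0$. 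This also simultaneously identifies ${}^\perp(i_*\aA_1)$ with the essential image of $j_!$ on the nose of this argument — giving most of (2).

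For (2): one inclusion, $j_!(\aA/\aA_1) \subseteq {}^\perp(i_*\aA_1)$, is exactly the computation $\Hom(j_!X, i_*B) \simeq \Hom_{\aA/\aA_1}(X, j^*i_*B) = 0$ just used. For the reverse inclusion, take $A \in {}^\perp(i_*\aA_1)$; then in the torsion sequence $0 \to K_A \to A \to i_*i^*A \to 0$ the quotient $i_*i^*A$ lies in $i_*\aA_1$, while $\Hom(A, i_*i^*A) \ni \eta_A$ must vanish (because $A$ is left-perpendicular to $i_*\aA_1$), forcing $\eta_A = 0$, hence $i_*i^*A = 0$ (as $\eta_A$ is an epimorphism) and $A \simeq K_A \simeq \mathrm{im}(\varepsilon_A)$. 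Finally $\varepsilon_A$ is a monomorphism when $A \in {}^\perp(i_*\aA_1)$ — its kernel $N$ satisfies $j^*N = 0$, so $N \in i_*\aA_1$, and the composite $N \hookrightarrow j_!j^*A$ gives a map from $N$; but more cleanly, $N$ is a subobject and also a quotient situation, so one shows $N = 0$ using that $\Hom(j_!j^*A,-)$ detects it — so $A \simeq j_!j^*A \in j_!(\aA/\aA_1)$. For (3): $(j_!(\aA/\aA_1))^\perp \simeq (\, {}^\perp(i_*\aA_1)\,)^\perp$; since $i_*\aA_1$ is the torsion-free class of a torsion pair whose torsion class is ${}^\perp(i_*\aA_1)$, we have $(\, {}^\perp(i_*\aA_1)\,)^{\perp_0} = i_*\aA_1$, and one must upgrade $\perp_0$ to $\perp$ by checking $\Ext^1(j_!j^*A, i_*B) = 0$; this follows because $j_!j^*A \to A$ exhibits extensions of $i_*B \in i_*\aA_1$ by $j_!j^*A$ as pullbacks along $\varepsilon_A$, and any such pulls back to a split extension since $\Hom(j_!j^*A, i_*B)$-controlled obstruction vanishes — more directly, an extension $0 \to i_*B \to E \to j_!j^*A \to 0$ yields $j^*E \simeq j^*A$ and the section $j_!j^*A \to j_!j^*E \to E$ splits it. The main obstacle is organizing the interplay of $\perp$ versus $\perp_0$ cleanly — i.e. getting the $\Ext^1$-vanishing in (3) without circularity — and being careful that $\varepsilon_A$ is mono on the perpendicular class; I would handle the latter by noting $\ker\varepsilon_A \in i_*\aA_1$ and that $\Hom(\ker\varepsilon_A, j_!j^*A) = 0$ by adjunction plus $j^*(\ker\varepsilon_A)=0$, hence the inclusion $\ker\varepsilon_A \hookrightarrow j_!j^*A$ is zero.
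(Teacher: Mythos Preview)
Your approach differs from the paper's: the paper dispatches (1) by invoking the criterion of \cite[Proposition~1.2]{BelRei} (a subcategory is torsion-free iff the inclusion has a left adjoint and the subcategory is closed under extensions-by-itself in the left-exact sense), and simply cites \cite{Gabriel} for (2); only (3) is argued directly. Your attempt to prove everything from Lemma~\ref{lem_adjoint_to_coloc} is more self-contained, and the construction of the torsion decomposition in (1) via $K_A=\mathrm{im}\,\varepsilon_A$ is correct.

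However, there is a genuine gap in your argument for (2). To show $A\in{}^\perp(i_*\aA_1)$ implies $A\simeq j_!j^*A$, you need $\varepsilon_A$ to be a monomorphism, and you claim $\Hom(\ker\varepsilon_A,\,j_!j^*A)=0$ ``by adjunction plus $j^*(\ker\varepsilon_A)=0$''. There is no such adjunction: $j_!\dashv j^*$ controls $\Hom(j_!X,-)$, not $\Hom(-,j_!X)$. In fact $\Hom(i_*\aA_1,\,j_!(\aA/\aA_1))$ need not vanish (e.g.\ for $\aA=\mathrm{rep}(1\to 2)$ with $\aA_1=\langle S_2\rangle$, one has $\Hom(S_2,P_1)=k$). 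The fix is to use the $\Ext^1$-hypothesis you have not yet exploited: from the short exact sequence $0\to N\to j_!j^*A\to A\to 0$ with $N\in i_*\aA_1$, the assumption $\Ext^1(A,i_*\aA_1)=0$ forces a splitting, so $N$ is a direct summand of $j_!j^*A$; but now the projection $j_!j^*A\to N$ lies in $\Hom(j_!j^*A,\,i_*\aA_1)\simeq\Hom(j^*A,\,j^*i_*\aA_1)=0$, whence $N=0$.

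A smaller organizational point: your inclusion $j_!(\aA/\aA_1)\subseteq{}^\perp(i_*\aA_1)$ in (2) only checks $\Hom$-vanishing; the $\Ext^1$-vanishing is what you prove later in (3) via the splitting argument, so you should either move that argument forward or note explicitly that (2) and the forward inclusion of (3) are being established together.
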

\begin{proof}
	By \cite[Proposition 1.2]{BelRei}, $i_*\aA_1\subset \aA$ is a torsion-free part of a torsion pair if and only if the embedding functor $i_*$ has left adjoint and, for any exact sequence 
	\begin{equation}\label{eqtn_left_ex_seq}
	0 \to i_*A_1 \to A \to i_*A'_1
	\end{equation}
	in $\aA$, object $A$ lies in the subcategory $i_*\aA_1$. By Lemma \ref{lem_adjoint_to_coloc}, functor $i^*$, left adjoint to $i_*$, exists. By applying the exact functor $j^*$ to (\ref{eqtn_left_ex_seq}), we get $j^*(A) \simeq 0$, i.e. $A\in i_*\aA_1$. 
	
	By \cite{Gabriel}, the image of $j_! \colon \aA/\aA_1 \to \aA$ is ${}^\perp(i_*\aA_1) $. In particular, $i_*\aA_1 \subset (j_!(\aA/\aA_1))^\perp$. For $A \in (j_!(\aA/\aA_1))^\perp$ the adjunction counit $j_!j^*A \to A$ is zero, hence the exact sequence (\ref{eqtn_def_of_i*i*}) implies that $A\simeq i_*i^*A$.
\end{proof}

\vspace{0.3cm}
\subsection{ Abelian recollements and bi-localising subcategories}\label{ssec_abel_recol_and_bi-loc_subcat}~\\

An \emph{abelian recollement} \cite{Kuhn} is a diagram of abelian categories and additive functors
\begin{equation}\label{eqtn_abelian_recol}
\xymatrix{\aA_1 \ar[r]|{i_*} & \aA \ar[r]|{j^*} \ar@<2ex>[l]|{i^!} \ar@<-2ex>[l]|{i^*} & \aA_2 \ar@<2ex>[l]|{j_*} \ar@<-2ex>[l]|{j_!}}
\end{equation}
such that $i^* \dashv i_* \dashv i^!$, $j_! \dashv j^* \dashv j_*$, functors $i_*$, $j_*$, $j_!$ are fully faithful, and $i_* \aA_1$ is the kernel of $j^*$. 

Given an abelian recollement (\ref{eqtn_abelian_recol}), the subcategory $i_*\aA_1 \subset \aA$ is bi-localising. The converse is also true, i.e. every bi-localising subcategory yields a recollement:

\begin{PROP}\label{prop_charact_of_abel_recol}
	The data of an abelian recollement (\ref{eqtn_abelian_recol}) is equivalent to the data of a bi-localising subcategory $i_*\aA_1$. 
\end{PROP}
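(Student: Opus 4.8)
The plan is to prove both implications. In one direction, given an abelian recollement (\ref{eqtn_abelian_recol}), I would show that $i_*\aA_1 \subset \aA$ is bi-localising. Since $i_*\aA_1 = \ker j^*$ and $j^*$ is exact, $i_*\aA_1$ is a Serre subcategory. The quotient functor $\aA \to \aA/i_*\aA_1$ may be identified with $j^*$ (up to equivalence), because $j^*$ is an exact functor vanishing exactly on $i_*\aA_1$ and, by fully faithfulness of $j_!$ and $j_*$, it is a localisation functor; more precisely the essential image of $j_*$ is a right adjoint section and the essential image of $j_!$ is a left adjoint section, so $\aA/i_*\aA_1 \simeq \aA_2$ in a way compatible with $j^*$. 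Then the adjunctions $j_! \dashv j^* \dashv j_*$ give both a left and a right adjoint to the quotient functor, so $i_*\aA_1$ is both colocalising and localising, i.e. bi-localising.

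For the converse, suppose $i_*\colon \aA_1 \to \aA$ is the inclusion of a bi-localising subcategory. Set $\aA_2 := \aA/\aA_1$ and let $j^*\colon \aA \to \aA_2$ be the quotient functor. Since $\aA_1$ is localising, $j^*$ has a fully faithful right adjoint $j_*$ (Lemma \ref{lem_adj_ff}); since $\aA_1$ is colocalising, $j^*$ has a fully faithful left adjoint $j_!$ (dual of Lemma \ref{lem_adj_ff}). By Lemma \ref{lem_adjoint_to_coloc} and its dual, the inclusion $i_*$ admits both a left adjoint $i^*$ (using that $\aA_1$ is colocalising) and a right adjoint $i^!$ (using that $\aA_1$ is localising). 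Fully faithfulness of $i_*$ is automatic as it is an inclusion of a full subcategory, and $i_*\aA_1 = \ker j^*$ holds by construction of the quotient category. This assembles exactly into the diagram (\ref{eqtn_abelian_recol}) with all the required adjunctions and fully-faithfulness properties.

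Finally I would check that these two constructions are mutually inverse, which is where the only real bookkeeping lies. Starting from a bi-localising subcategory, forming the recollement, and then extracting its kernel returns $i_*\aA_1$ on the nose. Starting from a recollement, the subcategory $\ker j^*$ produces a quotient $\aA/\ker j^*$ that is canonically equivalent to $\aA_2$ via $j^*$ (here one uses that a recollement exhibits $j^*$ as a localisation — both $j_!$ and $j_*$ being fully faithful sections force $S = \{f : \ker f, \operatorname{coker} f \in \ker j^*\}$ to be exactly the class inverted by $j^*$), and under this equivalence the six functors of the reconstructed recollement match the original ones up to canonical isomorphism by uniqueness of adjoints.

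\textbf{Main obstacle.} The delicate point is the identification, for an arbitrary abelian recollement, of the quotient $\aA/\ker j^*$ with $\aA_2$ \emph{compatibly with $j^*$}: one must verify that $j^*$ inverts precisely the morphisms with kernel and cokernel in $\ker j^*$ and no more, which uses the existence of the fully faithful adjoints $j_!, j_*$ in an essential way (an exact functor killing a Serre subcategory need not be the quotient functor in general). Everything else is a routine application of Lemma \ref{lem_adj_ff}, Proposition \ref{prop_Popescu}, Lemma \ref{lem_adjoint_to_coloc}, and uniqueness of adjoints.
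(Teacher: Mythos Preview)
Your proposal is correct and follows essentially the same route as the paper. The one remark worth making is that your ``main obstacle'' is not an obstacle at all: Proposition \ref{prop_Popescu} (which you list among the routine ingredients) says precisely that an exact functor with a fully faithful left adjoint is the quotient by its kernel, so applying it to $j^*$ immediately identifies $\aA/\ker j^*$ with $\aA_2$ compatibly with $j^*$; this is exactly how the paper dispatches that direction in one line.
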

\begin{proof}
	Consider abelian recollement (\ref{eqtn_abelian_recol}). Functor $j^*$ has a fully faithful right adjoint, hence it is the quotient functor  by its kernel  $i_*\aA_1$ (see Proposition \ref{prop_Popescu}). As $j^*$ has also the left adjoint, category $i_*\aA_1$ is both localising and colocalising.
	
	Let now $i_*\colon \bB\to \aA$ be an embedding of a bi-localising subcategory. The quotient functor $j^* \colon \aA \to \aA/\bB$ has right and left adjoints $j_*, j_!\colon \aA/\bB\to \aA$. By \cite[Proposition 4.4.3]{Pop}, functors $j_*$ and $j_!$ are fully faithful. By Lemma \ref{lem_adjoint_to_coloc}, functor $i_*$ has left adjoint $i^*$. 
	Similarly, define $i^!\colon \aA\to \aA/\bB$ as the kernel of the $j^*\dashv j_*$ adjunction unit. An analogous argument as in the proof of Lemma \ref{lem_adjoint_to_coloc} shows that $i_*\dashv i^!$.
\end{proof}

Adjoint functors for a 2-step filtration with bi-localising subcategories are compatible:

\begin{LEM}\label{lem_shriek_commute}
	Let $\bB \subset \cC$, $\cC\subset \aA$ be bi-localising subcategories.
	Consider the commutative diagram of inclusions and quotients:
	\[
	\xymatrix{&\aA/\cC \ar[r]^{\simeq} & \aA/\cC\\
		\bB \ar[r]^{i_*} & \aA \ar[r]^{j^*} \ar[u]^{l^*} & \aA/\bB\ar[u]^{s^*} \\
		\bB \ar[r]^{o_*} \ar[u]^\simeq & \cC \ar[r]^{p^*} \ar[u]^{k_*} & \cC/\bB \ar[u]^{r_*}} 	
	\]
	Then $\cC/\bB \subset  \aA/\bB$ is a bi-localising subcategory and functors $j_! \circ r_* \simeq k_*p_!$ are canonically isomorphic.
\end{LEM}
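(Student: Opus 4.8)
The plan is to establish the lemma in two stages: first, that $\cC/\bB \subset \aA/\bB$ is indeed a bi-localising subcategory, and second, that the left adjoints of the quotient functors commute with the appropriate inclusions, i.e. $j_! \circ r_* \simeq k_* \circ p_!$.

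For the first stage, I would argue as follows. Since $\bB \subset \cC$ is a Serre subcategory, the quotient $p^*\colon \cC \to \cC/\bB$ is well-defined and exact. Because $\cC \subset \aA$ is also a Serre subcategory containing $\bB$, the image of $\cC$ under $j^*\colon \aA \to \aA/\bB$ is a Serre subcategory of $\aA/\bB$, which we may identify with $\cC/\bB$ via the induced functor $r_*$; this is a standard property of quotients by Serre subcategories (the ``third isomorphism theorem'' $(\aA/\bB)/(\cC/\bB) \simeq \aA/\cC$, which is exactly the commutativity of the top square). To see that $\cC/\bB \subset \aA/\bB$ is bi-localising, observe that the composite quotient functor $\aA/\bB \to (\aA/\bB)/(\cC/\bB) \simeq \aA/\cC$ is, up to this identification, the functor $s^*$ making the diagram commute, and $s^* \circ j^* \simeq l^*$ where $l^*\colon \aA \to \aA/\cC$. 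Since $\cC \subset \aA$ is bi-localising, $l^*$ admits both adjoints; since $\bB \subset \aA$ is bi-localising, $j^*$ admits both adjoints and $j_*, j_!$ are fully faithful. One then checks that the composite adjoints factor appropriately to produce both a right and a left adjoint to $s^*$. Concretely, $l_! \simeq j_! \circ s_!$ and $l_* \simeq j_* \circ s_*$ on the nose by uniqueness of adjoints, and fully faithfulness of $j_!, j_*, l_!, l_*$ forces $s_!, s_*$ to be fully faithful; hence $\cC/\bB$ is bi-localising by Proposition \ref{prop_charact_of_abel_recol} (or directly by Proposition \ref{prop_Popescu}).

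For the second stage, the isomorphism $j_! \circ r_* \simeq k_* \circ p_!$ follows from uniqueness of left adjoints. The functor $k_* \circ p_! \colon \cC/\bB \to \aA$ has a right adjoint, namely $p_* \circ k^!$ where $k^!\colon \aA \to \cC$ is the right adjoint to $k_*$ (which exists since $\cC \subset \aA$ is colocalising, by Lemma \ref{lem_adjoint_to_coloc} applied in the dual form, or rather by the localising hypothesis) and $p_* \colon \cC/\bB \to \cC$ is the right adjoint to $p^*$. On the other hand, $j_! \circ r_*$: here $r_*$ is the inclusion $\cC/\bB \to \aA/\bB$, and I claim its composite with the right adjoint to $j_!$, namely $j^*$, gives $p^* \circ k^!$ after using the commutativity $j^* \circ k_* \simeq r_* \circ p^*$. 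Chasing the adjunctions: for $M \in \cC/\bB$ and $A \in \aA$,
\[
\Hom_{\aA}(j_! r_* M, A) \simeq \Hom_{\aA/\bB}(r_* M, j^* A) \simeq \Hom_{\aA/\bB}(r_* M, j^* A),
\]
and one must match this with $\Hom_{\aA}(k_* p_! M, A) \simeq \Hom_{\cC/\bB}(M, p_* k^! A)$. The key input is that the right adjoint $j^* r_{!}$-style computation reduces, via the commuting square, to a computation inside $\cC$; precisely, $r_*$ being the restriction of $j^*$ to $\cC$ means that for $M$ supported on $\cC/\bB$ the unit/counit triangles collapse. I would make this precise by showing that both $j_! r_*$ and $k_* p_!$ are left adjoint to the same functor $\aA \to \cC/\bB$, and invoke uniqueness.

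**The main obstacle** I anticipate is pinning down exactly which functor serves as the common right adjoint and verifying the compatibility of the various adjunctions with the commutative square of quotient functors — in particular, checking that $j^* \circ k_* \simeq r_* \circ p^*$ and that passing to left adjoints of this relation is legitimate (left adjoints compose contravariantly, so this needs care: the relation one gets for left adjoints is $k_! \circ$ (left adjoint of $j^*$ on the relevant subcategory)$\,\simeq\,$(left adjoint of $r_*$)$\circ p_!$, and one must translate between ``left adjoint of $j^*$'' $= j_!$ and ``left adjoint of $r_*$'' which is the functor $r^!$-analogue). The cleanest route is probably to avoid manipulating adjoints of a commutative square directly, and instead verify the adjunction $k_* p_! \dashv p_* k^!$ and $j_! r_* \dashv (\text{something})$ separately, then identify the two ``something''s using that $r_*$ is fully faithful with essential image the kernel of the composite quotient $\aA/\bB \to \aA/\cC$. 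Everything else is routine diagram-chasing with exact functors and fully faithful adjoints, using Lemma \ref{lem_adjoint_to_coloc}, Proposition \ref{prop_Popescu}, and \cite[Proposition 4.4.3]{Pop} as cited in the paper.
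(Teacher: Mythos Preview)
Your overall strategy is sound, and for the first stage (bi-localising) you are essentially doing what the paper does, though the paper is more direct: it simply checks that $j^*l_! \dashv s^* \dashv j^*l_*$, which immediately exhibits both adjoints of $s^*$.

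For the second stage your approach differs from the paper's and is more laborious. You propose to identify a common right adjoint of $j_! r_*$ and $k_* p_!$. This can be made to work, but note a slip: the right adjoint of $k_* p_!$ is $p^* k^!$, not $p_* k^!$ (since $p_! \dashv p^*$, not $p_! \dashv p_*$). You then need $r^! j^* \simeq p^* k^!$ as functors $\aA \to \cC/\bB$, which does not follow immediately from the commuting square and requires its own argument. The paper avoids this by a cleaner route: first observe that $l^* j_! r_* \simeq s^* j^* j_! r_* \simeq s^* r_* \simeq 0$, so $j_! r_*$ lands in $\ker l^* = k_*\cC$ and hence $j_! r_* \simeq k_* k^* j_! r_*$. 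Then take \emph{left} adjoints of the diagram relation $j^* k_* \simeq r_* p^*$ to obtain $k^* j_! \simeq p_! r^*$; composing with $r_*$ and using $r^* r_* \simeq \Id$ gives $k^* j_! r_* \simeq p_!$, hence $j_! r_* \simeq k_* p_!$. Your ``main obstacle'' paragraph in fact gestures toward exactly this manoeuvre (passing to left adjoints of the square and using full faithfulness of $r_*$), so you were close; the missing concrete step is the observation that $j_! r_*$ factors through $\cC$, which is what makes the left-adjoint computation land.
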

\begin{proof}
	A straightforward verification shows that $j^*l_! \dashv s^* \dashv j^*l_*$, i.e. $\cC/\bB\subset \aA/\bB$ is a bi-localising subcategory.
	
	As $l^*j_!r_* \simeq s^*j^*j_!r_* \simeq s^*r_* \simeq 0$, functor $j_!r_*$ takes values in the subcategory $\cC$. Hence, $j_!r_* \simeq k_*k^*j_!r_*$. The isomorphism $j^*k_* \simeq r_*p^*$ implies an isomorphism of left adjoint functors $k^*j_! \simeq p_!r^*$. Hence, $j_!r_* \simeq k_*k^*j_!r_* \simeq k_*p_!r^*r_*$. Since $r_*$ is fully faithful, the $r^*\dashv r_*$ adjunction counit yields an isomorphism $j_!r^* \simeq k_*p_!r^*r_*\simeq k_*p_!$.
\end{proof}

In Section \ref{sec_high_weigh_as_env} 
we use the following fact.
\begin{LEM}\label{lem_monomorphism_in_the_quotient}
	Consider a recollement (\ref{eqtn_abelian_recol}) of abelian categories. 
	Let $\aA_2$ have enough projectives. Then  morphism $\f$ in $\aA_2$ is a monomorphism if $j_!(\f)$ is.
\end{LEM}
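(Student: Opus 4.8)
The plan is to observe that the statement is essentially formal, the only input being that $j_!$ is fully faithful. This is part of the defining data of an abelian recollement, and it also follows from Lemma \ref{lem_adj_ff} applied to the quotient functor $j^*\colon \aA \to \aA/i_*\aA_1 \simeq \aA_2$ (recall that $j^*$ is the quotient by its kernel $i_*\aA_1$, since it has a fully faithful right adjoint, \emph{cf.} Proposition \ref{prop_Popescu}). In particular $j_!$ is faithful and, being additive, sends zero morphisms to zero; these are the only properties I will use.

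First I would take $\f\colon X \to Y$ in $\aA_2$ with $j_!(\f)$ a monomorphism. Since $\aA_2$ is abelian, $\f$ admits a kernel $\iota\colon K \to X$, so that $\f \circ \iota = 0$. Applying the functor $j_!$ gives $j_!(\f)\circ j_!(\iota) = j_!(\f\circ \iota) = 0$; as $j_!(\f)$ is a monomorphism this forces $j_!(\iota) = 0$, and as $j_!$ is faithful, $\iota = 0$. Since $\iota$ is a monomorphism, $K = 0$, and therefore $\f$ is a monomorphism.

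There is essentially no obstacle here; the only step worth spelling out is the identification of $j^*$ with a quotient functor, so that Lemma \ref{lem_adj_ff} applies, which is standard in the presence of a recollement. If one prefers to run the argument through $j^*$ rather than $j_!$, one may instead use that $j^*$ is exact (it admits adjoints on both sides) together with the canonical isomorphism $j^* j_! \simeq \Id_{\aA_2}$ coming from full faithfulness of $j_!$: then $j^*$ carries the monomorphism $j_!(\f)$ to a monomorphism naturally isomorphic to $\f$. One may note that neither version of the argument uses the hypothesis that $\aA_2$ has enough projectives; it is recorded because it holds in the setting --- the proof of Theorem \ref{thm_thin_in_its_envelope} --- where the lemma is applied.
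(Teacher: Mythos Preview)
Your proof is correct, and both routes you sketch are valid. The argument via faithfulness of $j_!$ is the cleanest: from $\f\circ\iota=0$ one gets $j_!(\f)\circ j_!(\iota)=0$, the monomorphism hypothesis forces $j_!(\iota)=0$, and faithfulness gives $\iota=0$. The alternative via the exact functor $j^*$ and the isomorphism $j^*j_!\simeq\Id_{\aA_2}$ is equally direct. You are right that the hypothesis on enough projectives is not used.

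The paper takes a considerably more elaborate path. It derives $j_!$ to $Lj_!\colon\dD^-(\aA_2)\to\dD^-(\aA)$ (this is where enough projectives enters), observes that $j^*\circ Lj_!\simeq\Id$ forces $L^1j_!A\in i_*\aA_1$ for all $A$, and then runs the long exact sequence for $0\to K\to A'_2\to I\to 0$ to conclude that $j_!K$ is a quotient of $L^1j_!I\in i_*\aA_1$, hence lies in $i_*\aA_1$, hence $K\simeq j^*j_!K=0$. This reaches the same conclusion but through derived machinery that your argument shows to be unnecessary: the fully-faithfulness of $j_!$ (already part of the recollement data) does all the work. Your approach is both more elementary and more general, since it applies without any projectivity assumption on $\aA_2$.
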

\begin{proof}
	Functor $j_!$ is right exact. As $\aA_2$ has enough projectives, there exist the derived functor $Lj_! \colon \dD^-(\aA_2) \to \dD^-(\aA)$. As $j^*$ is exact, it can be derived to a $t$-exact functor $j^* \colon \dD^-(\aA) \to \dD^-(\aA_2)$, i.e. to a functor which preserves the subcategories of complexes concentrated in negative and positive degrees. Since $j^* \circ j_! \simeq \Id_{\aA_2}$, we have $j^* \circ Lj_! \simeq \Id_{\dD^-(\aA_2)}$. In particular, $j^*L^1j_!A = 0$, for any $A\in \aA_2$. It follows that $L^1j_!A \in i_* \aA_1$, for any $A\in \aA_2$.
	
	Let $\f \colon A'_2 \to A_2$ be a morphism in $\aA_2$. Denote by $K$ its kernel and by $I$ its image. If $j_!(\f)$ is a monomorphism, then the map $\alpha$ in the exact sequence
	$$
	L^1j_!I \to j_!K \xrightarrow{\alpha} j_!A'_2 \to j_!I \to 0
	$$
	is zero. It follows that $j_!K$ is a quotient of an object in $i_* \aA_1$. As $i_*\aA_1$ is a kernel of an exact functor $j^*$, it is closed under quotients. It follows that $j_!K \in i_*(\aA_1)$. 
	Then $K=j^*j_!K=0$, i.e. $\f$ is a monomorphism.
\end{proof}

\vspace{0.3cm}
\subsection{Serre subactegories in abelian categories of finite length}\label{ssec_ab_cat_fin_len}~\\

We say that a $k$-linear abelian category is \emph{of finite length} if it is $\Hom$ and $\Ext^1$-finite, contains finitely many non-isomorphic simple objects, and any object admits a finite filtration with simple graded factors. 

Serre subcategory of a finite length category is determined by simple objects in it.

\begin{LEM}\label{lem_biloc_in_fin_len}
	Let $\aA$ be an abelian category of finite length and $\mathcal{S}$ the set of simple objects in it. Then Serre subcategories in $\aA$ are in one-to-one correspondence with subsets of $\sS$. Given $I\subset \mathcal{S}$, the corresponding Serre subcategory is the extension closure of $\{S\}_{S\in I}$.	
\end{LEM}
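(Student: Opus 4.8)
The plan is to show that the two maps --- sending a Serre subcategory $\bB$ to the set $I_\bB$ of (isomorphism classes of) simple objects of $\aA$ that lie in $\bB$, and sending a subset $I \subset \sS$ to the extension closure $\fF(\{S\}_{S\in I})$ --- are mutually inverse bijections. First I would check that $\fF(\{S\}_{S\in I})$ is indeed a Serre subcategory: it is closed under extensions by construction, and one verifies closure under subobjects and quotients by induction on the length of a filtration with simple factors. Concretely, if $A \in \fF(\{S\}_{S\in I})$ and $A' \subset A$, pick a composition series of $A$; each simple factor is some $S \in I$; then $A'$ and $A/A'$ inherit filtrations whose simple factors are among those of $A$ (using the Jordan--Hölder theorem in the finite length category $\aA$), hence lie in $I$, so $A', A/A' \in \fF(\{S\}_{S\in I})$. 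This uses only that $\aA$ has finite length.

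Next I would observe that, for any $I \subset \sS$, the simple objects contained in $\fF(\{S\}_{S\in I})$ are exactly those in $I$: one inclusion is trivial, and for the other, a simple object $T \in \fF(\{S\}_{S\in I})$ has a filtration with factors in $I$, but being simple it \emph{equals} one of those factors, so $T \in I$. This gives $I_{\fF(\{S\}_{S\in I})} = I$, i.e. one composition of the two maps is the identity. For the reverse composition, let $\bB \subset \aA$ be a Serre subcategory and $I = I_\bB$. Clearly $\fF(\{S\}_{S\in I}) \subset \bB$, since $\bB$ is closed under extensions and contains every $S \in I$. Conversely, take any $B \in \bB$ and a composition series $0 = B_0 \subset B_1 \subset \cdots \subset B_m = B$ with simple quotients $S_i = B_i/B_{i-1}$; since $\bB$ is closed under subobjects and quotients, each $S_i \in \bB$, hence $S_i \in I$, and therefore $B \in \fF(\{S\}_{S\in I})$ by closure under extensions. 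Thus $\bB = \fF(\{S\}_{S \in I_\bB})$.

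The argument is essentially bookkeeping with Jordan--Hölder, so there is no real obstacle; the only point requiring a little care is the compatibility of filtrations with sub- and quotient objects, which is where the finite length hypothesis is genuinely used (it guarantees every object has a composition series and that the multiset of simple factors is well-defined). I would phrase that step cleanly and leave the routine verifications to the reader.
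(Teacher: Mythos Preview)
Your proof is correct and follows essentially the same approach as the paper's: both set up the two maps $\bB \mapsto I_\bB$ and $I \mapsto \fF(\{S\}_{S\in I})$ and verify they are mutually inverse using that a Serre subcategory contains all simple factors of its objects. You give more detail (spelling out closure under subquotients via Jordan--H\"older and checking $I_{\fF(\{S\}_{S\in I})} = I$ explicitly), whereas the paper declares the Serre property ``standard'' and leaves the injectivity direction implicit.
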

\begin{proof}
	Given $I\subset \mathcal{S}$, let $\fF(S_i)_{i\in I}$ be  the full subcategory of  objects whose all simple factors are isomorphic to $S_i$, for some $i\in I$. It is standard that $\fF(S_i)_{i\in I}$ is a Serre subcategory.	
	Moreover, any Serre subcategory is of this form. Indeed, given Serre subcategory $\bB \subset \aA$, let $I_{\bB}\subset \sS$ be the set of simples in $\aA$ which lie in $\bB$. Then $\fF(S)_{i\in I_{\bB}}\subset \bB$. On the other hand, $\bB$ is closed under subquotients, hence together with an object $B\in \bB$ it contains all its simple factors in $\aA$, i.e. $B\in \fF(S)_{i\in I_{\bB}}$.
\end{proof}

An abelian category of finite length is a \emph{Deligne finite category} if it has a projective generator.
By \cite[Propostion 2.14]{Del}, a Deligne finite category $\aA$ with projective generator $P$ is equivalent to the category of finitely generated modules over the (finite dimensional) $k$-algebra $\End_{\aA}(P)$. In particular, Deligne finite categories are Krull-Schmidt.

\begin{PROP}\label{prop_biloc_in_Df_n}
	Consider a Deligne finite category $\aA$. Then
	\begin{enumerate}
		\item any Serre subcategory $\bB$ of $\aA$ is bi-localising. 
		\item Any Serre subcategory $\bB$ and any quotient category $\aA/\bB$ is Deligne finite.
	\end{enumerate}
\end{PROP}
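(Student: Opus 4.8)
The plan is to reduce everything to module categories over finite-dimensional algebras and then to apply the recollement attached to an idempotent. By \cite[Proposition 2.14]{Del} we may fix a basic projective generator $P\in\aA$ and identify $\aA$ with $\textrm{mod-}A$ for the finite-dimensional $k$-algebra $A:=\End_{\aA}(P)$; let $1=e_1+\dots+e_n$ be the associated decomposition into primitive orthogonal idempotents, so that the simple objects of $\aA$ are the pairwise non-isomorphic tops $S_i$ of the indecomposable projectives $e_iA$.

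The first step is to realise an arbitrary Serre subcategory $\bB\subset\aA$ as the kernel of a localisation functor. By Lemma \ref{lem_biloc_in_fin_len}, $\bB$ is the extension closure of $\{S_i\}_{i\in I}$ for a unique $I\subseteq\{1,\dots,n\}$. Set $e:=\sum_{j\notin I}e_j$. Since $\Hom_{\aA}(e_jA,M)\cong Me_j$, a module $M$ has all its composition factors among $\{S_i\}_{i\in I}$ if and only if $Me_j=0$ for every $j\notin I$, i.e. $Me=0$. Thus $\bB=\ker j^*$ for the exact functor $j^*:=(-)e=(-)\otimes_A Ae\colon\textrm{mod-}A\to\textrm{mod-}eAe$.

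For (1), I would invoke the standard adjoints of $j^*$: the left adjoint $j_!:=(-)\otimes_{eAe}eA$ and the right adjoint $j_*:=\Hom_{eAe}(Ae,-)$. Both send finitely generated modules to finitely generated modules, because $eA$ and $Ae$ are finite-dimensional bimodules, so they are honest functors between $\textrm{mod-}eAe$ and $\textrm{mod-}A$ rather than merely between the ambient module categories. As $j^*j_!\cong\Id$ and $j^*j_*\cong\Id$, both $j_!$ and $j_*$ are fully faithful; hence Proposition \ref{prop_Popescu} and its dual show that $\bB=\ker j^*$ is simultaneously colocalising and localising, that is, bi-localising, with $\aA/\bB\simeq\textrm{mod-}eAe$.

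Part (2) then drops out of the same description. The condition $Me=0$ is equivalent to $M\cdot AeA=0$, so $\bB$ is equivalent to $\textrm{mod-}(A/AeA)$, the category of finitely generated modules over a finite-dimensional algebra; such a category is of finite length and has a projective generator, hence is Deligne finite. Likewise $\aA/\bB\simeq\textrm{mod-}eAe$ is the module category of the finite-dimensional algebra $eAe$, hence Deligne finite. I expect the only genuinely delicate points to be the identification of $\bB$ with the modules annihilated by the complementary idempotent and the verification that $j_!$ and $j_*$ preserve finite generation; once these are in place the rest is formal.
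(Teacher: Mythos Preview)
Your proof is correct and follows essentially the same route as the paper: both identify $\aA$ with $\textrm{mod-}A$ for a finite-dimensional algebra, take the complementary idempotent $e$ to the simples in $\bB$, and use the standard idempotent recollement with $j^*=(-)e$, $j_!=(-)\otimes_{eAe}eA$, $j_*=\Hom_{eAe}(Ae,-)$ to conclude that $\bB\simeq\textrm{mod-}(A/AeA)$ is bi-localising and $\aA/\bB\simeq\textrm{mod-}eAe$. Your version is somewhat more explicit about why $j_!$ and $j_*$ land in finitely generated modules and why $\bB=\ker j^*$, but the argument is the same.
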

\begin{proof}
	By Lemma \ref{lem_biloc_in_fin_len}, a Serre subcategory $\bB$ of a Deligne finite category $\aA$ is the extension closure $\fF({S_i})$ of the simple objects in it.

	Note that $\aA$ is equivalent to the category of right modules over a finite dimensional algebra $R$. A choice of idempotents $e_i$ such that $e_iR$ is a projective cover of $S_i$ yields an equivalence of the Serre category generated by simple objects $\{S_j\}_{j\in J}$ with $\textrm{mod-}R/RfR$, where $f = \sum_{i\notin J}e_i$. In particular, $\bB$ is Deligne finite.
	
	 Then the quotient category is $\textrm{mod-}fRf$, in particular it is Deligne finite. The quotient  functor $\Hom_R(fR,-)\simeq (-)\otimes_R Rf\colon \aA \to \textrm{mod-}fRf$ has the left $(-)\otimes_{fRf}fR$ and the right $\Hom_{fRf}(Rf,-)$ adjoint.
\end{proof}

\bibliographystyle{alpha}
\bibliography{../../ref}
\end{document}